\setlist[enumerate, 1]{label=(\arabic*),leftmargin=4em}
\DeclareMathOperator{\res}{res}
\DeclareMathOperator{\cl}{cl}
\DeclareMathOperator{\rk}{rk}
\DeclareMathOperator{\st}{st}
\DeclareMathOperator{\Th}{Th}
\DeclareMathOperator{\dtrdeg}{tr.\!deg_{\der}}
\DeclareMathOperator{\upc}{c}
\def\d{\operatorname{d}}
\def\nl{\operatorname{nl}}
\def\dhl{\operatorname{dhl}}
\def\sm{\operatorname{small}}
\def\codf{\operatorname{codf}}
\def\lift{\operatorname{lift}}
\def\r{\operatorname{r}}
\newcommand{\OR}{\textnormal{OR}}
\newcommand{\deft}[1]{\textbf{\textup{#1}}}
\newcommand{\ca}{\mathcal}
\newcommand{\prece}{\preccurlyeq}
\newcommand{\succe}{\succcurlyeq}
\newcommand{\x}{\times}
\newcommand{\0}{\emptyset}
\newcommand{\ges}{\geqslant}
\newcommand{\les}{\leqslant}
\newcommand{\N}{\mathbb{N}}
\newcommand{\Z}{\mathbb{Z}}
\newcommand{\Q}{\mathbb{Q}}
\newcommand{\R}{\mathbb{R}}
\newcommand{\T}{\mathbb{T}}
\newcommand{\dotrel}[1]{\mathrel{\dot{#1}}}
\DeclareFontFamily{U}{fsy}{}
\DeclareFontShape{U}{fsy}{m}{n}{<->s*[.9]psyr}{}
\DeclareSymbolFont{der@m}{U}{fsy}{m}{n}
\DeclareMathSymbol{\der}{\mathord}{der@m}{182}
\DeclareFontFamily{OMS}{smallo}{}
\DeclareFontShape{OMS}{smallo}{m}{n}{<->s*[.65]cmsy10}{}
\DeclareSymbolFont{smallo@m}{OMS}{smallo}{m}{n}
\DeclareMathSymbol{\cao}{\mathord}{smallo@m}{79}
\newtheorem{thmint}{Theorem}
\newtheorem{lem}{Lemma}[section]
\newtheorem{fact}[lem]{Fact}
\newtheorem{prop}[lem]{Proposition}
\newtheorem{cor}[lem]{Corollary}
\newtheorem{thm}[lem]{Theorem}
\theoremstyle{definition}
\newtheorem{defn}[lem]{Definition}
\numberwithin{claim}{lem}
\numberwithin{equation}{section}
\title{Dimension and topology in transserial tame pairs}
\author{Nigel Pynn-Coates}
\address{Kurt G\"{o}del Research Center, Institute of Mathematics, University of Vienna, Austria}
\email{\href{mailto:nigel.pynn-coates@univie.ac.at}{nigel.pynn-coates@univie.ac.at}}
\urladdr{\url{https://www.mat.univie.ac.at/~pnigel/}}
\begin{document}

\begin{abstract}
Every maximal Hardy field has a proper elementary differential subfield that is Dedekind complete in the maximal Hardy field.
This pair of Hardy fields is a transserial tame pair, shown to have a complete and model complete elementary theory in \cite{pc-transtamepair}.
This paper introduces a dimension in transserial tame pairs and shows that it equals the (naive) dimension coming from the order topology.
In particular, the dimension is definable in a certain sense, and is the unique definable dimension in a transserial tame pair.
Further, transserial tame pairs are locally o-minimal and d-minimal.
These properties and the dimension are used to establish topological properties of definable sets in transserial tame pairs, including a definable Baire category theorem.
\end{abstract}
\maketitle


\section{Introduction}
Each semialgebraic subset $S\subseteq \R^n$ has a dimension in $\{-\infty,0,1,\dots,n\}$, which can be defined as the supremum of $m\les n$ such that $\pi(S)$ has interior in $\R^m$ for some coordinate projection $\pi \colon \R^n \to \R^m$.
This ``naive topological dimension'' generalizes to sets definable in any o-minimal expansion $\ca R$ of $\R$, and can alternatively be defined via cell decomposition \cite{knightpillaysteinhorn-omin2} (see also \cite[Chapter~4]{vdd-tametopominbook})
or as the dimension coming from the definable closure pregeometry \cite{pillay-remarksdefeq,pillay-grupsfieldsomindef}.
The germs at $+\infty$ of functions $\R\to \R$ definable in $\ca R$ form a \emph{Hardy field}, which is a field of germs at $+\infty$ of eventually differentiable functions $\R\to \R$ that is closed under differentiation.
Hardy fields, whether or not they come from o-minimal structures, are ordered fields, so the naive topological dimension makes sense.
By a recent remarkable theorem \cite{adh-maxHftheory}, all maximal Hardy fields (i.e., maximal with respect to inclusion) are elementarily equivalent and satisfy an effectively axiomatized model complete theory as ordered valued differential fields.
The naive topological dimension of sets definable in maximal Hardy fields has good topological and definability properties, and equals the dimension $\dim$ coming from the differential-algebraic closure pregeometry \cite{adh-dimension}.

Every maximal Hardy field $K$ can be equipped with a proper elementary differential subfield $L$ such that $L$ is Dedekind complete in $K$, meaning that if $f\in K$ defines a proper nonempty subset $(-\infty, f)\cap L$ of $L$, then there is $g \in L$ with $(-\infty, f)\cap L = (-\infty,g) \cap L$ or $(-\infty, f)\cap L = (-\infty,g] \cap L$.
\emph{Transserial tame pairs} such as $(K,L)$ are studied in \cite{pc-transtamepair}.
In particular, the theory of transserial tame pairs is complete and model complete in a natural language.
This paper focuses on the naive topological dimension of sets definable in transserial tame pairs.
More correctly, we modify the differential-algebraic closure pregeometry in $(K,L)$ to define a dimension $\dim_2$, along the lines of \cite{berensteinvassiliev,fornasiero-dimension,angelvdd} for other pairs, and show:

\begin{thmint}\label{thmint:kinda}
If $S \subseteq K^n$ is definable in $(K,L)$, then the following are equivalent:
\begin{enumerate}
    \item\label{thmint:kinda1} $\dim_2 S<n$;
    \item\label{thmint:kinda2} $S$ has empty interior in $K^n$;
    \item\label{thmint:kinda3} $S$ is nowhere dense in $K^n$;
    \item\label{thmint:kinda4} $S$ is definably meagre in $K^n$.
\end{enumerate}
\end{thmint}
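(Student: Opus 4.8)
The plan is to prove the cycle of implications $(1)\Rightarrow(2)\Rightarrow(3)\Rightarrow(4)\Rightarrow(1)$, or rather to prove the chain in a way that exploits the known good behaviour of the underlying dimension. Several implications are essentially topological and hold in any reasonable topological setting: $(2)\Rightarrow$ ``not everywhere dense complement'' is trivial, and $(4)\Rightarrow(3)\Rightarrow(2)$ is the standard chain that definably meagre sets are nowhere dense (using that a definable set with interior is, via local o-minimality, somewhere locally a box and hence definably nonmeagre by a definable Baire-type argument) and nowhere dense sets have empty interior. So the topological direction reduces to showing $(2)\Rightarrow(1)$ and, for the converse, $(1)\Rightarrow(2)$, with the real content being the equivalence between the algebraically defined dimension $\dim_2$ and having interior.

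\medskip
\noindent\textbf{Reducing to the model-theoretic core.} First I would establish $(1)\Rightarrow(2)$ in contrapositive form: if $S$ has nonempty interior in $K^n$, then $\dim_2 S = n$. Since $\dim_2$ is built from a pregeometry (a modification of the differential-algebraic closure pregeometry), $\dim_2 S = n$ should be characterized by the existence of a point $a \in S$ whose coordinates are ``$\dim_2$-independent'' over the relevant base, witnessing full dimension. If $S$ contains a box $\prod_i (a_i, b_i)$, one expects to find such a generic point inside the box by a genericity/independence argument: the box is open, so by model completeness and the structure of the pregeometry one can realize a tuple in the box that is independent over the parameters defining $S$. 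This is where I would lean on the completeness and model completeness of the theory from \cite{pc-transtamepair} together with the explicit description of $\dim_2$.

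\medskip
\noindent\textbf{The converse and the main obstacle.} The hard direction is $(2)\Rightarrow(1)$, equivalently $\dim_2 S = n \Rightarrow S$ has interior. The natural strategy is induction on $n$ together with a cell-decomposition or fibering argument: project $S$ to $K^{n-1}$ and analyze the generic fibre. One wants that if $\dim_2 S = n$ then the projection $\pi(S) \subseteq K^{n-1}$ has $\dim_2 \pi(S) = n-1$ and the generic fibre has nonempty interior in $K$, the latter coming from local o-minimality (which gives that a one-variable definable set of full local dimension contains an interval). Combining an open base in $\pi(S)$ (by induction) with an open fibre should produce an open box in $S$. The main obstacle, and the step I expect to consume the most work, is controlling the interaction between the pregeometry dimension and the topology across the pair $(K,L)$: unlike the pure o-minimal or pure Hardy-field case, $\dim_2$ mixes the ambient differential-algebraic dimension with the position relative to the dense subfield $L$, so one must check that $L$-points do not secretly inflate or deflate $\dim_2$ relative to the topology. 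Concretely, I would need a lemma guaranteeing that a $\dim_2$-generic point sits in the interior, which requires knowing that the definable closure in the pair does not ``trap'' generic tuples on lower-dimensional $L$-definable sets — this is exactly where d-minimality and the Dedekind-completeness of $L$ in $K$ must be invoked to rule out pathological dense-but-small definable sets.

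\medskip
\noindent Finally, to close the loop I would verify $(4)\Rightarrow(1)$ directly rather than only through $(3)$ and $(2)$, since the statement packages a definable Baire category input: a set of full $\dim_2$ should be definably nonmeagre, which I would prove by showing it contains (after the fibering argument above) a box, and boxes are definably nonmeagre by the definable Baire category theorem advertised in the abstract. Thus the four conditions collapse, with $\dim_2$ serving as the algebraic bridge between the purely topological notions $(2)$–$(4)$.
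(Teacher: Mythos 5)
Your outline has the right overall shape (make $\dim_2 S<n\Leftrightarrow$ empty interior the core and hang the topological statements off it), but the two places where you defer the real work are exactly where your plan breaks down. First, the hard direction ``$\dim_2 S=n\Rightarrow S$ has interior'': your fibering strategy needs the statement ``if $\pi(S)$ contains an open set over which every fibre $S_a$ has nonempty interior, then $S$ has nonempty interior.'' That is precisely one of the d-minimality axioms, and in the paper it is \emph{derived from} Proposition~\ref{prop:emptyintlean} together with Corollary~\ref{2:cor:dim2fibre} (see the proof of Corollary~\ref{cor:dmin}); since the structure is not definably complete, you cannot import it from the general theory of locally o-minimal structures, so your induction is circular. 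The paper avoids fibering entirely: it takes a $\dim_2$-generic point $a$ of $S^*$ in a saturated elementary extension, perturbs each coordinate by an $\varepsilon_i$ smaller than a suitable intermediate field, and uses Taylor expansion plus the quantifier reduction to special formulas (Theorem~\ref{thm:specialQE}) to show $a+\varepsilon$ satisfies the same special formulas as $a$, so an entire box lies in $S^*$. The quantifier reduction is the indispensable tool here and is absent from your proposal. (A side remark: $L$ is \emph{discrete and closed} in $K$, not dense; your worry about ``pathological dense-but-small definable sets'' is aimed at the wrong picture.)

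Second, the implications \ref{thmint:kinda2}$\Rightarrow$\ref{thmint:kinda3} and \ref{thmint:kinda4}$\Rightarrow$\ref{thmint:kinda3} are not ``standard'': in a general ordered field a definable set with empty interior can be dense (think of $\Q\subseteq\R$), and there is no off-the-shelf definable Baire category theorem because $(K,L)$ is not definably complete (the paper says this explicitly when citing \cite{hieronymi-baire}). Both implications are consequences of the dimension theory: for Lemma~\ref{lem:emptyintnodenseequiv}, if $S$ has empty interior then $\dim_2 S<n$, so for any nonempty open $U$ one gets $\dim_2(U\setminus S)=n$ and hence $U\setminus S$ has interior, i.e.\ $S$ is nowhere dense; for Lemma~\ref{lem:defbaire}, each piece $W_a$ of the definably meagre cover has $\dim_2 W_a<n$, and the crucial input is that $\dim_2$ is preserved under increasing unions of definable families (\cite[Lemma~3.71]{fornasiero-dimension}, available because $\cl^{\der}_2$ is an existential matroid). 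Your appeal to ``a definable Baire-type argument'' to justify \ref{thmint:kinda4}$\Rightarrow$\ref{thmint:kinda3} assumes the very statement being proved. Until you supply the quantifier reduction and the increasing-union lemma, the proof does not close.
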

In particular, the equivalence of \ref{thmint:kinda1} and \ref{thmint:kinda2} yields the coincidence of $\dim_2$ with naive topological dimension, and hence $\dim_2 S=\dim S$ for subsets $S\subseteq K^n$ definable in $K$.
It also shows that $\dim_2$ is \emph{definable} in a certain sense, and so is a dimension function in the sense of \cite{vdd-dimension,fornasiero-dimension}, leading to Theorem~\ref{thmint:defbij} below.
By \cite{fornasiero-dimension}, $\dim_2$ is the unique (definable) dimension function on transserial tame pairs.

Moreover, Theorem~\ref{thmint:kinda} includes a strong definable Baire category theorem for transserial tame pairs, since every definably meagre set is nowhere dense; a definably meagre set is one covered by the union of an increasing definable family of nowhere dense sets.
Every definably complete ordered field satisfies a definable Baire category theorem by \cite{hieronymi-baire} (see also \cite{fornaservi-dcb} for consequences), but the structures in this paper are not definably complete.
By the previous paragraph, the definable Baire category theorem also applies to sets definable in maximal Hardy fields, which is new.

As a consequence of Theorem~\ref{thmint:kinda} and generalities on pregeometric dimensions from \cite{fornasiero-dimension,angelvdd}, we show that $\dim_2$ is preserved by definable bijections.
In particular, no space-filling function is definable in a transserial tame pair.
More generally:
\begin{thmint}\label{thmint:defbij}
If $S \subseteq K^m$ and $f \colon S \to K^n$ are definable in $(K,L)$, then
\begin{enumerate}
    \item $\dim_2 S \ges \dim_2 f(S)$, so $\dim_2 S = \dim_2 f(S)$ if $f$ is injective;
    \item $B_i \coloneqq \{ b \in K^n : \dim_2 f^{-1}(b)=i \}$ is definable and $\dim_2 f^{-1}(B_i)= i + \dim_2 B_i$ for $i=0, \dots, m$.
\end{enumerate}
\end{thmint}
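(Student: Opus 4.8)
The plan is to derive both parts from the description of $\dim_2$ as the dimension attached to a pregeometry, combined with the topological characterization in Theorem~\ref{thmint:kinda}. Write $\cl$ for the closure operator of the pregeometry underlying $\dim_2$, so that for a set $S$ definable over a parameter set $A$ we have $\dim_2 S = \max\{\rk(a/A) : a \in S\}$, where $a$ ranges over realizations of $S$ in a sufficiently saturated elementary extension and $\rk(a/A)$ is the rank of $a$ over $A$ in $\cl$. By the equivalence of \ref{thmint:kinda1} and \ref{thmint:kinda2}, $\dim_2$ coincides with the naive topological dimension $\dim$, which is what will make $\dim_2$ definable in families. The overall strategy is then to confirm that $\dim_2$ is a dimension function in the sense of \cite{vdd-dimension} and to invoke the general theory of pregeometric dimensions of \cite{angelvdd,fornasiero-dimension}, with (i) expressing monotonicity under definable maps and (ii) expressing additivity over fibers.

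For (i) I would argue pointwise. Given $b \in f(S)$ in a saturated extension, choose $a \in S$ with $f(a)=b$; since $f$ is definable over $A$, the point $b$ lies in $\cl(Aa)$, whence $\rk(b/A) \les \rk(a/A) \les \dim_2 S$. Taking the maximum over $b \in f(S)$ yields $\dim_2 f(S) \les \dim_2 S$. When $f$ is injective its inverse is again definable, so applying the same argument to $f^{-1}$ gives the reverse inequality, and hence equality.

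For the definability assertion in (ii) I would use that $\dim_2$ agrees with $\dim$. For each $j$ and each coordinate projection $\proj \colon K^m \to K^j$, the condition that $\proj(f^{-1}(b))$ has nonempty interior in $K^j$ is expressible by a first-order formula in $b$, interior being taken in the order topology. Thus $\{b : \dim_2 f^{-1}(b) \ges j\}$ is definable for each $j$, and $B_i$ is the difference of two such sets, hence definable.

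The additivity formula is the heart of the matter and where I expect the main difficulty. Restricting $f$ to $f^{-1}(B_i)$, every fiber has dimension exactly $i$. Choosing $a \in f^{-1}(B_i)$ with $b \coloneqq f(a)$ so that $b$ is generic in $B_i$ over $A$ and $a$ is generic in its fiber over $Ab$, the addition formula for the pregeometric rank gives $\rk(a/A) = \rk(a/Ab) + \rk(b/A) = i + \dim_2 B_i$, yielding $\dim_2 f^{-1}(B_i) \ges i + \dim_2 B_i$; the reverse inequality follows from the same addition formula applied to an arbitrary point together with the bounds $\rk(a/Ab) \les i$ and $\rk(b/A) \les \dim_2 B_i$. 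The delicate point is that this manipulation of pregeometric ranks must correctly compute the dimensions of the definable sets involved — in particular, that a generic point of $B_i$ carries a fiber whose generic point has rank exactly $i$ over it, and that these two genericities can be realized simultaneously in a saturated model. This is precisely what the definability of $\dim_2$ from Theorem~\ref{thmint:kinda} secures, since membership in $B_i$ is then a first-order condition preserved in elementary extensions, allowing the general results of \cite{fornasiero-dimension,angelvdd} on definable pregeometric dimensions to apply. Verifying that $(K,L)$ meets the hypotheses of that framework, with Theorem~\ref{thmint:kinda} furnishing the key definability input, is the crux of the argument.
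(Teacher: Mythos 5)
Your proposal is correct and follows essentially the same route as the paper: Theorem~\ref{thmint:defbij} is obtained there as Corollary~\ref{2:cor:dim2fibre} by checking that $\dim_2$ is a nontrivial definable pregeometric dimension coming from the existential matroid $\cl^{\der}_2$ --- with definability supplied exactly by the topological characterization of Proposition~\ref{prop:emptyintlean}, i.e.\ the equivalence \ref{thmint:kinda1}$\Leftrightarrow$\ref{thmint:kinda2} of Theorem~\ref{thmint:kinda} --- and then invoking the general Corollary~\ref{cor:dfibre} (\cite[Proposition~2.8]{angelvdd}), whose internal rank arguments you reproduce. The only input you leave implicit is nontriviality, $\dim_2 K=1$ (Lemma~\ref{2:lem:Knotlean}), but this is covered by your remark that one must verify $(K,L)$ meets the hypotheses of that framework.
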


If $S \subseteq \R^n$ is semialgebraic (or definable in an o-minimal expansion of $\R$), then $S$ has positive dimension if and only if $S$ is infinite.
This fails for definable sets in $K$, since $\R$ is definable (without parameters) in $K$ but $\dim \R=0$.
In $(K,L)$, $L\supseteq \R$ also satisfies $\dim_2 L=0$.
Moreover, $L$ is discrete in $K$ (the reader should keep in mind that the underlying ordered fields here are highly non-archimedean), and discrete is the right analogue of finite.
\begin{thmint}\label{thmint:very}
If $S \subseteq K^n$ is definable in $(K,L)$, then the following are equivalent:
\begin{enumerate}
    \item\label{thmint:very1} $\dim_2 S \les 0$;
    \item\label{thmint:very2} $S$ is discrete in $K^n$;
    \item\label{thmint:very3} $S$ is coanalyzable relative to $L$.
\end{enumerate}
\end{thmint}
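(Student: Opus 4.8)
The plan is to prove the cycle \ref{thmint:very1}$\Rightarrow$\ref{thmint:very3}$\Rightarrow$\ref{thmint:very2}$\Rightarrow$\ref{thmint:very1}, separating the model-theoretic content (the equivalence with coanalyzability) from the topological content (the equivalence with discreteness). Throughout I would use that $\dim_2$ is the dimension attached to the modified differential-algebraic closure pregeometry, in which the closure of a set $A$ is the differential-algebraic closure of $A\cup L$; thus $\dim_2 S\les 0$ says precisely that, for parameters $A$ over which $S$ is defined, every $s\in S$ has all coordinates in this closure of $A$. I would also freely use the coincidence $\dim_2=\dim$ with naive topological dimension and the fibre formula, both furnished by Theorems~\ref{thmint:kinda} and~\ref{thmint:defbij}, together with the local o-minimality and d-minimality of $(K,L)$.

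For \ref{thmint:very1}$\Rightarrow$\ref{thmint:very3}, the hypothesis says each point of $S$ lies, coordinate by coordinate, in the closure of the parameter set $A$ together with $L$. The task is to upgrade this pointwise, type-level membership to a single definable witness. Invoking the completeness and model completeness of the theory of transserial tame pairs from \cite{pc-transtamepair}, together with a compactness and definability-of-types argument, I would produce finitely many definable pieces covering $S$, on each of which a definable finite-to-one map into some $L^{k}$ is defined; this is exactly what it means for $S$ to be coanalyzable relative to $L$. I expect this uniformization step to be the main obstacle, since it is where the pointwise dimension hypothesis is converted into genuine definable structure, and it relies essentially on the good logical properties of $(K,L)$.

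For \ref{thmint:very3}$\Rightarrow$\ref{thmint:very2}, I would use that $L$ is discrete in $K$, hence $L^{k}$ is discrete in $K^{k}$. After refining the covering from the previous step using local o-minimality, I may assume each coanalyzing map $g$ is continuous and injective on its piece $S'$. For a point $t$ in the discrete image $g(S')\subseteq L^{k}$, the singleton $\{t\}$ is open in $g(S')$, so by continuity $g^{-1}(\{t\})$ is open in $S'$, and by injectivity it is a single point; hence each point of $S'$ is isolated. Summing over the finitely many pieces shows that $S$ is discrete. The only care needed is to handle the lower-dimensional loci where continuity of $g$ fails, which themselves have dimension $\les 0$ and so are treated by induction.

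For \ref{thmint:very2}$\Rightarrow$\ref{thmint:very1}, I argue the contrapositive. If $\dim_2 S\ges 1$, then since $\dim_2=\dim$, some coordinate projection $\pi(S)$ has nonempty interior in $K^{m}$ with $m\ges 1$; composing with a further coordinate projection yields $p\colon K^{n}\to K$ whose image $p(S)$ contains an open interval $I$. Using local o-minimality and definable selection over an interval, I would choose a definable continuous section $\sigma\colon I'\to S$ over a subinterval $I'\subseteq I$. Since $p\circ\sigma=\id$ on $I'$, the restriction of $p$ to $\sigma(I')$ is a homeomorphism onto the nondegenerate interval $I'$, so $\sigma(I')\subseteq S$ is non-discrete, contradicting \ref{thmint:very2}. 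Alternatively, this implication may be read off directly from the d-minimal structure theory, under which a definable zero-dimensional set is discrete.
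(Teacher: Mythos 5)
The most serious problems are in \ref{thmint:very1}$\Rightarrow$\ref{thmint:very3} and in how your cycle is organized. You take ``coanalyzable relative to $L$'' to mean that $S$ is covered by finitely many definable pieces each carrying a definable finite-to-one map into some $L^k$. That is not the definition used here (coanalyzability is the model-theoretic condition that $S$ does not grow in elementary extensions of the pair that keep $L$ fixed), and what you describe is essentially \emph{internality} to $L$ --- a strictly stronger-looking property whose equivalence with coanalyzability the paper explicitly leaves open at the end of Section~\ref{sec:coanal}. No compactness or definability-of-types argument will produce such maps from $\dim_2 S\les 0$: a lean set is covered by sets of the form $\{a\in K: P(u,a)=0\ \text{for some}\ u\in L^m\ \text{with}\ P(u,Z)\neq 0\}$, and for fixed $u$ the fibre is a thin but typically infinite subset of $K$, so nothing finite-to-one into $L^k$ drops out. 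The real content of \ref{thmint:very1}$\Rightarrow$\ref{thmint:very3} in the paper (Lemma~\ref{lem:leancoanal}) is differential-algebraic: a point of such a set in an elementary extension with the same $L$ is $\d$-algebraic over $K$, and Fact~\ref{pc-preH-gap:4.12} (no proper $\d$-algebraic pre-$H$-field extension with gap~$0$ and the same residue field) forces it to lie in $K$. Your sketch has no substitute for this input. Moreover, your \ref{thmint:very3}$\Rightarrow$\ref{thmint:very2} argues from ``the covering from the previous step,'' which was derived from \ref{thmint:very1}, not from \ref{thmint:very3}; so no implication \emph{out of} \ref{thmint:very3} is ever established and the cycle does not close. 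The missing direction \ref{thmint:very3}$\Rightarrow$\ref{thmint:very1} is in fact the easy one, via definability of $\dim_2$ (Corollary~\ref{cor:dim2bdd}) and \cite[Corollary~2.9]{angelvdd}.

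The step \ref{thmint:very2}$\Rightarrow$\ref{thmint:very1} also has a gap: the ``definable continuous section $\sigma\colon I'\to S$'' is unavailable. The pair is not definably complete, no cell decomposition or definable choice is established, and the paper warns that the general theory of definably complete locally o-minimal or d-minimal structures cannot be invoked; worse, d-minimality is \emph{derived} from this very equivalence (Corollary~\ref{cor:dmin}), so appealing to it would be circular, and in any case ``zero-dimensional implies discrete'' is the direction \ref{thmint:very1}$\Rightarrow$\ref{thmint:very2}. The paper's actual argument (Theorem~\ref{2:thm:dim20disc}) is entirely different: pass to a countable elementary substructure, replace $K$ by its completion $K^{\upc}$ --- shown to be an elementary extension of the pair using Lemma~\ref{lem:10.6.4} --- so that the topology is second countable while $K^{\upc}$ is uncountable; a discrete definable set is then countable, its coordinate projections are countable and hence have empty interior, giving $\dim_2 S=0$. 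The part of your sketch closest to the paper is \ref{thmint:very1}$\Rightarrow$\ref{thmint:very2}, which indeed reduces to $n=1$ via $S\subseteq\prod_i\pi_i(S)$ and local o-minimality (Proposition~\ref{2:prop:discleanequiv}).
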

Coanalyzability in item~\ref{thmint:very3} of Theorem~\ref{thmint:very} is a model-theoretic notion from \cite{herwighrushovskimacpherson}.
More common for differential fields is to consider coanalyzability relative to the field of constants, which for $K$ and $L$ is $\R$.
Although for $S \subseteq K^n$ definable in $K$, $\dim S=0$ if and only if $S$ is coanalyzable relative to $\R$ \cite[Proposition~6.2]{adh-dimension}, this fails for sets definable in $(K,L)$, since $L$ itself is not coanalyzable relative to $\R$ but satisfies $\dim_2 L=0$.

The key step in proving Theorem~\ref{thmint:very} is to establish the equivalences for $n=1$.
To do this, we show that transserial tame pairs are locally o-minimal in the sense of \cite{localomin} and d-minimal in the sense of \cite{fornasiero-dimension}.
Additionally, we use Theorems~\ref{thmint:kinda}, \ref{thmint:defbij}, and \ref{thmint:very} to derive further topological consequences for sets definable in transserial tame pairs.
First, any discrete $S\subseteq K^n$ definable in $(K,L)$ is also closed in $K^n$, which is used in proving the more difficult:
\begin{thmint}\label{thmint:frontier}
If $S\subseteq K^n$ is nonempty and definable in $(K,L)$, and $\overline{S}$ is its topological closure in $K^n$, then $\dim_2 (\overline{S}\setminus S)<\dim_2 S$ and $\dim_2 S=\dim_2 \overline{S}$.
\end{thmint}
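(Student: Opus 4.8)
The plan is to prove the stronger frontier inequality $\dim_2(\overline S\setminus S)<\dim_2 S$, from which the equality $\dim_2\overline S=\dim_2 S$ is immediate since $\dim_2$ of a union is the maximum. Before the induction I would record that equality directly, as it isolates the easy half. Writing $d\coloneqq\dim_2 S$, for every coordinate projection $p\colon K^n\to K^m$ with $m\ges d+1$ one has $\dim_2 p(S)\les d<m$ by Theorem~\ref{thmint:defbij}, so $p(S)$ has empty interior and hence is nowhere dense by Theorem~\ref{thmint:kinda}; then $p(\overline S)\subseteq\overline{p(S)}$ is nowhere dense as well, so no projection of $\overline S$ to dimension exceeding $d$ has interior and $\dim_2\overline S\les d$. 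Together with $S\subseteq\overline S$ this gives $\dim_2\overline S=d$, whence $\dim_2(\overline S\setminus S)\les d$; the whole difficulty is upgrading this to a strict inequality.

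I would prove $\dim_2(\overline S\setminus S)<\dim_2 S$ by induction on $n$. For $n=1$, local o-minimality shows that the frontier of a definable $S\subseteq K$ is discrete, hence closed; thus if $\dim_2 S=0$ then $S$ is already closed (by Theorem~\ref{thmint:very} together with the fact that discrete definable sets are closed) and the frontier is empty, while if $\dim_2 S=1$ the frontier is discrete and so has $\dim_2\les 0<1$.

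For the inductive step, let $\pi\colon K^n\to K^{n-1}$ drop the last coordinate, write $S_a\coloneqq\{t\in K:(a,t)\in S\}$ for fibres, and set $T\coloneqq\{(a,t):t\in\overline{S_a}\}$, the fibrewise closure, which is definable and satisfies $S\subseteq T\subseteq\overline S$, so $\dim_2 T=d$ and $\overline T=\overline S$. This splits the frontier as $\overline S\setminus S=G\sqcup H$ with $G\coloneqq T\setminus S$ (the \emph{vertical} frontier) and $H\coloneqq\overline S\setminus T=\overline T\setminus T$ (the \emph{horizontal} frontier). The vertical part is controlled by the $n=1$ analysis applied fibrewise: each fibre $G_a=\overline{S_a}\setminus S_a$ is discrete, and, crucially, is nonempty only when $\dim_2 S_a=1$, since a discrete definable $S_a$ is already closed. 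Hence $\pi(G)\subseteq\{a:\dim_2 S_a=1\}$, whose dimension is at most $d-1$ by the fibre formula of Theorem~\ref{thmint:defbij}; as $G$ has discrete fibres, that same formula gives $\dim_2 G=\dim_2\pi(G)\les d-1$.

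The main obstacle is the horizontal frontier $H=\overline T\setminus T$, consisting of the $(a,t)$ in the closure of $T$ with $t$ separated from the already closed fibre $\overline{S_a}$; these arise purely from accumulation of nearby fibres. For $(a,t)\in H$ I would choose $\epsilon>0$ with $(t-\epsilon,t+\epsilon)\cap\overline{S_a}=\0$, so that $a$ lies in the frontier of the base set $W_{t,\epsilon}\coloneqq\pi\bigl(T\cap(K^{n-1}\times(t-\epsilon,t+\epsilon))\bigr)$, to which the induction hypothesis applies, giving $\dim_2(\overline{W_{t,\epsilon}}\setminus W_{t,\epsilon})<\dim_2 W_{t,\epsilon}\les d$. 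The hard part is to run this uniformly in the parameters $(t,\epsilon)$ and to exclude the genuinely dangerous configuration in which discrete fibres $S_{a'}$ accumulate, as $a'\to a$ through a $d$-dimensional base set, onto an interval in the limit fibre $(\overline T)_a$, which would make $H$ be $d$-dimensional over a base-frontier of dimension $d-1$. This is precisely where d-minimality is essential: it forbids a definable family of nowhere dense (here discrete) fibres from accumulating to a set with interior, so that the discrepancy locus $\pi(H)$ has dimension at most $d-1$ while the locus $\{a:\dim_2 H_a=1\}$ of base points carrying a positive-dimensional horizontal fibre has dimension at most $d-2$. Combining these two estimates with the fibre formula of Theorem~\ref{thmint:defbij} yields $\dim_2 H\les d-1$, and hence $\dim_2(\overline S\setminus S)=\max(\dim_2 G,\dim_2 H)\les d-1$. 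I expect packaging the accumulation estimate from d-minimality and local o-minimality into this uniform, parameter-definable dimension bound to be the principal technical hurdle.
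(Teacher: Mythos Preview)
Your opening derivation of $\dim_2\overline S=\dim_2 S$ and the vertical-frontier bound $\dim_2 G\les d-1$ are correct, but the approach diverges from the paper's and the horizontal frontier $H$ is left with a real gap. The paper does not induct on $n$ or split the frontier. Assuming for contradiction that $\dim_2(\overline S\setminus S)=\dim_2 S=m$, it projects onto $K^m$ so that $\pi(\overline S\setminus S)$ has interior, shrinks to an open box $U$ on which all fibres of $S$ and of $\overline S\setminus S$ are zero-dimensional (hence discrete and closed), and for each $\varepsilon\in K^{>}$ lets $B_\varepsilon$ be the set of frontier points $(a,b)$ over $U$ at vertical distance $\ges\varepsilon$ from $S_a$. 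A d-minimal local-dimension criterion of Fornasiero (dimension can be tested on definable open neighbourhoods) shows $\dim_2 B_\varepsilon<m$ for each $\varepsilon$; since the $B_\varepsilon$ increase as $\varepsilon$ decreases, Fornasiero's directed-union lemma gives $\dim_2\bigcup_\varepsilon B_\varepsilon<m$. But each $S_a$ is closed, so $\bigcup_\varepsilon B_\varepsilon=(\overline S\setminus S)\cap\pi^{-1}(U)$, a contradiction.

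Your treatment of $H$ gestures at precisely this machinery without supplying it. Applying the inductive hypothesis to each $W_{t,\epsilon}$ bounds only the individual frontiers $\overline{W_{t,\epsilon}}\setminus W_{t,\epsilon}$ by $d-1$; your two-parameter family is not directed, so there is no passage from these individual bounds to $\dim_2\pi(H)\les d-1$. The second bound $\dim_2\{a:\dim_2 H_a=1\}\les d-2$ is asserted via d-minimality, but the d-minimality axioms are statements about subsets of $K$ and $K^2$ and do not directly deliver this estimate in $K^{n-1}$. Making either step precise would require the directed-union lemma and the local-dimension criterion the paper invokes, at which point the induction on $n$ and the vertical/horizontal decomposition become unnecessary overhead; the paper's route is both shorter and complete.
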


At this point we mention connections to other work.
First, the theorems above have analogues for maximal Hardy fields (and elementarily equivalent differential fields) stated in \cite{adh-dimension}.
This paper is thus a generalization and continuation of those results.
As stated above, transserial tame pairs are not o-minimal, but they are locally o-minimal and d-minimal, which we use in establishing Theorems~\ref{thmint:very} and \ref{thmint:frontier}.
However, much of the general theory for such structures is developed under the assumption that they are definably complete.
Hence, the results here require different arguments and in particular cannot appeal to dimension theorems for definably complete locally o-minimal structures from \cite{forna-localomin,fujita-localomindim,fujitakawakamikomine-defcomplocomin}.
Additionally, transserial tame pairs are not \emph{t-minimal} in the sense of \cite{mathews-celldecompdimtmin}, so in particular not \emph{visceral} in the sense of \cite{dolichgoodrick-visceral} (see also \cite{johnson-visceral}).

The paper \cite{fornahierowals} considers expansions of $\R$ that are not o-minimal but satisfy a weaker tameness condition.
Among their results are \cite[Theorem~D]{fornahierowals} and the consequent strong definable Baire category theorem, which are analogous to Theorem~\ref{thmint:kinda} but only apply to so-called $D_{\Sigma}$ sets (a certain definable analogue of $F_{\sigma}$ sets).
Also, \cite[Theorem~E]{fornahierowals} is analogous to Theorem~\ref{thmint:defbij} and the last part of Theorem~\ref{thmint:frontier} for $D_{\Sigma}$ sets.
It is stated for small inductive dimension, which they show agrees with naive topological dimension on $D_{\Sigma}$ sets, suggesting that it could be interesting to explore connections between the work here and other topological dimensions of sets definable in transserial tame pairs.

Although the introduction has focused on transserial tame pairs, throughout the paper, we actually work in the more general setting of pre-$H$-fields that are differential-henselian and Liouville closed, equipped with a lift of their differential residue field.
In that sense, this is a continuation of \cite{pc-dh,pc-preH-gap,pc-transtamepair} and indeed relies on the results of those papers.
We also consider the case of existentially closed pre-$H$-fields with gap~$0$, with and without such a lift, and the results proved here in that case were announced in \cite{pc-dimodf}.

\subsection{Structure of the paper}
After background in Section~\ref{sec:prelim}, we study $\dim$ on sets definable in existentially closed pre-$H$-fields with gap~$0$ in Section~\ref{sec:ecpreHgap}, without a lift of the differential residue field, showing that it equals naive topological dimension.
This has no relevance for transserial tame pairs, but complements \cite{adh-dimension}.
Section~\ref{sec:specialQE} establishes the relevant quantifier reduction for the pairs considered here, building on \cite{pc-transtamepair}:
Section~\ref{subsec:intspecialQE} contains the main statement and Section~\ref{subsec:specialQEspecial} contains various consequences.
Section~\ref{sec:dimpairs} contains most of the main results:
Section~\ref{subsec:intlocalomin} introduces $\dim_2$ and establishes the equivalence of \ref{thmint:kinda1} and \ref{thmint:kinda2} in Theorem~\ref{thmint:kinda}, as well as local o-minimality and Theorem~\ref{thmint:defbij}.
Section~\ref{subsec:discretedmin} establishes the equivalence of \ref{thmint:very1} and \ref{thmint:very2} in Theorem~\ref{thmint:very}, as well as d-minimality.
Section~\ref{subsec:moretop} establishes further topological facts, including incorporating \ref{thmint:kinda3} and \ref{thmint:kinda4} into Theorem~\ref{thmint:kinda}, as well as Theorem~\ref{thmint:frontier}.
Finally, Section~\ref{sec:coanal} concerns coanalyzability, in particular incorporating \ref{thmint:very3} into Theorem~\ref{thmint:very}.
The main results are:
\begin{itemize}
    \item Theorem~\ref{thmint:kinda}: Proposition~\ref{prop:emptyintlean}, Lemma~\ref{lem:emptyintnodenseequiv}, and Lemma~\ref{lem:defbaire}.
    \item Theorem~\ref{thmint:defbij}: Corollary~\ref{2:cor:dim2fibre}.
    \item Theorem~\ref{thmint:very}: Theorems~\ref{2:thm:dim20disc} and \ref{thm:coanaldim20equiv}.
    \item Theorem~\ref{thmint:frontier}: Theorem~\ref{thm:dimfrontier}.
\end{itemize}

\section{Preliminaries and notation}\label{sec:prelim}

We let $\ell$, $m$, and $n$ range over $\N = \{0, 1, 2, \dots\}$.

\subsection{Valued differential fields and differential-henselianity}

Let $K$ be a differential field (which here always has characteristic $0$), equipped with a derivation $\der \colon K \to K$.
The \deft{constant field} of $K$ is $C \coloneqq \{ f \in K : \der(f)=0 \}$; for another differential field $L$, we write $C_L$.
For $f \in K$, we often write $f'$ for $\der(f)$ if the derivation is clear from context and set $f^\dagger \coloneqq f'/f$ if $f \neq 0$, the logarithmic derivative of $f$.
We say that $K$ is \deft{closed under integration} if $\der K = K$
and \deft{closed under exponential integration} if $(K^{\x})^\dagger = K$.
We let $K\{Y\} \coloneqq K[Y, Y', Y'', \dots]$ be the differential ring of differential polynomials over $K$ and set $K\{Y\}^{\neq} \coloneqq K\{Y\} \setminus \{0\}$.
For $P \in K\{Y\}^{\neq}$, the \deft{order} of $P$ is the smallest $m$ such that $P \in K[Y, Y', \dots, Y^{(m)}]$ and $P_n$ is the homogeneous part of $P$ of degree $n$, which we will only need for $n=0$ and $n=1$; degree for differential polynomials means total degree.
We call $K$ \deft{linearly surjective} if for all $a_0, \dots, a_n \in K$ with $a_n \neq 0$, the equation $1 + a_0Y + a_1Y' + \dots + a_n Y^{(n)} = 0$ has a solution in $K$.
If $L$ is a differential field extension of $K$ and $a \in L$, then $K \langle a \rangle$ denotes the differential subfield of $L$ generated by $a$ over $K$.
Throughout, \emph{$\d$-algebraic} and \emph{$\d$-transcendental} abbreviate ``differentially algebraic'' and ``differentially transcendental''.

Now let $(K, \ca O)$ be a \deft{valued differential field} in the sense of \cite[Section~4.4]{adamtt}, which simply means that $K$ is a differential field and $\ca O \supseteq \Q$ is a valuation ring of $K$, i.e., $\ca O$ is a subring of $K$ that contains $a$ or $a^{-1}$ for every $a \in K^{\x}$.
In this paper, there will often be two valuation rings on a field equipped with a single derivation, so in the notation, we specify $\ca O$ when needed but leave $\der$ implicit.
With $\ca O^{\x} = \{ a \in K^{\x} : a, a^{-1} \in \ca O \}$, the ring $\ca O$ has a unique maximal ideal $\cao \coloneqq \ca O \setminus \ca O^{\x}$.
We introduce the following binary relations, for $f, g \in K$:
\begin{align*}
f \prece g\ &\Leftrightarrow\ f \in \ca Og, & f \asymp g\ &\Leftrightarrow\ f \prece g\ \text{and}\ g \prece f,\\
f \prec g\ &\Leftrightarrow\ f \in \cao g\ \text{and}\ g\neq 0, & f\sim g\ &\Leftrightarrow\ f-g \prec g.
\end{align*}
The relation $\asymp$ is an equivalence relation on $K$, the relation $\sim$ is an equivalence relation on $K^{\x}$, and they satisfy: if $f \sim g$, then $f \asymp g$.
The \deft{residue field} of $(K, \ca O)$ is $\res(K, \ca O) \coloneqq \ca O/\cao$.
By our assumption that $\Q \subseteq \ca O$, the characteristics of $K$ and $\res(K, \ca O)$ are~$0$.

To define differential-henselianity below, we extend the relations displayed above to $K\{Y\}$, for which it is convenient to work with valuations instead of valuation rings.
It is well-known that to $(K, \ca O)$ is associated a surjective \emph{valuation} $v \colon K^{\x} \to \Gamma$, where $\Gamma \coloneqq K^{\x}/\ca O^{\x}$ is an ordered abelian group called the \deft{value group} of $K$ (and conversely, from such a valuation one gets a valuation ring of $K$).
Adding a new symbol $\infty$ to the value group $\Gamma$ and extending the addition and ordering to $\Gamma_\infty \coloneqq \Gamma \cup \{\infty\}$ by $\infty+\gamma=\gamma+\infty=\infty$ and $\infty>\gamma$ for all $\gamma \in \Gamma$ allows us to extend $v$ to $K$ by setting $v(0) \coloneqq \infty$.
Then for all $f, g \in K$,
\[
f \prece g\ \Leftrightarrow\ vf \ges vg\ \qquad\ \text{and}\ \qquad\ f \prec g\ \Leftrightarrow\ vf > vg.
\]
Thus setting $v(P)$ to be the minimum valuation of the coefficients of $P \in K\{Y\}$, we can extend the relations $\prece$, $\prec$, $\asymp$, and $\sim$ to $K\{Y\}$.

One basic condition relating the valuation and the derivation is called \deft{small derivation}, which means for $(K, \ca O)$ that $\der\cao \subseteq \cao$.
In this case, $\der\ca O \subseteq \ca O$ \cite[Lemma~4.4.2]{adamtt}, so $\der$ induces a derivation on $\res(K, \ca O)$, and we always construe $\res(K, \ca O)$ as a differential field with this induced derivation.
(Small derivation implies the continuity of the derivation with respect to the valuation topology, for which see \cite[Lemma~4.4.7]{adamtt}.)
An analogue of henselianity of a valued field for a valued differential field with small derivation is differential-henselianity.

\begin{defn}\label{defn:dh}
We call $(K, \ca O)$ \deft{differential-henselian} (\deft{$\d$-henselian} for short) if $(K, \ca O)$ has small derivation and
\begin{enumerate}[label=(DH\arabic*)]
	\item\label{defn:dh1} the differential field $\res(K, \ca O)$ is linearly surjective;
	\item\label{defn:dh2} if $P \in K\{Y\}$ satisfies $P_0 \prec 1$ and $P \asymp P_1 \asymp 1$, there is $y \prec 1$ in $K$ with $P(y) = 0$.
\end{enumerate}
\end{defn}
Note that $P_0=P(0)$ and $P_1 \asymp 1$ means $\frac{\partial P}{\partial Y^{(n)}}(0) \asymp 1$ for some $n$ at most the order of $P$.
Differential-henselianity was introduced in a slightly different form in \cite{scanlon} and developed systematically in greater generality in \cite[Chapter~7]{adamtt}.
We call $(K, \ca O)$ \deft{differential-Hensel-Liouville closed} (shorter: \deft{$\d$-Hensel-Liouville closed}) if $(K, \ca O)$ is $\d$-henselian and $K$ is \deft{Liouville closed} in the sense that it is real closed, closed under integration, and closed under exponential integration.
(If $(K, \ca O)$ is $\d$-henselian, then closure under integration comes for free by \cite[Lemma~7.1.8]{adamtt}.)

Suppose that $(K,\ca O)$ has small derivation, so $\res(K, \ca O)$ is a differential field.
A \deft{lift} of $\res(K, \ca O)$ is a differential subfield $\bm k \subseteq \ca O$, in the sense that $\bm k$ is a differential subring of $\ca O$ that is itself a field, that maps bijectively (so isomorphically as a differential field) onto $\res(K, \ca O)$ under the residue map $\ca O \to \res(K,\ca O)$; equivalently, for every $a \in \ca O^{\x}$ there is a (necessarily unique) $u \in \bm k^{\x}$ with $a \sim u$ (i.e., $a-u \in \cao$).
If $(K, \ca O)$ is $\d$-henselian, it can be equipped with a lift of its differential residue field \cite[Proposition~7.1.3]{adamtt}.
More precisely, by the proof of that proposition:
\begin{fact}[{\cite[Proposition~7.1.3]{adamtt}}]\label{fact:adh7.1.3}
If $(K, \ca O)$ is $\d$-henselian, then any differential subfield of $\ca O$ can be extended to a lift of the differential residue field $\res(K,\ca O)$. 
\end{fact}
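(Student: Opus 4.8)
The plan is to extend the given differential subfield $F \subseteq \ca O$ to a maximal one by Zorn's lemma and show that maximality forces it to be a lift. First I note that the set of differential subfields of $\ca O$ containing $F$ is nonempty and closed under unions of chains, so Zorn's lemma yields a maximal such field $\bm k$. For any subfield $L \subseteq \ca O$, every nonzero element of $L$ is a unit of $\ca O$ (otherwise its inverse would lie outside $\ca O$), so $L \cap \cao = \{0\}$ and the residue map is injective on $L$; applied to $\bm k$, it remains only to show that the residue map $\bm k \to \res(K, \ca O)$ is surjective. Write $\bar c$ for the image of $c \in \ca O$ and let $\bar{\bm k}$ be the image of $\bm k$, so that $\bm k \to \bar{\bm k}$ is a differential isomorphism. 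Suppose toward a contradiction that some $\bar a \in \res(K, \ca O)$ lies outside $\bar{\bm k}$; I will produce a differential subfield of $\ca O$ properly containing $\bm k$. I split into two cases according to whether $\bar a$ is $\d$-transcendental or $\d$-algebraic over $\bar{\bm k}$.

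If $\bar a$ is $\d$-transcendental over $\bar{\bm k}$, I choose any $a \in \ca O$ with residue $\bar a$; then $a, a', a'', \dots$ are algebraically independent over $\bm k$, since any nonzero polynomial relation over $\bm k$ would (its coefficients being units or $0$) reduce to a nonzero relation among $\bar a, \bar a', \dots$ over $\bar{\bm k}$. The same observation shows that for nonzero $Q \in \bm k\{a\} = \bm k[a, a', \dots]$ the residue $\bar Q \neq 0$, whence $Q \asymp 1$; therefore every element of the fraction field $\bm k\langle a\rangle$ lies in $\ca O$, and $\bm k\langle a\rangle$ properly contains $\bm k$, the desired contradiction.

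The $\d$-algebraic case is the heart of the argument and is where I use $\d$-henselianity. Let $\bar P \in \bar{\bm k}\{Y\}$ be of minimal complexity vanishing at $\bar a$, say of order $r$, and lift it coefficientwise (via $\bar{\bm k} \cong \bm k$) to $P \in \bm k\{Y\}$. Since we are in characteristic $0$ and $\bar P$ is minimal, its separant $\frac{\partial \bar P}{\partial Y^{(r)}}$ has strictly lower complexity and so does not vanish at $\bar a$. Now pick any $b \in \ca O$ with residue $\bar a$ and consider $P(b + Y)$: its degree-$0$ part is $P(b) \prec 1$ (as $\bar P(\bar a) = 0$), while its degree-$1$ part has all coefficients in $\ca O$ and includes $\frac{\partial P}{\partial Y^{(r)}}(b) \asymp 1$ (its residue is $\frac{\partial \bar P}{\partial Y^{(r)}}(\bar a) \neq 0$), so $P(b+Y) \asymp (P(b+Y))_1 \asymp 1$. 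Thus (DH2) supplies $y \prec 1$ with $P(b + y) = 0$; set $a \coloneqq b + y$, so $a$ has residue $\bar a$ and $P(a) = 0$. To finish I check $\bm k\langle a\rangle \subseteq \ca O$: as before, $a, \dots, a^{(r-1)}$ are algebraically independent over $\bm k$, so $\bm k(a, \dots, a^{(r-1)}) \subseteq \ca O$; since $a^{(r)}$ is algebraic over this field and lies in $\ca O$ (small derivation gives $\der\ca O \subseteq \ca O$), the field $\bm k(a, \dots, a^{(r)})$ is a polynomial expression in $a^{(r)}$ over it, hence also contained in $\ca O$. Finally, differentiating $P(a) = 0$ expresses each $a^{(r+j)}$ as a ratio with denominator $\frac{\partial P}{\partial Y^{(r)}}(a) \in \ca O^{\x}$, so all higher derivatives lie in $\bm k(a, \dots, a^{(r)})$; thus $\bm k\langle a\rangle = \bm k(a, \dots, a^{(r)}) \subseteq \ca O$ properly contains $\bm k$, the required contradiction.

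I expect the main obstacle to be this $\d$-algebraic case: arranging the lift $a$ to annihilate the lifted minimal polynomial, which hinges on verifying the hypotheses of (DH2) through the nonvanishing of the separant at $\bar a$, and then exploiting that the separant value at $a$ is a unit to keep the entire generated differential field $\bm k\langle a\rangle$ inside $\ca O$.
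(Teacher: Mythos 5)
Your proof is correct and is essentially the argument behind the cited result: the paper gives no proof of its own, quoting this Fact as extracted from the proof of \cite[Proposition~7.1.3]{adamtt}, which likewise takes a Zorn-maximal differential subfield of $\ca O$ containing the given one and rules out a missing residue class via the $\d$-transcendental/$\d$-algebraic dichotomy, applying (DH2) to $P(b+Y)$ in the algebraic case. The delicate points --- nonvanishing of the separant at $\bar a$ by minimality of $\bar P$, and that its value at $a$ is a unit of $\ca O$ so that all higher derivatives keep $\bm k\langle a\rangle$ inside $\ca O$ --- are all handled correctly.
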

These lifts play an important role in this paper.
Since it has not been recorded elsewhere, we note that arguing as in \cite[Section~7]{ad-liou} but replacing $C$ with a lift $\bm k$ of $\res(K,\ca O)$, no $\d$-Hensel-Liouville closed pre-$H$-field $(K,\ca O)$ is spherically complete; this is not used here.

\subsection{Ordered differential fields and (pre-)\texorpdfstring{$H$}{H}-fields}\label{subsec:preH}

Let $K$ be an ordered differential field, in the sense that $K$ is a differential field additionally equipped with an ordering $\les$ making it an ordered field (i.e., the ordering is compatible with addition and multiplication in the usual way).
Then $K$ can always be equipped with its so-called natural valuation ring $\{ a \in K : c_1 \les a \les c_2\ \text{for some}\ c_1, c_2 \in C \}$, which henceforth we denote by $\ca O$, while another valuation ring of $K$ is denoted by $\dot{\ca O}$, with maximal ideal $\dot{\cao}$, value group $\dot{\Gamma}$, and associated relations $\dotrel{\prece}$, $\dotrel{\prec}$, $\dotrel{\asymp}$, and $\dotrel{\sim}$.
For another ordered differential field $L$ we write $\ca O_L$ and~$\dot{\ca O}_L$.

The valuation ring $\ca O$ is always existentially definable in the ordered differential field $K$ without parameters.
Certain model-theoretic statements are therefore insensitive to the distinction between $K$ and $(K, \ca O)$: For example, $K \prece L$ if and only if $(K, \ca O) \prece (L, \ca O_L)$.
On the other hand, it may be that $L$ is a differential field extension of $K$ but $(L, \ca O_L)$ is not a valued differential field extension of $(K, \ca O)$, so caution will be taken to specify valuation rings when necessary.

Typically, we impose conditions relating the ordering and the derivation.
For instance, we call $K$ an \deft{$H$-field} if:
\begin{enumerate}[label=(H\arabic*)]
    \item for all $a \in K$, if $a>C$, then $a'>0$;
    \item $\ca O = C + \cao$.
\end{enumerate}
The second condition says that $C$ is a lift of $\res(K, \ca O)$ as a field.
For example, every Hardy field containing $\R$, in particular, every maximal Hardy field, is an $H$-field, and indeed $H$-fields were introduced in \cite{ad-hf} towards axiomatizing Hardy fields and similar differential fields.
As described above, when convenient we construe an $H$-field $K$ as an ordered valued differential field $(K, \ca O)$.
A related notion is the following.
We call $(K, \dot{\ca O})$ a \deft{pre-$H$-field} if:
\begin{enumerate}[label=(PH\arabic*)]
    \item\label{ph1} $\dot{\ca O}$ is convex (with respect to $\les$);
    \item\label{ph2} for all $f \in K$, if $f > \dot{\ca O}$, then $f'>0$;
    \item\label{ph3} for all $f, g \in K^{\x}$ with $f \dotrel{\prece} 1$ and $g \dotrel{\prec} 1$, we have $f' \dotrel{\prec} g^\dagger$.
\end{enumerate}
Note that \ref{ph3} is (PDV) in \cite{adamtt}.
In this definition, $\dot{\ca O}$ may be any valuation ring of $K$ with associated relations $\dotrel{\prece}$ and $\dotrel{\prec}$, but \ref{ph2} forces $C \subseteq \dot{\ca O}$, so $\ca O \subseteq \dot{\ca O}$ by \ref{ph1}.
Recall that \ref{ph1} holds if and only if $\dot{\cao}$ is convex, so if \ref{ph1} holds, then $\les$ induces an ordering on $\res(K, \dot{\ca O})$ making it an ordered field.
We thus construe the residue field of a pre-$H$-field with small derivation as an ordered differential field.
If $(K,\dot{\ca O})$ is a pre-$H$-field, we construe a differential subfield $E$ of $K$ as a pre-$H$-subfield $(E,\dot{\ca O}_E)$ of $(K,\dot{\ca O})$ with $\dot{\ca O}_{E}=\dot{\ca O}\cap E$.
Construing an $H$-field $K$ as an ordered valued differential field $(K, \ca O)$, it is a pre-$H$-field, where \ref{ph3} holds by \cite[Lemma~10.5.1]{adamtt}.
Pre-$H$-fields are so named because every ordered valued differential subfield of an $H$-field is a pre-$H$-field and every pre-$H$-field can be extended to an $H$-field by \cite[Corollary~4.6]{ad-hf} (also see \cite[Corollary~10.5.13]{adamtt}).

In a pre-$H$-field $(K,\dot{\ca O})$, logarithmic differentiation induces a map on $\dot{\Gamma}$, since for $g \in K^{\x}$ with $g \not\dotrel{\asymp} 1$, $\dot{v}(g^\dagger)$ depends only on $\dot{v}g$ and not on $g$ (see \cite[Chapters~9 and 10]{adamtt})
Although used heavily in earlier work on which this paper depends, it is mostly in the background here.
Nevertheless, it is convenient in a couple of places in Section~\ref{sec:dimpairs} to set $\Psi_{\dot{\Gamma}} \coloneqq \{ \dot{v}g^{\dagger} : g \in K^{\x}\ \text{and}\ g \not\dotrel{\asymp} 1 \} \subseteq \dot{\Gamma}$.
If $(K,\dot{\ca O})$ is a $\d$-henselian pre-$H$-field, then $(K,\dot{\ca O})$ \textbf{has gap~$0$}, meaning that $(K,\dot{\ca O})$ has small derivation and $f^\dagger \dotrel{\succ} 1$ for all $f \in K^{\x}$ with $f \dotrel{\prec} 1$.

\subsection{Closed \texorpdfstring{$H$}{H}-fields and transserial tame pairs}\label{subsec:closedHttp}
Let $T^{\nl}_{\sm}$ be the theory of $\upomega$-free, newtonian, Liouville closed $H$-fields with small derivation.
In this paper, we call a differential field $K \models T^{\nl}_{\sm}$ simply a \deft{closed $H$-field}, but warn the reader that in the literature the term ``closed $H$-field'' typically does not assume small derivation (i.e., what we call ``closed $H$-field'' is elsewhere called ``closed $H$-field with small derivation'').
We do not define newtonianity here (see \cite[Chapter~14]{adamtt}), since it plays no role in this paper, but suffice it to say that it is a (more technical) analogue of differential-henselianity appropriate for maximal Hardy fields and $H$-fields more generally.
Similarly, we do not define $\upomega$-freeness here (see \cite[Sections~11.7 and 11.8]{adamtt}), but one definition comes from the predicate $\Upomega$ described shortly.

Indeed, every maximal Hardy field is a closed $H$-field \cite{adh-maxHftheory}.
Another closed $H$-field is the differential field $\T$ of logarithmic-exponential transseries constructed in \cite{dmm97}, as shown in \cite{adamtt}.
Moreover, by \cite[Corollary~16.6.3]{adamtt}, the theory $T^{\nl}_{\sm}$ is complete.
As explained in Section~\ref{subsec:preH}, the natural valuation ring of an $H$-field is definable in its differential field structure without parameters and the ordering of an $H$-field that is real closed is definable in its field structure without parameters, so the theory $T^{\nl}_{\sm}$ can be formulated in the language $\ca L_{\OR,\der}\cup \{ \prece \}$ of ordered valued differential rings, where $\ca L_{\OR, \der} \coloneqq \{ +, -, \cdot, 0, 1, \les, \der \}$, or alternatively in the language $\{ +, -, \cdot, 0, 1, \der \}$ of differential rings; when important, we specify the language.

One of the main results of \cite{adamtt} is that $T^{\nl}_{\sm}$ is model complete in the language $\ca L_{\OR,\der}\cup \{ \prece \}$ by \cite[Corollary~16.2.5]{adamtt} (in fact, the theory $T^{\nl}$ without the assumption ``small derivation'' is already model complete, and $T^{\nl}_{\sm}$ is one of its two completions \cite[Corollary~16.6.3]{adamtt}).
But $T^{\nl}_{\sm}$ is not model complete without the valuation, because the valuation ring of $\T$ is not universally definable (allowing parameters) in the differential field $\T$ \cite[Corollary~16.2.6]{adamtt}.

To obtain quantifier-elimination for $T^{\nl}_{\sm}$ in \cite[Theorem~16.0.1]{adamtt}, two binary predicates $\Uplambda_2$ and $\Upomega_2$ are needed.
These predicates are not substantially used in this paper except at the end of Section~\ref{subsec:specialQEspecial}, which itself can be safely skipped.
First, for a closed $H$-field $K$, define $\Uplambda(K)\coloneqq\{ -a^{\dagger\dagger} : a \in K\ \text{with}\ a \succ 1 \}$ and $\Upomega(K)\coloneqq\{ -4a''/a : a \in K^{\x} \}$, which are downward closed subsets of $K$ by \cite[Corollary~11.8.13]{adamtt}, and \cite[Proposition~11.8.20]{adamtt} and the subsequent remarks.
Additionally, $\Uplambda(K)$ and $\Upomega(K)$ have no maximum and $K \setminus \Uplambda(K)$ and $K \setminus \Upomega(K)$ have no minimum by \cite[Lemma~11.8.15]{adamtt} and \cite[Corollaries~11.8.21, 11.8.30, and 11.8.33]{adamtt}.
Then for $a,b \in K$, $\Uplambda_2(a,b)$ holds just in case $a \in b\cdot\Uplambda(K)$, and similarly for $\Upomega_2$.
(Taking the binary versions of $\Uplambda$ and $\Upomega$ is simply to avoid including a function symbol for multiplicative inversion in the language.)

To finish this subsection, we define transserial tame pairs and summarize some facts about them from \cite{pc-transtamepair}.
\begin{defn}
A pair $(K, L)$ of differential fields is a \deft{transserial tame pair} if:
\begin{enumerate}[label=(TTP\arabic*)]
    \item $K, L \models T^{\nl}_{\sm}$ as differential fields;
    \item $L$ is a proper differential subfield of $K$;
    \item $L$ is tame in $K$ as real closed fields, i.e., $\dot{\ca O} = L + \dot{\cao}$, where $\dot{\ca O}$ is the convex hull of $L$ in $K$ and $\dot{\cao}$ is its maximal ideal.
\end{enumerate}    
\end{defn}
By \cite[Lemma~3.1]{pc-transtamepair}, if $(K,L)$ is a transserial tame pair, then $C=C_L$ and so $K$ is an elementary extension of $L$ as differential fields (equivalently, as ordered valued differential fields with their natural valuations).
The theory of transserial tame pairs is complete \cite[Corollary~5.6]{pc-transtamepair} and model complete in the language $\ca L_{\OR,\der}\cup \{\prece,\dotrel{\prece}\}$ expanded by a predicate for the smaller field \cite[Corollary~4.4]{pc-transtamepair}.
Indeed, a connection with the previous subsections is that $(K,L)$ is a transserial tame pair if and only if $(K,\dot{\ca O})$ is a $\d$-Hensel-Liouville closed pre-$H$-field with $\dot{\ca O}\neq K$ and $L$ is a lift of $\res(K,\dot{\ca O})\models T^{\nl}_{\sm}$ by \cite[Corollary~3.10]{pc-transtamepair}.
Throughout most of Section~\ref{sec:dimpairs}, we work in the more general setting of a $\d$-Hensel-Liouville closed pre-$H$-field equipped with a lift of its differential residue field.

\subsection{Existentially closed ordered differential fields}
The theory of ordered differential fields, with no additional assumption on the interaction between the ordering and the derivation, has a model completion, namely the theory of \textbf{closed ordered differential fields} \cite{singer-codf}.
Moreover, the theory of closed ordered differential fields has quantifier elimination in $\ca L_{\OR,\der}$, is complete, and has a natural axiomatization.
If $\bm k$ is a closed ordered differential field, then $C_{\bm k}$ is dense (and codense) in $\bm k$.
In this paper, closed ordered differential fields appear as differential residue fields of some pre-$H$-fields, but a pre-$H$-field with nontrivial valuation is never a closed ordered differential field.
(The differential-algebraic dimension defined in Section~\ref{subsec:dalgdim} was studied in the context of closed ordered differential fields in \cite{bmr-dim-codf}, but that has no bearing here.)

\subsection{Dimensions from pregeometries}\label{subsec:dimpregeo}
We review here the basics of dimensions coming from pregeometries needed in this paper, making use of \cite{fornasiero-dimension,angelvdd}.
These papers develop the general model theory of such dimensions in slightly different but closely related ways; related earlier work goes back to \cite{vdd-dimension}.

Let $T$ be a theory in a language $\ca L$ and $\Phi$ a collection of $\ca L$-formulas $\varphi(y,z)$, where $y$ is an $n$-tuple ($n$ varying) and $z$ is a single variable.
Suppose that in every $\bm M\models T$, we have a pregeometry $\cl \colon \ca P(M) \to \ca P(M)$, where $\ca P(M)$ is the power set of $M$, satisfying, for $A \subseteq M$,
\[
\cl(A)\ =\ \{ b \in M : \bm M \models \varphi(a,b),\ \varphi(y,z) \in \Phi,\ a\in A^n \}.
\]
Then we say that $\cl$ is defined in models of $T$ by $\Phi$, as in \cite[Section~2]{angelvdd}.
Let $\bm M \models T$.
Let $\rk(B|A)$ be the cardinality of a basis of $\cl(A\cup B)$ over $\cl(A)$.
Then $\cl$ induces a dimension defined by, for nonempty $S\subseteq M^n$,
\[
\d S\ =\ \max\{\rk(s|M) : s \in S^* \},
\]
where $(\bm M^*,S^*) \succe (\bm M, S)$ is $|M|^+$-saturated and $\rk$ is computed using the pregeometry of $\bm M^*$; we are mostly interested in the case that $S$ is definable, but the definition makes sense for any $S$ by expanding $\bm M$ by a predicate for $S$.
Also set $\d\0\coloneqq-\infty$.
This definition of dimension does not depend on the choice of $|M|^+$-saturated $(\bm M^*,S^*) \succe (\bm M, S)$ \cite[Remark~2.1]{angelvdd} and has several basic but useful consequences:
\begin{lem}[{\cite[Lemmas~2.2, 2.4, and 2.5]{angelvdd}}]\label{lem:dimbasic}
Let $S_1 \subseteq K^m$ and $S_2 \subseteq K^n$ with $m\les n$. Then
\begin{enumerate}
    \item if $S_1$ is finite and nonempty, then $\d S_1=0$;
    \item if $m=n$, then $\d (S_1 \cup S_2) = \max\{ \d S_1, \d S_2 \}$;
    \item if $m=n$ and $S_1 \subseteq S_2$, then $\d S_1 \les \d S_2$;
    \item $\d$ is preserved under permutation of coordinates;
    \item if $\pi\colon M^n \to M^m$ is a coordinate projection, then $\d S_1 \ges \d \pi(S_1)$;
    \item if $\d S_1=m$, then for some coordinate projection $\pi\colon M^n \to M^m$, $\d \pi(S_1)=m$;
    \item $\d S_1 = \d S_1^*$ if $(\bm M^*,S_1^*)\succe (\bm M, S_1)$;
    \item $\d (S_1 \x S_2) = \d S_1 + \d S_2$.
\end{enumerate}
\end{lem}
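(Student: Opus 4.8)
The plan is to reduce everything to standard properties of the matroid rank $\rk$ attached to the pregeometry $\cl$, together with the fact that forming $(\bm M^*, S^*)$ commutes with the finitary operations on definable sets occurring in the statement. First I would fix a single $|M|^+$-saturated model in which all ranks are computed and freely expand $\bm M$ by predicates naming the definable sets in play, so that elementarity delivers $(S_1 \cup S_2)^* = S_1^* \cup S_2^*$, $\pi(S)^* = \pi(S^*)$ for a coordinate projection $\pi$, and $(S_1 \times S_2)^* = S_1^* \times S_2^*$.

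The items (i)--(vi) are then routine (writing $S$ for the set being projected in (v) and (vi)). For (i), a finite nonempty definable set has $S_1^* = S_1 \subseteq M^n$, so each $s \in S_1^*$ has all coordinates in $M$ and $\rk(s|M) = 0$. Item (iv) holds because permuting the entries of a tuple leaves its underlying set, hence its rank, unchanged. Item (ii) is immediate from $(S_1 \cup S_2)^* = S_1^* \cup S_2^*$, since a maximum over a union is the larger of the two maxima, and (iii) is the instance $S_2 = S_1 \cup S_2$. For (v), monotonicity of rank gives $\rk(\pi(s)|M) \les \rk(s|M)$ for every $s$, and $\pi(S)^* = \pi(S^*)$ lets me pass to maxima, yielding $\d \pi(S) \les \d S$. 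For (vi), I would choose $s \in S^*$ with $\rk(s|M) = \d S = m$, so that $m$ of its coordinates are $\cl$-independent over $M$, and project onto exactly those coordinates: the resulting $\pi$ satisfies $\rk(\pi(s)|M) = m$, whence $\d \pi(S) \ges m$, and (v) forces equality.

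The real content sits in (vii) and the lower bound of (viii). The key point is that, because $\cl$ is uniformly defined by the family $\Phi$, for any parameter set $A$ the assertion that a tuple lies in $S$ and has a prescribed set of $k$ coordinates $\cl$-independent over $A$ is type-definable over $A$; consequently $\d S \ges k$ holds if and only if, for some choice of $k$ coordinates, this type is consistent, and consistency is determined by $T$ and the type of the parameters defining $S$. This characterization makes $\d S$ manifestly independent of the chosen $|M|^+$-saturated model, and, applied over $M$ and over $M^*$, gives (vii): realizing over $M^*$ the generic type of $S_1$ that witnesses $\d S_1 = d$ produces, by saturation, a point of $S_1^*$ of rank $d$ over $M^*$, so $\d S_1^* \ges d$, while monotonicity of rank gives the reverse inequality $\d S_1^* \les d$.

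Finally, subadditivity of rank gives $\rk((s,t)|M) \les \rk(s|M) + \rk(t|M)$, so $(S_1 \times S_2)^* = S_1^* \times S_2^*$ yields $\d(S_1 \times S_2) \les \d S_1 + \d S_2$. The reverse inequality is where I expect the main obstacle to lie. Fixing $s \in S_1^*$ with $\rk(s|M) = \d S_1$, I must produce $t \in S_2^*$ with $\rk(t|Ms) = \d S_2$, for then additivity of rank along the tower $M \subseteq Ms \subseteq Mst$ gives $\rk((s,t)|M) = \d S_1 + \d S_2$. Such a $t$ is obtained by extending the generic type of $S_2$ over $M$ to one over the still size-$|M|$ parameter set $Ms$ of the same rank, via the extension property of the pregeometry, and realizing it inside $S_2^*$ by $|M|^+$-saturation; the type-definability from the previous paragraph is precisely what ensures that the realization lands in $S_2^*$. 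Verifying this realization step, rather than any rank identity, is the delicate part of the argument.
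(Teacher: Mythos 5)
The paper does not prove this lemma at all: it is quoted verbatim from \cite[Lemmas~2.2, 2.4, and 2.5]{angelvdd}, so there is no in-paper argument to compare against. Your reconstruction follows the standard route of that reference (reduce everything to rank computations in one sufficiently saturated model, and use that ``$s\in S$ and a prescribed set of $k$ coordinates of $s$ is $\cl$-independent over $M$'' is a partial type over $M$ whose consistency is equivalent to a family of $\forall\exists$-sentences in the defining parameters, since the $M$-parameters occurring in the $\neg\varphi$ conditions can be universally quantified out), and it is correct. One phrase deserves a caution: in the lower bound for (viii) you appeal to ``the extension property of the pregeometry'' to pass from a rank-$\d S_2$ type over $M$ to one over $Ms$. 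A pregeometry defined by $(T,\Phi)$ has no such abstract extension property --- that is precisely the \emph{existential matroid} condition of \cite{fornasiero-dimension}, which this paper only establishes later and separately for $\cl^{\d}$, so invoking it here would be either unavailable or circular. What actually carries the step is the mechanism you already set up for (vii): the $\forall\exists$-sentences witnessing $\d S_2\ges d_2$ hold in the ambient saturated model by elementarity, hence can be instantiated at parameter tuples from $Ms$, which shows the partial type over $Ms$ is finitely satisfiable and therefore realized in $S_2^*$ by $|M|^+$-saturation; additivity of rank along $M\subseteq Ms\subseteq Mst$ then finishes as you say. With that attribution corrected, the argument is complete.
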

Also, for $S\subseteq M$, compactness yields $\d S \les 0$ if and only if $S$ is contained in a finite union of sets of the form $\varphi(a,M)$ for $\varphi(y,z)\in\Phi$ and $a\in M^n$ (see \cite[Lemma~2.3]{angelvdd}).
We call $\d$ \deft{nontrivial} if $\d M=1$ for all $\bm M\models T$.
We call $\d$ \deft{definable} if for all $\bm M\models T$, whenever $S \subseteq M^{n+1}$ is definable, $B\coloneqq \{ a\in M^n : \d S_a=0 \}$ is definable using the same parameters as in the definition of $S$; here and later, $S_a$ denotes the fibre $\{ b \in M : (a,b) \in S \}$ above $a \in M^n$.
What we call ``definable'', without the requirement on parameters, is called ``bounded'' in \cite{angelvdd} (``bounded'' also includes ``nontrivial''), following \cite{vdd-dimension}.
In particular, a nontrivial definable $\d$ is a dimension function in the sense of \cite{vdd-dimension}.
The condition on parameters appears in \cite[Definition~4.1]{fornasiero-dimension}, and leads to extra uniformity in Corollaries~\ref{cor:dfibre} and \ref{cor:duniformsmall}; therefore, we adopt \cite[Definition~4.1]{fornasiero-dimension} as the definition of \emph{dimension function} here.
In the next definition, $\d$ need not come from the pregeometry $\cl$, as otherwise assumed in this subsection.
\begin{defn}
A \deft{dimension function} on a structure $\bm M$ is a function $\d$ from the definable subsets of $M^n$ ($n$ varying) to $\N\cup\{-\infty\}$ satisfying:
\begin{enumerate}[label=(D\arabic*)]
    \item\label{D1}
        \begin{enumerate}[label=(\alph*)]
            \item\label{D1a} $\d S=-\infty \Leftrightarrow S=\0$ for definable $S \subseteq M^n$;
            \item\label{D1b} $\d(\{a\})=0$ for all $a \in M$;
            \item\label{D1c} $\d M=1$;
        \end{enumerate}
    \item\label{D2} $\d(S_1\cup S_2)=\max\{\d(S_1), \d(S_2)\}$ for definable $S_1,S_2 \subseteq M^n$;
    \item\label{D3} $\d$ is preserved under permutation of coordinates;
    \item\label{D4} if $S \subseteq M^{n+1}$ is definable, then $B_i\coloneqq \{ a\in M^n : \d(S_a)=i \}$ is definable over the same parameters as $S$ and
    \[
    \d\big(\{(a,b)\in S : a\in B_i \}\big)\ =\ i + \d(B_i) \quad\text{for $i=0,1$.}
    \]
\end{enumerate}
\end{defn}

Reinstating our assumption that $\d$ is induced by $\cl$, we call $\d$ a \deft{dimension function} if $\d$ is a dimension function on every $\bm M\models T$. Then by Lemma~\ref{lem:dimbasic}:
\begin{cor}\label{cor:defdimdimfun}
If $\d$ is nontrivial and definable, then $\d$ is a dimension function.
\end{cor}

First, a consequence for definable functions.
The condition on parameters follows from examining the proofs in \cite[Section~1]{vdd-dimension} (cf.\ \cite[Proposition~2.8]{angelvdd}).

\begin{cor}\label{cor:dfibre}
If $\d$ is nontrivial and definable, and $f \colon S \to M^n$ is definable with $S \subseteq M^m$, then
    \begin{enumerate}
        \item $\d S \ges \d f(S)$ \textnormal{(}so $\d S = \d f(S)$ for injective $f$\textnormal{)};
        \item $B_i \coloneqq \{ b \in M^n : \d f^{-1}(b)=i \}$ is definable  using the same parameters as $f$ and $\d f^{-1}(B_i)= i + \d B_i$ for $i=0, \dots, m$.
    \end{enumerate}
\end{cor}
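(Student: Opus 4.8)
The plan is to prove both parts by passing to a $|M|^+$-saturated elementary pair $(\bm M^*, S^*) \succe (\bm M, S)$ and computing in the pregeometry of $\bm M^*$, using additivity of rank, $\rk(xy|A) = \rk(x|A) + \rk(y|Ax)$, together with the fact that any element of $\cl(A)$ has rank $0$ over $A$. For part~(i), note that $f$ and $f(S)$ are $M$-definable, so by elementarity $f(S)^* = \{ f^*(s) : s \in S^* \}$, where $f^*$ is the $M$-definable extension of $f$ to $S^*$. Each coordinate of $f^*(s)$ lies in $\cl(M \cup \{s\})$, so $\rk(f^*(s)|M) \les \rk(s|M)$, and taking the maximum over $s \in S^*$ gives $\d f(S) \les \d S$. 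If $f$ is injective, then $f^{-1}$ is $M$-definable on $f(S)$, so symmetrically $s \in \cl(M\cup\{f^*(s)\})$ and $\rk(s|M) \les \rk(f^*(s)|M)$, whence $\d S \les \d f(S)$ and equality holds.

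For part~(ii), I would work with the $M$-definable family $F \subseteq M^n \x M^m$ given by $F \coloneqq \{ (b,s) : s \in S,\ f(s)=b \}$, so that $F_b = f^{-1}(b)$ and $B_i = \{ b \in M^n : \d F_b = i \}$. The crux is the following uniform statement, which simultaneously yields definability of $B_i$ and the additive formula: for any $M$-definable $T \subseteq M^n \x M^k$, each level set $\{ b : \d T_b = i \}$ is $M$-definable (with the \emph{same} parameters), and for $b$ in its interpretation in $\bm M^*$ one has $\max\{ \rk(s|Mb) : (b,s) \in T^* \} = i$. Granting this for $T=F$, the set $B_i$ is definable, so by elementarity $(f^{-1}(B_i))^* = \{ s \in S^* : f^*(s) \in B_i^* \}$, and for such $s$, writing $b \coloneqq f^*(s) \in B_i^*$, additivity gives $\rk(s|M) = \rk(sb|M) = \rk(b|M) + \rk(s|Mb)$ since $b \in \cl(M\cup\{s\})$. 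Taking maxima,
\[
\d f^{-1}(B_i)\ =\ \max_{b \in B_i^*}\ \Bigl(\rk(b|M) + \max\{ \rk(s|Mb) : (b,s)\in F^* \}\Bigr)\ =\ i + \d B_i,
\]
where the inner maximum equals $i$ by the uniform statement and $\max_{b\in B_i^*}\rk(b|M) = \d B_i^* = \d B_i$ by Lemma~\ref{lem:dimbasic}(vii).

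It remains to prove the uniform statement, by induction on the number $k$ of fibre variables. In the base case $k=1$, nontriviality forces $\d T_b \in \{-\infty,0,1\}$; the sets $\{b : T_b=\0\}$ and, by the parameter-preserving definability hypothesis, $\{b : \d T_b=0\}$ are $M$-definable, hence so is their complement $\{b : \d T_b=1\}$, and the relative-rank equality for nonstandard $b$ follows from the compactness description of $0$-dimensional subsets of $M$ recalled after Lemma~\ref{lem:dimbasic}: $b$ satisfies ``$\d T_b=0$'' exactly when every $s \in T_b^*$ lies in one of finitely many sets $\varphi(a,M^*)$ with $\varphi \in \Phi$ and $a \in \cl(M\cup\{b\})$, i.e.\ when $\rk(s|Mb)=0$ for all such $s$. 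For $k>1$ I would split off the last fibre coordinate, applying the base case to the family over $M^n \x M^{k-1}$ and the inductive hypothesis to the families over $M^n$ that it produces, and combine via $\rk(s_1\cdots s_k|Mb) = \rk(s_1\cdots s_{k-1}|Mb) + \rk(s_k|Mbs_1\cdots s_{k-1})$. The main obstacle is precisely this inductive bookkeeping: one must ensure at every stage that the fibre-dimension level sets remain definable \emph{over the same parameters}—which is exactly where the parameter-sensitive notion of definable $\d$ is indispensable and the weaker ``bounded'' notion does not suffice—and that these definable conditions correctly compute the relative ranks $\rk(s|Mb)$ for nonstandard $b$; once this is in place, everything reduces to additivity of rank in the pregeometry.
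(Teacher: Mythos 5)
The paper does not actually prove this statement---it is imported wholesale from \cite[Proposition~2.8]{angelvdd}---so there is no internal argument to measure yours against; I can only assess your proof on its own terms. Your skeleton (pass to a $|M|^+$-saturated extension, use additivity $\rk(sb|M)=\rk(b|M)+\rk(s|Mb)$, and reduce everything to a uniform statement about fibre dimensions read off from the definable level sets) is the standard one and matches the cited source in outline. One remark on part~(i): the assertion that the coordinates of $f^*(s)$ lie in $\cl(M\cup\{s\})$ is not a consequence of the properties restated in Section~2.5 of the paper. It needs the requirement $\acl(A)\subseteq\cl(A)$ built into the definition of ``$(T,\Phi)$ defines a pregeometry'' in \cite{angelvdd}; without it the claim, and indeed the corollary, are false (take $T=\Th(\R_{\exp})$ with $\cl$ the field-theoretic algebraic closure, which is a nontrivial definable dimension, and $f(x)=(x,e^x)$ on $[0,1]$: then $\d f(S)=2>1=\d S$). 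You should say which closure property you are invoking.

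The genuine gap is in the ``uniform statement,'' which is where all the content of the proposition lives. In the base case you justify the equivalence between ``$b$ lies in the interpretation of the level set $\{b:\d T_b=0\}$ in $\bm M^*$'' and ``$\rk(s|Mb)=0$ for all $s\in T_b^*$'' by appealing to the compactness description recorded after Lemma~\ref{lem:dimbasic}. But that description is pointwise: it characterizes $\d X=0$ for a fixed definable $X\subseteq M$ via a cover by finitely many sets $\varphi(a,M)$ with $a\in M$, and says nothing about fibres over nonstandard $b\in(M^*)^n\setminus M^n$. What you need is the uniform version---finitely many $\varphi_i$ and a single tuple $a\in M^{<\omega}$ with $T_c\subseteq\bigcup_i\varphi_i(a,c,M)$ for \emph{every} $c$ in the level set---which then transfers to $\bm M^*$ by elementarity. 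That is exactly \cite[Lemma~2.7]{angelvdd} (of which Corollary~\ref{cor:duniformsmall} is a strengthening under extra hypotheses), and it is not a formal consequence of the pointwise statement: the naive compactness argument produces $(b^*,s^*)$ with $b^*$ in the level set and $s^*\notin\cl(Mb^*)$, and deriving a contradiction from this is precisely the point at issue, not something already available. So the crux---that the $A$-definable level sets correctly compute relative ranks $\rk(s|Mb)$ over nonstandard parameters---is asserted rather than proved, and you yourself flag the inductive step for $k>1$ as unresolved ``bookkeeping.'' To close the argument you must either prove the uniform covering lemma (following \cite[Lemmas~2.6 and 2.7]{angelvdd}) or cite it; granted that, the remaining rank computation in your part~(ii) does go through.
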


Suppose that $\d$ is nontrivial and definable.
Then by \cite[Theorem~4.3]{fornasiero-dimension}, the operator $\cl^{\d}\colon \ca P(M) \to \ca P(M)$ defined by, for $A \subseteq M$,
\[
\cl^{\d}(A)\ =\ \{ b \in M : b\in X\ \text{for some}\ A\text{-definable}\ X \subseteq M\ \text{with}\ \d X=0 \},
\]
is a pregeometry defined in models of $T$ by a certain collection of $\ca L$-formulas.
Moreover, its extension to a monster model $\bm M^*$ is an \deft{existential matroid} in the sense of \cite[Definition~3.25]{fornasiero-dimension}, which means additionally that given $b \notin \cl^{\d}(A_0)$ and $A_1$, where $A_0, A_1 \subseteq M^*$ are small (relative to the degree of saturation and homogeneity of the monster model), there is $\tilde{b} \in M^*$ with the same type as $b$ over $A_0$ such that $\tilde{b} \notin \cl^{\d}(A_0\cup A_1)$.

The pregeometries considered in this paper will satisfy $\cl=\cl^{\d}$ (which happens if $\cl$ is also an existential matroid), so suppose that now.
Then the dimension associated to $\cl=\cl^{\d}$ in  \cite[Definition~3.29]{fornasiero-dimension} is exactly $\d$, which means that for nonempty $A$-definable $S \subseteq M^n$,
\[
\d S\ =\ \max\{\rk(s|A) : s \in S^* \}
\]
(the fact that the latter is independent of the choice of $A$ uses that $\cl=\cl^{\d}$ is an existential matroid).
Compactness arguments as in \cite[Lemmas~2.3 and 2.7]{angelvdd} then yield the following strengthening of that latter lemma:
\begin{cor}\label{cor:duniformsmall}
If $\d$ is nontrivial and definable and $\cl=\cl^{\d}$, then for any $A$-definable $S \subseteq M^{n+1}$, there are $\varphi_1(x,y,z),\dots,\varphi_r(x,y,z) \in \Phi$, with $r\in \N$ and $x$ an $m$-tuple of variables, and $a \in A^m$ such that for all $b\in M^n$ with $\d S_b=0$,
\[
S_b\ \subseteq\ \bigcup_{i=1}^r \varphi_i(a,b,M).
\]
\end{cor}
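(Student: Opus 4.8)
The plan is to take the parameter $a$ to be the finite tuple $a_0 \in A$ over which $S$ is defined, and to produce the finitely many formulas by a single compactness argument; the two hypotheses ($\d$ definable and $\cl = \cl^{\d}$ an existential matroid) are exactly what make each of these possible.

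First I would fix an $\ca L$-formula $\sigma(x,y,z)$ and $a_0 \in A^m$ with $S = \{(b,c) \in M^{n+1} : \bm M \models \sigma(a_0,b,c)\}$, and set $a \coloneqq a_0$. Since $\d$ is definable, the set $D \coloneqq \{b \in M^n : \d S_b = 0\}$ is defined by some $\delta(a_0,y)$; by the extra uniformity afforded by the parameter clause in \cite[Definition~4.1]{fornasiero-dimension}, I may take $\delta$ so that in every $\bm N \models T$ and for every tuple $a_0'$, the formula $\delta(a_0',-)$ defines $\{b : \d S^{\bm N}_b = 0\}$ for $S^{\bm N} \coloneqq \sigma(a_0',-,-)^{\bm N}$. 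The crucial structural input is base-independence of $\d$: because $\cl = \cl^{\d}$ is an existential matroid, for the $(a_0 b)$-definable fibre $S_b$ one has $\d S_b = \max\{\rk(c \mid a_0 b) : c \in S_b^*\}$, so that $\d S_b = 0$ holds iff $S_b^* \subseteq \cl^{\d}(a_0 b) = \cl(a_0 b)$; that is, the covering of $S_b$ by $\Phi$-sets may be taken over the small base $a_0 b$ rather than over all of $A$.

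Next I would run compactness. Consider the partial type in variables $(y,z)$ over $a_0$
\[
p(y,z)\ =\ \{\delta(a_0,y),\ \sigma(a_0,y,z)\}\ \cup\ \{\neg\varphi(a_0,y,z) : \varphi(x,y,z) \in \Phi\},
\]
where $\varphi$ ranges over all $\Phi$-formulas whose parameter tuple is filled from the coordinates of $a_0$ and of $y$. If $(b^*,c^*)$ realised $p$ in a sufficiently saturated $\bm M^* \succ \bm M$, then $b^* \in D^*$, so $\d S^*_{b^*} = 0$ by the uniformity of $\delta$, whence $c^* \in S^*_{b^*} \subseteq \cl(a_0 b^*)$ by base-independence; but then $\bm M^* \models \varphi(a_0,b^*,c^*)$ for some such $\varphi$, contradicting $\neg\varphi(a_0,y,z) \in p$. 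Thus $p$ is inconsistent, and compactness yields $\varphi_1,\dots,\varphi_r \in \Phi$ such that $\{\delta(a_0,y),\sigma(a_0,y,z),\neg\varphi_1(a_0,y,z),\dots,\neg\varphi_r(a_0,y,z)\}$ is inconsistent. Equivalently, $\bm M \models \forall y\,\forall z\,\big[\delta(a_0,y) \wedge \sigma(a_0,y,z) \to \bigvee_{i=1}^r \varphi_i(a_0,y,z)\big]$, which says precisely that $S_b \subseteq \bigcup_{i=1}^r \varphi_i(a_0,b,M)$ for every $b \in D$, as required.

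The main obstacle is the base-independence step. A priori, the compactness characterisation of $\d S_b = 0$ only covers $S_b$ by $\Phi$-sets with parameters from $M$, or at best from $Ab$, which would force the parameter tuple to grow with $b$ and so destroy any uniformity in $A$. It is exactly because $\cl = \cl^{\d}$ is an existential matroid that $\d S_b$ may be computed over the small base $a_0 b$, collapsing the $A$-dependence of the cover to the single tuple $a_0$. The only other delicate point is the uniformity of $\delta$ across elementary extensions—so that $b^* \in D^*$ forces $\d S^*_{b^*} = 0$ in $\bm M^*$—which is supplied by the parameter clause in the definition of definability of $\d$.
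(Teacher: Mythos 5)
Your argument is correct and is exactly the proof the paper has in mind: the paper only gestures at ``compactness arguments as in \cite[Lemmas~2.3 and 2.7]{angelvdd}'', and the two ingredients you isolate --- computing $\d S_b$ over the small base $a_0b$ via $\d S=\max\{\rk(s|A):s\in S^*\}$ for $A$-definable $S$ (which is where $\cl=\cl^{\d}$ being an existential matroid enters), and the transfer of the defining formula $\delta(a_0,y)$ to elementary extensions coming from the same-parameters clause in the definition of definability --- are precisely the points the surrounding text flags as enabling the strengthening. The compactness argument with the type $p(y,z)$ over $a_0$ is the standard implementation and is carried out correctly.
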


\subsection{Conventions}
Let $\ell,m,n\in \N=\{0,1,2,\dots\}$.
Let $X$ be an $m$-tuple of differential indeterminates, $Y$ be an $n$-tuple of differential indeterminates, and $Z$ be a single differential indeterminate, all distinct.
Let $x$ be an $m$-tuple of variables, $y$ be an $n$-tuple of variables, and $z$ be a single variable, all distinct.
Throughout, $\ca L_{\OR,\der}=\{ +, -, \cdot, 0, 1, \les, \der \}$ is the language of ordered differential rings, which we extend as specified.

\section{Existentially closed pre-\texorpdfstring{$H$}{H}-fields with gap~\texorpdfstring{$0$}{0}}\label{sec:ecpreHgap}
We first examine differential-algebraic dimension in pre-$H$-fields with gap~$0$ that are existentially closed, along the lines of \cite{adh-dimension}, which studies differential-algebraic dimension in closed $H$-fields.
Although in this section we show that this dimension equals naive topological dimension and is definable, in Section~\ref{sec:coanal} we show that the analogue of \cite[Proposition~6.2]{adh-dimension} connecting $\dim$ to coanalyzability fails.

\subsection{Background}
Let $T^{\dhl}$ be the theory of $\d$-Hensel-Liouville closed pre-$H$-fields $(K,\dot{\ca O})$ with $\dot{\ca O}\neq K$ in the language $\ca L \coloneqq \ca L_{\OR,\der}\cup\{ \dotrel{\prece} \}$.
As explained in \cite{pc-preH-gap}, this theory is not complete, but it is complete relative to the theory of the differential residue field.\footnote{In \cite{pc-preH-gap}, $T^{\dhl}$ referred to an incomplete two-sorted theory; this paper considers only one-sorted theories.}
In this section, we consider the theory $T^{\dhl}_{\codf}$ of $\d$-Hensel-Liouville closed pre-$H$-fields with closed ordered differential residue field; if $(K,\dot{\ca O})\models T^{\dhl}_{\codf}$, then $\dot{\ca O}=\ca O$ (where as always $\ca O$ is the natural valuation of $K$, i.e., the convex hull of $C$ in $K$), so we freely write $K\models T^{\dhl}_{\codf}$.

As in \cite{adh-dimension}, it follows from quantifier elimination that differential-algebraic dimension $\dim$ in $K \models T^{\dhl}_{\codf}$ (see Section~\ref{subsec:dalgdim}) agrees with the naive topological dimension, where $K$ is equipped with its order topology. 
In particular, for $S \subseteq K^n$ definable in $K$, we have $\dim S = n$ if and only if $S$ has nonempty interior in $K^n$ (cf.\ \cite[Corollary~3.1]{adh-dimension}).
Hence, $\dim$ is definable in models of $T^{\dhl}_{\codf}$, and thus a dimension function.
First, we recall model-theoretic facts about $T^{\dhl}_{\codf}$.

\begin{fact}[{\cite[Theorem~7.9]{pc-preH-gap}}]\label{fact:preH:qe}
The theory $T^{\dhl}_{\codf}$
\begin{enumerate}
    \item\label{preH:qeitem} has quantifier elimination;
    \item\label{preH:modcomp} is the model completion of the theory of pre-$H$-fields with gap~$0$;
    \item is complete, and hence decidable;
    \item\label{preH:distal} is distal, and hence has NIP;
    \item\label{preH:locomin} is locally o-minimal.
\end{enumerate}
\end{fact}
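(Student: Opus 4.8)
The plan is to treat quantifier elimination \ref{preH:qeitem} as the core and derive the other four items from it by standard model-theoretic principles. For QE I would run an Ax--Kochen--Ershov style back-and-forth: it suffices to show that whenever $M,N\models T^{\dhl}_{\codf}$ with $M$ being $|N|^+$-saturated, $A$ is a common ordered valued differential subfield (a substructure in $\ca L=\ca L_{\OR,\der}\cup\{\dotrel{\prece}\}$), every $\ca L$-embedding $A\to M$ extends to an embedding $N\to M$. Here substructures are automatically pre-$H$-fields with gap~$0$, since all the defining conditions are inherited by ordered valued differential subfields, so the task is to extend the given embedding in stages until all of $N$ is absorbed.

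The back-and-forth splits into three kinds of steps. First, residue-field extensions: since the differential residue field is a closed ordered differential field, which has QE in $\ca L_{\OR,\der}$ and is the model completion of ordered differential fields, residue data can be matched inside the (saturated) residue field of $M$ and lifted back using Fact~\ref{fact:adh7.1.3}. Second, value-group extensions, where in the gap~$0$ setting the value group is an asymptotic couple with gap~$0$ and one appeals to the model theory of such couples. Third, and most delicate, immediate extensions, in which neither residue field nor value group grows. The usual device of passing to spherically complete (maximal) immediate extensions is \emph{unavailable}, since no model here is spherically complete; instead I would use $\d$-henselianity directly, pinning down the image of a new element by the differential-polynomial equation it approximately satisfies and invoking saturation together with (DH2) and the differential Newton--Hensel machinery of \cite[Chapter~7]{adamtt} to produce the matching element in $M$. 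This immediate-extension step, carried out without spherical completeness, is the main obstacle.

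Granting QE, the remaining items are largely formal. Model completeness is immediate. Every pre-$H$-field with gap~$0$ extends to a model of $T^{\dhl}_{\codf}$ --- take a $\d$-henselization, Liouville-close, and enlarge the differential residue field to a closed ordered differential field, all while preserving gap~$0$ --- and this class has amalgamation, so QE identifies $T^{\dhl}_{\codf}$ as their model completion \ref{preH:modcomp}. Completeness is QE applied over the common prime substructure generated by $\Q$ (with trivial derivation and trivial valuation), which embeds uniquely into every model; decidability then follows from completeness and recursive axiomatizability.

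For NIP and distality \ref{preH:distal} I would transfer along the Ax--Kochen--Ershov reduction, lifting distality of indiscernible-sequence splittings from residue field and value group to $K$ via QE, provided both are distal: the residue field is a closed ordered differential field (distal, being a real closed field with a generic derivation) and the value group is a gap~$0$ asymptotic couple. Finally, local o-minimality \ref{preH:locomin} I would read off from QE by analyzing a quantifier-free definable $X\subseteq K$ near a point $a$: it is a Boolean combination of sets $\{z:P(z)>0\}$ and valuation conditions $\{z:P(z)\dotrel{\prece}Q(z)\}$ for $P,Q\in K\{Z\}$, and the point is that on each side of $a$ the sign and dominant valuation of such a differential polynomial stabilize in a small enough interval. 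This uses small derivation and $\d$-henselianity to reduce the local behavior to that of an ordinary polynomial over the real closed field $K$ --- in particular the constants $C_K\subseteq\ca O$ form a \emph{discrete} subset, unlike the dense constants of the residue field --- so $X$ meets each one-sided neighborhood of $a$ in either the empty set or the whole neighborhood.
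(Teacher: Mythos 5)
This statement is not proved in the paper at all: it is recorded as a \emph{Fact}, imported verbatim from \cite[Theorem~7.9]{pc-preH-gap}, so there is no in-paper argument to compare your proposal against. What you have written is a reconstruction of the proof that lives in the cited reference. As a high-level sketch it points in broadly the right directions --- an Ax--Kochen--Ershov-style embedding lemma whose three extension steps (residue field, value group, immediate), plus the existence of $\d$-Hensel-Liouville closures of pre-$H$-fields with gap~$0$, is indeed how the external reference proceeds --- but judged as a proof it has substantive gaps rather than merely elided routine steps.

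The most serious is the immediate-extension step, which you correctly flag as the main obstacle but then dispatch with ``invoking saturation together with (DH2) and the differential Newton--Hensel machinery.'' Without spherical completeness one needs uniqueness (up to isomorphism over the base) of the relevant immediate $\d$-algebraic extensions; this is the main theorem of \cite{pc-dh} and is the technical heart of the whole result, not something that falls out of (DH2) plus saturation. Two further points are asserted with justifications that do not actually suffice. For distality, ``distal, being a real closed field with a generic derivation'' is not a proof: generic expansions need not preserve distality (dense pairs of o-minimal structures are generic in a comparable sense and are \emph{not} distal), so the distality of closed ordered differential fields, and its transfer to $T^{\dhl}_{\codf}$, each require a specific theorem. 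For local o-minimality, the reduction of the local behaviour of $P(a+\varepsilon)$ to an ordinary polynomial requires control of \emph{all} derivatives $\varepsilon^{(j)}$ of a small $\varepsilon$, not just $\varepsilon$ itself; this is exactly the point where the present paper, in the analogous argument of Proposition~\ref{prop:emptyintlean}, has to invoke \cite[Theorem~9.2.1]{adamtt}, and the discreteness of $C$ in $K$ is true but beside the point. So: correct skeleton, but the load-bearing lemmas are named rather than proved, and one of the stated justifications (genericity implies distality) is actually false as a general principle.
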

For this paper, the essential point is item \ref{preH:qeitem}.
In view of item \ref{preH:modcomp}, we also refer to models of $T^{\dhl}_{\codf}$ as existentially closed pre-$H$-fields with gap~$0$.

\subsection{Differential-algebraic dimension}\label{subsec:dalgdim}
Let $K$ be (an expansion of) a differential field.
The pregeometry
\[
\cl^{\der}(A)\ =\ \{ b \in K : b\ \text{is $\d$-algebraic over}\ \Q\langle A\rangle \}
\]
for $A \subseteq K$, is defined in any differential field by the collection of formulas of the form $P(y,z)=0 \wedge P(y,Z)\neq 0$, where $P \in \Z\{Y,Z\}$.
As explained in Section~\ref{subsec:dimpregeo}, $\cl^{\der}$ yields a dimension $\dim$ on subsets of $K^n$; equivalently, the \deft{differential-algebraic dimension} of nonempty $S \subseteq K^n$ is
\[
\dim S\ =\ \max \{ \dtrdeg(K\langle s\rangle|K) : s \in S^*\},
\]
where $(K^*, S^*) \succe (K, S)$ is $|K|^+$-saturated and $\dtrdeg(K\langle s\rangle|K)$ denotes the differential transcendence degree of $K\langle s\rangle$ over~$K$.
Hence $\dim$ enjoys the properties of Lemma~\ref{lem:dimbasic}.
A paradigmatic example of a dimension zero set is $C$, and note that $\dim K=1$ just in case $K\neq C$ (see \cite[Lemma~4.2.1]{adamtt}), in which case $\dim K^n=n$.
We have the following characterization: For $S \subseteq K$, $\dim S\les 0$ if and only if $S \subseteq \{ a \in K : P(a)=0 \}$ for some $P \in K\{Z\}^{\neq}$, in which case we call $S$ \deft{thin}. 
Note that the thin subsets of $K$ form an ideal.
More generally, $\dim S<n$ if and only if $S \subseteq \{ a \in K^n : P(a)=0 \}$ for some $P \in K\{Y\}^{\neq}$.
See also \cite[Section~2]{vdd-dimension} or \cite[Section~1]{adh-dimension} for an alternate definition of differential-algebraic dimension that makes no reference to elementary extensions.

Our main task is to show that $\dim$ is definable in any model of $T^{\dhl}_{\codf}$, so in the rest of this section $K \models T^{\dhl}_{\codf}$.
Note that $K \neq C$, so $\dim K=1$.
We equip $K$ with the order topology, which equals the valuation topology, and $K^n$ with the corresponding product topology.

\begin{lem}\label{lem:clderexistmatroid}
The pregeometry $\cl^{\der}$ is the unique existential matroid on a monster model of~$T^{\dhl}_{\codf}$.
\end{lem}
\begin{proof}
Let $A \subseteq K$.
Then $F\coloneqq\cl^{\der}(A)$ is clearly a real closed differential subfield of $K$ that is closed under exponential integration.
Moreover, $(F, \dot{\ca O}_{F})$ is $\d$-henselian since it satisfies \ref{defn:dh2} and $\res(F, \dot{\ca O}_{F})$ is linearly surjective using \cite[Lemma~7.1.8]{adamtt} in $K$.
Hence, $(F, \dot{\ca O}_{F})$ is $\d$-Hensel-Liouville closed.
Additionally, $\res(F, \dot{\ca O}_{F})$ is a closed ordered differential field; one way to see this is to use Fact~\ref{fact:adh7.1.3} to equip $(F, \dot{\ca O}_{F})$ with a lift $\bm k_{F}$ of $\res(F, \dot{\ca O}_{F})$ and extend $\bm k_{F}$ to a lift $\bm k$ of $\res(K)$, then use the axiomatization of closed ordered differential fields.
Therefore, $(F,\dot{\ca O}_F)$ is an elementary substructure of $K$ by Fact~\ref{fact:preH:qe}.
Hence, $\cl^{\der}$ (extended to a monster model) is an existential matroid by \cite[Lemma~3.23]{fornasiero-dimension}.
Uniqueness is \cite[Theorem~3.48]{fornasiero-dimension}, since $T^{\dhl}_{\codf}$ extends the theory of integral domains.
\end{proof}

\begin{lem}\label{1:lem:dimnintbasic}
\mbox{}
\begin{enumerate}
    \item\label{1:lem:dimnintbasic:1} The formula $P(y)=0$, for $P \in K\{Y\}^{\neq}$, defines a subset of $K^n$ with empty interior in $K^n$.
    \item\label{1:lem:dimnintbasic:2} The formulas $P(y)>0$, $P(y) \dotrel{\succ} Q(y)$, and $P(y) \dotrel{\succe} Q(y) \wedge P(y)\neq 0$, where $P, Q \in K\{Y\}$, define open subsets of $K^n$.
\end{enumerate}
\end{lem}
\begin{proof}
The first part follows from the $n$-variable version of \cite[Lemma~4.4.10]{adamtt}, while the second follows from the continuity of the evaluation maps of $P$ and~$Q$.
\end{proof}
Quantifier elimination thus yields the following characterization. 
\begin{lem}\label{1:lem:dimnint}
Let $S \subseteq K^n$ be definable in $K$. Then
\[
\dim S = n\ \iff\ S\ \text{has nonempty interior in}\ K^n.
\]
\end{lem}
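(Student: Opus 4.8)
The plan is to prove the two implications separately, with the reverse direction immediate and the forward direction ($\dim S=n\Rightarrow$ nonempty interior) carrying the content. For the reverse direction, I would argue contrapositively: if $\dim S<n$, then by the characterization recalled just above the statement, $S\subseteq\{a\in K^n:P(a)=0\}$ for some $P\in K\{Y\}^{\neq}$; since the $n$-variable version of \cite[Lemma~4.4.10]{adamtt} says this zero set has empty interior, $S$ has empty interior too. Hence nonempty interior forces $\dim S=n$.

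For the forward direction the main tool is quantifier elimination, Fact~\ref{fact:preH:qe}\ref{preH:qeitem}: every definable $S\subseteq K^n$ is a Boolean combination of sets defined by atomic $\ca L$-formulas, which after the usual rewriting are of the three shapes $P(y)=0$, $P(y)\ges 0$, and $P(y)\dotrel{\prece}Q(y)$ with $P,Q\in K\{Y\}$ (parameters absorbed into coefficients). First I would list \emph{all} the differential polynomials $P_1,\dots,P_N\in K\{Y\}$ occurring in such a description, form $P^*\coloneqq\prod_{i:\,P_i\neq 0}P_i\in K\{Y\}^{\neq}$, and set $E\coloneqq\{a\in K^n:P^*(a)=0\}$. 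By continuity of the evaluation maps $E$ is closed, so $K^n\setminus E$ is open, and by the characterization of $\dim<n$ we have $\dim E<n$.

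The key step is to observe that, relative to the subspace $K^n\setminus E$, every atomic set above is \emph{clopen}. Indeed, off $E$ each $P_i$ is nonzero, so each $\{P_i=0\}$ is relatively empty; each sign condition $\{P_i>0\}$ is open with relative complement $\{P_i<0\}=\{-P_i>0\}$ open; and each valuation condition $\{P_i\dotrel{\prece}P_j\}$ meets $K^n\setminus E$ in an open set whose relative complement is $\{P_i\dotrel{\succ}P_j\}$, again open. Here I use exactly the stated facts that $P>0$, $P\dotrel{\succ}Q$, and $P\dotrel{\succe}Q\wedge P\neq 0$ define open sets. Since the relatively clopen subsets of $K^n\setminus E$ form a Boolean algebra, $S\cap(K^n\setminus E)$ is clopen in $K^n\setminus E$, hence open in $K^n$ because $K^n\setminus E$ is open in $K^n$. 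To finish, write $S=\bigl(S\cap(K^n\setminus E)\bigr)\cup(S\cap E)$; as $S\cap E\subseteq E$ has dimension $<n$, Lemma~\ref{lem:dimbasic} gives $\dim\bigl(S\cap(K^n\setminus E)\bigr)=\dim S=n$, so this open subset of $S$ is nonempty and witnesses that $S$ has nonempty interior.

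The main obstacle is purely topological: a Boolean combination of open atomic sets need not be open, since negations of open sets fail to be open. The device that resolves this is exactly the passage to $K^n\setminus E$, where each atomic set becomes clopen; the delicate points to check carefully are the valuation atomic formulas $P\dotrel{\prece}Q$, including their degenerate instances ($Q=0$, or $P,Q$ proportional along the zero set), all of which are safely absorbed into the small closed set $E$ so that the relatively clopen description is clean.
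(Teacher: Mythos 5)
Your proof is correct and follows essentially the same route as the paper: both directions rest on quantifier elimination, the openness of the sets defined by $P>0$, $P\dotrel{\succ}Q$, and $P\dotrel{\succe}Q\wedge P\neq 0$, and the fact that zero sets of nonzero differential polynomials have empty interior and dimension $<n$. The only (cosmetic) difference is that the paper first reduces via Lemma~\ref{lem:dimbasic} to a single full-dimensional conjunction of non-negated atoms and removes only the zero sets of the $P_i$ occurring in $\dotrel{\succe}$-conditions, whereas your global exceptional set $E$ together with the relative-clopenness observation handles arbitrary Boolean combinations directly; the two devices are interchangeable here.
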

\begin{proof}
If $\dim S<n$, then $S$ has empty interior by Lemma~\ref{1:lem:dimnintbasic}\ref{1:lem:dimnintbasic:1}.
Conversely, suppose that $\dim S=n$.
By quantifier elimination, $S$ is defined by a finite conjunction of formulas $\bigwedge_{i=1}^m \varphi_{i}(y)$, where each $\varphi_{i}(y)$ is one of $P_{i}(y)=0$, $P_{i}(y)>0$, $P_{i}(y) \dotrel{\succ} Q_{i}(y)$, and $P_{i}(y) \dotrel{\succe} Q_{i}(y)$, with $P_{i}, Q_{i} \in K\{Y\}^{\neq}$.
By Lemma~\ref{lem:dimbasic}, we may assume that $S$ is defined by one such $\bigwedge_{i=1}^m \varphi_{i}(y)$ and no $\varphi_{i}(y)$ is $P_{i}(y)=0$.
Let $I\coloneqq\{ i \in \{1,\dots, m\} : \varphi_i(y)\ \text{is}\ P_{i}(y) \dotrel{\succe} Q_{i}(y)\}$, and note that
\[
S \setminus \bigcup_{i\in I} \{ a \in K^n : P_{i}(a)=0 \},
\]
which is nonempty since $\dim S=n$, is contained in the interior of $S$ by Lemma~\ref{1:lem:dimnintbasic}\ref{1:lem:dimnintbasic:2}.
\end{proof}

\begin{cor}\label{cor:dimbdd}
The dimension $\dim$ is definable.
\end{cor}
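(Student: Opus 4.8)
The plan is to unwind the definition of definability for $\dim$ and verify it directly. Given a definable $S\subseteq K^{n+1}$, I must show that $B\coloneqq\{a\in K^n:\dim S_a=0\}$ is definable using only the parameters occurring in $S$. First I would pass to fibres: each $S_a$ is a definable subset of $K=K^1$, and since $\dim K=1$ (as $K\neq C$), Lemma~\ref{lem:dimbasic} gives $\dim S_a\les 1$, while $\dim S_a\ges 0$ exactly when $S_a\neq\0$. Thus $\dim S_a\in\{-\infty,0,1\}$, and the three values are separated by emptiness and by interior.

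The key step is to apply Lemma~\ref{1:lem:dimnint} to the fibre $S_a\subseteq K$ in the case $n=1$: it yields $\dim S_a=1$ if and only if $S_a$ has nonempty interior in $K$. Combining this with the dichotomy of the previous paragraph gives the pointwise equivalence
\[
\dim S_a=0\ \iff\ S_a\neq\0\ \text{and}\ S_a\ \text{has empty interior in}\ K.
\]
It therefore suffices to express the two conditions on the right uniformly in $a$ by first-order formulas using only the parameters of $S$.

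The nonemptiness condition is simply $\exists z\,(a,z)\in S$. For the interior condition I would use that $K$, being real closed, is densely ordered without endpoints, so the order topology has a basis of bounded open intervals $(c,d)$ with $c<d$ in $K$; hence $S_a$ has nonempty interior if and only if $S_a$ contains some such interval, which is expressed by $\exists c\,\exists d\,\bigl(c<d\wedge\forall z\,((c<z\wedge z<d)\rightarrow(a,z)\in S)\bigr)$. Negating this formula and conjoining it with nonemptiness defines $B$, manifestly with the same parameters as $S$, which is exactly the required definability.

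The only point requiring care—and hence the main ``obstacle''—is precisely this passage from the a priori topological notion ``$S_a$ has empty interior'' to a single first-order formula that is uniform in $a$. Once the interval basis of the order topology is invoked to rewrite ``nonempty interior'' as ``contains an open interval,'' the inner universal quantifier over $z$ renders the condition first-order, and no further appeal to quantifier elimination is needed beyond its use in Lemma~\ref{1:lem:dimnint}.
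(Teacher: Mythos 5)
Your proposal is correct and takes essentially the same route as the paper: the paper's proof is the one-line identification of $\{a\in K^n:\dim S_a=0\}$ with $\{a\in K^n: S_a\ \text{has empty interior in}\ K\}$, which rests on exactly the fibrewise application of Lemma~\ref{1:lem:dimnint} and the first-order expressibility of ``contains an open interval'' that you spell out. Your explicit handling of the empty fibre (requiring $S_a\neq\0$ so that $\dim S_a=0$ rather than $-\infty$) is in fact slightly more careful than the paper's phrasing.
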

\begin{proof}
Given $S \subseteq K^{n+1}$, definable in $K$, we have
\[
\{a \in K^n : \dim S_a=0\}\ =\ \{a \in K^n : S_a\ \text{has empty interior in}\ K \}.\qedhere
\]
\end{proof}

\begin{cor}
The dimension $\dim$ is the unique dimension function \textnormal{(}on models of~$T^{\dhl}_{\codf}$\textnormal{)}.
\end{cor}
\begin{proof}
Since $\dim$ is nontrivial and definable, it is a dimension function by Corollary~\ref{cor:defdimdimfun}.
Moreover, $\cl^{\der}$ is the unique existential matroid (on a monster model) by Lemma~\ref{lem:clderexistmatroid}, so this uniqueness yields the uniqueness of $\dim$ by \cite[Theorem~4.3]{fornasiero-dimension}.
\end{proof}

We thus obtain the consequences for $\dim$ stated in general form in Corollaries~\ref{cor:dfibre} and \ref{cor:duniformsmall}.\footnote{For $\dim$ in $T^{\dhl}_{\codf}$, Corollary~\ref{1:cor:dimfibre} without the condition on parameters also follows from the differential analogue of \cite[Proposition~2.15]{vdd-dimension} on \cite[p. 203]{vdd-dimension}, but this argument does not work in the next section.}

\begin{cor}\label{1:cor:dimfibre}
\mbox{}
\begin{enumerate}
    \item\label{1:cor:dimfibre:1} If $S \subseteq K^m$ is definable in $K$ and $f \colon S \to K^n$ is definable, then
    \begin{enumerate}
        \item $\dim S \ges \dim f(S)$ \textnormal{(}so $\dim S = \dim f(S)$ for injective $f$\textnormal{)};
        \item $B_i \coloneqq \{ b \in K^n : \dim f^{-1}(b)=i \}$ is definable using the same parameters as $f$ and $\dim f^{-1}(B_i)= i + \dim B_i$ for $i=0, \dots, m$.
    \end{enumerate}
    \item\label{1:cor:dimfibre:2} If $S \subseteq K^{n+1}$ is $A$-definable in $K$, $A\subseteq K$, then there are $P_1, \dots, P_r \in \Q\langle A\rangle\{Y,Z\}$ with $r \in \N$ such that for all $a \in K^{n}$ with $\dim S_a=0$,
    \[
    S_a\ \subseteq\ \{ b \in K : P_i(a,b)=0 \}
    \]
    for some $i\in \{1,\dots,r\}$ with $P_i(a,Z)\neq 0$.
\end{enumerate}
\end{cor}
It follows from Corollary~\ref{cor:dimbdd} (using Lemma~\ref{lem:dimbasic}) that $\dim$ equals the naive topological dimension in models of $T^{\dhl}_{\codf}$. 
In particular, if $S$ is a semialgebraic subset of $K^n$ (i.e., definable in the field $K$), then $\dim S$ equals its semialgebraic dimension.
\begin{cor}\label{1:dimnaivetop}
If $S \subseteq K^n$ is definable in $K$, then $\dim S$ is the supremum of $m\les n$ such that for some coordinate projection $\pi \colon K^n \to K^m$, $\pi(S)$ has nonempty interior in~$K^m$.
\end{cor}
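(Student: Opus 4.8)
The plan is to prove the two inequalities that together express $\dim S$ as the supremum $d'$ appearing in the statement. Writing $d \coloneqq \dim S$, both directions reduce to combining the full-dimensional characterization of Lemma~\ref{1:lem:dimnint} with the projection properties of $\dim$ in Lemma~\ref{lem:dimbasic}; the one point to keep in mind is that for any coordinate projection $\pi\colon K^n\to K^m$ the image $\pi(S)$ is again definable (it is the projection of a definable set), so that Lemma~\ref{1:lem:dimnint} may legitimately be applied to $\pi(S)$ inside its own ambient space $K^m$.

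For $d'\ges d$, I would apply Lemma~\ref{lem:dimbasic}\,(vi) to obtain a coordinate projection $\pi\colon K^n\to K^d$ with $\dim\pi(S)=d$. Since $\pi(S)\subseteq K^d$ then has dimension equal to the dimension $d$ of its ambient space, Lemma~\ref{1:lem:dimnint} shows that $\pi(S)$ has nonempty interior in $K^d$. Thus $m=d$ is among the values whose supremum defines $d'$, giving $d'\ges d$.

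For $d'\les d$, consider any $m\les n$ and any coordinate projection $\pi\colon K^n\to K^m$ such that $\pi(S)$ has nonempty interior in $K^m$. Then Lemma~\ref{1:lem:dimnint} yields $\dim\pi(S)=m$, and Lemma~\ref{lem:dimbasic}\,(v) gives $\dim S\ges\dim\pi(S)=m$, so $m\les d$. Taking the supremum over all such $m$ yields $d'\les d$, and hence $d=d'$. I do not expect a genuine obstacle here: the content has already been isolated in Lemma~\ref{1:lem:dimnint} and in the monotonicity and projection clauses of Lemma~\ref{lem:dimbasic}, so the corollary is just the standard repackaging of differential-algebraic dimension as naive topological dimension. (If one prefers, the same conclusion can be extracted from the definability of $\dim$ in Corollary~\ref{cor:dimbdd} together with Lemma~\ref{lem:dimbasic}, but the direct argument above is cleaner.)
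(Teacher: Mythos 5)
Your argument is correct and is exactly the intended one: the paper states this corollary without proof as an immediate consequence of the full-dimension criterion (Lemma~\ref{1:lem:dimnint}) together with the projection clauses of Lemma~\ref{lem:dimbasic}, which is precisely the two-inequality repackaging you carry out. Your remark that $\pi(S)$ is definable in its ambient $K^m$, so that Lemma~\ref{1:lem:dimnint} applies to it, is the only point needing care, and you handle it.
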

At this point, we could establish more topological facts concerning sets definable in $K$, but these will be generalized in the next section.
More precisely, we will define a dimension $\dim_2$ on an expansion $(K, \bm k)$ of $K$ by a lift of its differential residue field, and show in Corollary~\ref{cor:dim=dim2} that $\dim_2 S=\dim S$ for sets definable in $K$.
Thus, all results of the next section will apply to $K \models T^{\dhl}_{\codf}$.

\section{Quantifier reduction}\label{sec:specialQE}

\subsection{Introduction and main result}\label{subsec:intspecialQE}
Let $(K,\dot{\ca O}) \models T^{\dhl}$, introduced at the beginning of the last section, in the language $\ca L = \ca L_{\OR,\der} \cup \{\dotrel{\prece}\}$.
Since $(K,\dot{\ca O})$ is $\d$-henselian, by Fact~\ref{fact:adh7.1.3} we can equip it with a lift $\bm k$ of its differential residue field $\res(K,\dot{\ca O})$.
Recall that this means that $\bm k$ is a differential subfield of $\dot{\ca O}$ and maps bijectively (so isomorphically as an ordered differential field) onto $\res(K,\dot{\ca O})$ under the residue map $\dot{\ca O} \to \res(K,\dot{\ca O})$.
Note also that $\dot{\ca O}$ is the convex hull of $\bm k$ in~$K$.

\begin{lem}\label{lem:liftconstants}
Let $(E,\dot{\ca O}_E)$ be a valued differential field with small derivation and $C_E \subseteq \dot{\ca O}_E$, and let $\bm k_E$ be a lift of $\res(E,\dot{\ca O}_E)$.
Then $C_E=C_{\bm k_E} \subseteq \bm k_E$. 
In particular, $C=C_{\bm k} \subseteq \bm k$. 
\end{lem}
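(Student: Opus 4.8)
The only genuine content is the inclusion $C_E\subseteq\bm k_E$. Indeed, since $\bm k_E$ is a differential subfield of $E$ we have $C_{\bm k_E}=C_E\cap\bm k_E$, so $C_{\bm k_E}\subseteq C_E$ for free, and conversely $C_E\subseteq\bm k_E$ would give $C_E\subseteq C_E\cap\bm k_E=C_{\bm k_E}$, hence the equality $C_E=C_{\bm k_E}$. So the plan is to show that every constant of $E$ already lies in the lift $\bm k_E$.

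The first step is to exploit the hypothesis $C_E\subseteq\dot{\ca O}_E$ to see that $C_E\cap\dot{\cao}_E=\{0\}$: if $c\in C_E$ is nonzero, then $c^{-1}\in C_E\subseteq\dot{\ca O}_E$ as well, so $c\in\dot{\ca O}_E^{\x}$ and thus $c\notin\dot{\cao}_E$. Equivalently, the residue map $\res\colon\dot{\ca O}_E\to\res(E,\dot{\ca O}_E)$ is injective on $C_E$. The second step uses small derivation: because $\der\dot{\cao}_E\subseteq\dot{\cao}_E$, the derivation descends to the quotient and $\res$ becomes a homomorphism of differential rings, so it carries $C_E$ into the constant field of $\res(E,\dot{\ca O}_E)$. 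The third step is just the defining property of a lift: $\res$ restricts to an isomorphism of differential fields $\bm k_E\to\res(E,\dot{\ca O}_E)$, and hence maps $C_{\bm k_E}$ bijectively onto the constant field of $\res(E,\dot{\ca O}_E)$.

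Combining these, given $c\in C_E$, its residue $\res(c)$ is a constant of $\res(E,\dot{\ca O}_E)$, so by the third step there is $u\in C_{\bm k_E}$ with $\res(u)=\res(c)$. Then $c-u\in C_E$ (a difference of constants) and $\res(c-u)=0$, i.e.\ $c-u\in C_E\cap\dot{\cao}_E=\{0\}$ by the first step; hence $c=u\in\bm k_E$, as desired. The ``in particular'' statement is the instance $(E,\dot{\ca O}_E)=(K,\dot{\ca O})$ with lift $\bm k$: here small derivation holds because $(K,\dot{\ca O})$ is $\d$-henselian, and $C\subseteq\dot{\ca O}$ holds because \ref{ph2} forces $C\subseteq\dot{\ca O}$, as noted after the definition of pre-$H$-field. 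No step is deep; the one to state carefully is that the lift carries $C_{\bm k_E}$ \emph{onto} the residue constants, so that the representative $u$ may be chosen to be an actual constant — this relies on $\bm k_E\to\res(E,\dot{\ca O}_E)$ being a \emph{differential} isomorphism, and the injectivity of $\res$ on $C_E$ is what upgrades the congruence $c\equiv u$ to the equality $c=u$.
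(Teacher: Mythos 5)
Your proof is correct and takes essentially the same route as the paper: both arguments pick the unique $u\in\bm k_E$ with the same residue as $c$, use small derivation to see that $u$ is a constant (you via the residue map being a differential homomorphism and the lift being a differential isomorphism; the paper via $u'=-(c-u)'\in\bm k_E\cap\dot{\cao}_E=\{0\}$), and then conclude $c-u=0$ because a nonzero constant is a unit of $\dot{\ca O}_E$. The bookkeeping around $C_{\bm k_E}=C_E\cap\bm k_E$ and the ``in particular'' clause is also handled as in the paper.
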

\begin{proof}
Clearly, $C_{\bm k_E} \subseteq C_E$.
Conversely, for $c \in C_E^{\x}$ we have $u \in \bm k_E^{\x}$ with $c \dotrel{\sim} u$.
Then $u' \in \bm k_E \cap \dot{\cao}_E = \{0\}$, so $c=u$.
For the ``in particular'' statement, note that $C\subseteq\dot{\ca O}$ holds because $(K,\dot{\ca O})$ is a pre-$H$-field.
\end{proof}

The theory of $\bm k$ is not determined by $T^{\dhl}$, so as much as possible, we work relative to an arbitrary $\bm k$, but ultimately we are most interested in the cases that $\bm k$ is a closed $H$-field and $\bm k$ is a closed ordered differential field.
If $\bm k$ is a closed $H$-field, then $(K, \bm k)$ is a transserial tame pair by Section~\ref{subsec:closedHttp}, so $\bm k \prece K$, and $\ca O\neq\dot{\ca O}$.
If $\bm k$ is a closed ordered differential field, then $(K,\dot{\ca O})$ is an existentially closed pre-$H$-field with gap~$0$, $C$ is dense in $\bm k$, and $\ca O=\dot{\ca O}$.
In the last section, we showed that in the latter case, $\dim$ is a dimension function agreeing with the naive topological dimension on definable sets in $K$.
In the former case, \cite[Corollary~3.2]{adh-dimension} shows that $\dim$ is a dimension function agreeing with the naive topological dimension on definable sets in $K$.
However, $\bm k$ is a discrete subset of $K$ with $\dim \bm k=1$, so $\dim$ cannot agree with the naive topological dimension on definable sets in $(K,\bm k)$.
This also shows:
\begin{cor}\label{cor:nodeflift}
If $K$ is a closed $H$-field or an existentially closed pre-$H$-field with gap~$0$, then no lift of $\res(K, \dot{\ca O})$ is definable in~$K$.
\end{cor}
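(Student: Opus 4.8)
The plan is to argue by contradiction, exploiting the agreement of $\dim$ with naive topological dimension on sets definable in $K$, which is already available in both cases: for $K$ a closed $H$-field by \cite[Corollary~3.2]{adh-dimension}, and for $K$ an existentially closed pre-$H$-field with gap~$0$ by the present section (e.g.\ Corollary~\ref{1:dimnaivetop}). So suppose some lift $\bm k$ of $\res(K,\dot{\ca O})$ is definable in $K$. Since $\bm k \subseteq K$, I would compute both its naive topological dimension and its differential-algebraic dimension $\dim \bm k$, and derive the contradiction $0=1$.

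For the topological side, I would first observe that a lift is discrete in $K$. Indeed, distinct elements of $\bm k$ have distinct images under the residue map $\res\colon\dot{\ca O}\to\res(K,\dot{\ca O})$, so any two of them differ by an element not in $\dot{\cao}$; thus each point of $\bm k$ is isolated in the $\dot{\ca O}$-topology. As the $\dot{\ca O}$-topology is coarser than the order topology of $K$ (and coincides with it in the gap~$0$ case, where $\dot{\ca O}=\ca O$), the set $\bm k$ is discrete in $K$. A discrete subset of $K$ has empty interior, so its naive topological dimension is $0$.

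For the dimension side, the claim is $\dim \bm k = 1$, and this is the step I expect to be the main obstacle, since the elements of $\bm k$ lie in $K$ and are therefore trivially $\d$-algebraic over $K$; non-thinness of $\bm k$ must instead be read off from the residue field. Concretely, I would suppose $\bm k \subseteq \{a \in K : P(a)=0\}$ for some $P \in K\{Z\}^{\neq}$, divide $P$ by a coefficient of minimal $\dot{v}$ so that all coefficients lie in $\dot{\ca O}$ with at least one a $\dot{\ca O}$-unit, and reduce modulo $\dot{\cao}$. Using small derivation, so that $\res$ is a differential ring homomorphism on $\dot{\ca O}$ and $\der\dot{\ca O}\subseteq\dot{\ca O}$, this produces a nonzero $Q \in \res(K,\dot{\ca O})\{Z\}$ with $Q(\res u)=0$ for all $u \in \bm k$. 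Since $\bm k$ is a lift, $\res u$ ranges over all of $\res(K,\dot{\ca O})$, so $\res(K,\dot{\ca O})$ would be thin in itself, i.e.\ have differential-algebraic dimension $0$. But $\res(K,\dot{\ca O})\cong\bm k$ is a closed $H$-field or a closed ordered differential field, hence properly contains its constant field $C_{\bm k}=C$ (Lemma~\ref{lem:liftconstants}), so it has dimension $1$ by \cite[Lemma~4.2.1]{adamtt}; this contradiction shows $\bm k$ is not thin, and hence $\dim \bm k = 1$.

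Putting the two computations together, a definable $\bm k$ would force $1 = \dim \bm k = 0$, which is absurd; since $\bm k$ was an arbitrary lift, no lift of $\res(K,\dot{\ca O})$ is definable in $K$.
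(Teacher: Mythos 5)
Your proof is correct and follows essentially the same route as the paper, which derives the corollary from the observation that a lift $\bm k$ is discrete in $K$ yet satisfies $\dim\bm k=1$, contradicting the agreement of $\dim$ with naive topological dimension on sets definable in $K$. You merely supply the details the paper leaves implicit, namely the isolation of points of $\bm k$ via $\bm k^{\neq}\cap\dot{\cao}=\emptyset$ and the non-thinness of $\bm k$ via reduction of a normalized differential polynomial to $\res(K,\dot{\ca O})$, and both of these are carried out correctly.
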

We conjecture moreover that if $(K,L)$ is a transserial tame pair, then $L$ is not definable in $(K,\dot{\ca O})$. This likely requires establishing a quantifier elimination result for the theory $T^{\nl,\dhl}$ from \cite{pc-transtamepair} in a natural extension of $\ca L_{\OR,\der}\cup\{\prece,\dotrel{\prece}\}$; perhaps $\ca L_{\OR,\der}\cup\{\prece,\dotrel{\prece},\Uplambda_2,\Upomega_2\}$ suffices.

By \cite[Theorems~3.48 and 4.3]{fornasiero-dimension}, any dimension function $\d$ on an expansion of an integral domain $R$ must assign $\d F=0$ for any definable subfield $F$ of $R$.
Hence, the dimension $\dim_2$ we will define on $(K,\bm k)$ must satisfy $\dim_2\bm k=0$.
Before defining $\dim_2$ in the next subsection, we need to establish the quantifier reduction underpinning the results.

Model-theoretically, we expand the $\ca L$-structure $(K,\dot{\ca O})\models T^{\dhl}$ by a unary relation symbol $U$ interpreted as $\bm k$.
Let $\ca L_{\res} \supseteq \ca L_{\OR,\der}$ and $T_{\res}$ be an $\ca L_{\res}$-theory such that $\bm k\models T_{\res}$ (in particular, $T_{\res}$ extends the theory of ordered differential fields that are real closed, linearly surjective, and closed under exponential integration); the two main cases are $T_{\res}$ is the theory of closed $H$-fields and $T_{\res}$ is the theory of closed ordered differential fields, both of which can be axiomatized in $\ca L_{\res}=\ca L_{\OR,\der}$.
Let $\ca L_{\lift} \coloneqq \ca L \cup \ca L_{\res} \cup \{U\}$.
Now, reformulate $T_{\res}$ as a set of $\r$-relative sentences, where \deft{$\r$-relative formulas} are members of the set of $\ca L_{\res}\cup\{U\}$-formulas satisfying
\begin{itemize}
    \item every quantifier-free $\ca L_{\res}$-formula is an $\r$-relative formula;
    \item if $\varphi$ and $\theta$ are $\r$-relative formulas, then so are $\neg\varphi$, $\varphi\wedge\theta$, and $\varphi\vee\theta$;
    \item if $\varphi$ is $\r$-relative and $u$ is a variable, then $\exists u (u \in U \wedge \varphi)$ and $\forall u (u \in U \to \varphi)$ are $\r$-relative formulas.
\end{itemize}
It is easy to pass between $\ca L_{\res}$-formulas and $\r$-relative formulas, which we do without comment.
Also, the symbols of $\ca L_{\res}\setminus\ca L_{\OR,\der}$ are interpreted as trivial off $\bm k$.
Finally, axiomatize the $\ca L_{\lift}$-theory $T^{\dhl}_{\lift}$ by adding to $T^{\dhl}\cup T_{\res}$ axioms expressing that $\bm k$ is a differential subfield of $\dot{\ca O}$ and for every $a \dotrel{\asymp} 1$ in $K$, there exists $u \in \bm k$ such that $a \dotrel{\sim} u$.
Although $T^{\dhl}_{\lift}$ depends on $T_{\res}$, suppressing that should not cause confusion.
Also, since $\dot{\ca O}$ is existentially definable from $\bm k$ (without parameters), we write $(K, \bm k)\models T^{\dhl}_{\lift}$, suppressing $\dot{\ca O}$.
As stated informally earlier, $T^{\dhl}_{\lift}$ is complete if $T_{\res}$ is complete \cite[Corollary~5.5]{pc-transtamepair}.

Let $\st$ denote the standard part map, i.e., the quantifier-free definable (without parameters) map $\st \colon \dot{\ca O} \to \bm k$ satisfying $\st(a)=0$ if $a \in \dot{\cao}$ and $\st(a)=u$ if $a \in \dot{\ca O}$, $u \in \bm k$, and $a \dotrel{\sim} u$ (in \cite{pc-transtamepair}, $\st$ was denoted by $\pi$, which we reserve here for coordinate projections).
By \cite[Theorem~5.10]{pc-transtamepair}, we have a relative quantifier elimination result in $\ca L_{\lift}\cup\{\st\}$.
As consequences, $T^{\dhl}_{\lift}$ has quantifier elimination in $\ca L_{\lift}\cup\{\st\}$ with $\ca L_{\res}=\ca L_{\OR,\der}$ if $T_{\res}$ is the theory of closed ordered differential fields \cite[Corollary~5.11]{pc-transtamepair} and it has quantifier elimination in $\ca L_{\lift}\cup\{\st\}$ with $\ca L_{\res}=\ca L_{\OR,\der}\cup\{ \prece, \Uplambda_2, \Upomega_2 \}$ if $T_{\res}$ is the theory of closed $H$-fields \cite[Corollary~5.12]{pc-transtamepair}.
In the latter case, for a model $(K,L) \models T^{\dhl}_{\lift}$ (so $(K,L)$ is a transserial tame pair), $\prece$ is interpreted as the binary version of the natural valuation of $L$, and $\Uplambda_2$ and $\Upomega_2$ are interpreted as the binary relations on $L$ defined in Section~\ref{subsec:closedHttp}.
Here we exclude $\st$ from the language, which significantly simplifies the terms of the language, but precludes full quantifier elimination.
Instead, in Theorem~\ref{thm:specialQE} below we give a reduction to certain special formulas in the following sense.
Recall that $x$ is an $m$-tuple of variables, $y$ is an $n$-tuple of variables, and $z$ is a single variable.

\begin{defn}\label{defn:specialfmla}
An $\ca L_{\lift}$-formula in $y$ is \deft{special} if it is of the form $\exists x \in U\ (\theta(x)\wedge\varphi(x,y))$ for some $x$, $\r$-relative formula $\theta(x)$, and quantifier-free $\ca L$-formula $\varphi(x,y)$.
\end{defn}
Above and later, $\exists x \in U\ (\theta(x)\wedge\varphi(x,y))$ abbreviates $\exists x (\bigwedge_{i=1}^m x_i \in U \wedge \theta(x) \wedge \varphi(x,y))$.

\begin{thm}\label{thm:specialQE}
Every $\ca L_{\lift}$-formula is $T^{\dhl}_{\lift}$-equivalent to a boolean combination of special formulas.
\end{thm}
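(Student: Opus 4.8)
The plan is to feed the relative quantifier elimination of \cite[Theorem~5.8]{pc-transtamepair} into a procedure that removes the symbol $\st$ at the cost of bounded existential quantification over $U$. By \cite[Theorem~5.8]{pc-transtamepair}, every $\ca L_{\lift}$-formula is $T^{\dhl}_{\lift}$-equivalent to a boolean combination of quantifier-free $\ca L_{\lift}\cup\{\st\}$-formulas and $\r$-relative formulas, the latter depending on the free variables $y$ only through standard parts, i.e.\ of the form $\theta(\st(t_1(y)),\dots,\st(t_k(y)))$ for an $\r$-relative $\theta$ and $\ca L$-terms $t_i$. Since the theorem only asks for a boolean combination of special formulas, it suffices to rewrite each such building block as a special formula (or a boolean combination of such), and then reassemble with the same boolean connectives.

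The key observation is that the graph of $\st$ is quantifier-free $\ca L_{\lift}$-definable. Extending $\st$ to all of $K$ by $\st(a)\coloneqq 0$ for $a\dotrel{\succ}1$, one checks, using $\bm k\cap\dot{\cao}=\{0\}$ for uniqueness, that for $w\in U$
\[
w=\st(a)\ \Longleftrightarrow\ \big(a\dotrel{\prece}1\wedge a-w\dotrel{\prec}1\big)\vee\big(a\dotrel{\succ}1\wedge w=0\big).
\]
Writing $\delta(a,w)$ for the right-hand side, this is a quantifier-free $\ca L$-formula once $w$ is known to range over $U$. I would then eliminate $\st$ from a given building block by processing its $\st$-subterms from innermost to outermost: each maximal occurrence $\st(t)$, where inductively $t$ is already an $\ca L$-term in the previously introduced variables $w_1,\dots,w_{j-1}$ and in $y$, is replaced by a fresh variable $w_j$, bound by $\exists w_j\in U$ and governed by the conjunct $\delta(t,w_j)$. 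Because $U=\bm k$ is closed under the $\ca L_{\OR,\der}$-operations, after all replacements every term becomes an $\ca L$-term in $(w,y)$ and all occurrences of $\st$ have disappeared.

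It then remains to sort the surviving atomic conditions into the two slots of a special formula. Atomic formulas built from $=$, $\les$, and $\dotrel{\prece}$, together with the graph conditions $\delta(t,w_j)$, are quantifier-free $\ca L$-formulas in $(w,y)$ and go into the $\varphi$-part; a membership condition $t\in U$ is rewritten as $\exists w\in U\,(t=w)$, again contributing to $\varphi$. The residue-field content—an $\r$-relative formula, or an $\ca L_{\res}$-relation such as $\Uplambda_2$ or $\Upomega_2$ interpreted trivially off $\bm k$, evaluated at standard parts—becomes, after these substitutions, an $\r$-relative formula $\theta$ in the $U$-variables $w$, contributing to the $\theta$-part. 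Pulling the bounded existentials $\exists w\in U$ to the front of each block yields a special formula $\exists x\in U\,(\theta(x)\wedge\varphi(x,y))$, and forming the original boolean combination produces the required boolean combination of special formulas.

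The main obstacle I anticipate is bookkeeping rather than any single hard idea: one must correctly handle nested standard parts, the totality convention that defines $\st$ off $\dot{\ca O}$, and—most delicately—verify that the residue-field content coming out of \cite[Theorem~5.8]{pc-transtamepair} genuinely depends on $y$ only through standard parts, so that after naming those standard parts by $U$-variables it is authentically $\r$-relative (landing in $\theta$) while all remaining base-language dependence on $y$ is isolated in the quantifier-free $\ca L$-formula $\varphi$.
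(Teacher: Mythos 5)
Your route is genuinely different from the paper's. The paper does not derive Theorem~\ref{thm:specialQE} from the relative quantifier elimination in $\ca L_{\lift}\cup\{\st\}$; instead it reruns a back-and-forth from scratch, showing that special formulas determine $\ca L_{\lift}$-types by building, from two tuples satisfying the same special formulas, an increasing chain of pre-$H$-field isomorphisms $f_r\colon E_r\to E_r^*$ whose union is closed under $\st$ and restricts to an elementary map between the lifts, and then invoking the embedding criterion of \cite[Theorem~5.2]{pc-transtamepair}. The inductive maintenance of condition (4) there --- that $f_r$ preserves all special formulas with parameters from $\bm k_{r-1}$ --- is exactly the mechanism by which standard parts of terms get named by $U$-variables compatibly on both sides. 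Your proposal outsources precisely this content to \cite[Theorem~5.8]{pc-transtamepair} and then removes $\st$ syntactically via the quantifier-free definability of its graph; your formula $\delta$ is correct (including uniqueness via $\bm k\cap\dot{\cao}=\{0\}$ and the totality convention off $\dot{\ca O}$), the innermost-to-outermost replacement of $\st$-subterms by $U$-bound existential variables is sound, and sorting the residue content into $\theta$ and the graph and atomic conditions into $\varphi$ does produce a boolean combination of special formulas. Since each building block is converted to an equivalent special formula before the boolean connectives are reassembled, polarity causes no trouble.

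The one load-bearing step you flag but do not discharge is the precise shape of \cite[Theorem~5.8]{pc-transtamepair}: your argument needs it to say that every formula is equivalent to a boolean combination of quantifier-free $\ca L_{\lift}\cup\{\st\}$-formulas and $\r$-relative formulas evaluated at standard parts of terms, with no residue-sort quantifiers governing main-sort atomic formulas. If the cited result instead gave only something like the bounded-quantifier form of Corollary~\ref{cor:bddquant} (alternating $U$-quantifiers over a mixed quantifier-free matrix), your reduction would not go through, since that form does not obviously collapse to a single existential $U$-block with the residue content isolated in $\theta(x)$. The fact that the relative QE upgrades to full QE in $\ca L_{\lift}\cup\{\st\}$ once $T_{\res}$ eliminates quantifiers strongly suggests the orthogonal form you need, so your plan is very likely viable; but as written, the hardest content of the proof lives in an unverified citation, whereas the paper's argument is self-contained modulo the embedding criterion and independent of the exact syntactic statement of the relative QE.
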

\begin{proof}
It suffices to show that every $y$-type consistent with $T^{\dhl}_{\lift}$ is determined by its intersection with the set $\Theta(y)$ of special formulas in $y$.
Let $(K, \bm k)$ and $(K^*, \bm k^*)$ be models of $T^{\dhl}_{\lift}$ and $a \in K^n$ and $a^* \in (K^*)^n$ satisfy the same special formulas in $y$.
Our goal is to use \cite[Theorem~5.4]{pc-transtamepair} to show that the $\ca L_{\lift}$-types (without parameters) of $a$ in $(K, \bm k)$ and $a^*$ in $(K^*, \bm k^*)$ are the same.
Set $E_0 \coloneqq \Q\langle a\rangle$ and $E_0^* \coloneqq \Q\langle a^*\rangle$.
By the assumption on special formulas, we have a pre-$H$-field isomorphism $f_0 \colon (E_0, \dot{\ca O}_{E_0}) \to (E_0^*, \dot{\ca O}^*_{E_0^*})$ with $f_0(a)=a^*$.
We cannot yet apply \cite[Theorem~5.4]{pc-transtamepair}, since it might be that $\st(E_0)\not\subseteq E_0$.
Hence we construct an increasing sequence $(E_r, E_r^*, f_r)_{r\in \N}$ such that for all $r \in \N$:
\begin{enumerate}
    \item\label{specialQE:1} $E_{r}\coloneqq\Q\langle a,\bm k_{r-1}\rangle \subseteq K$ and $E_{r}^*\coloneqq\Q\langle a^*,\bm k_{r-1}^*\rangle \subseteq K^*$ are differential subfields, where $\bm k_{r-1} \coloneqq \st(E_{r-1}) \subseteq \bm k$ and $\bm k_{r-1}^* \coloneqq \st^*(E_{r-1}^*) \subseteq \bm k^*$;
    \item $f_r \colon (E_r, \dot{\ca O}_{E_r}) \to (E_r^*, \dot{\ca O}^*_{E_r^*})$ is a pre-$H$-field isomorphism with $f_r(a)=a^*$;
    \item\label{specialQE:3} $f_{r}(\st(b))=\st^*(f_{r-1}(b))$ for all $b \in E_{r-1}$;
    \item\label{specialQE:4} for all $\r$-relative $\theta(x,z_1,\dots,z_{\ell})$, quantifier-free $\ca L$-formulas $\varphi(x,y,z_1,\dots,z_{\ell})$, and $d \in \bm k_{r-1}^{\ell}$, we have
    \[(K, \bm k) \models \exists x \in U\ (\theta(x,d)\wedge\varphi(x,a,d))\ \Leftrightarrow\ (K^*, \bm k^*) \models \exists x \in U\ (\theta(x,f_{r}d)\wedge\varphi(x,a^*,f_{r}d)).\]
\end{enumerate}

Suppose momentarily we are able to accomplish this.
Then taking the unions $E \coloneqq \bigcup_{r \in \N} E_r$ and $\bm k_E \coloneqq \bigcup_{r \in \N} \bm k_r = \st(E)$, we obtain an $\ca L_{\lift}$-substructure $(E, \bm k_E)$ of $(K, \bm k)$ such that $E$ is a field and $\bm k_E$ is a lift of $\res(E,\dot{\ca O}_E)$; for $\bm k_E$ to be an $\ca L_{\res}$-substructure of $\bm k$, we may assume that $\ca L_{\res}$ is an extension of $\ca L_{\OR,\der}$ by relation symbols.
The same is true of $E^* \coloneqq \bigcup_{r \in \N} E_r^*$ and $\bm k_{E^*}^* \coloneqq \bigcup_{r \in \N} \bm k_r^* = \st^*(E^*)$ in $(K^*,\bm k^*)$.
Also, $f \colon (E, \bm k_E) \to (E^*, \bm k_{E^*}^*)$ is an $\ca L_{\lift}$-isomorphism such that $f|_{\bm k_E}$ is elementary as a partial $\ca L_{\res}$-map $\bm k \to \bm k^*$.
Hence, $f$ is elementary as a partial map $(K, \bm k) \to (K^*, \bm k^*)$ by \cite[Theorem~5.4]{pc-transtamepair}, so $a$ and $a^*$ have the same $\ca L_{\lift}$-type.

Now for the construction: for $r=0$, it remains to note that \ref{specialQE:4} for $r=0$ is just the assumption on special formulas.
Suppose we have constructed the sequence to $(E_r, E_r^*, f_r)$, $r \in \N$.
Let $E_{r+1}=\Q\langle a,\bm k_{r}\rangle$.
We first define $f_{r+1}$ on $\bm k_r$.
Let $d \in \bm k_{r}^{\x}$, and take $b \in E_r^{\x}$ with $d \dotrel{\sim} b$.
Then there is a unique $d^* \in (\bm k^*)^{\x}$ such that $d^* \dotrel{\sim} f_{r}b$, so set $f_{r+1}d\coloneqq d^* \in (\bm k_r^*)^{\x}$.
Since $f_r$ is a pre-$H$-field isomorphism, this $d^*$ is independent of the choice of $b$, and by \ref{specialQE:3}, $f_{r+1} \colon \bm k_{r} \to \bm k_{r}^*$ extends $f_{r}|_{E_{r}\cap \bm k_{r}}$.
Having defined $f_{r+1}$ on $\bm k_{r}$ so that it satisfies \ref{specialQE:3} for $r+1$, we need to verify that it also satisfies \ref{specialQE:4}  for $r+1$.
Observe that for $d$ and $b$ as above, we have $P,Q\in \Z\{X,Y\}$ and $e\in (\bm k_{r-1}^{\x})^m$ with $P(e,a)\neq 0$, $Q(e,a)\neq 0$, and $b=P(e,a)/Q(e,a)$.
Then $P(e,a)\dotrel{\sim}dQ(e,a)$ and $f_{r}b=P(f_{r}e,a^*)/Q(f_{r}e,a^*)$, so likewise $P(f_{r}e,a^*)\dotrel{\sim}f_{r+1}(d)Q(f_{r}e,a^*)$.
Now let the formula 
$\exists x \in U\ (\theta(x,d)\wedge\varphi(x,a,d))$ be as in \ref{specialQE:4}, but where $d \in (\bm k_{r}^{\x})^{\ell}$.
By the previous observation for each $d_i$, take $P_i, Q_i \in \Z\{Z_1,\dots,Z_{j},Y\}$  (arranging the same $j \in \N$), and take $e \in \bm k_{r-1}^{j}$ (arranging the same $e$) so that
\[
(K,\bm k)\models \exists x \in U\ (\theta(x,d)\wedge\varphi(x,a,d))
\]
if and only if
\[
(K, \bm k) \models
\exists x,\bar{z}\in U\ \big(\bigwedge_{i=1}^{\ell} P_i(e,a)\dotrel{\sim} z_iQ_i(e,a) \wedge \theta(x,\bar{z}) \wedge \varphi(x,a,\bar{z})\big),
\]
where $\bar{z}=(z_1,\dots,z_{j})$.
By \ref{specialQE:4} for $r$, the line displayed above is equivalent to
\[
(K^*, \bm k^*) \models
\exists x,\bar{z}\in U\ \big(\bigwedge_{i=1}^{\ell} P_i(f_{r}e,a^*)\dotrel{\sim} z_iQ_i(f_{r}e,a^*) \wedge \theta(x,\bar{z}) \wedge \varphi(x,a^*,\bar{z})\big),
\]
which, by the uniqueness of $f_{r+1}d$, holds if and only if
\[
(K^*,\bm k^*)\models \exists x \in U\ (\theta(x,f_{r+1}d)\wedge\varphi(x,a^*,f_{r+1}d)).
\]
This completes the proof of \ref{specialQE:4} for $r+1$.
Note that $f_{r+1}\colon \bm k_{r} \to \bm k_{r}^*$ is bijective.

It remains to extend $f_{r+1}$ from $\bm k_r$ to $E_{r+1}$ and verify that it is a pre-$H$-field isomorphism.
For that, let $b \in E_{r+1}^{\x}$ and take $P, Q \in \Z\{X,Y\}$ and $d \in (\bm k_r^{\x})^{m}$ with $P(d,a) \neq 0$, $Q(d,a) \neq 0$, and $b=P(d,a)/Q(d,a)$.
Set $f_{r+1}(b) \coloneqq P(f_{r+1}d,a^*)/Q(f_{r+1}d,a^*)$.
Note that $P(f_{r+1}d_{r},a^*)\neq 0$, $Q(f_{r+1}d_{r},a^*)\neq 0$, and $f_{r+1}(b)$ is independent of the choice of $P,Q,d$ by the already-established \ref{specialQE:4} for $r+1$.
Since $f_{r+1}\colon \bm k_{r}\to \bm k_{r}^*$ is bijective, its extension to $f_{r+1} \colon E_{r+1} \to E_{r+1}^*$ is surjective.
Fix $b=(b_1,\dots,b_\ell) \in (E_{r+1}^{\x})^{\ell}$ and a quantifier-free $\ca L$-formula $\psi(z_1,\dots,z_{\ell})$.
To see that $f_{r+1}$ is a pre-$H$-field isomorphism, we need to show that $(K, \bm k) \models \psi(b)$ if and only if $(K^*, \bm k^*) \models \psi(f_{r+1}b)$.
Take $P_i,Q_i \in \Z\{ X,Y \}$ for each $b_i$ (arranging the same $m$), and take $d \in \bm k_{r}^m$ (arranging the same $d$), so that $b_i=P_i(d,a)/Q_i(d,a)$.
Replacing $b_i$ with $P_i(d,a)/Q_i(d,a)$ in $\psi$ and cross-multiplying, we obtain a quantifier-free $\ca L$-formula $\bar{\psi}(x,y)$ such that
\begin{align*}
(K, \bm k) \models \psi(b)\ &\iff\ (K, \bm k) \models \bar{\psi}(d,a)\\
&\iff\ (K^*, \bm k^*) \models \bar{\psi}(f_{r+1}d,a^*)\\
&\iff\ (K^*, \bm k^*) \models \psi(f_{r+1}b),
\end{align*}
using the already-established \ref{specialQE:4} for $r+1$ in the middle equivalence.
Hence the map $f_{r+1} \colon (E_{r+1}, \dot{\ca O}_{E_{r+1}}) \to (E_{r+1}^*, \dot{\ca O}^*_{E_{r+1}^*})$ is a pre-$H$-field isomorphism extending~$f_r$.
\end{proof}

\subsection{Consequences and special cases}\label{subsec:specialQEspecial}

\begin{cor}\label{cor:bddquant}
If $\ca L_{\res}=\ca L_{\OR,\der}$, then every $\ca L_{\lift}$-formula in $y$ is $T^{\dhl}_{\lift}$-equivalent to a formula $Q_m x_m\in U\cdots Q_1x_1 \in U\ \varphi(x,y)$ for some $x$, quantifiers $Q_1,\dots,Q_m \in \{\exists,\forall\}$, and quantifier-free $\ca L_{\OR,\der}$-formula $\varphi(x,y)$.
\end{cor}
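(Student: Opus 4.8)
The plan is to start from Theorem~\ref{thm:specialQE}, which already reduces any $\ca L_{\lift}$-formula in $y$ to a boolean combination of special formulas $\exists x\in U\ (\theta(x)\wedge\varphi(x,y))$ with $\theta$ an $\r$-relative formula and $\varphi$ quantifier-free $\ca L$. Since $\ca L_{\res}=\ca L_{\OR,\der}$, the atomic subformulas of an $\r$-relative $\theta(x)$ are quantifier-free $\ca L_{\OR,\der}$-formulas, and $U$ occurs in $\theta$ only as a quantifier guard; so every quantifier of $\theta(x)$ is already bounded to $U$ and its matrix is quantifier-free $\ca L_{\OR,\der}$. The only obstruction is that $\varphi(x,y)$ may contain the symbol $\dotrel{\prece}\notin\ca L_{\OR,\der}$. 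Thus the crux is to absorb each occurrence of $\dotrel{\prece}$ into bounded quantifiers over $U$.

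For this I would use that $\dot{\ca O}$ is the convex hull of $\bm k=U$ in $K$, as recorded just before Theorem~\ref{thm:specialQE}. This yields, for any $\ca L_{\OR,\der}$-terms $s,t$ in $(x,y)$, the $T^{\dhl}_{\lift}$-equivalences
\[
s\dotrel{\prece} t\ \iff\ \exists u\in U\ (u\ges 0\wedge s^2\les u^2 t^2),\qquad
s\not\dotrel{\prece} t\ \iff\ \forall u\in U\ (u\ges 0\to s^2>u^2 t^2),
\]
where the boundary case $t=0$ is subsumed, since then $u^2t^2=0$ and both sides of the left equivalence reduce to $s=0$ (using that $U$ is nonempty). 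Writing $\varphi$ in negation normal form and replacing each literal $s\dotrel{\prece} t$ or $s\not\dotrel{\prece} t$ by the corresponding bounded formula turns $\varphi(x,y)$ into an $\ca L_{\OR,\der}\cup\{U\}$-formula all of whose quantifiers are bounded to $U$ and whose matrix is quantifier-free $\ca L_{\OR,\der}$; the remaining literals are $\ca L_{\OR,\der}$-atoms and their negations, which stay quantifier-free $\ca L_{\OR,\der}$. Hence each special formula becomes a formula of this same kind.

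Finally I would prenex. Since $T^{\dhl}_{\lift}$ proves $U$ nonempty (indeed $\Q\subseteq\bm k$), the bounded quantifiers $\exists u\in U$ and $\forall u\in U$ obey the usual prenex laws, including $\neg\,\exists u\in U\,\chi\equiv\forall u\in U\,\neg\chi$ and the distribution of $\wedge,\vee$ over bounded quantifiers in variables not occurring free. So every such formula is $T^{\dhl}_{\lift}$-equivalent to one in the prenex form $Q_m x_m\in U\cdots Q_1x_1\in U\ \varphi(x,y)$ with $\varphi$ quantifier-free $\ca L_{\OR,\der}$, and a boolean combination of formulas already in this shape can be merged into a single such prenex formula (after renaming bound variables, again using nonemptiness of $U$), giving the statement. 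The only genuinely substantive step is the convex-hull elimination of $\dotrel{\prece}$; everything else is standard prenex bookkeeping.
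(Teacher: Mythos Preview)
Your proposal is correct and follows essentially the same approach as the paper: invoke Theorem~\ref{thm:specialQE}, eliminate $\dotrel{\prece}$ via the convex-hull description of $\dot{\ca O}$ as bounded quantifiers over $U$, and then prenex. The paper expresses the elimination as $a\dotrel{\prece}b \iff |a|\les|b|u$ for some $u\in\bm k^>$ (and similarly for $\dotrel{\prec}$), while your squared version $s^2\les u^2t^2$ is a more explicit rendering of the same idea as a genuine $\ca L_{\OR,\der}$-atom; the rest is the same ``standard logical manipulations'' the paper alludes to.
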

\begin{proof}
Note that for $(K,\bm k)\models T^{\dhl}_{\lift}$ and $a,b \in K$, we have $a\dotrel{\prece}b$ if and only if $|a|\les|b|u$ for some $u \in \bm k^>$, and $a\dotrel{\prec}b$ if and only if $|a|\les|b|u$ for all $u \in \bm k^>$.
The rest follows from Theorem~\ref{thm:specialQE} by standard logical manipulations.
\end{proof}
Corollary~\ref{cor:bddquant} is not used later, but shows in particular that the theory of transserial tame pairs is \emph{bounded} in the sense of \cite[Section~2]{chernikovsimon-extdefsetsdeppairs}.
By standard manipulations in ordered fields, we also have a refinement of Theorem~\ref{thm:specialQE} relevant in connection with Lemma~\ref{lem:interiordetails}.
\begin{cor}\label{cor:veryspecialQE}
Every $\ca L_{\lift}$-formula in $y$ is $T^{\dhl}_{\lift}$-equivalent to a disjunction of conjunctions
\[
\psi_0(y) \wedge \bigwedge_{i=1}^{s} \neg \psi_i(y),
\]
for some $s \in \N$, where each $\psi_i(y)$, $i=0,\dots,s$, is of the form $\exists x \in U\ (\theta(x)\wedge\bigwedge_{j=1}^t \varphi_j(x,y))$ for some $x$, $\r$-relative $\theta(x)$, and $t \in \N$, and each $\varphi_j(x,y)$, $j=1,\dots,t$, is one of $P(x,y)=0$, $P(x,y)>0$, $P(x,y) \dotrel{\succe} Q(x,y)$, and $P(x,y) \dotrel{\succ} Q(x,y)$, for some $P, Q \in \Z\{X,Y\}$.
\end{cor}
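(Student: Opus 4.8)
The plan is to extract this normal form from Theorem~\ref{thm:specialQE} by purely logical and ordered-field manipulations, with no further model-theoretic input. By Theorem~\ref{thm:specialQE}, the given formula is $T^{\dhl}_{\lift}$-equivalent to a boolean combination of special formulas $\exists x\in U\,(\theta(x)\wedge\varphi(x,y))$, so it suffices to rewrite each special formula into the required shape and then tidy up the resulting boolean combination.

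First I would refine the inner quantifier-free $\ca L$-formula $\varphi(x,y)$. After moving everything to one side, its atomic subformulas have the form $P=0$, $P>0$, $P\les 0$, or $P\dotrel{\prece}Q$ with $P,Q\in\Z\{X,Y\}$. Using that models of $T^{\dhl}_{\lift}$ are ordered valued fields, I would rewrite each atomic formula and its negation as a disjunction of the four allowed types $P=0$, $P>0$, $P\dotrel{\succe}Q$, $P\dotrel{\succ}Q$: for instance $P\neq 0$ becomes $P>0\vee -P>0$ and $P\les 0$ becomes $-P>0\vee P=0$, while the valuation relations behave well under negation and swapping of arguments, e.g.\ $P\dotrel{\prece}Q$ is $Q\dotrel{\succe}P$, $\neg(P\dotrel{\succe}Q)$ is $Q\dotrel{\succ}P$, and $\neg(P\dotrel{\succ}Q)$ is $Q\dotrel{\succe}P$. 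Distributing, $\varphi(x,y)$ becomes a disjunction of conjunctions of allowed atomic types, and pulling the bounded quantifier over this disjunction (since $\exists x\in U\,(A\vee B)$ is $\exists x\in U\,A\vee\exists x\in U\,B$) turns each special formula into a disjunction of formulas $\exists x\in U\,(\theta(x)\wedge\bigwedge_{j=1}^t\varphi_j(x,y))$ with each $\varphi_j$ of an allowed type, leaving $\theta$ (still $\r$-relative) untouched. Call these refined special formulas.

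The original formula is now a boolean combination of refined special formulas. Putting it in disjunctive normal form yields a disjunction of conjunctions $\bigwedge_{i\in P}\psi_i\wedge\bigwedge_{i'\in N}\neg\psi_{i'}$ of refined special formulas and their negations, and the remaining point is to merge each positive block into a single refined special formula $\psi_0$. Renaming the bound variables of the $\psi_i$, $i\in P$, to be pairwise disjoint, a conjunction of existentials over $U$ is a single existential over $U$; the conjunction of the $\r$-relative parts $\theta_i$ is again $\r$-relative, since the $\r$-relative formulas are closed under $\wedge$; and the conjunction of the inner conjunctions is again a single conjunction of allowed atomic types in the enlarged variable tuple. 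When $P$ is empty I would take $\psi_0$ to be a trivially true refined special formula such as $\exists x\in U\,(x=x)$, which is valid because $\bm k$ is nonempty. Relabelling the negated conjuncts as $\psi_1,\dots,\psi_s$ then gives exactly the asserted form.

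Every step here is routine; the only point needing real care is the atomic reduction in the second paragraph, namely verifying that negations of all four atomic relations—and in particular the valuation-divisibility relation $\dotrel{\prece}$—reduce to disjunctions of the four allowed positive types. This is precisely the ``standard manipulations in ordered fields'' invoked in the statement, and it is the one place where the ordered valued field structure guaranteed by $T^{\dhl}_{\lift}$ is used.
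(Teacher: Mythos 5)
Your proposal is correct and follows exactly the route the paper intends: the paper gives no written proof beyond invoking ``standard manipulations in ordered fields'' applied to Theorem~\ref{thm:specialQE}, and your reduction of atomic formulas and their negations to the four allowed types, distribution, pulling the bounded existential over disjunctions, passage to disjunctive normal form, and merging of the positive existential block is precisely the intended argument. The one step deserving care---that negations of $\les$ and of the asymptotic relations $\dotrel{\prece}$, $\dotrel{\succe}$, $\dotrel{\succ}$ again land among the allowed positive types---is handled correctly in your second paragraph.
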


Additionally, we obtain more precise results worth recording for the two main cases of interest, although they are not used later.

\begin{cor}\label{cor:veryspecialQEcodf}
If $T_{\res}$ is the theory of closed ordered differential fields, then every $\ca L_{\lift}$-formula in $y$ is $T^{\dhl}_{\lift}$-equivalent to a disjunction of conjunctions
\[
\psi_0(y) \wedge \bigwedge_{i=1}^{s} \neg \psi_i(y),
\]
for some $s \in \N$, where each $\psi_i(y)$, $i=0,\dots,s$, is of the form $\exists x \in U\ \bigwedge_{j=1}^t \varphi_j(x,y)$ for some $x$ and $t \in \N$, and each $\varphi_j(x,y)$, $j=1,\dots,t$, is one of $P(x,y)=0$, $P(x,y)>0$, $P(x,y) \dotrel{\succe} Q(x,y)$, and $P(x,y) \dotrel{\succ} Q(x,y)$, for some $P, Q \in \Z\{X,Y\}$.
\end{cor}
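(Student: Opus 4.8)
The plan is to start from Corollary~\ref{cor:veryspecialQE} and simply absorb the $\r$-relative prefix $\theta(x)$ into the conjunction of $\varphi$'s using quantifier elimination for closed ordered differential fields. By Corollary~\ref{cor:veryspecialQE}, every $\ca L_{\lift}$-formula in $y$ is already $T^{\dhl}_{\lift}$-equivalent to a disjunction of conjunctions $\psi_0(y) \wedge \bigwedge_{i=1}^s \neg\psi_i(y)$, with each $\psi_i$ of the form $\exists x \in U\ (\theta_i(x) \wedge \bigwedge_j \varphi_{i,j}(x,y))$, where $\theta_i$ is $\r$-relative and the $\varphi_{i,j}$ already have the four allowed shapes. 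The only discrepancy with the target statement is the presence of $\theta_i(x)$, so the entire task reduces to eliminating each $\theta_i$ in favour of additional allowed $\varphi$'s (which happen not to involve $y$).

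First I would use that, when $T_{\res}$ is the theory of closed ordered differential fields, it has quantifier elimination in $\ca L_{\res} = \ca L_{\OR,\der}$, as recalled in the subsection on existentially closed ordered differential fields. Every variable occurring in the $\r$-relative formula $\theta_i(x)$—both its free variables $x$ and its bound variables—ranges over $U$, and $U$ is interpreted in any model of $T^{\dhl}_{\lift}$ as $\bm k \models T_{\res}$; hence $\theta_i(x)$ is $T^{\dhl}_{\lift}$-equivalent to a quantifier-free $\ca L_{\OR,\der}$-formula $\theta_i'(x)$. Putting $\theta_i'$ in disjunctive normal form, each atomic $\ca L_{\OR,\der}$-formula in $x$ is $P(x) = 0$ or $P(x) \les Q(x)$ for differential polynomials in $\Z\{X\}$; using that the ambient structure is an ordered field, every such atom and its negation rewrite as a boolean combination of $P(x) = 0$ and $P(x) > 0$ with $P \in \Z\{X\} \subseteq \Z\{X,Y\}$, which are among the four allowed shapes for $\varphi$. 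Thus $\theta_i'(x)$ becomes a disjunction of conjunctions of allowed $\varphi$'s not involving $y$.

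Next I would distribute. Writing $\theta_i'(x) = \bigvee_k \bigwedge_{j'} \varphi'_{i,k,j'}(x)$ and pulling the existential quantifier across the disjunction, each $\psi_i$ becomes a finite disjunction $\bigvee_k \chi_{i,k}(y)$, where $\chi_{i,k}(y) = \exists x \in U\ (\bigwedge_{j'} \varphi'_{i,k,j'}(x) \wedge \bigwedge_j \varphi_{i,j}(x,y))$ is exactly of the target form $\exists x \in U\ \bigwedge_l \varphi_l(x,y)$. Substituting back into $\psi_0 \wedge \bigwedge_{i=1}^s \neg\psi_i$, I have $\psi_0 = \bigvee_k \chi_{0,k}$ and each $\neg\psi_i = \bigwedge_k \neg\chi_{i,k}$, so one further pass to disjunctive normal form—distributing the disjunction coming from $\psi_0$ over the conjunction of negations—yields a disjunction of conjunctions $\chi_{0,k} \wedge \bigwedge \neg\chi_{i,k'}$, each of precisely the stated shape with $\psi_i' = \exists x \in U\ \bigwedge_l \varphi_l(x,y)$.

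None of the steps is hard; the content lies entirely in the quantifier elimination for closed ordered differential fields that lets one discard $\theta_i$. The only point requiring care is the uniformity in the first step—that $\theta_i$ is equivalent to a single quantifier-free $\ca L_{\OR,\der}$-formula across all models of $T^{\dhl}_{\lift}$—which holds because the equivalence is witnessed inside the $U$-sort $\bm k$ and depends only on $\bm k \models T_{\res}$, not on the surrounding pre-$H$-field. The remaining manipulations are routine boolean normalization.
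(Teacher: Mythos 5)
Your proposal is correct and follows essentially the same route as the paper: the paper's proof likewise invokes quantifier elimination for closed ordered differential fields to replace the $\r$-relative formula $\theta(x)$ in a special formula by a quantifier-free $\ca L_{\OR,\der}$-formula $\bar{\theta}(x)$, leaving the remaining boolean and ordered-field normalizations (which you spell out in more detail) implicit. Your observation that the equivalence $\theta \leftrightarrow \bar{\theta}$ transfers to $(K,\bm k)$ because it is evaluated inside $\bm k\models T_{\res}$ and quantifier-free formulas are absolute is the right justification for the uniformity the paper takes for granted.
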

\begin{proof}
The theory of closed ordered differential fields has quantifier elimination in $\ca L_{\OR,\der}$ \cite{singer-codf}.
Hence, if $\exists x \in U\ (\theta(x)\wedge\varphi(x,y))$ is special, then there is a quantifier-free $\ca L_{\OR,\der}$-formula $\bar{\theta}(x)$ such that $\exists x \in U\ (\theta(x)\wedge\varphi(x,y))$ is $T^{\dhl}_{\lift}$-equivalent to $\exists x \in U\ (\bar{\theta}(x)\wedge\varphi(x,y))$.
\end{proof}

For the next result, recall from before Definition~\ref{defn:specialfmla} how $\prece$ is interpreted.
\begin{cor}\label{cor:veryspecialQEttp}
If $T_{\res}$ is the theory of closed $H$-fields, then every $\ca L_{\lift}$-formula in $y$ is $T^{\dhl}_{\lift}$-equivalent to a disjunction of conjunctions
\[
\psi_0(y) \wedge \bigwedge_{i=1}^{s} \neg \psi_i(y),
\]
for some $s \in \N$, where each $\psi_i(y)$, $i=0,\dots,s$, is of the form $\exists x \in U\ \bigwedge_{j=1}^t \varphi_j(x,y)$ for some $x$ and $t \in \N$, and each $\varphi_j(x,y)$, $j=1,\dots,t$, is one of $P(x,y)=0$, $P(x,y)>0$, $P(x,y) \dotrel{\succe} Q(x,y)$, $P(x,y) \dotrel{\succ} Q(x,y)$, $R(x) \succe S(x)$, and $R(x) \succ S(x)$, for some $P, Q \in \Z\{X,Y\}$ and $R, S \in \Z\{X\}$.
\end{cor}
\begin{proof}
The theory of closed $H$-fields is model complete in $\ca L_{\OR,\der}\cup\{ \prece \}$.
Hence, for a special $\exists x \in U\ (\theta(x)\wedge\varphi(x,y))$, there is a quantifier-free $\ca L_{\OR,\der}\cup\{\prece\}$-formula $\bar{\theta}(x,z_1,\dots,z_{\ell})$ such that $\exists x \in U\ (\theta(x)\wedge\varphi(x,y))$ is $T^{\dhl}_{\lift}$-equivalent to $\exists x,z_1,\dots,z_{\ell} \in U\ (\bar{\theta}(x,z_1,\dots,z_{\ell})\wedge\varphi(x,y))$.
\end{proof}

Recall that the predicates $\Upomega_2$ and $\Uplambda_2$ defined in Section~\ref{subsec:closedHttp} are needed for quantifier elimination for $T^{\nl}_{\sm}$ but not for model completeness.
Although the remainder of this subsection will also not be used later, it might be worth clarifying the relationship between the subsets $\Uplambda(K)$ and $\Uplambda(L)$ (and likewise for $\Upomega$) for a transserial tame pair $(K,L)$, and explaining how they are defined by special formulas.
The proofs make use of \cite[Section~11.8]{adamtt}.

\begin{lem}\label{lem:ttpconvexsubgp}
Let $(K, L)$ be a transserial tame pair.
Then $\Gamma_L$ is a convex subgroup of~$\Gamma$.
\end{lem}
\begin{proof}
Let $a \in K$ with $0<va<\gamma$ for some $\gamma \in \Gamma_L^>$.
It suffices to show that $va \in \Gamma_L$.
But $a \in \cao\setminus\dot{\cao}$, so we have $u\in L^{\x}$ with $a\dotrel{\sim}u$, in which case $a\sim u$.
\end{proof}

\begin{lem}\label{lem:ttpLambda}
Let $(K, L)$ be a transserial tame pair.
Then $\Uplambda(K)$ is the downward closure of $\Uplambda(L)$ in $K$ and $K \setminus \Uplambda(K)$ is the upward closure of $L \setminus \Uplambda(L)$ in~$K$.
\end{lem}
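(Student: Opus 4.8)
The plan is to identify both $\Uplambda(K)$ and $K\setminus\Uplambda(K)$ as the two pieces into which the cut $\Uplambda(L)$ divides $K$. Write $D$ for the downward closure of $\Uplambda(L)$ in $K$ and $U$ for the upward closure of $L\setminus\Uplambda(L)$ in $K$; the two assertions to prove are $\Uplambda(K)=D$ and $K\setminus\Uplambda(K)=U$.

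First I would record that $\Uplambda(K)\cap L=\Uplambda(L)$. The set $\{-a^{\dagger\dagger}:a\succ 1\}$ is defined by one and the same existential $\ca L_{\OR,\der}\cup\{\prece\}$-formula in every closed $H$-field, since $a\succ 1$ is equivalent to $a\not\prece 1$. As $K$ is an elementary extension of $L$ as valued differential fields with their natural valuations (by \cite[Lemma~3.1]{pc-transtamepair}), an element of $L$ lies in this existentially definable set computed in $K$ if and only if it does computed in $L$. In particular $\Uplambda(L)\subseteq\Uplambda(K)$ and $L\setminus\Uplambda(L)\subseteq K\setminus\Uplambda(K)$. Since $\Uplambda(K)$ is downward closed, and hence $K\setminus\Uplambda(K)$ is upward closed (recalled in Section~\ref{sec:closedHttp}), these inclusions upgrade to $D\subseteq\Uplambda(K)$ and $U\subseteq K\setminus\Uplambda(K)$.

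The crux is to show $D\cup U=K$, after which the conclusion is formal: $\Uplambda(K)=\Uplambda(K)\cap(D\cup U)=(\Uplambda(K)\cap D)\cup(\Uplambda(K)\cap U)=D$, using $D\subseteq\Uplambda(K)$ and $U\cap\Uplambda(K)=\emptyset$, and symmetrically $K\setminus\Uplambda(K)=U$. To prove the covering, it suffices to rule out any $c\in K$ lying in the gap, i.e. with $c>\Uplambda(L)$ and $c<L\setminus\Uplambda(L)$: for such a $c$ we would have $(-\infty,c)\cap L=\Uplambda(L)$, a proper nonempty subset of $L$, so Dedekind completeness of $L$ in $K$ yields $g\in L$ with $(-\infty,c)\cap L=(-\infty,g)\cap L=\Uplambda(L)$, which forces $g=\min(L\setminus\Uplambda(L))$ and contradicts the fact that $L\setminus\Uplambda(L)$ has no minimum (valid since $L\models T^{\nl}_{\sm}$ is a closed $H$-field; recalled in Section~\ref{sec:closedHttp}). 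Granting this, for arbitrary $c\in K$ either $c\les\lambda$ for some $\lambda\in\Uplambda(L)$, whence $c\in D$, or else $c>\Uplambda(L)$, in which case the emptiness of the gap forces $c\ges\mu$ for some $\mu\in L\setminus\Uplambda(L)$, whence $c\in U$; here I use that both $\Uplambda(L)$ and $L\setminus\Uplambda(L)$ are nonempty.

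I expect the main obstacle to be precisely this covering step. The transfer of the existential definition across the elementary pair and the closing Boolean manipulation are routine, whereas excluding an element of $K$ that sits strictly between $\Uplambda(L)$ and $L\setminus\Uplambda(L)$ is exactly where the tameness hypothesis is used, entering through Dedekind completeness of $L$ in $K$ together with the no-maximum/no-minimum properties of the $\Uplambda$-cut.
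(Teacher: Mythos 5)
Your proof is correct, but it takes a genuinely different route from the paper's. The paper obtains the hard inclusion $\Uplambda(K)\subseteq\Uplambda(L)^{\downarrow}$ by citing \cite[Lemma~11.8.14]{adamtt}, whose hypothesis it verifies via Lemma~\ref{lem:ttpconvexsubgp} (convexity of $\Gamma_L$ in $\Gamma$), and derives the statement about $K\setminus\Uplambda(K)$ from the same lemma together with \cite[Corollary~11.8.13]{adamtt}. You avoid the value-group analysis entirely: elementarity of $L\prece K$ gives $\Uplambda(K)\cap L=\Uplambda(L)$, downward/upward closedness gives the easy inclusions, and all the content sits in the covering step, which you settle using the tameness of the pair together with the no-maximum/no-minimum properties of the $\Uplambda$-cut. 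This is self-contained modulo the facts recalled in Section~\ref{sec:closedHttp} and makes visible exactly where the tameness hypothesis enters, whereas the paper's proof is shorter and reuses machinery for which Lemma~\ref{lem:ttpconvexsubgp} was set up anyway. One point to tighten: the official definition is (TTP3), $\dot{\ca O}=L+\dot{\cao}$, and ``Dedekind completeness'' as phrased in the introduction is literally slightly too strong --- for $c=g+\varepsilon$ with $g\in L$ and $0<\varepsilon\in\dot{\cao}$ the trace $(-\infty,c)\cap L$ equals $(-\infty,g]\cap L$, which is not of the form $(-\infty,g')\cap L$ in a dense order. It is cleaner to run the gap-exclusion directly from (TTP3): a $c$ with $\Uplambda(L)<c<L\setminus\Uplambda(L)$ lies between two elements of $L$, hence in $\dot{\ca O}$, so $c=g+\varepsilon$ with $g\in L$ and $|\varepsilon|$ below every positive element of $L$; whether $g\in\Uplambda(L)$ or $g\in L\setminus\Uplambda(L)$, the no-maximum/no-minimum facts produce an element of $L$ strictly between $c$ and the wrong side of the cut, a contradiction. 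Since a realized cut of the form $(-\infty,g]\cap L$ contradicts no-maximum just as $(-\infty,g)\cap L$ contradicts no-minimum, your argument survives in either case; this is a cosmetic repair, not a gap.
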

\begin{proof}
Denote the downward closure of $\Uplambda(L)$ in $K$ by $\Uplambda(L)^{\downarrow}$.
The inclusion $\Uplambda(L)^{\downarrow} \subseteq \Uplambda(K)$ is trivial, and the other direction is just \cite[Lemma~11.8.14]{adamtt}, whose assumption is satisfied by Lemma~\ref{lem:ttpconvexsubgp}.
The statement about $K \setminus \Uplambda(K)$ follows from \cite[Lemma~11.8.14]{adamtt} by \cite[Corollary~11.8.13]{adamtt}.
\end{proof}

\begin{cor}\label{cor:ttpOmega}
Let $(K, L)$ be a transserial tame pair.
Then $\Upomega(K)$ is the downward closure of $\Upomega(L)$ in $K$ and $K \setminus \Upomega(K)$ is the upward closure of $L \setminus \Upomega(L)$ in~$K$.
\end{cor}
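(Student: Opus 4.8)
The plan is to obtain Corollary~\ref{cor:ttpOmega} from Lemma~\ref{lem:ttpLambda} in the way its placement as a corollary suggests, transferring through the connection between $\Uplambda$ and $\Upomega$ recorded in \cite[Section~11.8]{adamtt}. Write $\Upomega(L)^{\downarrow}$ for the downward closure of $\Upomega(L)$ in $K$. Since $L$ is a differential subfield of $K$ and the defining expression $-4a''/a$ is the same in $L$ and $K$, we have $\Upomega(L) \subseteq \Upomega(K)$, and $\Upomega(K)$ is downward closed in $K$ by \cite[Proposition~11.8.20]{adamtt} and the subsequent remarks; hence $\Upomega(L)^{\downarrow} \subseteq \Upomega(K)$. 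This trivial inclusion mirrors the first step in the proof of Lemma~\ref{lem:ttpLambda}.

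For the reverse inclusion I would use the relationship $\Upomega(K) = \omega(\Uplambda(K))^{\downarrow}$ together with strict monotonicity of $\omega$ on $\Uplambda(K)$, where $\omega(z) = -2z'-z^2$, so that $-4a''/a = \omega(2a^\dagger)$ and the sign convention matches the downward-closedness of $\Upomega(K)$. Given $y \in \Upomega(K)$, take $\lambda \in \Uplambda(K)$ with $y \les \omega(\lambda)$. By Lemma~\ref{lem:ttpLambda}, $\Uplambda(K)$ is the downward closure of $\Uplambda(L)$, so $\lambda \les \lambda_0$ for some $\lambda_0 \in \Uplambda(L)$; monotonicity of $\omega$ then gives $y \les \omega(\lambda) \les \omega(\lambda_0)$, and $\omega(\lambda_0) \in \Upomega(L)$ since $\Upomega(L) \supseteq \omega(\Uplambda(L))$. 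Thus $y \in \Upomega(L)^{\downarrow}$, proving $\Upomega(K) \subseteq \Upomega(L)^{\downarrow}$. The statement about $K \setminus \Upomega(K)$ follows by the same transfer: an element $c \in K \setminus \Upomega(K)$ lies above all of $\omega(\Uplambda(K))$, and feeding the complement part of Lemma~\ref{lem:ttpLambda} (that $K \setminus \Uplambda(K)$ is the upward closure of $L \setminus \Uplambda(L)$) through $\omega$ locates an element of $L \setminus \Upomega(L)$ below $c$, whence $c$ lies in the upward closure of $L \setminus \Upomega(L)$.

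A more mechanical alternative is to repeat the proof of Lemma~\ref{lem:ttpLambda} verbatim with $\Upomega$ in place of $\Uplambda$, invoking the $\Upomega$-analogue of \cite[Lemma~11.8.14]{adamtt} from \cite[Section~11.8]{adamtt} (in the vicinity of Corollaries~11.8.30 and 11.8.33, which were already cited for the ``no minimum'' facts), whose convex-subgroup hypothesis is supplied by Lemma~\ref{lem:ttpconvexsubgp}, playing the role that \cite[Corollary~11.8.13]{adamtt} plays there for passing to complements.

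The main obstacle is pinning down the precise statement to cite: either confirming that \cite[Section~11.8]{adamtt} records $\Upomega(K) = \omega(\Uplambda(K))^{\downarrow}$ with $\omega$ strictly increasing on $\Uplambda(K)$ (making the transfer through Lemma~\ref{lem:ttpLambda} clean), or locating the exact $\Upomega$-analogue of Lemma~11.8.14 and checking that its hypothesis is the convexity of $\Gamma_L$ from Lemma~\ref{lem:ttpconvexsubgp}. The only genuinely delicate point is the boundary behaviour of $\omega$, whose turning point coincides with the upper edge of $\Upomega(K)$; this is exactly what the monotonicity-on-$\Uplambda(K)$ assertion controls, and once it is in hand both inclusions and the complementary statement are routine.
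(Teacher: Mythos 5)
Your first argument is essentially the paper's own proof: the paper likewise deduces the $\Upomega$ statement from Lemma~\ref{lem:ttpLambda} via the identity $\Upomega(K)=\omega(\Uplambda(K))$ and the strict monotonicity of $\omega$ on $\Uplambda(K)$ from \cite[Proposition~11.8.20]{adamtt}. The one imprecision is your treatment of the complement: $K\setminus\Upomega(K)$ is not obtained by pushing $K\setminus\Uplambda(K)$ through $\omega$ (which is not increasing there); the paper instead works with the companion function $\sigma$ and the set $\Upgamma(K)$ via \cite[Corollaries~11.8.30, 11.8.32, and 11.8.33]{adamtt} --- exactly the references you flag as the place to look for the delicate boundary behaviour.
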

\begin{proof}
We have $\Upomega(K)=\omega(\Uplambda(K))$ for a certain function $\omega\colon K \to K$ that is strictly increasing on $\Uplambda(K)$ \cite[Proposition~11.8.20]{adamtt}, so the statement for $\Upomega(K)$ follows from the statement for $\Uplambda(K)$.
The statement for $K \setminus \Upomega(K)$ is similar, but using \cite[Corollaries~11.8.30, 11.8.32, and 11.8.33]{adamtt}.
\end{proof}

\begin{cor}
Let $(K,L)$ be a transserial tame pair.
Then $\Uplambda(K)$, $\Upomega(K)$, $K\setminus\Uplambda(K)$, and $K\setminus\Upomega(K)$ are defined by special formulas with $\ca L_{\res}=\ca L_{\OR,\der}\cup\{\prece\}$.
\end{cor}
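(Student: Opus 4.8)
The plan is to combine the characterisations of $\Uplambda(K)$, $\Upomega(K)$ and their complements from Lemma~\ref{lem:ttpLambda} and Corollary~\ref{cor:ttpOmega} with the observation that $\Uplambda(L)$, $\Upomega(L)$, $L\setminus\Uplambda(L)$, and $L\setminus\Upomega(L)$ are defined by $\r$-relative formulas with $\ca L_{\res}=\ca L_{\OR,\der}\cup\{\prece\}$. Recall that for a transserial tame pair the predicate $U$ is interpreted as $L$.

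First I would check the $\r$-relative definability of these four subsets of $L$. From its definition, $w\in\Uplambda(L)$ holds if and only if there is $a\in L$ with $a\succ 1$ and $w=-a^{\dagger\dagger}$. Introducing an auxiliary variable $q$ standing for $a^{\dagger}$ so as to clear denominators, this is expressed by the $U$-bounded existential formula
\[
\theta_{\Uplambda}(w)\ \coloneqq\ \exists a,q\in U\ \big(a\succ 1\ \wedge\ q\,a=\der a\ \wedge\ w\,q=-\der q\big),
\]
which is $\r$-relative, since $\succ$ is definable from $\prece\in\ca L_{\res}$ and the remaining atomic formulas are $\ca L_{\OR,\der}$-formulas. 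Likewise, from $\Upomega(L)=\{-4a''/a : a\in L^{\x}\}$ one obtains, after a single cross-multiplication, an $\r$-relative formula $\theta_{\Upomega}(w)$. The complements $L\setminus\Uplambda(L)$ and $L\setminus\Upomega(L)$ are then defined by the $\r$-relative formulas $\neg\theta_{\Uplambda}(w)$ and $\neg\theta_{\Upomega}(w)$.

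Next I would assemble the special formulas. By Lemma~\ref{lem:ttpLambda}, $\Uplambda(K)$ is the downward closure of $\Uplambda(L)$ in $K$, so $b\in\Uplambda(K)$ if and only if there is $w\in\Uplambda(L)$ with $b\les w$; hence $\Uplambda(K)$ is defined by the special formula $\exists w\in U\ (\theta_{\Uplambda}(w)\wedge y\les w)$. Again by Lemma~\ref{lem:ttpLambda}, $K\setminus\Uplambda(K)$ is the upward closure of $L\setminus\Uplambda(L)$ in $K$, hence is defined by $\exists w\in U\ (\neg\theta_{\Uplambda}(w)\wedge w\les y)$. Using Corollary~\ref{cor:ttpOmega} in place of Lemma~\ref{lem:ttpLambda}, $\Upomega(K)$ is defined by $\exists w\in U\ (\theta_{\Upomega}(w)\wedge y\les w)$ and $K\setminus\Upomega(K)$ by $\exists w\in U\ (\neg\theta_{\Upomega}(w)\wedge w\les y)$. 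In each case the formula inside the $U$-bounded quantifier is a conjunction of an $\r$-relative formula in $w$ with a quantifier-free $\ca L$-formula in $w,y$ (namely $y\les w$ or $w\les y$, both in $\ca L_{\OR,\der}\subseteq\ca L$), so each is special.

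The real content lies in the already-established Lemma~\ref{lem:ttpLambda} and Corollary~\ref{cor:ttpOmega}; granting those, the only obstacle is the bookkeeping needed to match the precise syntactic shape of a special formula. The single point requiring care is the $\r$-relative definability of $\Uplambda(L)$ and $\Upomega(L)$: their defining expressions involve logarithmic derivatives and division, which must be eliminated by cross-multiplication using auxiliary $U$-bounded variables as above, and the valuation condition $a\succ 1$ must be rewritten in terms of the $\ca L_{\res}$-symbol $\prece$.
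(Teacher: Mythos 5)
Your proposal is correct and follows essentially the same route as the paper: reduce to Lemma~\ref{lem:ttpLambda} and Corollary~\ref{cor:ttpOmega}, write $\Uplambda(L)$ and $\Upomega(L)$ by cross-multiplied $U$-bounded existential formulas, and use closure of $\r$-relative formulas under negation for the complements (the paper only writes out the $\Upomega$ case and additionally remarks that $\neg\Upomega(z)$ can be made existential via model completeness, but that refinement is not needed for the statement).
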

\begin{proof}
We only explain how to define $\Upomega(K)$ and $K\setminus\Upomega(K)$.
Let $a \in K$.
By Corollary~\ref{cor:ttpOmega},
\begin{align*}
a \in \Upomega(K)\ &\iff\ (K,L)\models \exists z \in U\ (\Upomega(z) \wedge a<z)\\
&\iff\ (K,L)\models \exists z_1, z_2 \in U\ (z_2 \neq 0 \wedge 4z_2''+z_1z_2 = 0 \wedge a<z_1).
\end{align*}
Also,
\[
a \notin \Upomega(K)\ \iff\ (K,L)\models \exists z \in U\ (\neg\Upomega(z) \wedge a>z),
\]
and $\neg\Upomega(z)$ is $T^{\nl}_{\sm}$-equivalent to an existential $\ca L_{\OR,\der}\cup\{\prece\}$-formula by model completeness.
An explicit description of the formula can be obtained from \cite[Section~11.8]{adamtt}, especially \cite[Corollary~11.8.33]{adamtt}.
\end{proof}

\section{Dimension in pairs}\label{sec:dimpairs}

In this section, let $(K, \bm k) \models T^{\dhl}_{\lift}$, the theory introduced in Section~\ref{subsec:intspecialQE}.
\subsection{Interior and local o-minimality}\label{subsec:intlocalomin}
As mentioned earlier, $\bm k$ is a discrete subset of $K$ definable in $(K, \bm k)$ with $\dim \bm k=1$.
To recover a suitable notion of dimension in $(K,\bm k)$, we relativize the pregeometry $\cl^{\der}$ to the pregeometry
\[
\cl^{\der}_2(A)\ =\ \{ b\in K : b\ \text{is $\d$-algebraic over}\ \bm k\langle A\rangle \}
\]
for $A \subseteq K$.
(This is reminiscent of the ``small closure'' for lovely pairs \cite{berensteinvassiliev}, dense pairs \cite[Section~8.4]{fornasiero-dimension}, and tame pairs of real closed fields \cite{angelvdd}.)
It is defined in any model of $T^{\dhl}_{\lift}$ by the collection of formulas of the form
$\exists x \in U\ (P(x,y,z)=0 \wedge P(x,y,Z)\neq 0)$, where $P \in \Z\{X,Y,Z\}$.

As before, this yields a notion of dimension on subsets of $K^n$:
For nonempty $S\subseteq K^n$,
\[
\dim_2 S\ \coloneqq\ \max\{\dtrdeg(K\bm k^*\langle s\rangle|K\bm k^*) : s \in S^* \},
\]
where $(K^*, \bm k^*, S^*) \succe (K, \bm k, S)$ is $|K|^+$-saturated; also, $\dim_2 \0 \coloneqq -\infty$.
Note that $\dim_2 \bm k=0$, but it is not obvious that $\dim_2 K=1$; that is Lemma~\ref{2:lem:Knotlean}.
It then follows in Lemma~\ref{lem:clder2existmatroid} that $\cl^{\der}_2$ (on a monster model) is an existential matroid.
Lemma~\ref{lem:dimbasic} summarizes some basic properties of $\dim_2$.

\begin{defn}
We call $S \subseteq K$ \deft{lean} if $S$ is contained in a finite union of sets of the form
\[
\{ a \in K: P(u,a)=0\ \text{for some}\ u \in \bm k^m\ \text{with}\ P(u,Z)\neq 0 \},
\]
where $P \in K\{X,Z\}$.
\end{defn}
Note that $S \subseteq K$ is lean if and only if $\dim_2 S \les 0$.
Every thin subset of $K$ is lean, but so is $\bm k$, so the lean subsets of $K$ form an ideal properly containing the ideal of thin sets.
Towards showing that $K$ is not lean, i.e., $\dim_2 K=1$, the proofs of \cite[Lemmas~16.6.9 and 16.6.10]{adamtt} give the following lemma.
To state it and sketch the minor differences uses the notation $\Psi_{\dot{\Gamma}}$ from the end of Section~\ref{subsec:preH}.

\begin{lem}\label{lem:16.6.9-10}
Let $(E, \dot{\ca O}_E)$ be a pre-$H$-field with $\Psi_{\dot{\Gamma}_E}$ downward closed in $\dot{\Gamma}_E$.
Let $a$ be an element of some pre-$H$-field extension of $(E, \dot{\ca O}_E)$ with gap~$0$, and suppose that $a>E$ and $a \dotrel{\succ} 1$.
Then
\begin{enumerate}
    \item\label{16.6.9} $E<(a^\dagger)^m<a$ for all $m \ges 1$;
    \item\label{16.6.10} $a$ is $\d$-transcendental over $E$ with
    \[\res(E\langle a \rangle, \dot{\ca O}_{E\langle a \rangle})\ =\ \res(E, \dot{\ca O}_E)\ \qquad\ \text{and}\ \qquad\ \dot{\Gamma}_{E\langle a \rangle}\ =\ \dot{\Gamma}_E \oplus \bigoplus_n \Z \dot{v}(a^{\langle n \rangle}).\]
\end{enumerate}
\end{lem}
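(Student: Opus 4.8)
The plan is to follow the strategy of \cite[Lemmas~16.6.9 and 16.6.10]{adamtt} and adapt it to the pre-$H$-field-with-gap-$0$ setting, where the main difference is that the derivation need not be small relative to the \emph{natural} valuation but the hypotheses on $\Psi_{\dot{\Gamma}_E}$ and gap~$0$ substitute for the structural facts used there. First I would establish \ref{16.6.9}. Since $a \dotrel{\succ} 1$ and the ambient extension has gap~$0$, we have $a^\dagger \dotrel{\succ} 1$ as well, and more precisely $\dot{v}(a^\dagger) < 0$ lies in (or just below) $\Psi_{\dot{\Gamma}_E}$; the downward closure of $\Psi_{\dot{\Gamma}_E}$ in $\dot{\Gamma}_E$ is exactly what lets me compare $\dot{v}\big((a^\dagger)^m\big) = m\,\dot{v}(a^\dagger)$ against elements of $\dot{\Gamma}_E$ and against $\dot{v}(a)$. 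The inequality $(a^\dagger)^m < a$ should follow because $\dot{v}(a) < \dot{v}\big((a^\dagger)^m\big)$ (the logarithmic derivative is much smaller in absolute value than $a$ when $a$ is transexponentially large), while $E < (a^\dagger)^m$ follows from $a^\dagger \dotrel{\succ} 1$ together with $a^\dagger > 0$ (using pre-$H$-field axiom \ref{ph2}, since $a > E \supseteq \dot{\ca O}_E$ forces $a' > 0$, hence $a^\dagger > 0$) and the downward closedness of $\Psi_{\dot{\Gamma}_E}$, which guarantees no element of $E$ can catch up to $(a^\dagger)^m$ from below.

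Next I would prove \ref{16.6.10}. The key point is that $a$ is $\d$-transcendental over $E$: if $a$ were $\d$-algebraic, it would satisfy some $P \in E\{Z\}^{\neq}$, and one derives a contradiction from the fact that $a$ and all its iterated logarithmic derivatives $a^{\langle n\rangle}$ dominate $E$ in the sense made precise in \ref{16.6.9}, so that no nontrivial differential-polynomial relation can hold—this is the crux of the domination argument in \cite[Lemma~16.6.10]{adamtt}. Granting $\d$-transcendence, I would compute the value group and residue field of $E\langle a\rangle$. For the value group, because $a$ is $\d$-transcendental and each successive $a^{\langle n\rangle}$ has value in a fresh direction (again using \ref{16.6.9} to see the values $\dot{v}(a^{\langle n\rangle})$ are $\Q$-linearly independent over $\dot{\Gamma}_E$), a standard valuation-theoretic computation gives $\dot{\Gamma}_{E\langle a\rangle} = \dot{\Gamma}_E \oplus \bigoplus_n \Z\,\dot{v}(a^{\langle n\rangle})$. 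For the residue field, the same value-group computation shows that differential polynomials in $a$ cannot produce new units in $\dot{\ca O}$, so $\res$ is unchanged.

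The hard part will be the $\d$-transcendence argument and the accompanying value-group computation in \ref{16.6.10}, since this is where one must control arbitrary differential polynomials $P(a)$ rather than just powers of $a^\dagger$. The delicate bookkeeping is to show that the dominant monomial of $P(a)$ is determined purely by valuation-theoretic data—so that distinct monomials have distinct values and cannot cancel—which requires carefully tracking $\dot{v}(a^{\langle n\rangle})$ for all $n$ and invoking the gap~$0$ hypothesis to pin down the logarithmic-derivative map on $\dot{\Gamma}$. I expect the inequalities in \ref{16.6.9} and the downward closedness of $\Psi_{\dot{\Gamma}_E}$ to do most of the work here, exactly as in the cited lemmas, so the proof should amount to checking that those hypotheses suffice in place of whatever stronger structure (e.g.\ $\upomega$-freeness or smallness of the derivation) was available in \cite{adamtt}. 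Since the statement explicitly says the proofs of \cite[Lemmas~16.6.9 and 16.6.10]{adamtt} already give the result modulo ``minor differences,'' I would organize the writeup as a pointer to those proofs together with an explicit verification that the pre-$H$-field and gap~$0$ hypotheses replace the ambient assumptions used there.
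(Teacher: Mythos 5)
Your proposal is correct and takes essentially the same route as the paper's proof: part (i) by deducing $\dot{v}(a^\dagger)<\dot{\Gamma}_E$ from the downward closedness of $\Psi_{\dot{\Gamma}_E}$ and then invoking \cite[Lemma~9.2.10]{adamtt} together with the pre-$H$-field axioms, and part (ii) by the logarithmic-decomposition/dominant-monomial argument of \cite[Lemma~16.6.10]{adamtt}, using that the values $\dot{v}(a^{\langle n\rangle})$ are independent over $\dot{\Gamma}_E$ so that $P(a)\dotrel{\sim}P_{\langle\bm j\rangle}a^{\langle\bm j\rangle}$ for the lexicographically maximal $\bm j$. The paper's write-up is likewise a pointer to those two lemmas plus an explicit record of this dominant-monomial formula for later use.
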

\begin{proof}
For \ref{16.6.9}, $\dot{v}a^\dagger<\dot{\Gamma}_E$ follows from $\Psi_{\dot{\Gamma}_E} \subseteq \dot{\Gamma}_E$ being downward closed and $\dot{v}a<\dot{\Gamma}$ as in the proof of \cite[Lemma~16.6.9]{adamtt}.
The statement then follows from \cite[Lemma~9.2.10]{adamtt} and the definition of pre-$H$-field.

Part \ref{16.6.10} is proved using \ref{16.6.9} and logarithmic decomposition exactly as in \cite[Lemma~16.6.10]{adamtt}, but we record some details needed later.
For $P \in E\{Z\}^{\neq}$ of order at most $r$, we have its logarithmic decomposition $P(Z)=\sum_{\bm i} P_{\langle\bm i\rangle}Z^{\langle\bm i\rangle}$ with all $P_{\langle\bm i\rangle} \in E$, finitely many of them nonzero; notationally, $\bm i \in \N^{1+r}$ and $Z^{\langle\bm i\rangle}=(Z^{\langle 0\rangle})^{i_0}\cdots (Z^{\langle r\rangle})^{i_r}$, where $Z^{\langle 0\rangle}=Z$ and $Z^{\langle n+1 \rangle} = (Z^{\langle n\rangle})^{\dagger}$.
For more details, see \cite[Section~4.2]{adamtt}.
Then letting $\bm j \in \N^{1+r}$ be lexicographically maximal with $P_{\langle \bm j\rangle} \neq 0$, we have $P(a) \dotrel{\sim} P_{\langle \bm j\rangle}a^{\langle\bm j\rangle}$.
In particular, the sign of $P(a)$ is the sign of $P_{\langle\bm j\rangle}$ and $\dot{v}P(a)=\dot{v}P_{\langle\bm j\rangle}+\sum_{k=0}^r j_k \dot{v}(a^{\langle k\rangle})$.
\end{proof}

For use in the proofs of Proposition~\ref{prop:localomin} and Lemma~\ref{lem:interiordetails}, we observe certain uniformity in the previous proof, construing the pre-$H$-field $(E,\dot{\ca O}_E)$ as an $\ca L$-structure.
\begin{cor}\label{cor:16.6.9-10uniform}
Let $(E,\dot{\ca O}_E)$ be as in Lemma~\ref{lem:16.6.9-10} and $P,Q\in E\{X,Z\}^{\neq}$.
\begin{enumerate}
    \item\label{16.6.9-10remark1} There is an $\ca L_{\OR,\der}$-definable partition of $E^m$ into $D_{>}^{\top}$ and $D_{>}^{\bot}$ such that
    \begin{align*}
    e \in D_{>}^{\top}\ &\iff\ \text{for all}\ a\ \text{as in Lemma~\ref{lem:16.6.9-10}},\ P(e,a)>0;\\
    e \in D_{>}^{\bot}\ &\iff\ \text{for all}\ a\ \text{as in Lemma~\ref{lem:16.6.9-10}},\ P(e,a)\les 0.
    \end{align*}
    \item\label{16.6.9-10remark2} There is an $\ca L$-definable partition of $E^m$ into $D_{\dotrel{\succ}}^{\top}$ and $D_{\dotrel{\succ}}^{\bot}$ such that
    \begin{align*}
    e \in D_{\dotrel{\succ}}^{\top}\ &\iff\ \text{for all}\ a\ \text{as in Lemma~\ref{lem:16.6.9-10}},\ P(e,a)\dotrel{\succ} Q(e,a);\\
    e \in D_{\dotrel{\succ}}^{\bot}\ &\iff\ \text{for all}\ a\ \text{as in Lemma~\ref{lem:16.6.9-10}},\ P(e,a)\dotrel{\prece} Q(e,a).
    \end{align*}
\end{enumerate}
\end{cor}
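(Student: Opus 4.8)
The plan is to show that, for $e\in E^m$ and $a$ as in Lemma~\ref{lem:16.6.9-10}, both the sign and the $\dot{v}$-value of $P(e,a)$ are governed by finitary data attached to $e$ that is first-order definable and, crucially, independent of the choice of $a$; the two partitions then fall out by a finite case distinction. First I would form the logarithmic decompositions of $P$ and $Q$ with respect to $Z$, regarded as differential polynomials in $Z$ over the differential ring $E\{X\}$, writing $P=\sum_{\bm i}P_{\langle\bm i\rangle}(X)\,Z^{\langle\bm i\rangle}$ and $Q=\sum_{\bm i}Q_{\langle\bm i\rangle}(X)\,Z^{\langle\bm i\rangle}$ with $P_{\langle\bm i\rangle},Q_{\langle\bm i\rangle}\in E\{X\}$, finitely many nonzero. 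The conversion between ordinary and logarithmic derivative monomials has coefficients independent of the ground field, so this decomposition commutes with the specialization $X\mapsto e$: the logarithmic coefficients of $P(e,Z)\in E\{Z\}$ are exactly the $P_{\langle\bm i\rangle}(e)$. In particular each condition ``$P_{\langle\bm i\rangle}(e)=0$'' is an $\ca L_{\OR,\der}$-condition on $e$, and only finitely many $\bm i$ are relevant.

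For part~\ref{16.6.9-10remark1}, fix a candidate lexicographically maximal index $\bm j_0$ and let $V_{\bm j_0}\subseteq E^m$ be the $\ca L_{\OR,\der}$-definable set of $e$ with $P_{\langle\bm j_0\rangle}(e)\neq 0$ and $P_{\langle\bm i\rangle}(e)=0$ for all $\bm i$ lexicographically above $\bm j_0$. By the displayed conclusion of the proof of Lemma~\ref{lem:16.6.9-10}\ref{16.6.10}, for every $e\in V_{\bm j_0}$ and every $a$ as in the lemma we have $P(e,a)\dotrel{\sim}P_{\langle\bm j_0\rangle}(e)\,a^{\langle\bm j_0\rangle}$, whence $P(e,a)>0$ if and only if $P_{\langle\bm j_0\rangle}(e)>0$ (using $a^{\langle\bm j_0\rangle}>0$), independently of $a$. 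Taking $D_{>}^{\top}$ to be the union over all candidate $\bm j_0$ of $\{e\in V_{\bm j_0}:P_{\langle\bm j_0\rangle}(e)>0\}$ and $D_{>}^{\bot}\coloneqq E^m\setminus D_{>}^{\top}$ then gives the required $\ca L_{\OR,\der}$-definable partition; the degenerate locus where $P(e,Z)=0$ (so $P(e,a)=0\les 0$) is absorbed into $D_{>}^{\bot}$.

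For part~\ref{16.6.9-10remark2} I would run the same bookkeeping for both $P$ and $Q$, so that on each cell the leading indices $\bm j$ of $P(e,Z)$ and $\bm l$ of $Q(e,Z)$ are fixed, and then use the valuation formula from the proof of Lemma~\ref{lem:16.6.9-10}\ref{16.6.10}, namely $\dot{v}P(e,a)=\dot{v}P_{\langle\bm j\rangle}(e)+\dot{v}a^{\langle\bm j\rangle}$ and likewise for $Q$, together with $P(e,a)\dotrel{\succ}Q(e,a)\Leftrightarrow\dot{v}P(e,a)<\dot{v}Q(e,a)$. The relevant difference is $\bigl(\dot{v}P_{\langle\bm j\rangle}(e)-\dot{v}Q_{\langle\bm l\rangle}(e)\bigr)+\sum_k(j_k-l_k)\dot{v}\bigl(a^{\langle k\rangle}\bigr)$. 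Its sign is read off using Lemma~\ref{lem:16.6.9-10}\ref{16.6.9} (applied iteratively to $a,a^\dagger,a^{\langle 2\rangle},\dots$): the generators $\dot{v}(a^{\langle k\rangle})$ are negative, $\Z$-independent over $\dot{\Gamma}_E$, each dominating $\dot{\Gamma}_E$, and of strictly decreasing archimedean magnitude as $k$ grows. Hence if $\bm j\neq\bm l$ (padding with zeros), the least index $k^*$ at which $j_{k^*}\neq l_{k^*}$ dominates, and $\dot{v}P(e,a)<\dot{v}Q(e,a)$ holds for all $a$ precisely when $j_{k^*}>l_{k^*}$; while if $j_k=l_k$ for all $k$ the new-generator part vanishes and the comparison reduces to $\dot{v}P_{\langle\bm j\rangle}(e)<\dot{v}Q_{\langle\bm l\rangle}(e)$, i.e. to $P_{\langle\bm j\rangle}(e)\dotrel{\succ}Q_{\langle\bm l\rangle}(e)$. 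Each alternative is $a$-independent and an $\ca L$-condition on $e$, so assembling the cells yields the $\ca L$-definable partition; the cases $P(e,Z)=0$ or $Q(e,Z)=0$ are handled directly, since then $P(e,a)=0\dotrel{\prece}Q(e,a)$.

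The main obstacle is the sign analysis in part~\ref{16.6.9-10remark2}: one must extract from Lemma~\ref{lem:16.6.9-10}\ref{16.6.9} the precise ordering of the new valuation generators $\dot{v}(a^{\langle k\rangle})$ relative to $\dot{\Gamma}_E$ and among themselves --- in particular that each dominates $\dot{\Gamma}_E$ and that their magnitudes strictly decrease with $k$ (which needs the iterated application of \ref{16.6.9} to $a^{\langle k\rangle}$ in place of $a$) --- and then verify that the comparison collapses to the single most significant index $k^*$. By contrast, the point that logarithmic decomposition commutes with the specialization $X\mapsto e$, which makes the vanishing conditions on the $P_{\langle\bm i\rangle}(e)$ first-order in $e$, is routine but should be recorded.
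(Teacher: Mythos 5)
Your proposal is correct and follows essentially the same route as the paper's proof: logarithmic decomposition of $P$ and $Q$ in $Z$ over $E\{X\}$, the leading-term asymptotics $P(e,a)\dotrel{\sim}P_{\langle\bm j\rangle}(e)a^{\langle\bm j\rangle}$ extracted from the proof of Lemma~\ref{lem:16.6.9-10}, and a finite union of $\ca L_{\OR,\der}$- (resp.\ $\ca L$-)definable cells indexed by candidate leading multi-indices, with part~\ref{16.6.9-10remark2} reduced to lexicographic comparison of those indices and, in case of a tie, to $P_{\langle\bm i\rangle}(e)\dotrel{\succ}Q_{\langle\bm j\rangle}(e)$. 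One small slip in your final degenerate-case remark: if $Q(e,Z)=0$ but $P(e,Z)\neq 0$, then $P(e,a)\dotrel{\succ}0=Q(e,a)$, so such $e$ must be placed in $D_{\dotrel{\succ}}^{\top}$ rather than absorbed into the complement (a defect shared, as it happens, by the paper's own explicit description of $D_{\dotrel{\succ}}^{\top}$, and trivially repaired).
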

\begin{proof}
Using the logarithmic decomposition with respect to $Z$, $P(X,Z) = \sum_{\bm i} P_{\langle\bm i\rangle}(X)Z^{\langle\bm i\rangle}$, where $r$ is the order of $P$, $\bm i$ ranges over $\N^{1+r}$, and $P_{\langle\bm i\rangle} \in E\{X\}$.
Equip $\N^{1+r}$ with the lexicographic order.
Let $I$ be the finite collection of $\bm i \in \N^{1+r}$ with $P_{\langle\bm i\rangle}(X)\neq 0$.
Then the set
\[
D_{>}^{\top}\ \coloneqq\ \bigcup_{\bm i \in I}\{ e \in E^m : P_{\langle\bm i\rangle}(e) > 0\ \text{and}\ P_{\langle\bm j\rangle}(e)=0\ \text{if}\ \bm i<\bm j\in I \}
\]
and its complement $D_{>}^{\bot}$ in $E^m$ work for \ref{16.6.9-10remark1}.

For \ref{16.6.9-10remark2}, decompose $P$ and $Q$ as before and let $r$ be the maximum of their orders.
Let $I_P$ be the finite collection of $\bm i \in \N^{1+r}$ with $P_{\langle\bm i\rangle}(X)\neq 0$, and $I_Q$ be the finite collection of $\bm i \in \N^{1+r}$ with $Q_{\langle\bm i\rangle}(X)\neq 0$.
For $e \in E^m$ and $a$ as in Lemma~\ref{lem:16.6.9-10}, we have
\[
\dot{v}P(e,a)-\dot{v}Q(e,a)\ =\ \dot{v}P_{\langle\bm i\rangle}(e)-\dot{v}Q_{\langle\bm j\rangle}(e) + \sum_{k=0}^{r} (i_{k}-j_{k})\dot{v}(a^{\langle k\rangle}),
\]
where $\bm i \in I_P$ is maximal with $P_{\langle\bm i\rangle}(e)\neq 0$ and $\bm j \in I_Q$ is maximal with $Q_{\langle\bm j\rangle}(e)\neq 0$.
Then let $D_{\dotrel{\succ}}^{\top}$ be the union over $\bm i\in I_P$ and $\bm j \in I_Q$ of the set of $e \in E^m$ such that
\begin{enumerate}
    \item $P_{\langle\bm i\rangle}(e)\neq 0$ and $P_{\langle\bm \ell\rangle}(e)= 0$ if $\bm i<\bm \ell\in I_P$;
    \item $Q_{\langle\bm j\rangle}(e)\neq 0$ and $Q_{\langle\bm \ell\rangle}(e)= 0$ if $\bm j<\bm \ell\in I_Q$;
    \item either $\bm i>\bm j$ or $\bm i=\bm j$ and $P_{\langle \bm i\rangle}(e) \dotrel{\succ} Q_{\langle \bm j\rangle}(e)$.
\end{enumerate}
Let $D_{\dotrel{\succ}}^{\bot}$ be its complement in $E^m$.
Checking that this works uses more heavily the details of the proof of Lemma~\ref{lem:16.6.9-10} (especially \cite[Lemma~9.2.10]{adamtt}).
\end{proof}

Also, Lemma~\ref{lem:16.6.9-10} permits $\dot{\ca O}_E=E$, in which case $\dot{\Gamma}_E=\{0\}$ and $\Psi_{\dot{\Gamma}_E}=\0$. Below, this confirms in a strong way a suggestion of J. Freitag, who pointed out several years ago that an existentially closed pre-$H$-field with gap~$0$ should not be $\d$-algebraic over a lift of its differential residue field.
\begin{cor}\label{cor:dtransoverlift}
Let $(E, \dot{\ca O}_E)$ be a pre-$H$-field with gap~$0$ and $\dot{\ca O}_E\neq E$, and $\bm k_E$ be a lift of its differential residue field.
Then every element of $E\setminus \bm k_E$ is $\d$-transcendental over $\bm k_E$.
\end{cor}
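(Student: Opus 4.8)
The plan is to apply Lemma~\ref{lem:16.6.9-10} with $\bm k_E$ playing the role of the base field, after first checking that $\bm k_E$ carries the trivial valuation inside $(E,\dot{\ca O}_E)$. Indeed, since $\bm k_E$ is a lift, the residue map restricts to an isomorphism $\bm k_E \to \res(E,\dot{\ca O}_E)$, hence is injective, so $\bm k_E \cap \dot{\cao}_E=\{0\}$ and every nonzero element of $\bm k_E$ is a unit of $\dot{\ca O}_E$. Thus $\dot{\ca O}_{\bm k_E}=\bm k_E$, giving $\dot{\Gamma}_{\bm k_E}=\{0\}$ and $\Psi_{\dot{\Gamma}_{\bm k_E}}=\0$, so $\bm k_E$ satisfies the hypothesis of Lemma~\ref{lem:16.6.9-10} in the degenerate form $\dot{\ca O}_{\bm k_E}=\bm k_E$ noted just before the present corollary. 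Moreover $E$ is a pre-$H$-field extension of $\bm k_E$ with gap~$0$ by assumption.

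Now let $a\in E\setminus\bm k_E$, so $a\neq 0$. I would reduce to the case $a>\bm k_E$ and $a\dotrel{\succ}1$ by a sequence of normalizations, each preserving membership in $E\setminus\bm k_E$ and $\d$-transcendence over $\bm k_E$. If $a\dotrel{\asymp}1$, then $\st(a)\in\bm k_E$ and I replace $a$ by $a-\st(a)$, which lies in $\dot{\cao}_E$ and is nonzero since otherwise $a=\st(a)\in\bm k_E$. If now $a\dotrel{\prec}1$, I replace $a$ by $a^{-1}\dotrel{\succ}1$. At this point $a\dotrel{\succ}1$, so $a\notin\dot{\ca O}_E$; since $\dot{\ca O}_E$ is convex by \ref{ph1}, it follows that $|a|>\dot{\ca O}_E\supseteq\bm k_E$, and replacing $a$ by $-a$ if necessary yields $a>\bm k_E$.

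With $a>\bm k_E$ and $a\dotrel{\succ}1$, Lemma~\ref{lem:16.6.9-10}\ref{16.6.10} (applied to the base $\bm k_E$ and the extension $E$) gives that $a$ is $\d$-transcendental over $\bm k_E$, and undoing the normalizations shows the same for the original $a$. The only real content is the opening observation that a lift always carries the trivial valuation, which is exactly what lets the degenerate case of Lemma~\ref{lem:16.6.9-10} apply with base $\bm k_E$; the remaining normalizations are routine, the one point requiring care being that subtracting $\st(a)$ and inverting keep us outside $\bm k_E$.
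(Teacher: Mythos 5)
Your proposal is correct and follows essentially the same route as the paper: reduce to an element with $|a|>\dot{\ca O}_E$ (by subtracting the standard part and inverting, each of which preserves $\d$-transcendence over $\bm k_E$) and then apply Lemma~\ref{lem:16.6.9-10} to the trivially valued base $\bm k_E$. The paper phrases the standard-part step via an explicit Taylor expansion, but that is just a more explicit form of your observation that $\bm k_E\langle a\rangle=\bm k_E\langle a-\st(a)\rangle$.
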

\begin{proof}
If $a \in E$ with $|a|>\dot{\ca O}_E$, then $a$ is $\d$-transcendental over $\bm k_E$ by Lemma~\ref{lem:16.6.9-10}, and likewise for nonzero $a \in \dot{\cao}_E$ by considering $1/a$.
Now let $a \in \dot{\ca O}_E$, and take $u \in \bm k_E$ and $\varepsilon \in \dot{\cao}_E$ with $a=u+\varepsilon$.
Then for $P \in \bm k_E\{Z\}$ with order at most $r\in \N$, Taylor expansion yields
\[
P(a)\ =\ \sum_{\bm i} P_{(\bm i)}(u)\varepsilon^{\bm i},
\]
where $\bm i \in \N^{1+r}$.
Hence, if $a$ is $\d$-algebraic over $\bm k_E$, then so is $\varepsilon$, in which case $a=u \in \bm k_E$.
\end{proof}
In the terminology of pregeometries, this implies that $\bm k$ is $\cl^{\der}_2$-closed in $(K, \bm k)$, meaning that $\cl^{\der}_2(\bm k)\cap K = \bm k$.
For a transserial tame pair $(K,L)$, the fact that every element of $K \setminus L$ is $\d$-transcendental over $L$ already follows from \cite[Theorem~16.0.3]{adamtt}.

\begin{lem}\label{2:lem:Knotlean}
The set $K$ is not lean in $(K, \bm k)$ \textnormal{(}equivalently, $\dim_2(K)=1$\textnormal{)}.
\end{lem}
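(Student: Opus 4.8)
The plan is to realize, inside the $|K|^+$-saturated pair $(K^*,\bm k^*)$ occurring in the definition of $\dim_2$ (where $S=K$, so $S^*=K^*$), an element $a\in K^*$ that dominates all of $K\bm k^*$ and satisfies $a\dotrel{\succ}1$, and then to apply Lemma~\ref{lem:16.6.9-10} with $E:=K\bm k^*$ to conclude that $a$ is $\d$-transcendental over $K\bm k^*$, i.e.\ $\dtrdeg(K\bm k^*\langle a\rangle\mid K\bm k^*)=1$. This gives $\dim_2 K\ges 1$, and since $K\subseteq K^1$ forces $\dim_2 K\les 1$, we obtain $\dim_2 K=1$, so $K$ is not lean.

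First I would produce the element. Consider the partial type $p(z)=\{z>0\}\cup\{z\dotrel{\succ}f:f\in K^{\x}\}$ over $K$. Since $(K,\dot{\ca O})\models T^{\dhl}$ has $\dot{\ca O}\neq K$, the value group $\dot{\Gamma}$ is nontrivial and has no least element, so for any $f_1,\dots,f_n\in K^{\x}$ there is $c\in K$ with $c>0$ and $\dot{v}c$ below each $\dot{v}f_i$; thus $p$ is finitely satisfiable in $K$, and as a type over $|K|$ parameters it is realized by some $a\in K^*$ by saturation. By construction $a\dotrel{\succ}1$ (as $\dot{v}a<0$), and $a>K$ follows by convexity.

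The heart of the matter, which I expect to be the main obstacle, is to upgrade ``$a$ dominates $K$'' to ``$a$ dominates $K\bm k^*$''. The key point is that adjoining the lift does not enlarge the value group: every element of $\bm k^{*\,\x}$ is a unit, so $\dot{\Gamma}_{K\bm k^*}=\dot{\Gamma}_K$. To verify this I would adjoin the generators of $\bm k^*$ one at a time; at each step the residue of the new generator is either transcendental over the residue field built so far, which forces the Gauss valuation and leaves the value group unchanged, or algebraic over it, in which case---because $\bm k^*$ is a field mapping isomorphically onto $\res(K^*,\dot{\ca O}^*)$---the new generator is already algebraic over the previous ones, making the value-group extension torsion, hence trivial since $\dot{\Gamma}_K$ is divisible ($K$ being real closed). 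Granting $\dot{\Gamma}_{K\bm k^*}=\dot{\Gamma}_K$, the realized $a$ satisfies $\dot{v}a<\dot{\Gamma}_K=\dot{\Gamma}_{K\bm k^*}$, whence $a\dotrel{\succ}K\bm k^*$ and, with $a>0$ and convexity, $a>K\bm k^*$.

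Finally I would apply Lemma~\ref{lem:16.6.9-10} with $E:=K\bm k^*$, construed as a pre-$H$-subfield of the gap-$0$ field $K^*$. Its hypotheses hold: $a>K\bm k^*$ and $a\dotrel{\succ}1$ have just been arranged, $K^*$ is a gap-$0$ pre-$H$-field extension, and $\Psi_{\dot{\Gamma}_{K\bm k^*}}$ is downward closed because it coincides with $\Psi_{\dot{\Gamma}_K}$ (logarithmic differentiation on $\dot{\Gamma}$ depends only on the value, and $\dot{\Gamma}_{K\bm k^*}=\dot{\Gamma}_K$), which in turn is downward closed since $K$ is $\d$-Hensel-Liouville closed. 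Part~\ref{16.6.10} of the lemma then yields that $a$ is $\d$-transcendental over $K\bm k^*$, completing the argument of the first paragraph. One could alternatively argue by contradiction: any witness $P\in K\bm k^*\{Z\}^{\neq}$ with $P(a)=0$ involves only finitely many elements of $\bm k^*$, so $E$ may be taken finitely generated over $K$, where the value-group computation proceeds identically.
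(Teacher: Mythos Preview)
Your proposal is correct and follows essentially the same line as the paper's proof: pick $a\in K^*$ above $K$ by saturation, show that adjoining $\bm k^*$ to $K$ does not enlarge the value group so that $a>K\bm k^*$, and apply Lemma~\ref{lem:16.6.9-10} with $E=K\bm k^*$. The only substantive difference is in how the value-group equality $\dot\Gamma_{K\bm k^*}=\dot\Gamma_K$ is obtained: the paper simply cites \cite[Section~6.3]{adamtt}, \cite[Section~3]{pc-preH-gap}, and \cite[Lemma~4.1]{pc-transtamepair}, whereas you sketch a direct valuation-theoretic argument. Your sketch is correct, though the algebraic case is cleaner if you first extract a transcendence basis of $\bm k^*$ over $\bm k$ (so that the residues are genuinely algebraically independent over $\res(K)$ and the Gauss extension applies at each step), and then handle the remaining algebraic generators all at once via divisibility of $\dot\Gamma_K$; as written, the clause ``the new generator is already algebraic over the previous ones'' implicitly uses that the residue field built so far is exactly the image of $\bm k(u_1,\dots,u_n)$, which should be tracked inductively. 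You are also slightly more explicit than the paper in checking the hypothesis of Lemma~\ref{lem:16.6.9-10} on $\Psi$, which is a plus.
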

\begin{proof}
Fix a $|K|^+$-saturated elementary extension $(K^*, \bm k^*)$ of $(K, \bm k)$.
We need to find $a \in K^*$ such that the differential transcendence degree of $K\bm k^*\langle a\rangle$ over $K\bm k^*$ is $1$.
Use saturation to take $a \in K^*$ with $a>K$.
To see that $a$ works, consider the pre-$H$-field extension $(K\bm k^*, \dot{\ca O}_{K\bm k^*})$ of $(K, \dot{\ca O})$ with gap~$0$.
Note that $(K, \dot{\ca O})$ and $(K\bm k^*, \dot{\ca O}_{K\bm k^*})$ have the same value group (see \cite[Section~6.3]{adamtt}, \cite[Section~3]{pc-preH-gap}, and \cite[Lemma~4.1]{pc-transtamepair}), so $K$ is cofinal in $K\bm k^*$.
Thus $a>K\bm k^*$, so $a$ is $\d$-transcendental over $K\bm k^*$ by Lemma~\ref{lem:16.6.9-10}.
\end{proof}
This shows that $\dim_2$ is nontrivial.
Also, by Lemma~\ref{lem:dimbasic}:
\begin{cor}\label{cor:dim2Kn}
For all $n$, we have $\dim_2 K^n = n$.
\end{cor}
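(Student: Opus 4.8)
The plan is to deduce this immediately from Lemma~\ref{2:lem:Knotlean} together with the basic properties of $\dim_2$. Since $\dim_2$ is the dimension attached to the pregeometry $\cl^{\der}_2$ by the general construction of Section~\ref{subsec:dimpregeo}, it satisfies every item of Lemma~\ref{lem:dimbasic}; the relevant one here is the product formula, $\dim_2(S_1 \times S_2) = \dim_2 S_1 + \dim_2 S_2$. Lemma~\ref{2:lem:Knotlean} has already supplied the single nontrivial input, namely $\dim_2 K = 1$ (equivalently, that $K$ is not lean), so the corollary should follow by a short induction.

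Concretely, I would argue by induction on $n$. For the base case, I would note $\dim_2 K^0 = 0$ since $K^0$ is a one-point set and finite nonempty sets have $\dim_2 = 0$ by Lemma~\ref{lem:dimbasic}, while $\dim_2 K^1 = \dim_2 K = 1$ by Lemma~\ref{2:lem:Knotlean}. For the inductive step, I would write $K^{n+1} = K^n \times K$ and apply the product formula to get
\[
\dim_2 K^{n+1}\ =\ \dim_2 K^n + \dim_2 K\ =\ n + 1,
\]
using the inductive hypothesis and Lemma~\ref{2:lem:Knotlean}.

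There is no real obstacle at this stage: all the substantive work has already been carried out. Establishing $\dim_2 K = 1$ required verifying that $\cl^{\der}_2$ genuinely defines a pregeometry in the sense of Section~\ref{subsec:dimpregeo} (so that Lemma~\ref{lem:dimbasic} is available) and producing a $\d$-transcendental element over $K\bm k^*$ via Lemma~\ref{lem:16.6.9-10}. Once that framework is in place, the corollary is a formal consequence of the product formula, and I would expect the author's proof to consist of nothing more than the phrase ``by Lemma~\ref{lem:dimbasic}'' applied to $K^n = K \times \dots \times K$.
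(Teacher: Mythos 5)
Your proposal is correct and matches the paper's argument exactly: the corollary is stated as an immediate consequence of Lemma~\ref{lem:dimbasic} (the product formula) together with $\dim_2 K = 1$ from Lemma~\ref{2:lem:Knotlean}, with no further proof given. Your prediction that the author's justification would be essentially ``by Lemma~\ref{lem:dimbasic}'' is precisely what appears in the text.
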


\begin{lem}\label{lem:clder2existmatroid}
The pregeometry $\cl^{\der}_2$ is the unique existential matroid on a monster model of~$T^{\dhl}_{\lift}$.
\end{lem}
\begin{proof}
Let $A\subseteq K$ and $F\coloneqq \cl^{\der}_2(A)$.
Then $F\supseteq \bm k$ is a differential subfield of $K$, and moreover $(F, \dot{\ca O}_{F})$ is $\d$-Hensel-Liouville closed, so $(F, \bm k) \prece (K, \bm k)$ by \cite[Corollary~5.7]{pc-transtamepair}.
Hence, $\cl^{\der}_2$ (extended to a monster model) is an existential matroid by \cite[Lemma~3.23]{fornasiero-dimension}.
Since $T^{\dhl}_{\lift}$ expands the theory of integral domains, $\cl^{\der}_2$ is unique by \cite[Theorem~3.48]{fornasiero-dimension}.
\end{proof}

Next, we use the above lemmas to prove local o-minimality of $T^{\dhl}_{\lift}$, which underlies the results on d-minimality and discrete sets in the next subsection.
\begin{prop}\label{prop:localomin}
The structure $(K,\bm k)$ is locally o-minimal, meaning that for every $S\subseteq K$ definable in $(K,\bm k)$ and $a \in S$, there is $\varepsilon\in K^>$ such that $S\cap(a-\varepsilon,a+\varepsilon)$ is a finite union of points and intervals.
\end{prop}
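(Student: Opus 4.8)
The plan is to reformulate local o-minimality in terms of one-sided cuts and then exploit Lemma~\ref{lem:16.6.9-10} together with the elementary-map criterion behind Theorem~\ref{thm:specialQE}. Pass to a $|K|^+$-saturated elementary extension $(K^*,\bm k^*)\succe(K,\bm k)$. For $a\in K$, call $b\in K^*$ a \emph{right companion} of $a$ if it realizes the type $\{a<y\}\cup\{y<c:c\in K,\ c>a\}$, and define left companions symmetrically. I claim it suffices to show that for every $a\in K$ all right companions of $a$ have the same type over $K$, and likewise on the left. Indeed, given definable $S\subseteq K$ and $a\in K$, all right companions of $a$ then agree on membership in $S^*$, so by compactness either $(a,a+\varepsilon)\subseteq S$ or $(a,a+\varepsilon)\cap S=\0$ for some $\varepsilon\in K^>$; arguing symmetrically on the left and recording whether $a\in S$ leaves only the simple possibilities for $S$ near $a$, in particular $S\cap(a-\varepsilon,a+\varepsilon)=\{a\}$ when $a$ is isolated in $S$.

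So fix $a\in K$ and a right companion $b=a+\epsilon$, where $\epsilon>0$ is smaller than every element of $K^>$. Then $w\coloneqq 1/\epsilon$ satisfies $w>K$ and $w\dotrel{\succ}1$, so Lemma~\ref{lem:16.6.9-10} applies with $E=K$ (its hypothesis that $\Psi_{\dot{\Gamma}_K}$ is downward closed holds for our $K$, exactly as used in the proof of Lemma~\ref{2:lem:Knotlean}). Hence $w$ is $\d$-transcendental over $K$, with $\res(K\langle w\rangle,\dot{\ca O}_{K\langle w\rangle})=\res(K,\dot{\ca O})$ and $\dot{\Gamma}_{K\langle w\rangle}=\dot{\Gamma}_K\oplus\bigoplus_n\Z\,\dot v(w^{\langle n\rangle})$. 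Moreover, by the logarithmic-decomposition computation in that proof, for any $P\in K\{Z\}^{\neq}$ the sign of $P(w)$ and the value $\dot vP(w)$ are read off from the leading coefficient $P_{\langle\bm j\rangle}\in K$ of the logarithmic decomposition of $P$ and the ordering of the $\dot v(w^{\langle n\rangle})$; this is precisely the uniformity isolated in Corollary~\ref{cor:16.6.9-10uniform}.

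Now let $b'=a+1/w'$ be a second right companion, with $w'\coloneqq 1/(b'-a)>K$. Since $w$ and $w'$ are both $\d$-transcendental over $K$, the assignment $w\mapsto w'$ extends to a differential field isomorphism $f\colon K\langle w\rangle\to K\langle w'\rangle$ fixing $K$ pointwise, and $f(b)=b'$. The previous paragraph, applied verbatim to $w'$, shows that $f$ preserves the sign of every element and every $\dotrel{\prece}$-comparison, so $f$ is an isomorphism of pre-$H$-fields. Because $\res(K\langle w\rangle)=\res(K)$, no new residue is created, so $\bm k\subseteq K$ is a lift of $\res(K\langle w\rangle)$ that $f$ fixes pointwise; in particular $f|_{\bm k}=\id$ is $\ca L_{\res}$-elementary. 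Therefore \cite[Theorem~5.2]{pc-transtamepair} shows $f$ is elementary as a partial map of $(K^*,\bm k^*)$, whence $\tp(b/K)=\tp(b'/K)$. The same argument with $w,w'<K$ handles left companions, completing the reduction and hence the proof.

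The main obstacle is the content of the second and third paragraphs: showing that the ordered-valued-differential structure of $K\langle w\rangle$ over $K$ is canonical, i.e.\ independent of the choice of positive $w>K$. This rests on the logarithmic-decomposition analysis of Lemma~\ref{lem:16.6.9-10} and the attendant uniformity of Corollary~\ref{cor:16.6.9-10uniform}, which reduce signs and valuations of differential polynomials in $w$ to data already living in $K$. A secondary point is the passage from a pre-$H$-field isomorphism to an elementary map, which is clean here only because $K\langle w\rangle$ has the same differential residue field as $K$, so the lift $\bm k$ need not be enlarged and \cite[Theorem~5.2]{pc-transtamepair} applies directly; one could instead argue more explicitly using the quantifier reduction of Corollary~\ref{cor:veryspecialQE}.
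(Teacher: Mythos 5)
Your proposal is correct and follows essentially the same route as the paper: both reduce local o-minimality (via a fractional linear transformation) to showing that any two realizations of the cut above $K$ have the same type over $K$, and both get this from Lemma~\ref{lem:16.6.9-10} and the uniformity in Corollary~\ref{cor:16.6.9-10uniform}, which make the ordered valued differential structure of $K\langle w\rangle$ over $K$ independent of the choice of $w>K$. The only divergence is the endgame: the paper builds the isomorphism over $K\bm k^*$ so that special formulas (which quantify over all of $\bm k^*$) are visibly preserved and then cites Theorem~\ref{thm:specialQE}, whereas you build it over $K$ alone and invoke \cite[Theorem~5.2]{pc-transtamepair} directly --- which works precisely because $\res(K\langle w\rangle,\dot{\ca O}_{K\langle w\rangle})=\res(K,\dot{\ca O})$ makes $\bm k$ already a lift for $K\langle w\rangle$, so the substructure hypotheses of that theorem are met without the closure construction used in the proof of Theorem~\ref{thm:specialQE}.
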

\begin{proof}
Let $(K^*, \bm k^*)$ be an elementary extension of $(K, \bm k)$ with elements $a,b>K$.
By fractional linear transformations, we need to show that $a$ and $b$ have the same type over $K$.
By Theorem~\ref{thm:specialQE}, it suffices to show that $a$ and $b$ satisfy the same special formulas over $K$.
By Lemma~\ref{lem:16.6.9-10}, $\bm k = \bm k^* \cap K\langle a \rangle$ is a lift of the differential residue field of $(K\langle a \rangle, \dot{\ca O}_{K\langle a\rangle})$, and likewise with $(K\langle b \rangle, \dot{\ca O}_{K\langle b\rangle})$.
As in the proof of Lemma~\ref{2:lem:Knotlean}, $a,b>K\bm k^*$ and both are $\d$-transcendental over $K\bm k^*$.
Hence, the map $i\colon K\bm k^*\langle a\rangle \to K\bm k^*\langle b\rangle$ fixing $K\bm k^*$ and with $i(a)=b$ is an isomorphism of differential fields.
Moreover, Corollary~\ref{cor:16.6.9-10uniform} shows that $i$ is an isomorphism of pre-$H$-fields, so $a$ and $b$ satisfy the same special formulas over~$K$.
\end{proof}

Now we connect $\dim_2$ to the order topology. (See \cite[Corollary~16.6.4]{adamtt} for an analogue for closed $H$-fields.)
\begin{prop}\label{prop:emptyintlean}
Let $S \subseteq K^n$ be definable in $(K, \bm k)$.
Then the following are equivalent:
\begin{enumerate}
    \item\label{prop:emptyintlean:dim} $\dim_2 S<n$;
    \item\label{prop:emptyintlean:lean} $S\subseteq\bigcup_{i=1}^r \{ a \in K^n : P_i(u,a)=0\ \text{for some}\ u \in \bm k^m\ \text{with}\ P_i(u,Y)\neq 0 \}$, for some $r \in \N$ and $P_i \in K\{X,Y\}$, $i=1, \dots, r$;
    \item\label{prop:emptyintlean:emptyint} $S$ has empty interior in $K^n$.
\end{enumerate}
\end{prop}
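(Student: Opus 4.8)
The plan is to treat \ref{prop:emptyintlean:dim}$\Leftrightarrow$\ref{prop:emptyintlean:lean} as the ``pregeometric'' part and \ref{prop:emptyintlean:dim}$\Leftrightarrow$\ref{prop:emptyintlean:emptyint} as the ``topological'' part, with only the implication \ref{prop:emptyintlean:emptyint}$\Rightarrow$\ref{prop:emptyintlean:dim} requiring real work. For \ref{prop:emptyintlean:lean}$\Rightarrow$\ref{prop:emptyintlean:dim}: if $a$ lies in one of the displayed sets, then there is $u\in\bm k^m$ with $P_i(u,a)=0$ and $P_i(u,Y)\neq 0$, so $P_i(u,Y)$ is a nonzero differential polynomial over $K\bm k^*$ vanishing at $a$; hence every $s\in S^*$ has $\dtrdeg(K\bm k^*\langle s\rangle\mid K\bm k^*)<n$, i.e.\ $\dim_2 S<n$. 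For \ref{prop:emptyintlean:dim}$\Rightarrow$\ref{prop:emptyintlean:lean} I would run the standard compactness argument underlying the existential matroid $\cl^{\der}_2$: the partial type asserting $y\in S$ together with $\forall x\in U\,(P(x,Y)\neq 0\to P(x,y)\neq 0)$ for every $P\in K\{X,Y\}$ is realized only by points of $S^*$ that are $\d$-independent over $K\bm k^*$, so it is inconsistent when $\dim_2 S<n$; compactness then extracts finitely many $P_1,\dots,P_r$ whose associated sets cover $S$, exactly as in Corollary~\ref{cor:duniformsmall}.

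The topological direction \ref{prop:emptyintlean:dim}$\Rightarrow$\ref{prop:emptyintlean:emptyint} I would obtain from the single clean fact that \emph{every open box has full $\dim_2$}. Indeed, for an interval $(c,d)\subseteq K$ one uses Lemma~\ref{lem:16.6.9-10} to produce, in a $|K|^+$-saturated extension, an element $t>K\bm k^*$, which is $\d$-transcendental over $K\bm k^*$; choosing $t$ large enough that $1/t<d-c$ gives $c+1/t\in(c,d)$ still $\d$-transcendental over $K\bm k^*$, so $\dim_2(c,d)=1$ by Lemma~\ref{2:lem:Knotlean}. The product formula Lemma~\ref{lem:dimbasic} then yields $\dim_2\big(\prod_{k}(c_k,d_k)\big)=n$. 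Consequently, if $S$ had nonempty interior it would contain a box $B$ with $\dim_2 S\ges\dim_2 B=n$; contrapositively, $\dim_2 S<n$ forces empty interior. (The same observation reproves \ref{prop:emptyintlean:lean}$\Rightarrow$\ref{prop:emptyintlean:emptyint} directly, since the generic point of a box cannot satisfy a nonzero differential polynomial over $K\bm k^*$.)

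The remaining implication \ref{prop:emptyintlean:emptyint}$\Rightarrow$\ref{prop:emptyintlean:dim}, equivalently $\dim_2 S=n\Rightarrow S$ has nonempty interior, is the crux. Since ``$S$ has nonempty interior'' is first-order in the defining parameters, it suffices to exhibit an interior point of $S^*$, for which I would take $a\in S^*$ with coordinates $\d$-independent over $K\bm k^*$. Using Corollary~\ref{cor:veryspecialQE} I reduce to the case where $S$ is defined by $\psi_0(y)\wedge\bigwedge_{i=1}^s\neg\psi_i(y)$ with each $\psi_i$ special, and by fractional linear transformations (as in the proof of Proposition~\ref{prop:localomin}) I arrange the generic point coordinatewise so that each new coordinate lies above the field generated by $\bm k^*$ and the previous ones, putting Lemma~\ref{lem:16.6.9-10} and its uniform refinement Corollary~\ref{cor:16.6.9-10uniform} at my disposal. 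For the witness $u_0$ realizing $\psi_0(a)$, genericity forces every active equality $P(u_0,a)=0$ to be the trivial $P(u_0,Y)\equiv 0$, so the surviving conditions are the open ones (using that the valuation is locally constant on nonzero values), and $a$ lies in the interior of $\{\,y:\psi_0(y)\,\}$.

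The genuine obstacle is the negated special formulas: I must show the generic $a$ lies in the \emph{interior} of each complement $K^n\setminus\{\,y:\psi_i(y)\,\}$, i.e.\ that $a\notin\overline{\{\,y:\psi_i(y)\,\}}$, uniformly in the witness variable $x$ ranging over the (discrete but unbounded) set $\bm k^*$. Here I would invoke Corollary~\ref{cor:16.6.9-10uniform}: the logarithmic-decomposition partition it provides decides each sign $P(u,a)>0$ and each comparison $P(u,a)\dotrel{\succ}Q(u,a)$ in terms of finitely many polynomial conditions on the \emph{lower} data $(u,a_1,\dots,a_{k-1})$, and these decisions are stable as the top generic coordinate $a_k$ varies over values above $E_{k-1}$. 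Thus ``$\forall u\in\bm k^*\ \neg\bigwedge_j\varphi_{ij}(u,y)$'' persists on a neighborhood of $a$, and iterating over the coordinates produces a common open box around $a$ contained in $S^*$. The delicate point, which I expect to be the main difficulty, is precisely this uniform control of the existential-over-$\bm k$ quantifier through the coordinatewise construction—ensuring that witnesses $u$ escaping to infinity in $\bm k^*$ cannot make $\{\,y:\psi_i(y)\,\}$ accumulate at the $\d$-independent point $a$—since the later nowhere-density and frontier results (Lemma~\ref{lem:emptyintnodenseequiv}, Theorem~\ref{thm:dimfrontier}) are not yet available and so may not be used.
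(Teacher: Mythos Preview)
Your treatment of \ref{prop:emptyintlean:dim}$\Leftrightarrow$\ref{prop:emptyintlean:lean} and of \ref{prop:emptyintlean:dim}$\Rightarrow$\ref{prop:emptyintlean:emptyint} is fine; the paper dispatches the first by the same compactness remark and does \ref{prop:emptyintlean:lean}$\Rightarrow$\ref{prop:emptyintlean:emptyint} by a substitution trick reducing to Corollary~\ref{cor:dim2Kn}, but your box--plus--product argument is an equally valid alternative.

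The difficulty is entirely in your plan for \ref{prop:emptyintlean:emptyint}$\Rightarrow$\ref{prop:emptyintlean:dim}. The step ``by fractional linear transformations \dots\ arrange the generic point coordinatewise so that each new coordinate lies above the field generated by $\bm k^*$ and the previous ones'' does not go through. A fractional linear transformation over $E_{k-1}\coloneqq K\bm k^*\langle a_1,\dots,a_{k-1}\rangle$ sends an element to infinity only if that element equals the pole, so it cannot move a $\d$-transcendental $a_k$ above $E_{k-1}$ unless $a_k$ is already at $\pm\infty$ or infinitesimally close to some element of $E_{k-1}$. But $a_k$ can perfectly well realize an irrational cut in $E_{k-1}$: for instance, take $a_k\in K^*$ with $\dot v^*a_k$ in a gap of $\dot\Gamma_K=\dot\Gamma_{K\bm k^*}$; then $\dot v^*(a_k-c)=\min(\dot v^*a_k,\dot v c)$ for every $c\in E_{k-1}$, so no $c$ is close and no fractional linear transformation helps. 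Without this reduction, Corollary~\ref{cor:16.6.9-10uniform} is unavailable (it is stated only for elements above $E$), and your iterated sign/valuation analysis has no foothold. The uniform control over $u\in\bm k^*$ that you correctly flag as the delicate point therefore remains unestablished.

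The paper avoids the formula analysis entirely. It takes the generic $a$, enlarges $K\langle a\rangle$ to a countable $K_1\subseteq K^*$ carrying its own lift, and then chooses $\varepsilon\in(K^*)^n$ with $0<|\varepsilon_i|<K_1^{>}$. Because $\dot\Gamma_{K_1}=\dot\Gamma_{K_1\bm k^*}$, one gets $|\varepsilon_i^{(j)}|<(K_1\bm k^*)^{>}$ for all $j$, and Taylor expansion in each $Y_i$ yields $P(a+\varepsilon)\dotrel{\sim}P(a)$ for every nonzero $P\in K\bm k^*\{Y\}$. This produces an $\ca L$-isomorphism $K\bm k^*\langle a\rangle\to K\bm k^*\langle a+\varepsilon\rangle$ over $K\bm k^*$, so $a$ and $a+\varepsilon$ satisfy the same special formulas over $K$, hence the same $\ca L_{\lift}$-type by Theorem~\ref{thm:specialQE}. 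Thus $a$ is interior to $S^*$ without ever decomposing the defining formula or worrying about the $\bm k^*$-quantifier. In effect, the paper replaces your ``$a_k>E_{k-1}$'' manoeuvre by an infinitesimal perturbation argument that works uniformly for any position of the generic point; the detailed formula analysis you sketch is what the paper carries out later, and only for $n=1$, in Proposition~\ref{2:prop:discleanequiv}, after the present proposition is already available.
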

\begin{proof}
The equivalence of \ref{prop:emptyintlean:dim} and \ref{prop:emptyintlean:lean} follows from the definition of $\dim_2$ by a standard compactness argument.

The direction from  \ref{prop:emptyintlean:lean} to \ref{prop:emptyintlean:emptyint} is similar to \cite[Lemma~4.4.10]{adamtt}, which we outline in this setting, and does not use the definability of $S$.
Suppose that $S$ has nonempty interior, so it contains a set $\{ y \in K^n : \min_{1\les i\les n}\{\dot{v}(y_i-b_i)\} \ges \dot{\gamma} \}$, with $\dot{\gamma} \in \dot{\Gamma}$ and $b\in K^n$.
Fix $r\in \N$ and $P_i(X,Y) \in K\{X,Y\}$ for $i=1,\dots,r$, and suppose towards a contradiction that
\[
S\ \subseteq\ \bigcup_{i=1}^r \{ a \in K^n : P_i(u,a)=0\ \text{for some}\ u \in \bm k^m\ \text{with}\ P_i(u,Y)\neq 0 \}.
\]

Take $g \in K^{\x}$ with $\dot{v}g=\dot{\gamma}$ and Let $Q_i(X,Y) \coloneqq P_i(X,b_1+gY_1,\dots,b_n+gY_n)$, and note that for any $u \in \bm k^m$, we have $Q_i(u,Y)\neq 0$ if and only if $P_i(u,Y)\neq 0$.
Hence
\[
\dot{\ca O}^n\ \subseteq\ \bigcup_{i=1}^r \{ a \in K^n : Q_i(u,a)=0\ \text{for some}\ u \in \bm k^m\ \text{with}\ Q_i(u,Y)\neq 0 \}.
\]
For $J \subseteq \{1, \dots, n\}$, let $Q_i^J \in K\langle Y\rangle\{X\}$ be obtained by substituting $Y_j^{-1}$ for $Y_j$ for all $j \in J$.
Then take $(j_{i,1},\dots,j_{i,n}) \in \N^n$ such that
\[
R_i(X,Y)\ \coloneqq\ Y_1^{j_{i,1}}\cdots Y_n^{j_{i,n}}\prod_{J \subseteq \{1, \dots, n\}} Q_i^J\ \in\ K\{X,Y\},
\]
so $R_i(u,Y) \neq 0$ if and only if $Q_i(u,Y)\neq 0$ for any $u \in \bm k^m$.
Finally, this yields
\[
K^n\ \subseteq\ \bigcup_{i=1}^r \{ a \in K^n : R_i(u,a)=0\ \text{for some}\ u \in \bm k^m\ \text{with}\ R_i(u,Y)\neq 0 \},
\]
so $\dim_2 K^n < n$, contradicting Corollary~\ref{cor:dim2Kn}.

Conversely, suppose that $\dim_2 S=n$.
Fix a cardinal $\kappa$ of uncountable cofinality such that $\max\{|K|,|\ca L_{\res}|\}<\kappa$.
Take a $\kappa$-saturated $(K^*,\bm k^*) \succe (K,\bm k)$ and $a \in (K^*)^n$ with $\dtrdeg(K\bm k^*\langle a\rangle|K\bm k^*)=n$.
Let $i$ range over $\{1,\dots,n\}$.
First, extend $K\langle a\rangle$ to a differential subfield $K_1 \subseteq K^*$ such that $K_1$ has a lift $\bm k_1 \subseteq \bm k^*$ of its differential residue field $\res(K_1,\dot{\ca O}_{K_1})$ and $|K_1|=|K|$ (which can be done in $\omega$ steps, each consisting of adjoining a $|K|$-size subset of $\bm k^*$).
By saturation, take a tuple $\varepsilon \in (K^*)^n$ with $0<|\varepsilon_i|<K_1^>$.
By passing to $K_1$, we have gained that $\dot{\Gamma}_{K_1}=\dot{\Gamma}_{K_1\bm k^*}$ as in the proof of Lemma~\ref{2:lem:Knotlean}.
Using \cite[Theorem~9.2.1]{adamtt}, it follows that $|\varepsilon_i^{(j)}|<(K_1\bm k^*)^>$ for each $j\in \N$ (in particular, $|\varepsilon_i^{(j)}|<K\bm k^*\langle a\rangle^>$).
Consider $P \in K\bm k^*\{Y\}^{\neq}$, so $P(a)\neq 0$ by assumption.
By Taylor expansion in each $Y_i$ and the above properties of $\varepsilon_i^{(j)}$, we have $P(a+\varepsilon)\dotrel{\sim}P(a)$.
In particular, $P(a+\varepsilon)\neq 0$.
Moreover, this yields an $\ca L$-isomorphism $K\bm k^*\langle a\rangle \to K\bm k^*\langle a+\varepsilon\rangle$ over $K\bm k^*$ sending $a_i\mapsto a_i+\varepsilon_i$, so the tuples $a$ and $a+\varepsilon$ satisfy the same special formulas over $K$.
By Theorem~\ref{thm:specialQE}, it follows that $S^*$ has nonempty interior in $(K^*)^n$, and hence $S$ has nonempty interior in~$K^n$.
\end{proof}

With more careful analysis using the details of the proof of Proposition~\ref{prop:localomin}, we obtain more explicit information about the interior of an $S \subseteq K$ as in the above proposition (with $n=1$).
Note that by Corollary~\ref{cor:veryspecialQE}, any such $S$ satisfies the hypotheses of the lemma.
This technical lemma is not used later.
\begin{lem}\label{lem:interiordetails}
Let $n=1$ and $\psi_0(y)$ be $\exists x \in U\ (\theta_0(x)\wedge\bigwedge_{j=0}^{t_0}\varphi_{0j}(x,y))$ and $\neg\psi_i(y)$ for $i\in \{1,\dots,s\}$ be $\forall x\in U\ (\theta_i(x) \rightarrow \bigvee_{j=0}^{t_i}\varphi_{ij}(x,y))$, where $t_i \in \N$, $\theta_i(x)$ is $\r$-relative, and each $\varphi_{ij}(x,y)$ is one of $P_{ij}(x,y)=0$, $P_{ij}(x,y)>0$, $P_{ij}(x,y) \dotrel{\succ} Q_{ij}(x,y)$, and $P_{ij}(x,y) \dotrel{\succe} Q_{ij}(x,y)$, for $P_{ij}, Q_{ij} \in K\{X,Y\}$.
Set
\begin{align*}
J_i^0\ &\coloneqq\ \{ j \in \{0,\dots, t_i\} : \varphi_{ij}(x,y)\ \text{is}\ P_{ij}(x,y)=0 \},\\
J_i^1\ &\coloneqq\ \{ j \in \{0,\dots, t_i\} : \varphi_{ij}(x,y)\ \text{is}\ P_{ij}(x,y) \dotrel{\succe} Q_{ij}(x,y) \},\\
L_i^0\ &\coloneqq\ \bigcup_{j\in J_i^0} \{ a \in K : P_{ij}(u,a)=0\ \text{and}\ P_{ij}(u,Y)\neq 0,\ \text{for some}\ u \in \bm k^m\ \text{with}\ \bm k \models \theta_i(u) \},
\end{align*}
and
\begin{align*}
L_i^1\ \coloneqq\ \bigcup_{j\in J_i^1} \{ a \in K : P_{ij}(u,a)^2+Q_{ij}(u,a)^2=0\ &\text{and}\ P_{ij}(u,Y)^2+Q_{ij}(u,Y)^2\neq 0,\\ &\text{for some}\ u \in \bm k^m\ \text{with}\ \bm k \models \theta_i(u) \}.
\end{align*}
Let $S\subseteq K$ be defined in $(K,\bm k)$ by $\psi_0(y) \wedge \bigwedge_{i=1}^{s} \neg \psi_i(y)$.
Then $S \setminus (\bigcup_{i=0}^s L_i^0\cup L_i^1)$ is contained in the interior of~$S$.
\end{lem}
\begin{proof}
Note that $L \coloneqq \bigcup_{i=0}^s L_i^0\cup L_i^1$ is lean, so if $S$ is not lean, then $S\setminus L\neq\0$.
For a fixed $u \in \bm k^m$, a set defined by a formula of the form $P(u,y)>0$, $P(u,y) \dotrel{\succ} Q(u,y)$, or $P(u,y) \dotrel{\succe} Q(u,y)\wedge P(u,y)\neq 0$ is open (as in the proof of Lemma~\ref{1:lem:dimnintbasic}).

Let $a \in S \setminus L$.
First, to show that $a$ is in the interior of $\psi_0(K)$, take $u \in \bm k^m$ such that $(K, \bm k) \models \theta_0(u)\wedge\bigwedge_{j=0}^{t_0} \varphi_{0j}(u,a)$.
If $j \notin J_0^0 \cup J_0^1$, then $\varphi_{0j}(u,K)$ is open.
If $j \in J_0^0$, then since $a \notin L_0^0$, we must have $P_{0j}(u,Y)=0$ and so $\varphi_{0j}(u,K)=K$.
If $j \in J_0^1$, then since $a \notin L_0^1$, we must have either $P_{0j}(u,Y)=0$ and $Q_{0j}(u,Y)=0$, so $\varphi_{0j}(u,K)=K$, or $P_{0j}(u,a)\neq 0$, so
\[
a \ \in\ \{b \in K : P_{0j}(u,b)\dotrel{\succe}Q_{0j}(u,b)\ \text{and}\ P_{0j}(u,b) \neq 0 \}\ \subseteq\ \varphi_{0j}(u,K)
\]
is in the interior of $\varphi_{0j}(u,K)$.
Hence $a$ is in the interior of $\bigcap_{j=0}^{t_0} \varphi_{0j}(u,K)\subseteq\psi_0(K)$, yielding $\varepsilon_0 \in K^>$ with $(a-\varepsilon_0,a+\varepsilon_0)\subseteq \psi_0(K)$.
Fix $i\in \{1,\dots,s\}$. For $u \in \bm k^m$ with $\bm k \models \theta_i(u)$, take the corresponding $j \in \{0,\dots,t_i\}$ with $(K, \bm k) \models \varphi_{ij}(u,a)$.
By the same case distinction (with $i$ replacing $0$ in the subscripts), we get $(a-\varepsilon,a+\varepsilon)\subseteq \varphi_{ij}(u,K)$ for some $\varepsilon \in K^>$.
However, this $\varepsilon$ depends on $u$.
By exploiting the uniformity in the proof of local o-minimality summarized in Corollary~\ref{cor:16.6.9-10uniform}, we will find $\varepsilon \in K^>$ so that $(a-\varepsilon,a+\varepsilon)\subseteq \varphi_{ij}(u,K)$ for all $u\in \theta_i(\bm k^m)$ with $a \in \varphi_{ij}(u,K)$. 

Now let $i$ range over $\{1,\dots,s\}$ and $j$ over $\{0,\dots,t_i\} \setminus J_i^0$.
If $\varphi_{ij}$ is $P_{ij}(x,y)>0$, pick $k \in \N$ so that $\bar{P}(X,Y) \coloneqq Y^k P(X,Y^{-1}+a) \in K\{X,Y\}$, and let $\bar{\varphi}_{ij}(x,y)$ be $\bar{P}_{ij}(x,y)>0$.
If instead $\varphi_{ij}$ is $P_{ij}(x,y)\dotrel{\succe}Q_{ij}(x,y)$ or $P_{ij}(x,y)\dotrel{\succ}Q_{ij}(x,y)$, then take $k \in \N$ so that $\bar{P}(X,Y) \coloneqq Y^k P(X,Y^{-1}+a) \in K\{X,Y\}$ and $\bar{Q}(X,Y) \coloneqq Y^k Q(X,Y^{-1}+a) \in K\{X,Y\}$, and let $\bar{\varphi}_{ij}(x,y)$ be $\bar{P}(x,y) \dotrel{\succe} \bar{Q}(x,y)$ or $\bar{P}(x,y) \dotrel{\succ} \bar{Q}(x,y)$, respectively.
Let $(K^*, \bm k^*) \succe (K, \bm k)$ be $|K|^+$-saturated.
By Corollary~\ref{cor:16.6.9-10uniform}, take a definable partition $D_{ij}^{\top}$ and $D_{ij}^{\bot}$ of $(K\bm k^*)^m$ (defined using the same parameters as $S$ and $a$) such that
\begin{align*}
u \in D_{ij}^{\top}\cap (\bm k^*)^m\ &\iff\ \text{for all}\ b \in K^*\ \text{with}\ b>K,\ (K^*, \bm k^*) \models \bar{\varphi}_{ij}(u,b);\\
u \in D_{ij}^{\bot}\cap (\bm k^*)^m\ &\iff\ \text{for all}\ b \in K^*\ \text{with}\ b>K,\ (K^*, \bm k^*) \not\models \bar{\varphi}_{ij}(u,b).
\end{align*}
(Recall that if $b \in K^*$ with $b>K$, then $b>K\bm k^*$, by the proof of Lemma~\ref{2:lem:Knotlean}.)
By elementarity, it follows that we have an $\varepsilon_{ij} \in K^>$ so that
\begin{align*}
u \in D_{ij}^{\top}\cap \bm k^m\ &\iff\ \text{for all}\ b \in (a,a+\varepsilon_{ij}),\ (K, \bm k) \models \varphi_{ij}(u,b);\\
u \in D_{ij}^{\bot}\cap \bm k^m\ &\iff\ \text{for all}\ b \in (a,a+\varepsilon_{ij}),\ (K, \bm k) \not\models \varphi_{ij}(u,b).
\end{align*}
Arguing similarly on the left side of $a$ and shrinking $\varepsilon_{ij}$ if necessary, we obtain definable $E_{ij}, F_{ij}^{-}, F_{ij}^{+} \subseteq \bm k^m$ such that $\bm k^m = E_{ij}\cup F_{ij}^{-} \cup F_{ij}^{+}$ and
\begin{align*}
u \in E_{ij}\ &\iff\ \text{for all}\ b \in (a-\varepsilon_{ij},a) \cup (a,a+\varepsilon_{ij}),\ (K, \bm k) \models \varphi_{ij}(u,b);\\
u \in F_{ij}^{-}\ &\iff\ \text{for all}\ b \in (a-\varepsilon_{ij},a),\ (K, \bm k) \not\models \varphi_{ij}(u,b);\\
u \in F_{ij}^{+}\ &\iff\ \text{for all}\ b \in (a,a+\varepsilon_{ij}),\ (K, \bm k) \not\models \varphi_{ij}(u,b).
\end{align*}
Since $a \in S \setminus L$, for any $u\in \theta_i(\bm k^m)$ and $j$ with $(K,\bm k) \models \varphi_{ij}(u,a)$, we have $u \in E_{ij}$, since $a$ is in the interior of $\varphi_{ij}(u,K)$.
Thus letting $\varepsilon$ be the minimum among the finitely many $\varepsilon_{ij}$ and $\varepsilon_0$, we have $(a-\varepsilon,a+\varepsilon) \subseteq S$.
\end{proof}

\begin{cor}\label{cor:dim2bdd}
The dimension $\dim_2$ is definable in~$(K,\bm k)$.
\end{cor}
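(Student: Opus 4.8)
The plan is to imitate the proof of Corollary~\ref{cor:dimbdd}, replacing its use of Lemma~\ref{1:lem:dimnint} by Proposition~\ref{prop:emptyintlean}. Recall from Section~\ref{subsec:dimpregeo} that showing $\dim_2$ is definable amounts to showing that for every definable $S\subseteq K^{n+1}$, the set $B\coloneqq\{a\in K^n:\dim_2 S_a=0\}$ is definable using the same parameters as those defining $S$. I would reduce this to the one-variable instance of Proposition~\ref{prop:emptyintlean} applied fibrewise.

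First I would fix $a\in K^n$ and regard the fibre $S_a\subseteq K$ as a definable subset of $K$ (over the parameter $a$ together with the parameters of $S$). The case $n=1$ of Proposition~\ref{prop:emptyintlean} says that $\dim_2 S_a<1$ if and only if $S_a$ has empty interior in $K$. Since $\dim_2 S_a$ takes values in $\{-\infty,0,1\}$, the inequality $\dim_2 S_a<1$ is the same as $\dim_2 S_a\les 0$, so $\dim_2 S_a=0$ if and only if $S_a$ is nonempty and has empty interior in $K$. This gives
\[
B\ =\ \{a\in K^n: S_a\neq\0\ \text{and}\ S_a\ \text{has empty interior in}\ K\}.
\]

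It remains to see that the right-hand side is definable over the parameters of $S$, which is a matter of the order topology only. Using that the valuation balls (equivalently, bounded open intervals) form a parameter-definable basis for the topology on $K$, the relation ``$b$ lies in $\inter S_a$'' is definable in $(a,b)$ over the parameters of $S$; hence ``$S_a$ has empty interior'' is $\neg\exists b\,(b\in\inter S_a)$ and ``$S_a\neq\0$'' is $\exists b\,(b\in S_a)$, both over the same parameters. Therefore $B$ is definable using the parameters of $S$, as needed.

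I do not anticipate a real obstacle: all the substance sits in Proposition~\ref{prop:emptyintlean}, and the rest is the routine observation that emptiness of interior is expressible by a formula over the existing parameters. The only point requiring care is the parameter bookkeeping---that the equivalence supplied by Proposition~\ref{prop:emptyintlean} introduces no new parameters, and that ``empty interior'' is first-order over the parameters of $S$---which is precisely what promotes ``bounded'' to ``definable'' in the sense of Section~\ref{subsec:dimpregeo}.
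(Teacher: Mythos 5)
Your proposal is correct and is essentially the paper's own proof: the paper likewise characterizes $B$ as the set of $a$ with $S_a$ of empty interior, citing Proposition~\ref{prop:emptyintlean}, and the definability of ``empty interior'' over the same parameters is exactly the routine point you spell out. Your extra care about the empty fibre (requiring $S_a\neq\0$ since $\dim_2\0=-\infty$) is a small precision the paper's one-line proof glosses over, but it does not change the argument.
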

\begin{proof}
Given $S \subseteq K^{n+1}$, definable in $(K, \bm k)$, we have
\[
\{a \in K^n : \dim_2 S_a=0\}\ =\ \{a \in K^n : S_a\ \text{has empty interior in}\ K\}.\qedhere
\]
\end{proof}

\begin{cor}
The dimension $\dim_2$ is the unique dimension function \textnormal{(}on models of~$T^{\dhl}_{\lift}$\textnormal{)}.
\end{cor}
\begin{proof}
Since $\dim_2$ is nontrivial and definable, it is a dimension function by Corollary~\ref{cor:defdimdimfun}.
Moreover, $\cl^{\der}_2$ is the unique existential matroid (on a monster model) by Lemma~\ref{lem:clder2existmatroid}, so this uniqueness yields the uniqueness of $\dim_2$ by \cite[Theorem~4.3]{fornasiero-dimension}.
\end{proof}

We thus obtain the consequences for $\dim_2$ stated in general form in Corollaries~\ref{cor:dfibre} and \ref{cor:duniformsmall}.

\begin{cor}\label{2:cor:dim2fibre}
\mbox{}
\begin{enumerate}
    \item\label{2:cor:dim2fibre:1} If $S \subseteq K^m$ and $f \colon S \to K^n$ are definable in $(K, \bm k)$, then
    \begin{enumerate}
        \item $\dim_2 S \ges \dim_2 f(S)$ \textnormal{(}so $\dim_2 S = \dim_2 f(S)$ for injective $f$\textnormal{)};
        \item $B_i \coloneqq \{ b \in K^n : \dim_2 f^{-1}(b)=i \}$ is definable using the same parameters as $f$ and $\dim_2 f^{-1}(B_i)= i + \dim_2 B_i$ for $i=0, \dots, m$.
    \end{enumerate}
    \item\label{2:cor:dim2fibre:2} If $S \subseteq K^{\ell+1}$ is $A$-definable in $(K, \bm k)$ with $A\subseteq K$ then there are $P_1, \dots, P_r$ in $\Q\langle A\rangle\{X,Y,Z\}$ with $r \in \N$ such that for all $a \in K^{\ell}$ with $\dim_2 S_a=0$,
    \[
    S_a\ \subseteq\ \bigcup_{i=1}^r \{ b \in K : P_i(u,a,b)=0\ \text{for some}\ u\in \bm k^m\ \text{with}\ P_i(u,a,Z)\neq 0 \}.
    \]
\end{enumerate}
\end{cor}

It also follows from Corollary~\ref{cor:dim2bdd} (using Lemma~\ref{lem:dimbasic}) that $\dim_2$ equals the naive topological dimension in~$K$.

\begin{cor}\label{2:cor:dim2naivetop}
If $S \subseteq K^n$ is definable in $(K,\bm k)$, then $\dim_2 S$ is the supremum of $m\les n$ such that for some coordinate projection $\pi \colon K^n \to K^m$, $\pi(S)$ has nonempty interior in~$K^m$.
\end{cor}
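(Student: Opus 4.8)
The plan is to prove the two inequalities between $\dim_2 S$ and the stated supremum — call the latter $d_{\mathrm{top}}(S)$ for the duration of the proof — by invoking Proposition~\ref{prop:emptyintlean} in the correct ambient space $K^m$ for each projected set, together with the projection properties in Lemma~\ref{lem:dimbasic}. The crucial point throughout is that for a definable $T \subseteq K^m$, Proposition~\ref{prop:emptyintlean} (with $n$ there replaced by $m$) says $T$ has nonempty interior in $K^m$ exactly when $\dim_2 T = m$, since $\dim_2 T \les m$ always holds.

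First I would establish $d_{\mathrm{top}}(S) \les \dim_2 S$. Suppose $\pi \colon K^n \to K^m$ is a coordinate projection such that $\pi(S)$ has nonempty interior in $K^m$. The set $\pi(S)$ is definable in $(K, \bm k)$, being the image of a definable set under a coordinate projection, so Proposition~\ref{prop:emptyintlean} gives $\dim_2 \pi(S) = m$. By the projection inequality in Lemma~\ref{lem:dimbasic}, $\dim_2 S \ges \dim_2 \pi(S) = m$. Taking the supremum over all such $m$ yields $d_{\mathrm{top}}(S) \les \dim_2 S$. For the reverse inequality, set $m \coloneqq \dim_2 S$ and apply the part of Lemma~\ref{lem:dimbasic} producing a coordinate projection $\pi \colon K^n \to K^m$ with $\dim_2 \pi(S) = m$. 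Since $\dim_2 \pi(S)$ equals $m$ (and in particular is not $< m$), Proposition~\ref{prop:emptyintlean} applied to $\pi(S) \subseteq K^m$ shows that $\pi(S)$ has nonempty interior in $K^m$. Hence $m$ is one of the values over which $d_{\mathrm{top}}(S)$ is the supremum, so $d_{\mathrm{top}}(S) \ges m = \dim_2 S$. Combining the two inequalities gives the desired equality.

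I do not expect a genuine obstacle here, as the content is exactly the reformulation of ``$\dim_2$ equals naive topological dimension'' recorded after Corollary~\ref{2:cor:dim2fibre}; the only points needing care are the bookkeeping of the ambient dimension and the degenerate cases. When $S = \0$ we have $\dim_2 S = -\infty$, and every $\pi(S) = \0$ has empty interior (including in $K^0$), so $d_{\mathrm{top}}(S) = -\infty$ under the convention $\sup \0 = -\infty$; when $S \neq \0$, the projection onto $K^0$ has image $K^0$, which is open, so both sides are at least $0$. The essential structural input is that Proposition~\ref{prop:emptyintlean} must be read in the ambient space $K^m$ of the \emph{projected} set rather than in $K^n$, and then the monotonicity and witness-projection clauses of Lemma~\ref{lem:dimbasic} close the argument.
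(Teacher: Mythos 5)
Your proof is correct and follows essentially the same route the paper intends: the paper derives this corollary from the equivalence "$\dim_2 T<m$ iff $T$ has empty interior in $K^m$" (Proposition~\ref{prop:emptyintlean}) combined with the projection clauses of Lemma~\ref{lem:dimbasic}, which is exactly your two-inequality argument. The attention to applying Proposition~\ref{prop:emptyintlean} in the ambient space of the projected set, and to the degenerate cases $S=\0$ and $m=0$, is appropriate and matches what the paper leaves implicit.
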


So far, we do not know whether $\dim_2$ extends $\dim$.
In the two main cases, the topological characterization of dimension shows that indeed it does.
\begin{cor}\label{cor:dim=dim2}
If $K$ is an existentially closed pre-$H$-field with gap~$0$ or a closed $H$-field and $S \subseteq K^n$ is definable in $K$, then $\dim S = \dim_2 S$.
\end{cor}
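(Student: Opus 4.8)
The plan is to show that both $\dim$ and $\dim_2$ coincide with the naive topological dimension of $S$, which is a purely topological quantity, and so must agree with each other. The substantive work has already been done in establishing the two separate topological characterizations, so what remains is to line them up.

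First I would record that in either case $(K,\bm k)\models T^{\dhl}_{\lift}$: when $K$ is an existentially closed pre-$H$-field with gap~$0$ we take $T_{\res}$ to be the theory of closed ordered differential fields and $\bm k$ a lift of $\res(K,\dot{\ca O})$, and when $K$ is a closed $H$-field we take $T_{\res}$ to be the theory of closed $H$-fields and $\bm k$ a lift, so that $(K,\bm k)$ is a transserial tame pair. Either way, a set $S\subseteq K^n$ definable in $K$ is a fortiori definable in $(K,\bm k)$. Next I would apply Corollary~\ref{2:cor:dim2naivetop} to this $S$, viewed as definable in $(K,\bm k)$: it gives that $\dim_2 S$ is the supremum of those $m\les n$ for which $\pi(S)$ has nonempty interior in $K^m$ for some coordinate projection $\pi\colon K^n\to K^m$. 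This supremum is exactly the naive topological dimension of $S$. On the other hand, $\dim S$ is also this naive topological dimension: in the gap~$0$ case $K\models T^{\dhl}_{\codf}$, so this is Corollary~\ref{1:dimnaivetop}, while in the closed $H$-field case it is \cite[Corollary~3.2]{adh-dimension}, as recalled in Section~\ref{subsec:intspecialQE}. Comparing the two characterizations then yields $\dim S=\dim_2 S$.

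I expect no real obstacle, since the content is entirely contained in the two topological characterizations. The only point meriting a word of care is that the defining condition ``$\pi(S)$ has nonempty interior in $K^m$'' depends solely on the set $\pi(S)$ and the fixed order topology (equivalently, valuation topology) on $K^m$, and not on whether $\pi(S)$ is regarded as definable in $K$ or in $(K,\bm k)$. Hence the two suprema range over literally identical conditions and are equal, rather than merely both being computed by formally different recipes that happen to agree; in particular the common value is independent of the chosen lift $\bm k$, as it must be.
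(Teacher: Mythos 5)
Your proposal is correct and is essentially identical to the paper's own proof, which likewise combines Corollary~\ref{2:cor:dim2naivetop} with Corollary~\ref{1:dimnaivetop} in the gap~$0$ case and with \cite[Corollary~3.2]{adh-dimension} in the closed $H$-field case. Your additional remark that the interior condition depends only on the topology, not on the structure in which $\pi(S)$ is viewed as definable, is a worthwhile point of care that the paper leaves implicit.
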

\begin{proof}
Combine Corollary~\ref{2:cor:dim2naivetop} with either Corollary~\ref{1:dimnaivetop} for existentially closed pre-$H$-fields with gap~$0$ or \cite[Corollary~3.2]{adh-dimension} for closed $H$-fields.
\end{proof}
In light of Corollary~\ref{cor:dim=dim2}, the results in the next two subsections apply just as well to sets definable in closed $H$-fields and in existentially closed pre-$H$-fields with gap~$0$.
The latter is because any existentially closed pre-$H$-field with gap~$0$ can be equipped with a lift of its differential residue field.
But not every closed $H$-field $K$ has a differential subfield $L$ so that $(K,L)$ is a transserial tame pair.
One way to observe this transfer of results to closed $H$-fields is that any fact for closed $H$-fields (not already established in \cite{adamtt,adh-dimension}) can be proved by copying the proof in this paper.
But they are moreover formal consequences of the results here, since any elementary statement about definable sets can be obtained via the next lemma, as we illustrate in Corollary~\ref{cor:closedHdimfrontier}.

\begin{lem}\label{lem:closedHtottp}
If $K$ is a closed $H$-field, then there is a transserial tame pair $(K^*,L^*)$ with $K^*\succe K$, so $\dim S=\dim_2 S^*$ for $S\subseteq K^n$ definable in~$K$.
\end{lem}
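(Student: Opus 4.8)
The plan is to realize $K$ as an elementary substructure of the top field of some transserial tame pair by a compactness argument, and then to transfer the dimension equality through Corollary~\ref{cor:dim=dim2}. The whole argument is soft model theory; the only points needing care are the choice of language and the transfer of an existential sentence.

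First I would recall that transserial tame pairs exist and that, by completeness of $T^{\nl}_{\sm}$ (\cite[Corollary~16.6.3]{adamtt}), the top field $K'$ of any transserial tame pair satisfies $K'\equiv K$ as a closed $H$-field, i.e.\ in the language $\ca L_{\OR,\der}\cup\{\prece\}$ with $\prece$ naming the natural valuation. Let $\mathrm{eldiag}(K)$ denote the elementary diagram of $K$ in this language, adding a constant for each element of $K$, and work in the language $\ca L_{\OR,\der}\cup\{\prece,\dotrel{\prece},U\}$ of transserial tame pairs, in which $\prece$ is the natural valuation of the top field, $\dotrel{\prece}$ is the coarsening whose valuation ring is the convex hull of the small field, and $U$ names the small field. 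I would then show that the theory of transserial tame pairs together with $\mathrm{eldiag}(K)$ is consistent. By compactness it suffices to treat a finite fragment, which mentions finitely many constants $\bar a$ from $K$ and finitely many $\ca L_{\OR,\der}\cup\{\prece\}$-formulas $\varphi_i(\bar a)$ holding in $K$. Then $K\models \exists \bar x\, \bigwedge_i \varphi_i(\bar x)$, and since $K'\equiv K$ for the top field $K'$ of any transserial tame pair, also $K'\models \exists \bar x\, \bigwedge_i \varphi_i(\bar x)$; interpreting $\bar a$ by a witnessing tuple in $K'$ satisfies the fragment. A model of the full theory is thus a transserial tame pair $(K^*,L^*)$ together with an elementary embedding $K\to K^*$ of closed $H$-fields, and after identifying $K$ with its image we arrange $K\prece K^*$.

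Finally, let $S\subseteq K^n$ be definable in $K$ (in its closed $H$-field structure), and let $S^*\subseteq (K^*)^n$ be defined by the same formula, so that $(K^*,S^*)\succe (K,S)$. By Lemma~\ref{lem:dimbasic} ($\dim$ is preserved under elementary extension) we get $\dim S=\dim S^*$. Since $K^*$ is a closed $H$-field and $S^*$ is definable in $K^*$, Corollary~\ref{cor:dim=dim2} yields $\dim S^*=\dim_2 S^*$, whence $\dim S=\dim_2 S^*$. There is no serious obstacle here beyond the bookkeeping just indicated: taking the elementary diagram in the language of the \emph{natural} valuation (so that the relevant reduct of a transserial tame pair is a closed $H$-field) and invoking completeness of $T^{\nl}_{\sm}$ to push the existential sentence from $K$ to $K'$.
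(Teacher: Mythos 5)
Your proposal is correct, and it reaches the conclusion by a genuinely different route for the key existence step. The paper simply takes any $|K|^+$-saturated $K^*\succe K$ and invokes \cite[Lemma~3.4]{pc-transtamepair}, which supplies the small field $L^*$ inside such a saturated closed $H$-field directly; the dimension equality is then read off from Corollary~\ref{cor:dim=dim2} and Lemma~\ref{lem:dimbasic}, exactly as in your last paragraph. You instead avoid that structural lemma entirely: you run compactness on $\mathrm{eldiag}(K)$ (in the language $\ca L_{\OR,\der}\cup\{\prece\}$ of the natural valuation) together with the first-order theory of transserial tame pairs, using only the completeness of $T^{\nl}_{\sm}$ to transfer the finitely many existential witnesses into the top field of some fixed transserial tame pair. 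This is the standard fact that a model of a complete theory elementarily embeds into the reduct of any model of a consistent expansion, and your bookkeeping about which valuation symbol appears in the diagram is exactly the point that needs care; you handle it correctly. What your argument buys is independence from \cite[Lemma~3.4]{pc-transtamepair}, at the cost of needing the existence of at least one transserial tame pair (which is available, e.g.\ from maximal Hardy fields) and the completeness of $T^{\nl}_{\sm}$; what the paper's argument buys is brevity, since the saturation lemma is already on the shelf. Both are sound.
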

\begin{proof}
If $K$ is a closed $H$-field, then any $|K|^+$-saturated $K^*\succe K$ works by \cite[Lemma~3.4]{pc-transtamepair}, Corollary~\ref{cor:dim=dim2}, and Lemma~\ref{lem:dimbasic}.
\end{proof}

\subsection{Discreteness and d-minimality}\label{subsec:discretedmin}
Next we establish an analogue of \cite[Corollary~16.6.11]{adamtt}, which in particular shows that $(K, \bm k)$ is d-minimal (although $(K, \bm k)$ is not definably complete).

\begin{prop}\label{2:prop:discleanequiv}
Let $S \subseteq K$ be definable in $(K, \bm k)$.
Then $S$ is the disjoint union of an open definable set and a discrete definable set.
Moreover, $S$ is discrete if and only if $S$ is lean.
\end{prop}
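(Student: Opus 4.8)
The plan is to take for the open part the interior of $S$ and for the discrete part everything else, then reduce the whole statement to the discreteness of that boundary piece. Concretely, set $U \coloneqq \inter S$, which is open and definable since $U = \{ a \in K : (a-\varepsilon,a+\varepsilon)\subseteq S\ \text{for some}\ \varepsilon\in K^{>} \}$, and set $D \coloneqq S\setminus U$, which is again definable. Then $S = U \sqcup D$ with $U$ open, so the first assertion amounts to showing that $D$ is discrete. Granting this, the ``moreover'' clause is immediate from Proposition~\ref{prop:emptyintlean}: if $S$ is lean then $S$ has empty interior, so $U=\emptyset$ and $S=D$ is discrete; and if $S$ is discrete then $S$ contains no nonempty open interval (no point of such an interval is isolated, as $K$ is a densely ordered real closed field), so $S$ has empty interior and hence is lean by Proposition~\ref{prop:emptyintlean}.

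To prove that $D$ is discrete I would argue locally at each $a\in D$, producing $\varepsilon\in K^{>}$ with $D\cap(a-\varepsilon,a+\varepsilon)=\{a\}$. The decisive input is that near $a$ the set $S$ satisfies a one-sided trichotomy: for small $\varepsilon$, each of $S\cap(a,a+\varepsilon)$ and $S\cap(a-\varepsilon,a)$ is either all of the one-sided interval or empty. This is local o-minimality (Proposition~\ref{prop:localomin}), which via the substitution $z\mapsto 1/(z-a)$ (a fractional linear transformation carrying the germ of $K$ just above $a$ to elements $>K$) reduces to the uniqueness of the type over $K$ of an element $>K$. Given the trichotomy, any point of a one-sided interval that lies in $S$ is interior to $S$, hence lies in $U$ and not in $D$; so after shrinking $\varepsilon$ we obtain $D\cap(a-\varepsilon,a+\varepsilon)=\{a\}$, and $a$ is isolated in $D$.

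The substance of the argument, and the step I expect to be the main obstacle, is the trichotomy itself, which is \emph{not} formal here because $S$ is defined using an existential quantifier over the \emph{discrete} set $\bm k$, so the usual o-minimal reasoning is unavailable. To handle this I would invoke the quantifier reduction of Corollary~\ref{cor:veryspecialQE}, writing $S$ as a boolean combination of sets defined by special formulas $\exists x\in U\,(\theta(x)\wedge\bigwedge_j\varphi_j(x,z))$ in which each $\varphi_j$ is an (in)equality between differential polynomials. Passing to a sufficiently saturated elementary extension and testing $a\pm\delta$ for $\delta$ in the cut determined by $a$, one must decide, uniformly in the potential witness $u\in\bm k$, the sign and valuation $\dot v$ of each $P(u,a\pm\delta)$. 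This is exactly what the logarithmic-decomposition computation of Corollary~\ref{cor:16.6.9-10uniform} (building on Lemma~\ref{lem:16.6.9-10}) delivers, by partitioning the parameter space according to the leading term of $P$; it shows that each special set is, on each side of $a$, either cofinally present or eventually absent, and these assemble to the trichotomy for the boolean combination $S$. Transferring back to $K$ then yields the required $\varepsilon$.
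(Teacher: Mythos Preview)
Your proposal is correct and follows essentially the same approach as the paper: take the interior $U$ of $S$, use local o-minimality (Proposition~\ref{prop:localomin}) to see that $D=S\setminus U$ is discrete, and deduce the ``moreover'' clause from Proposition~\ref{prop:emptyintlean}. Your third paragraph is superfluous, since the trichotomy you describe \emph{is} Proposition~\ref{prop:localomin} and need not be re-derived; conversely, the paper's proof contains a long supplementary passage extracting an explicit lean set $L$ with $S\setminus L\subseteq\inter S$, but this extra precision is not required for the proposition itself.
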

\begin{proof}
The interior of $S$ is definable in $(K, \bm k)$, so its relative complement in $S$ must be discrete by local o-minimality (see Proposition~\ref{prop:localomin}).
The second statement follows by Proposition~\ref{prop:emptyintlean}.
\end{proof}

Next we extend the last part of Proposition~\ref{2:prop:discleanequiv} to subsets of $K^n$.
To adapt the proof of \cite[Proposition~4.1]{adh-dimension}  (cf.\ \cite[Proposition~5.4]{angelvdd}), we first need an analogue of \cite[Corollary~10.6.4]{adamtt}, in which we denote the completion of a valued field $E$ by~$E^{\upc}$ (see for example \cite[Section~3.2]{adamtt}).
\begin{lem}\label{lem:10.6.4}
Suppose that $(E, \dot{\ca O}_E)$ is a $\d$-henselian pre-$H$-field that is closed under exponential integration.
Then the pre-$H$-field $(E^{\upc}, \dot{\ca O}_{E^{\upc}})$ is closed under exponential integration.
\end{lem}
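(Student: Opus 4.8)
The plan is to exploit that $E$ is dense in $E^{\upc}$ and that, by the completion theory of valued differential fields with small derivation (\cite[Sections~3.2 and 10.6]{adamtt}), $(E^{\upc},\dot{\ca O}_{E^{\upc}})$ is again a pre-$H$-field, with the same value group $\dot{\Gamma}$ and the same differential residue field as $(E,\dot{\ca O}_E)$, and with continuous small derivation. Since $(E,\dot{\ca O}_E)$ is $\d$-henselian it has gap~$0$, as does $E^{\upc}$; I will use gap~$0$ in the form that whenever $h\in E^{\x}$ satisfies $h^\dagger\dotrel{\prece}1$ then $h\dotrel{\asymp}1$ (if $h\dotrel{\prec}1$ then $h^\dagger\dotrel{\succ}1$, and if $h^{-1}\dotrel{\prec}1$ then $-h^\dagger=(h^{-1})^\dagger\dotrel{\succ}1$, both contradicting $h^\dagger\dotrel{\prece}1$). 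Given $f\in E^{\upc}$, I first reduce to the case $f\dotrel{\prec}1$: by density choose $f_0\in E$ with $f-f_0\dotrel{\prec}1$, solve $g_0^\dagger=f_0$ in $E$ (possible since $E$ is closed under exponential integration), and replace $f$ by $f-f_0$, so that a solution $\tilde g\in(E^{\upc})^{\x}$ of $\tilde g^\dagger=f-f_0$ yields $g\coloneqq g_0\tilde g$.

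So assume $f\dotrel{\prec}1$. Pick a Cauchy sequence $(f_n)$ in $E$ with $f_n\to f$ and all $f_n\dotrel{\prec}1$, and use closure under exponential integration to solve $g_n^\dagger=f_n$ in $E$; gap~$0$ gives $g_n\dotrel{\asymp}1$. To make $(g_n)$ converge I choose the $g_n$ inductively so as to control the increments $h_n\coloneqq g_{n+1}/g_n$, which satisfy $h_n^\dagger=\delta_n$ where $\delta_n\coloneqq f_{n+1}-f_n\dotrel{\prec}1$, whence $h_n\dotrel{\asymp}1$. Passing to residues (using $h_n\dotrel{\asymp}1$ and small derivation), $(\overline{h_n})^\dagger=\overline{h_n^\dagger}=\overline{\delta_n}=0$, so $\overline{h_n}$ is a constant of $\res(E,\dot{\ca O}_E)$. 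By Fact~\ref{fact:adh7.1.3} and Lemma~\ref{lem:liftconstants} the residue map restricts to an isomorphism $C_E\to C_{\res(E,\dot{\ca O}_E)}$; multiplying $g_{n+1}$ by the constant of $E$ whose residue is $\overline{h_n}^{-1}$ (a valid modification, as it does not change $g_{n+1}^\dagger=f_{n+1}$), I arrange after renaming that $\overline{h_n}=1$, i.e.\ $h_n\dotrel{\sim}1$.

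The key estimate is then that $\dot{v}(h_n-1)\to\infty$. Writing $h_n=1+\eta_n$ with $\eta_n\dotrel{\prec}1$, we have $\eta_n'=\delta_n(1+\eta_n)\dotrel{\asymp}\delta_n$, so $\dot{v}(\eta_n')=\dot{v}(\delta_n)\to\infty$; and for $\eta_n\neq0$ the identity $\dot{v}(\eta_n')=\dot{v}(\eta_n)+\psi(\dot{v}(\eta_n))$, where $\psi(\gamma)\coloneqq\dot{v}(\eta^\dagger)$ for $\dot{v}(\eta)=\gamma\neq0$ is well defined, together with strict increasingness of the map $\gamma\mapsto\gamma+\psi(\gamma)$ on $\dot{\Gamma}^{\neq}$ (\cite[Sections~6.5 and 9.2]{adamtt}), forces $\dot{v}(\eta_n)\to\infty$. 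Since $g_n\dotrel{\asymp}1$, this gives $\dot{v}(g_{n+1}-g_n)=\dot{v}(h_n-1)\to\infty$, so $(g_n)$ is Cauchy; let $g\coloneqq\lim g_n\in E^{\upc}$, which satisfies $g\dotrel{\asymp}1$, in particular $g\neq0$. Continuity of the derivation on $E^{\upc}$ then yields $g'=\lim g_n'$, and hence $g^\dagger=\lim g_n^\dagger=\lim f_n=f$, as desired.

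I expect the main obstacle to be convergence (rather than mere boundedness) of the chosen exponential integrals: the solutions $g_n$ are determined only up to multiplicative constants, so both the residue normalization via Lemma~\ref{lem:liftconstants} and the asymptotic estimate $\dot{v}(\eta_n)\to\infty$ are essential, and it is precisely here that the pre-$H$-field structure and gap~$0$ are used. The remaining ingredients---density of $E$, preservation of the value group and residue field under completion, and continuity of the derivation---are standard completion facts cited from \cite{adamtt}.
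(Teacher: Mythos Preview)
Your proof is correct and follows the same Cauchy-sequence strategy as the paper, which adapts \cite[Corollary~10.6.4]{adamtt}. The one difference worth noting is in how the increments are normalized to lie in $1+\dot{\cao}_E$: the paper arranges $\dot v b_\rho>0$ and invokes $\d$-henselianity directly via \cite[Corollary~7.1.9]{adamtt}, which gives $\dot{\cao}_E=(1+\dot{\cao}_E)^\dagger$ and hence produces the desired solution $h\dotrel{\sim}1$ in one step; you instead take an arbitrary exponential integral and then correct by a multiplicative constant, using that the residue map restricts to an isomorphism $C_E\to C_{\res(E,\dot{\ca O}_E)}$---itself a consequence of $\d$-henselianity via Fact~\ref{fact:adh7.1.3} and Lemma~\ref{lem:liftconstants}. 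Both routes call on $\d$-henselianity at exactly the same juncture, just packaged differently; the paper's is somewhat more direct, while yours makes the role of the constants more explicit.
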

\begin{proof}
Note that $(E^{\upc}, \dot{\ca O}_{E^{\upc}})$ is a pre-$H$-field by \cite[Corollary~10.5.9]{adamtt}.
The proof of closure under exponential integration is the same as in \cite[Corollary~10.6.4]{adamtt} except for one modification.
The only place that the assumption of $\d$-valued is used is, after arranging $\dot{v}b_\rho>\Psi_{\dot{\Gamma}_E}$, to get $a_\rho\in\dot{\cao}_E$ with $(1+a_\rho)^\dagger=b_\rho$.
Here, we instead arrange $\dot{v}b_\rho>0$ (using that $\sup\Psi_{\dot{\Gamma}_E}=0\notin \Psi_{\dot{\Gamma}_E}$) and then use $\d$-henselianity for $\dot{\cao}_E = (1+\dot{\cao}_E)^\dagger$ \cite[Corollary~7.1.9]{adamtt}.
\end{proof}
The ordering on $(E,\dot{\ca O}_E)$ in the above lemma is irrelevant.
In fact, it works assuming $(E,\dot{\ca O}_E)$ is pre-$\d$-valued instead of a pre-$H$-field and $1$-$\d$-henselian instead of $\d$-henselian; for that, use \cite[Corollaries~9.1.6 and 10.1.18]{adamtt} instead of \cite[Corollary~10.5.9]{adamtt}.

\begin{thm}\label{2:thm:dim20disc}
Let $S \subseteq K^n$ be definable in $(K, \bm k)$. Then
\[
\dim_2 S \les 0\ \iff\ S\ \text{is discrete}.
\]
\end{thm}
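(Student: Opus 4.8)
The plan is to induct on $n$, the base case $n=1$ being Proposition~\ref{2:prop:discleanequiv}. Throughout, write $\pi\colon K^n \to K^{n-1}$ for the projection onto the first $n-1$ coordinates and set $f\coloneqq \pi|_S$, so that $f(S)=\pi(S)\eqqcolon S'$ and the fibre of $f$ over $b \in S'$ is $S_b=\{a\in K : (b,a)\in S\}$. Since fibres of $f$ are subsets of $K$, by the $n=1$ case $\dim_2 S_b\in\{-\infty,0,1\}$, which gives a clean dichotomy used in both directions.

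For the direction $\dim_2 S=0\Rightarrow S$ discrete I would argue topologically. Writing $B_i\coloneqq\{b:\dim_2 S_b=i\}$, Corollary~\ref{2:cor:dim2fibre} gives $\dim_2 f^{-1}(B_i)=i+\dim_2 B_i$, and $\dim_2 S=\max_i\dim_2 f^{-1}(B_i)$ by Lemma~\ref{lem:dimbasic}; since $\dim_2 S=0$, the set $B_1$ must be empty, so every nonempty fibre has $\dim_2 S_b=0$ and $\dim_2 S'=\dim_2 B_0=0$. By the inductive hypothesis $S'$ is discrete and each $S_b$ is discrete. Now fix $(b,a)\in S$, choose open $V\ni b$ in $K^{n-1}$ with $V\cap S'=\{b\}$, and note $\pi^{-1}(V)\cap S=\{b\}\x S_b$, since every point of $S$ projects into $S'$. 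Choosing open $W\ni a$ in $K$ with $W\cap S_b=\{a\}$, the neighbourhood $\pi^{-1}(V)\cap(K^{n-1}\x W)$ of $(b,a)$ meets $S$ only in $(b,a)$, so $(b,a)$ is isolated and $S$ is discrete.

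For the converse I would prove the contrapositive $\dim_2 S\ges 1\Rightarrow S$ not discrete, adapting the proof of \cite[Proposition~4.1]{adh-dimension}. If some fibre satisfies $\dim_2 S_b=1$, then $S_b$ is not discrete by the case $n=1$, and since $\{b\}\x S_b\subseteq S$, a non-isolated point of $S_b$ immediately yields a non-isolated point of $S$. Otherwise $B_1=\0$, so every fibre is $0$-dimensional and the fibre formula gives $\dim_2 S'=\dim_2 S\ges 1$; by the inductive hypothesis $S'$ is then not discrete, and it remains to lift a non-isolated point $b$ of $S'$ to a non-isolated point of $S$.

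This lifting is the main obstacle. A net in $S'$ accumulating at $b$ need only lift to points of $S$ whose last coordinates run off to infinity, and since $(K,\bm k)$ is not definably complete, no definable choice is available to control the lifts. Here I would pass to the completion of a suitable countable pre-$H$-subfield containing the relevant parameters and the net, using Lemma~\ref{lem:10.6.4} to guarantee that this completion remains closed under exponential integration and hence is a pre-$H$-field with gap~$0$ to which Lemma~\ref{lem:16.6.9-10}, Corollary~\ref{cor:16.6.9-10uniform}, and the special-formula reduction of Theorem~\ref{thm:specialQE} still apply. The crux is to control the $\dotrel{\prec}$-valuations of the perturbed coordinates so that the net admits a genuine limit point lying in $S$ rather than escaping, thereby contradicting discreteness; this is precisely the analytic content that the completion argument, modelled on \cite[Proposition~4.1]{adh-dimension}, is designed to supply.
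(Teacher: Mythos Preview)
Your forward direction is correct, though the paper does it more directly: it projects onto each single coordinate $\pi_i\colon K^n\to K$, notes $\dim_2\pi_i(S)=0$ so each $\pi_i(S)$ is discrete by the $n=1$ case, and concludes that $S\subseteq\prod_i\pi_i(S)$ is discrete. No induction or fibre formula is needed.

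The backward direction has a genuine gap. You correctly identify the lifting problem as the obstacle, but what you sketch does not solve it, and in fact the completion argument you invoke is not designed to do what you claim. Passing to a completion may make nets converge, but there is no reason the limit of a net of lifts $(b_\alpha,a_\alpha)$ should land in $S$, nor that the last coordinates $a_\alpha$ stay bounded; and even if you found a limit, you have not argued it is non-isolated in $S$. Invoking Lemma~\ref{lem:16.6.9-10} and Corollary~\ref{cor:16.6.9-10uniform} here is a red herring---those control behaviour near $+\infty$, not accumulation of arbitrary nets in $K^n$.

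The paper's argument bypasses lifting entirely and is not inductive. It proves the direction ``$S$ discrete $\Rightarrow\dim_2 S=0$'' by a cardinality trick: first pass to a countable elementary substructure, then take its valued-field completion $(K^{\upc},\bm k)$, which by Lemma~\ref{lem:10.6.4} and related facts is again a model and an elementary extension of $(K,\bm k)$. Now $K^{\upc}$ is uncountable but has a countable basis, so any discrete subset of $(K^{\upc})^n$ is countable. Hence each $\pi_i(S)$ is countable, so has empty interior, so $\dim_2\pi_i(S)=0$; by Lemma~\ref{lem:dimbasic} this forces $\dim_2 S=0$. The role of the completion is purely to create a mismatch between the cardinality of the space and the size of a topology basis, not to supply limit points for nets of lifts.
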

\begin{proof}
Suppose that $\dim_2 S=0$.
Then $\dim_2 \pi_i(S)=0$, where $\pi_i \colon K^n \to K$ is projection onto the $i$-th coordinate for $i=1,\dots,n$.
Hence each $\pi_i(S)$ is discrete by Proposition~\ref{2:prop:discleanequiv}, thus so is $S \subseteq \prod_{j=1}^n \pi_i(S)$.

Conversely, assume that $S\neq \0$ is discrete.
First, by passing to an elementary substructure, we arrange that $K$ is countable.
Next, consider the completion $(K^{\upc}, \dot{\ca O}^{\upc})$ of $(K, \dot{\ca O})$ as a valued field, which is an immediate pre-$H$-field extension of $(K, \dot{\ca O})$ by \cite[Corollary~10.5.9]{adamtt}, and $\d$-henselian by \cite[Proposition~7.2.15]{adamtt}.
Also, $K^{\upc}$ is real closed (see \cite[Corollary~3.5.20]{adamtt}) and it is closed under exponential integration by Lemma~\ref{lem:10.6.4}.
In sum, $(K^{\upc}, \dot{\ca O}^{\upc})$ is a $\d$-Hensel-Liouville closed pre-$H$-field.
Finally, $\bm k$ remains a lift of the differential residue field of $(K^{\upc}, \dot{\ca O}^{\upc})$.
By \cite[Corollary~5.7]{pc-transtamepair}, $(K^{\upc}, \bm k)$ is an elementary extension of $(K, \bm k)$.
Note that $K^{\upc}$ is uncountable but its topology has a countable basis.
Then the natural extension of $S$ to definable $\bar{S}\subseteq K^{\upc}$ is countable, so each $\pi_i(\bar{S})$ is countable and hence has empty interior in $K^{\upc}$.
Then $\dim_2 \pi_i(\bar{S})=0$ for each $i$, and so $\dim_2 S=\dim_2 \bar{S}=0$.
\end{proof}

\begin{cor}\label{cor:dmin}
The structure $(K,\bm k)$ is d-minimal in the sense of \cite[Definition~9.1]{fornasiero-dimension}.
\end{cor}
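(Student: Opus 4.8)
The plan is to unwind \cite[Definition~9.1]{fornasiero-dimension} and verify each of its clauses against results already established in this subsection, as d-minimality there is built from two ingredients: that the structure carry a definable dimension arising from an existential matroid, and a one-variable tameness condition to the effect that every definable subset of the line is the union of an open definable set and a discrete definable set, with the dimension detecting discreteness.

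First I would invoke the definability of $\dim_2$ (Corollary~\ref{cor:dim2bdd}) together with the fact, recorded after Lemma~\ref{2:lem:Knotlean}, that $\cl^{\der}_2$ (extended to a monster model) is an existential matroid and is in fact the unique one on $(K,\bm k)$; this supplies the dimension-theoretic half of the definition. The structural half is exactly Proposition~\ref{2:prop:discleanequiv}: every definable $S\subseteq K$ is the disjoint union of an open definable set and a discrete definable set, and $S$ is discrete precisely when it is lean, i.e., precisely when $\dim_2 S=0$. Should the definition also demand compatibility of the dimension with discreteness in higher dimensions, this is Theorem~\ref{2:thm:dim20disc}, and any uniformity-in-definable-families requirement is furnished by Corollary~\ref{cor:duniformsmall}. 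Since all of these have been proved, no genuinely new argument is needed; the corollary reduces to assembling them.

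The hard part will be purely bookkeeping: matching the precise, general formulation of \cite[Definition~9.1]{fornasiero-dimension} to the present setting, and in particular confirming that local o-minimality (Proposition~\ref{prop:localomin}) together with the open/discrete decomposition meets its hypotheses \emph{without} any appeal to definable completeness, since $(K,\bm k)$ is not definably complete. The main point to check carefully is that Fornasiero's definition is stated so as not to presuppose definable completeness, so that the decomposition and the matroid alone suffice; granting this, the verification is immediate from the cited results.
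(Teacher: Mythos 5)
Your overall strategy --- unwind \cite[Definition~9.1]{fornasiero-dimension} and check its clauses against results already proved in this subsection --- is exactly what the paper does, and Proposition~\ref{2:prop:discleanequiv} is indeed the heart of the matter (the open/discrete decomposition of definable subsets of $K$). However, two points in your proposal do not survive contact with the actual definition. First, d-minimality in Fornasiero's sense is a purely topological/definability condition on the structure (five clauses about the order topology and definable sets); it does not \emph{presuppose} a definable dimension or an existential matroid --- those are consequences ($\operatorname{Zcl}$ and its dimension), not hypotheses. So Corollary~\ref{cor:dim2bdd} and the uniqueness of the existential matroid are not among the things to be verified.

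Second, and more substantively, you do not correctly address the last two clauses. The fourth clause asks that the projection to $K$ of a discrete definable $S\subseteq K^n$ again have empty interior; this needs Theorem~\ref{2:thm:dim20disc} \emph{combined with} the inequality $\dim_2\pi(S)\les\dim_2 S$ from Corollary~\ref{2:cor:dim2fibre}, which you gesture at only conditionally. The fifth clause is a fiberwise statement: if $S\subseteq K^2$ is definable and $U\subseteq\pi(S)$ is a nonempty definable open set with $S_a$ having nonempty interior for every $a\in U$, then $S$ has nonempty interior. Corollary~\ref{cor:duniformsmall}, which you offer for ``any uniformity-in-definable-families requirement,'' is about uniform witnesses for dimension-zero fibres and does not yield this; the paper instead derives the fifth clause from Proposition~\ref{prop:emptyintlean} (empty interior $\Leftrightarrow$ $\dim_2<n$) together with the fibre-dimension formula $\dim_2 f^{-1}(B_i)=i+\dim_2 B_i$ of Corollary~\ref{2:cor:dim2fibre}. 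These are fillable gaps --- all the needed ingredients are in the paper --- but as written your plan would stall on clause five.
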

\begin{proof}
There are five conditions that must be checked.
The first two are basic facts about the order topology in $K$, and Proposition~\ref{2:prop:discleanequiv} is a strong form of the third.
For the fourth condition, let $S \subseteq K^n$ be definable in $(K,\bm k)$ and discrete, and $\pi\colon K^n \to K$ be projection onto the first coordinate.
We need to show that $\pi(S)$ has empty interior in $K$: $\dim_2 \pi(S) \les \dim_2 S \les 0$ by Theorem~\ref{2:thm:dim20disc} and Corollary~\ref{2:cor:dim2fibre}\ref{2:cor:dim2fibre:1}, so $\pi(S)$ is discrete.
For the fifth condition, let $S \subseteq K^2$ be definable in $(K, \bm k)$, $\pi \colon K^2 \to K$ be projection onto the first coordinate, and $U \subseteq \pi(S)$ be a definable nonempty open set such that $S_a$ has nonempty interior for all $a\in U$.
We need to show that $S$ has nonempty interior in $K^2$:
$\dim_2 S=2$ by the other part of Corollary~\ref{2:cor:dim2fibre}\ref{2:cor:dim2fibre:1}, so $S$ has nonempty interior in $K^2$ by Proposition~\ref{prop:emptyintlean}.
\end{proof}

The analogue of \cite[Corollary~4.2]{adh-dimension} goes through with exactly the same proof, so we state only a natural consequence used in the next subsection.
\begin{cor}\label{cor:discclosed}
Every discrete subset of $K^n$ definable in $(K, \bm k)$ is closed in~$K^n$.
\end{cor}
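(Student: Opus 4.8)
The plan is to induct on $n$, reducing the general case to discrete definable subsets of $K$. The two facts that make the induction run are that a coordinate projection of a discrete definable set is again discrete and definable, and that every fibre of a discrete definable set is again discrete and definable; both follow from $\dim_2 S=0$ together with Theorem~\ref{2:thm:dim20disc} and Corollary~\ref{2:cor:dim2fibre}. Throughout I would argue with neighbourhoods rather than sequences, since $K$ need not be first countable.

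For the base case $n=1$, let $S\subseteq K$ be discrete and definable and suppose towards a contradiction that $a\in\overline{S}\setminus S$. Since $\dim_2 S=0$ by Theorem~\ref{2:thm:dim20disc} and $\dim_2\{a\}=0$, Lemma~\ref{lem:dimbasic} gives $\dim_2(S\cup\{a\})=0$, so $S\cup\{a\}$ is again discrete by Theorem~\ref{2:thm:dim20disc}. Applying local o-minimality (Proposition~\ref{prop:localomin}) to the definable set $S\cup\{a\}$ at its point $a$ yields $\varepsilon\in K^{>}$ with $(S\cup\{a\})\cap(a-\varepsilon,a+\varepsilon)=\{a\}$, whence $S\cap(a-\varepsilon,a+\varepsilon)=\0$; this contradicts $a\in\overline{S}$. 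Hence $S=\overline{S}$.

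For the inductive step, let $n\ges 2$ and write $\pi\colon K^n\to K^{n-1}$ for the projection onto the first $n-1$ coordinates. By Corollary~\ref{2:cor:dim2fibre}\ref{2:cor:dim2fibre:2}, $\dim_2\pi(S)\les\dim_2 S=0$, so $\pi(S)$ is discrete by Theorem~\ref{2:thm:dim20disc} and hence closed by the inductive hypothesis. Likewise, for each $b\in K^{n-1}$ the fibre $S_b=\{c\in K:(b,c)\in S\}$ is a definable subset of the discrete set $S$ (via the homeomorphism $c\mapsto(b,c)$), so it is discrete, and thus closed in $K$ by the base case. Now let $(b_0,c_0)\in\overline{S}$. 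Every open neighbourhood $V$ of $b_0$ gives an open $V\times K\ni(b_0,c_0)$ meeting $S$, so $V$ meets $\pi(S)$; thus $b_0\in\overline{\pi(S)}=\pi(S)$. As $\pi(S)$ is discrete, fix an open $V_0\ni b_0$ with $V_0\cap\pi(S)=\{b_0\}$. For any open interval $I\ni c_0$, the neighbourhood $V_0\times I$ meets $S$ in some $(b,c)$ with $b\in V_0\cap\pi(S)=\{b_0\}$, so $b=b_0$ and $c\in S_{b_0}\cap I$; since $I$ was arbitrary, $c_0\in\overline{S_{b_0}}=S_{b_0}$, giving $(b_0,c_0)\in S$. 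Therefore $S$ is closed.

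The hard part is that, at this stage of the paper, the frontier inequality (Theorem~\ref{thm:dimfrontier}) is not yet available, so I cannot simply argue that $\overline{S}$ is itself discrete and conclude directly; the projection--fibre reduction above is the device that circumvents this. The genuine conceptual input is concentrated in the base case, where local o-minimality turns discreteness into the isolation of \emph{every} point of $K$ from $S$ (achieved cleanly by adjoining the putative limit point $a$ to $S$ and invoking Theorem~\ref{2:thm:dim20disc} to keep the enlarged set discrete). The remaining work is routine topological bookkeeping, carried out with neighbourhoods to avoid any appeal to countable bases.
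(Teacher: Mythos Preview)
Your proof is correct. Two simplifications are worth noting. First, the base-case argument already works for all $n$ without induction, since Theorem~\ref{2:thm:dim20disc} is stated for $S\subseteq K^n$: if $S\subseteq K^n$ is discrete definable and $a\in\overline{S}\setminus S$, then $\dim_2(S\cup\{a\})=\max\{\dim_2 S,\dim_2\{a\}\}=0$ by Theorem~\ref{2:thm:dim20disc} and Lemma~\ref{lem:dimbasic}, so $S\cup\{a\}$ is discrete by Theorem~\ref{2:thm:dim20disc} again, whence $a$ is isolated in $S\cup\{a\}$, contradicting $a\in\overline{S}$. The projection--fibre induction is thus unnecessary. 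Second, the appeal to Proposition~\ref{prop:localomin} in your base case is redundant: once you have that $S\cup\{a\}$ is discrete, the isolating $\varepsilon$ exists by the very definition of discreteness.

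The paper does not spell out a proof here; it simply observes that the analogue of \cite[Corollary~4.2]{adh-dimension} carries over with the same proof and records Corollary~\ref{cor:discclosed} as a consequence. Your route via Theorem~\ref{2:thm:dim20disc} and Lemma~\ref{lem:dimbasic} is a clean, self-contained alternative that stays entirely within the paper.
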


\subsection{Further topological facts}\label{subsec:moretop}
In this subsection, we deduce several topological consequences about definable sets in $(K,\bm k)$, including that $\dim_2$ is preserved when taking closures and drops when taking frontiers, as well as a strong definable Baire category theorem.

Note that by Corollary~\ref{cor:dim=dim2} and Lemma~\ref{lem:closedHtottp}, all of the following results apply to sets definable in closed $H$-fields, as illustrated in Corollary~\ref{cor:closedHdimfrontier}.

\begin{lem}\label{lem:emptyintnodenseequiv}
Let $S \subseteq K^n$ be definable in $(K, \bm k)$.
Then $S$ has empty interior in $K^n$ if and only if $S$ is nowhere dense in $K^n$.
\end{lem}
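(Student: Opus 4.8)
The plan is to prove the nontrivial implication (empty interior $\Rightarrow$ nowhere dense) by induction on $n$, reducing at the top dimension to a Kuratowski--Ulam-style argument once the ambient topology has been made second countable. The reverse implication is immediate, since $S\subseteq\overline S$ gives $\inter S\subseteq\inter\overline S$. For the forward direction, note first that by Proposition~\ref{prop:emptyintlean} applied to both $S$ and the set $\overline S$ (which is definable, e.g.\ via $\forall\varepsilon\in K^{>}\,\exists y\in S\ |x-y|<\varepsilon$), the assertion is equivalent to the implication $\dim_2 S<n\Rightarrow\dim_2\overline S<n$. Since $\dim_2$ is preserved under elementary extension (Lemma~\ref{lem:dimbasic}) and $\overline S$ is defined by the same formula in every model, this biconditional transfers between $(K,\bm k)$ and any elementary extension. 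Hence, exactly as in the proof of Theorem~\ref{2:thm:dim20disc}, I would pass to a countable elementary substructure and then to the completion $(K^{\upc},\bm k)\succe(K,\bm k)$ (using Lemma~\ref{lem:10.6.4} and \cite[Corollary~5.5]{pc-transtamepair}), so that we may assume $K$ is second countable and its order topology is that of a complete metric space, hence a Baire space.

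The base case $n=1$ is easy: if $S$ has empty interior it is lean, hence discrete by Proposition~\ref{2:prop:discleanequiv}, hence closed by Corollary~\ref{cor:discclosed}, so $\overline S=S$ has empty interior. For the inductive step, let $n\ges 2$, assume the lemma in dimension $n-1$, and suppose $\dim_2 S\les n-1$ but, toward a contradiction, that $\overline S$ contains a nonempty open box $U\times V$ with $U\subseteq K^{n-1}$ and $V\subseteq K$ an interval; equivalently, $S$ is dense in $U\times V$. Writing $\pi\colon K^n\to K^{n-1}$ for the projection and $B_1\coloneqq\{b:\dim_2 S_b=1\}$, the fibre formula (Corollary~\ref{2:cor:dim2fibre}) together with $\dim_2 S\les n-1$ forces $\dim_2 B_1\les n-2$, so $B_1$ has empty interior (Proposition~\ref{prop:emptyintlean}) and is nowhere dense in $K^{n-1}$ by the inductive hypothesis. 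Replacing $U$ by the nonempty open set $U\setminus\overline{B_1}$, I may assume every fibre $S_b$ with $b\in U$ has empty interior, hence is discrete and closed in $K$ (Proposition~\ref{2:prop:discleanequiv} and Corollary~\ref{cor:discclosed}).

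Now fix a countable dense $D\subseteq V$ and, for each subinterval $V'$ of $V$ with endpoints in $D$, set $A_{V'}\coloneqq\{b\in U:S_b\cap V'=\0\}$. Each $A_{V'}$ is definable and has empty interior, for if it contained a nonempty open $W$ then $W\times V'$ would be disjoint from $S$, contradicting density of $S$ in $U\times V$; by the inductive hypothesis each $A_{V'}$ is therefore nowhere dense in $K^{n-1}$. As there are only countably many such $V'$, their union $M$ is meagre, so the nonempty open subset $U$ of the Baire space $K^{n-1}$ is not contained in $M$; choose $b^*\in U\setminus M$. Then $S_{b^*}$ meets every basic subinterval of $V$, so $S_{b^*}$ is dense in $V$; being also closed, $S_{b^*}\supseteq V$, contradicting discreteness of $S_{b^*}$. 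This contradiction shows $\overline S$ has empty interior, completing the induction.

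The main obstacle is precisely arranging a setting in which a Baire-category argument is available: the structures here are highly non-archimedean and not second countable, and ``discrete'' does not mean ``finite'', so the only route I see is the completion reduction above, which replaces the cardinality bookkeeping of Theorem~\ref{2:thm:dim20disc} by genuine use of the Baire property. The two applications of the inductive hypothesis --- making the top-dimensional fibres disappear (nowhere density of $B_1$) and making each slice-obstruction $A_{V'}$ nowhere dense --- are exactly what let the countable union $M$ be meagre. I would emphasize that this argument is self-contained and, in particular, does not invoke the frontier theorem (Theorem~\ref{thm:dimfrontier}), which comes later and is instead deduced from the present lemma.
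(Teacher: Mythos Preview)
Your argument is correct but far more elaborate than needed. The paper's proof is a four-line dimension count: if $S$ has empty interior then $\dim_2 S<n$ by Proposition~\ref{prop:emptyintlean}; for any nonempty open $U\subseteq K^n$ we have $\dim_2 U=n$, so $\dim_2(U\setminus S)=n$ by the $\max$ property of $\dim_2$ (Lemma~\ref{lem:dimbasic}), whence $U\setminus S$ has nonempty interior and $S$ is not dense in $U$.

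Your route---passing to a countable model and its completion to gain the Baire property, inducting on $n$, and running a Kuratowski--Ulam style argument on fibres---does work, and is the kind of argument one would reach for in a purely topological setting where no dimension function is available. But here Proposition~\ref{prop:emptyintlean} has already converted ``empty interior'' into the algebraic condition $\dim_2<n$, and once that is done the union axiom $\dim_2(A\cup B)=\max\{\dim_2 A,\dim_2 B\}$ immediately tells you that removing a small set from a full-dimensional set leaves a full-dimensional set. Your proof invokes Proposition~\ref{2:prop:discleanequiv}, Corollary~\ref{cor:discclosed}, Corollary~\ref{2:cor:dim2fibre}, Lemma~\ref{lem:10.6.4}, and the completion reduction from Theorem~\ref{2:thm:dim20disc}; the paper's proof uses none of these, only Proposition~\ref{prop:emptyintlean} and Lemma~\ref{lem:dimbasic}.
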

\begin{proof}
If $S$ is nowhere dense in $K^n$, then it has empty interior in $K^n$.
Conversely, suppose that $S$ has empty interior in $K^n$ and $U \subseteq K^n$ is a definable nonempty open set.
Consider $U \setminus S$. From $\dim_2 S<n$ and $\dim_2 U=n$, we get $\dim_2 U\setminus S = n$, so $U \setminus S$ has nonempty interior.
That is, $S$ is not dense in~$U$.
\end{proof}

This yields that definable sets have a certain definable Baire property:
\begin{cor}\label{cor:definableBP}
If $S \subseteq K^n$ is definable in $(K, \bm k)$, then $S$ is the disjoint union of a definable open set and a definable nowhere dense set.
\end{cor}

Below, $\overline{S}$ refers to the closure of the set $S \subseteq K^n$ in the product topology on $K^n$ coming from the order topology on $K$.
Then Corollary~\ref{2:cor:dim2naivetop} yields (see \cite[Proposition~2.1]{fornahalup}):
\begin{cor}\label{cor:dim2closure}
If $S \subseteq K^n$ is definable in $(K, \bm k)$, then $\dim_2 S = \dim_2 \overline{S}$.
\end{cor}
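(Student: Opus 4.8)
The plan is to deduce the equality from the topological characterization of $\dim_2$ in Corollary~\ref{2:cor:dim2naivetop}, combined with the equivalence of empty interior and nowhere density for definable sets from Lemma~\ref{lem:emptyintnodenseequiv}. Since $S\subseteq\overline{S}$, the monotonicity in Lemma~\ref{lem:dimbasic} immediately gives $\dim_2 S\les\dim_2\overline{S}$, so the whole content lies in the reverse inequality $\dim_2\overline{S}\les\dim_2 S$.

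To establish it, I would set $m\coloneqq\dim_2\overline{S}$ and invoke Corollary~\ref{2:cor:dim2naivetop} to fix a coordinate projection $\pi\colon K^n\to K^m$ with $\pi(\overline{S})$ having nonempty interior in $K^m$. Because $\pi$ is continuous for the product topology, $\pi(\overline{S})\subseteq\overline{\pi(S)}$, so $\overline{\pi(S)}$ also has nonempty interior in $K^m$.

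The key step is then to pass from $\overline{\pi(S)}$ back to $\pi(S)$. Since $\pi(S)$ is a coordinate projection of the definable set $S$, it is itself definable in $(K,\bm k)$, so Lemma~\ref{lem:emptyintnodenseequiv} applies to it: were $\pi(S)$ to have empty interior, it would be nowhere dense and hence $\overline{\pi(S)}$ would have empty interior, contradicting the previous step. Thus $\pi(S)$ has nonempty interior in $K^m$, and Corollary~\ref{2:cor:dim2naivetop} gives $\dim_2 S\ges m=\dim_2\overline{S}$, as desired.

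I expect no serious obstacle, since the argument is formal once the naive topological characterization is available, and it parallels the general topological statement \cite[Proposition~2.1]{fornahalup}. The only points needing care are that coordinate projections are continuous for the product topology (so that $\pi(\overline{S})\subseteq\overline{\pi(S)}$) and that $\pi(S)$ remains definable; both are routine. The genuinely essential input is Lemma~\ref{lem:emptyintnodenseequiv}, which would fail for arbitrary non-definable sets but holds for the definable ones considered here.
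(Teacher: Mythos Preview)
Your proposal is correct and is essentially the same approach as the paper's: both derive the result from the naive topological characterization in Corollary~\ref{2:cor:dim2naivetop}, with the paper simply citing \cite[Proposition~2.1]{fornahalup} for the short topological argument that you spell out via Lemma~\ref{lem:emptyintnodenseequiv}. Your identification of that lemma as the essential input is exactly right.
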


Corollary~\ref{cor:dim2closure} also follows from the next theorem, whose more difficult proof makes use of results from \cite{fornasiero-dimension,fornahalup}.
The two results we need are that dimension can be checked locally on definable open neighbourhoods \cite[Theorem~3.1]{fornahalup} (cf.\ \cite[Corollary~9.19]{fornasiero-dimension}) and is preserved under increasing unions of certain definable families \cite[Lemma~3.71]{fornasiero-dimension}.
It should also be possible to give a direct proof of the former in $(K,\bm k)$ by induction, using facts established in the previous two subsections.

\begin{thm}\label{thm:dimfrontier}
Let $S \subseteq K^n$ be definable in $(K, \bm k)$.
Then $\dim_2 (\overline{S}\setminus S) < \dim_2 S$ if $S\neq\0$.
\end{thm}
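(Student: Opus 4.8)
The plan is to prove the inequality by first disposing of the top-dimensional case on purely topological grounds and then running an induction on $n$, using the fibre formula of Corollary~\ref{2:cor:dim2fibre} together with the two d-minimal tools to control the frontier. Write $F\coloneqq\overline S\setminus S$ and $d\coloneqq\dim_2 S$; we may assume $S$ is neither empty nor closed, since otherwise $F=\0$ and the inequality is clear. The starting observation is that $F$ always has empty interior: if $U\subseteq K^n$ were open, nonempty, and contained in $F$, then $U$ would be a neighbourhood of each of its points disjoint from $S$ (as $F\subseteq K^n\setminus S$), so no point of $U$ could lie in $\overline S$, contradicting $U\subseteq F\subseteq\overline S$. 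By Proposition~\ref{prop:emptyintlean} this already yields $\dim_2 F<n$, and by Corollary~\ref{cor:dim2closure} it yields $\dim_2 F\les\dim_2\overline S=d$. In particular the case $d=n$ is finished outright, and the genuine content is the case $d<n$, where the bound $\dim_2 F\les d$ must be sharpened to a strict inequality.

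First I would set up the induction on $n$. The base case $n=1$ is immediate: there $F$ has empty interior, so $\dim_2 F\les 0$; if $\dim_2 S=0$ then $S$ is discrete by Theorem~\ref{2:thm:dim20disc} and hence closed by Corollary~\ref{cor:discclosed}, so $F=\0$, while if $\dim_2 S=1$ then $\dim_2 F\les 0<1$. For the inductive step with $d<n$, fix a coordinate projection $\pi\colon K^n\to K^{n-1}$, write points as $(x,t)$ with $t\in K$, and apply the fibre formula to $\pi|_S$ and to $\pi|_{\overline S}$. I would split $F$ into its \emph{vertical} part, where $t\in\overline{S_x}\setminus S_x$ is a frontier point of the one-variable fibre $S_x$, and its \emph{horizontal} part $F^{\h}\coloneqq\{(x,t)\in F:t\notin\overline{S_x}\}$. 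The vertical part is handled by the $n=1$ case fibrewise: for every $x$ the fibrewise frontier $\overline{S_x}\setminus S_x$ has empty interior in $K$ and so is discrete by Proposition~\ref{2:prop:discleanequiv}, whence the vertical part has $0$-dimensional fibres and, by the fibre formula, its $\dim_2$ is the dimension of its projection into $K^{n-1}$. That projection lies in $\overline{\pi(S)}$, so its dimension is governed by $\dim_2\pi(S)\les d$, with the inductive hypothesis on $K^{n-1}$ bounding the contribution of $\pi(F)\setminus\pi(S)\subseteq\overline{\pi(S)}\setminus\pi(S)$ by $\dim_2\pi(S)-1<d$.

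The hard part will be the horizontal frontier $F^{\h}$, and this is exactly where d-minimality must substitute for the cell decomposition available in the o-minimal setting: its points are limits produced by \emph{varying} the base point $x$ rather than by the fibre $S_x$ itself, so no fibrewise argument reaches them. To bound $\dim_2 F^{\h}$ I would show that its projection $\pi(F^{\h})=\{x:(\overline S)_x\supsetneq\overline{S_x}\}$ has $\dim_2<d$, whereupon the fibre formula gives $\dim_2 F^{\h}<d$ and, combined with the vertical estimate and Lemma~\ref{lem:dimbasic}, the desired $\dim_2 F<d$. To control this set of ``bad'' base points I would exhaust the part of $S$ lying in discrete fibres by the increasing definable family of points of $S$ whose fibre is $\gamma$-separated (no other point of the same fibre within the closed valuation ball of radius $\gamma$), and then transfer dimension bounds from these strata to the union via the increasing-union lemma \cite[Lemma~3.71]{fornasiero-dimension}, using the local-checking result \cite[Corollary~9.19]{fornasiero-dimension} to verify the bound on definable open neighbourhoods where the separation scale is uniform. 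The horizontal limit points appear precisely at base points where this scale degenerates, and the two d-minimal tools pin that degeneracy locus to dimension $<d$. Everything outside this step is either topological or a direct application of the fibre formula and the one-variable case; it is the interaction between the \emph{family} of fibres and the closure operation—unavailable to a fibrewise or cell-by-cell analysis—that constitutes the main obstacle and forces the appeal to \cite{fornasiero-dimension}.
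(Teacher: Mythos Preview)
Your outline assembles the right ingredients but has two genuine gaps. For the vertical part, you bound its projection by $\dim_2\pi(S)\les d$ and then invoke the inductive hypothesis on $\overline{\pi(S)}\setminus\pi(S)$; but the vertical part projects \emph{into} $\pi(S)$ (if $\overline{S_x}\setminus S_x\neq\0$ then $S_x\neq\0$), so the inductive bound on the frontier of $\pi(S)$ is irrelevant and you are left with only $\les d$. The fix is to observe that $\overline{S_x}\setminus S_x\neq\0$ forces $\dim_2 S_x=1$ (else $S_x$ is closed by Theorem~\ref{2:thm:dim20disc} and Corollary~\ref{cor:discclosed}), so the projection lies in $B_1=\{x:\dim_2 S_x=1\}$, which has $\dim_2\les d-1$ by the fibre formula applied to $S$. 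For the horizontal part, bounding $\dim_2\pi(F^{\h})<d$ does not give $\dim_2 F^{\h}<d$: the fibres $(F^{\h})_x=(\overline S)_x\setminus\overline{S_x}$ may have dimension $1$, so the fibre formula yields only $\dim_2 F^{\h}\les 1+(d-1)=d$. You would first have to strip off $\{x:\dim_2(\overline S)_x=1\}$ via the fibre formula for $\overline S$. Beyond that, your mechanism for bounding $\pi(F^{\h})$---exhausting $S$ by $\gamma$-separated strata---remains a gesture: it is not clear which increasing family you feed into \cite[Lemma~3.71]{fornasiero-dimension} or what local bound \cite[Corollary~9.19]{fornasiero-dimension} is supposed to certify.

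The paper's route is structurally different and sidesteps both issues. There is no induction on $n$ and no vertical/horizontal split: assuming for contradiction that $\dim_2 F=m=\dim_2 S$, one projects directly to $K^m$ (not $K^{n-1}$), shrinks to an open box $U$ in the base over which \emph{both} $S$ and $F$ have $0$-dimensional fibres, and defines $B_\varepsilon\subseteq F\cap\pi^{-1}(U)$ to be those frontier points at fibrewise distance $\ges\varepsilon$ from $S$. A direct application of \cite[Corollary~9.19]{fornasiero-dimension} together with a triangle-inequality argument gives $\dim_2 B_\varepsilon<m$; then \cite[Lemma~3.71]{fornasiero-dimension} and closedness of each $S_a$ yield $\dim_2\bigl(F\cap\pi^{-1}(U)\bigr)<m$, a contradiction. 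The increasing family lives on the \emph{frontier} rather than on $S$, and the local-checking result is applied to a single concrete set $B_\varepsilon$---this is the missing construction your sketch alludes to but does not supply.
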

\begin{proof}
First, observe that $\dim_2 (\overline{S}\setminus S) <n$, since $\overline{S}\setminus S$ has empty interior in $K^n$.
This handles the case $\dim_2 S=n$. Also, note that if $\dim_2 S=0$, then $S$ is closed (and discrete) by Theorem~\ref{2:thm:dim20disc} and Corollary~\ref{cor:discclosed}, so in particular $\dim_2 (\overline{S}\setminus S) = \dim_2 \0 = -\infty$.
Now suppose that $1\les \dim_2 (\overline{S}\setminus S)=\dim_2 S=m<n$.
Take a coordinate projection $\pi \colon K^n \to K^m$ such that $\dim_2 \pi(\overline{S}\setminus S)=m$; by permuting coordinates, arrange that $\pi$ is projection onto the first $m$ coordinates.
Note that $\dim_2 \pi(S)=m$ too, since 
\[
m\ =\ \dim_2 \pi(\overline{S}\setminus S)\ \les\ \dim_2 \pi(\overline{S})\ \les\ \dim_2 \overline{\pi(S)}\ =\ \dim_2 \pi(S).
\]
Take a nonempty open box $U \subseteq \pi(\overline{S}\setminus S)$.
Since $\dim_2 (\overline{\pi(S)}\setminus \pi(S))<m$, the set $\overline{\pi(S)}\setminus \pi(S)$ is nowhere dense, so shrinking $U$ we arrange that $U \subseteq \pi(\overline{S}\setminus S) \cap \pi(S)$.
We have
\[
\dim_2 \{ a \in U : \dim_2 S_a =0 \}\ =\ m\ =\ \dim_2 \{ a \in U : \dim_2 (\overline{S}\setminus S)_a =0 \},
\]
so by further shrinking $U$ we arrange that for all $a \in U$,
\[
\dim_2 S_a\ =\ 0\ =\ \dim_2 (\overline{S}\setminus S)_a.
\]

Now let $a$ range over $K^m$, $b$ over $K^{n-m}$, and $\varepsilon$ over $K^>$, and define the set of ``$\varepsilon$-bad points''
\[
B_{\varepsilon}\ \coloneqq\ \{ (a,b) \in (\overline{S}\setminus S)\cap \pi^{-1}(U) : \text{for all}\ (a,\tilde{b})\in S,\ |\tilde{b}-b|\ges\varepsilon\},
\]
where $|\tilde{b}-b|=\max\{ |\tilde{b}_i-b_i| : 1\les i \les n-m\}$.
We claim that $\dim_2 B_{\varepsilon}<m$.
Suppose to the contrary that $\dim_2 B_{\varepsilon}=m$.
Then $\dim_2 \pi(B_{\varepsilon})=m$, since for $a \in U$, we have $(B_{\varepsilon})_a \subseteq (\overline{S}\setminus S)_a$ and $\dim_2 (\overline{S}\setminus S)_a =0$.
Since $\dim_2 \pi(B_{\varepsilon})=m$, \cite[Theorem~3.1]{fornahalup} yields a point $(a,b) \in B_{\varepsilon}$ and nonempty open box $U_1 \subseteq U$ such that $V \coloneqq U_1\times(b-\varepsilon/2,b+\varepsilon/2)$ satisfies $\pi(V \cap B_{\varepsilon}) = U_1$.
By shrinking $U_1$, we arrange that $|(\tilde{a},\tilde{b})-(a,b)|<\varepsilon/2$ for all $(\tilde{a},\tilde{b}) \in V$.
Now use that $(a,b) \in \overline{S}\setminus S$ to find $(\tilde{a},\tilde{b}) \in S \cap V$, so $|(\tilde{a},\tilde{b})-(a,b)|<\varepsilon/2$.
But then $\tilde{a}\in U_1 = \pi(V \cap B_{\varepsilon})$, so we have $d \in K^{n-m}$ with $(\tilde{a},d)\in V \cap B_{\varepsilon}$.
We have finally reached a contradiction, since  $(\tilde{a},\tilde{b})\in S$ and
\[
|d-\tilde{b}|\ \les\ |d-b|+|\tilde{b}-b|\ <\ \varepsilon/2 + \varepsilon/2\ =\ \varepsilon.
\]

Having established that $\dim_2 B_{\varepsilon}<m$, note that $B_{\varepsilon}\subseteq B_{\delta}$ if $0<\delta\les\varepsilon$ in $K$.
Hence by \cite[Lemma~3.71]{fornasiero-dimension}, we have $\dim_2 \bigcup_{\varepsilon\in K^>} B_{\varepsilon}<m$.
On the other hand, using that $S_a$ is closed for $a \in U$, we get $\bigcup_{\varepsilon\in K^>} B_{\varepsilon} = (\overline{S}\setminus S) \cap \pi^{-1}(U)$, contradicting that $\dim_2 ((\overline{S}\setminus S) \cap \pi^{-1}(U)) = m$.
\end{proof}

Note that the proof of Theorem~\ref{thm:dimfrontier} uses Theorem~\ref{2:thm:dim20disc} and Corollary~\ref{cor:discclosed}.
In general, d-minimality is not enough for the dimension of the frontier to drop, by \cite[Example~9.12]{fornasiero-dimension}.
Another example more closely related to the material of this paper is the asymptotic couple of the differential field of logarithmic transseries, which fails to satisfy Theorem~\ref{thm:dimfrontier}, despite satisfying robust analogues of Theorems~\ref{thmint:kinda} and \ref{thmint:very} (in particular, it is d-minimal) \cite{gkpc-acTlogsmall}.
The key point here is that dimension zero definable sets are closed and, indeed, the proof of Theorem~\ref{thm:dimfrontier} works for naive topological dimension if it is a dimension function on an expansion of an ordered abelian group and dimension zero definable sets are closed.

Theorem~\ref{thm:dimfrontier} in the case of transserial tame pairs implies in particular the same result for closed $H$-fields, which  was stated in \cite{adh-dimension} but not proved there.
Strictly speaking, the statement from \cite{adh-dimension} concerns closed $H$-fields possibly without small derivation, which can be reduced to the statement below by \emph{compositional conjugation} (see \cite[Section~5.7]{adamtt}).
M. Aschenbrenner communicated that he, L. van den Dries, and J. van der Hoeven have had a proof of that statement since 2016, which they intend to include in a sequel to \cite{adh-dimension}.
In fact, they can prove a more general result that also implies Theorem~\ref{thm:dimfrontier}.
The proof of Theorem~\ref{thm:dimfrontier} differs from their proof.

\begin{cor}\label{cor:closedHdimfrontier}
Let $S \subseteq K^n$ be nonempty and definable in a closed $H$-field $K$.
Then $\dim (\overline{S}\setminus S) < \dim S$.
\end{cor}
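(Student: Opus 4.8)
The plan is to deduce this directly from Theorem~\ref{thm:dimfrontier} via the transfer mechanism provided by Lemma~\ref{lem:closedHtottp}, as the surrounding discussion anticipates. Let $S\subseteq K^n$ be definable in the closed $H$-field $K$, say with parameters from $K$. First I would invoke Lemma~\ref{lem:closedHtottp} to fix a transserial tame pair $(K^*,L^*)$ with $K^*\succe K$; writing $S^*$ for the interpretation in $K^*$ of the formula defining $S$, that lemma gives $\dim S=\dim_2 S^*$. Since $\overline{S}\setminus S$ is again definable in $K$ (see below), the same lemma applied to it yields $\dim(\overline{S}\setminus S)=\dim_2\big((\overline{S}\setminus S)^*\big)$. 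The goal is then to identify $(\overline{S}\setminus S)^*$ with the frontier $\overline{S^*}\setminus S^*$ computed in $(K^*)^n$, after which Theorem~\ref{thm:dimfrontier} applied in $(K^*,L^*)$ gives $\dim_2(\overline{S^*}\setminus S^*)<\dim_2 S^*$, and chaining the equalities produces $\dim(\overline{S}\setminus S)<\dim S$.

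The crux, and the only point requiring genuine justification, is that topological closure commutes with passage to the elementary extension $K^*\succe K$. Here the key observation is that closure in the order topology (and the induced product topology on $K^n$) is \emph{uniformly first-order definable}: a point $x$ lies in $\overline{S}$ if and only if $\forall\varepsilon\in K^>\,\exists y\in S\ (|x-y|<\varepsilon)$, where $|x-y|=\max_i|x_i-y_i|$, since basic open boxes around $x$ are cofinally given by such $\varepsilon$. In particular $\overline{S}$ is definable in $K$ over the same parameters, so $\overline{S}\setminus S$ is definable, justifying the use of Lemma~\ref{lem:closedHtottp} above. Because this defining formula is fixed and $K^*\succe K$, interpreting it in $K^*$ yields exactly $\overline{S^*}$, so that $(\overline{S})^*=\overline{S^*}$ and hence $(\overline{S}\setminus S)^*=\overline{S^*}\setminus S^*$, as needed.

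Thus the argument reduces to the chain
\[
\dim(\overline{S}\setminus S)=\dim_2\big(\overline{S^*}\setminus S^*\big)<\dim_2 S^*=\dim S,
\]
the strict inequality being Theorem~\ref{thm:dimfrontier} in the transserial tame pair $(K^*,L^*)$. I expect no further obstacles: the real content has already been established in Theorem~\ref{thm:dimfrontier}, and this corollary is essentially a transfer statement. The one subtlety to handle carefully is precisely the commutation of closure with the elementary extension, which is exactly why I would foreground the uniform definability of the closure operator; everything else is bookkeeping with the dimension identities supplied by Lemma~\ref{lem:closedHtottp}.
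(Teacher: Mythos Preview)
Your proposal is correct and follows essentially the same route as the paper: invoke Lemma~\ref{lem:closedHtottp} to pass to a transserial tame pair $(K^*,L^*)$, use $(\overline{S}\setminus S)^*=\overline{S^*}\setminus S^*$, and then apply Theorem~\ref{thm:dimfrontier}. The paper's proof is more terse, simply asserting the commutation of frontier with the elementary extension, whereas you spell out the uniform first-order definability of closure that underlies it.
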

\begin{proof}
Take a transserial tame pair $(K^*,L^*)$ as in Lemma~\ref{lem:closedHtottp}.
Since $(\overline{S}\setminus S)^*=\overline{S^*}\setminus S^*$,
\[
\dim (\overline{S}\setminus S)\ =\ \dim_2 (\overline{S^*}\setminus S^*)\ <\ \dim_2 S^*\ =\ \dim S. \qedhere
\]
\end{proof}

Our results also yield a strong definable Baire category theorem (cf.\ \cite{hieronymi-baire}, which establishes a similar result for \emph{definably complete} expansions of ordered fields).
In the same way as above, this implies the analogous statement for closed $H$-fields.
\begin{defn}
Suppose that $S\subseteq K^n$. We call $S$ \deft{definably meagre} in $K^n$ if there exist a directed set $(I;\les)$ and a family $W\subseteq I\times K^n$, both definable in $(K,\bm k)$, such that
\begin{enumerate}
\item $W_a$ is nowhere dense in $K^n$ for all $a \in I$;
\item $W_a\subseteq W_b$ whenever $a\les b$ in $I$;
\item $S\subseteq\bigcup_{a\in I} W_a$.
\end{enumerate}
\end{defn}
Note that we do not require a definably meagre set itself to be definable.

\begin{lem}\label{lem:defbaire}
If $S \subseteq K^n$ is definably meagre in $K^n$, then $S$ is nowhere dense in~$K^n$.
\end{lem}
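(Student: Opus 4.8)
The plan is to show that the entire union $U \coloneqq \bigcup_{a\in I} W_a$ already has empty interior, and then deduce nowhere density of $S$ by monotonicity. Since the directed set $I$ and the family $W$ are definable in $(K,\bm k)$, the set $U = \{ b \in K^n : \exists a\in I,\ (a,b)\in W \}$ is definable. Each fibre $W_a$ is definable and nowhere dense, hence has empty interior (as $\inter(W_a)\subseteq\inter(\overline{W_a})=\emptyset$), so Proposition~\ref{prop:emptyintlean} yields $\dim_2 W_a < n$ for every $a \in I$.

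The crucial step is to pass from the individual fibres to the union. Because $(I;\les)$ is directed and $W_a \subseteq W_b$ whenever $a \les b$, the family $(W_a)_{a\in I}$ is an increasing definable family, and its union is a directed (increasing) union. The dimension $\dim_2$ is preserved under such unions by \cite[Lemma~3.71]{fornasiero-dimension}, applicable here since $(K,\bm k)$ is d-minimal by Corollary~\ref{cor:dmin} and the existential matroid $\cl^{\der}_2$ coincides with $\operatorname{Zcl}$ by Proposition~\ref{2:prop:discleanequiv}. This gives $\dim_2 U < n$. I expect this to be the step requiring the most care: one must confirm that the directedness of $I$ together with the monotonicity of $W$ puts the family into exactly the form demanded by \cite[Lemma~3.71]{fornasiero-dimension}, just as that lemma was applied to the increasing family $(B_\varepsilon)_{\varepsilon\in K^>}$ in the proof of Theorem~\ref{thm:dimfrontier}.

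Having established $\dim_2 U < n$ for the \emph{definable} set $U$, Proposition~\ref{prop:emptyintlean} shows $U$ has empty interior in $K^n$, and Lemma~\ref{lem:emptyintnodenseequiv} upgrades this to $U$ being nowhere dense. Finally, from $S \subseteq U$ and monotonicity of closure and interior we get $\inter(\overline{S}) \subseteq \inter(\overline{U}) = \emptyset$, so $\overline{S}$ has empty interior, i.e., $S$ is nowhere dense in $K^n$. This last step uses only $S \subseteq U$ and not definability of $S$, consistent with the remark that a definably meagre set need not itself be definable.
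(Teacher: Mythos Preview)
Your proof is correct and follows essentially the same route as the paper: reduce each $W_a$ to $\dim_2 W_a<n$ via Proposition~\ref{prop:emptyintlean}, apply \cite[Lemma~3.71]{fornasiero-dimension} to the directed increasing family to get $\dim_2\bigcup_a W_a<n$, and then use Proposition~\ref{prop:emptyintlean} and Lemma~\ref{lem:emptyintnodenseequiv} to conclude nowhere density. Your added remarks on definability of $U$, the applicability of \cite[Lemma~3.71]{fornasiero-dimension} via d-minimality and $\cl^{\der}_2=\operatorname{Zcl}$, and the final monotonicity step for $S$ are all appropriate elaborations of what the paper leaves implicit.
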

\begin{proof}
Suppose that $W$ is as in the above definition.
Then $\dim_2 W_a<n$ for all $a \in I$ by Proposition~\ref{prop:emptyintlean}, so $\dim_2 \bigcup_{a \in I} W_a <n$ by \cite[Lemma~3.71]{fornasiero-dimension}, and hence $\bigcup_{a \in I} W_a$ is nowhere dense in $K^n$ by Proposition~\ref{prop:emptyintlean} and Lemma~\ref{lem:emptyintnodenseequiv}.
\end{proof}

The final result of the section, Lemma~\ref{lem:whitney}, is slightly reminiscent of the Whitney embedding theorem.
It was suggested by J. Freitag, who also outlined the proof.
\begin{lem}\label{lem:combinedefinj}
Let $f_1 \colon S_1 \to K^m$ and $f_2 \colon S_2 \to K^m$ be definable injections, where $m\ges 1$ and $S_1, S_2 \subseteq K^n$.
Then there is a definable injection $S_1\cup S_2 \to K^m$.
\end{lem}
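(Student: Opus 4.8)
The plan is to avoid any dimension-theoretic input altogether and instead combine $f_1$ and $f_2$ by \emph{compressing} their images into disjoint bounded boxes, which is precisely where the hypothesis $m\ges 1$ enters. First I would note that $S_1=\dom f_1$ and $S_2=\dom f_2$ are definable, so the disjoint decomposition $S_1\cup S_2=S_1\sqcup(S_2\setminus S_1)$ is definable too, and it suffices to produce a single definable $g$ that is injective on each of the two pieces and whose two images are disjoint.

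The key tool is a definable (indeed semialgebraic, hence already available in the real closed field $K$) strictly increasing bijection $\sigma\colon K\to(-1,1)$, for instance $\sigma(t)=t/\sqrt{1+t^2}$; writing $\Sigma\coloneqq\sigma\times\cdots\times\sigma\colon K^m\to(-1,1)^m$ gives a definable bijection with bounded image. I would then define
\[
g(x)\ \coloneqq\ \begin{cases}\Sigma(f_1(x)), & x\in S_1,\\ \Sigma(f_2(x))+2e_1, & x\in S_2\setminus S_1,\end{cases}
\]
where $e_1=(1,0,\dots,0)$. Each branch is a composition of definable injections (with the second restricted to $S_2\setminus S_1$), so $g$ is injective on each of $S_1$ and $S_2\setminus S_1$ separately, and $g$ is definable since $S_1,S_2$ are.

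It remains to see that the two images are disjoint, which is the one genuine point of the argument. The image $g(S_1)$ lies in $(-1,1)^m$, so its first coordinate lies in $(-1,1)$, whereas $g(S_2\setminus S_1)\subseteq(1,3)\times(-1,1)^{m-1}$ has first coordinate in $(1,3)$; since $(-1,1)\cap(1,3)=\0$, the two images cannot meet, and hence $g$ is a definable injection on all of $S_1\cup S_2$. The obstacle the construction overcomes is exactly the potential collision $f_1(S_1)\cap f_2(S_2\setminus S_1)\neq\0$ (for example over the overlap $S_1\cap S_2$, where $f_1$ and $f_2$ may disagree wildly): the given maps separately avoid collisions within each set but not between them, and compressing into bounded boxes and then translating in the first coordinate forces the images apart while preserving both definability and injectivity. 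No appeal to $\dim_2$, d-minimality, or any Whitney-type generic projection is needed; the hypothesis $m\ges 1$ is used only to supply a coordinate along which to translate.
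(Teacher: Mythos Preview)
Your proof is correct and follows essentially the same strategy as the paper: post-compose $f_1$ and $f_2$ with semialgebraic injections $K^m\to K^m$ whose images are disjoint (you compress all coordinates via $t\mapsto t/\sqrt{1+t^2}$ and translate in the first, whereas the paper uses piecewise maps on the last coordinate only), then piece the results together over the definable partition $S_1\sqcup(S_2\setminus S_1)$. The differences are cosmetic.
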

\begin{proof}
Defining $h_1\colon K^m \to K^m$ by
\[
h_1(a_1,\dots,a_m)\ \coloneqq\ 
\begin{cases}
(a_1,\dots,a_{m-1},a_m+2) & \text{if}\ a_m\ges 0,\\
(a_1,\dots,a_{m-1},a_m-2) & \text{if}\ a_m<0,
\end{cases}
\]
and $h_2 \colon K^m\to K^m$ by
\[
h_2(a_1,\dots,a_m)\ \coloneqq\ 
\begin{cases}
\big(a_1,\dots,a_{m-1},\frac{1}{1+a_m^2}\big) & \text{if}\ a_m\ges 0,\\
\big(a_1,\dots,a_{m-1},-\frac{1}{1+a_m^2}\big) & \text{if}\ a_m<0,
\end{cases}
\]
yields a definable injection $g\colon S_1\cup S_2 \to K^m$ defined by
\[
g(s)\ \coloneqq\ 
\begin{cases}
h_1(f_1(s)) & \text{if}\ s \in S_1,\\
h_2(f_2(s)) & \text{if}\ s \in S_2\setminus S_1.
\end{cases}\qedhere
\]
\end{proof}

\begin{lem}\label{lem:whitney}
Suppose that $S\subseteq K^n$ is definable in $(K,\bm k)$ with $\dim_2 S=m$.
Then there exists a definable injection $S \to K^{2m+1}$.
\end{lem}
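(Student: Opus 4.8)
The plan is to induct on the ambient dimension $n$, keeping $m=\dim_2 S$ fixed and decreasing $n$ by one at each step until $n\les 2m+1$. Since $S\subseteq K^n$ forces $m\les n$, the induction terminates. For the base case $n\les 2m+1$, the inclusion $K^n\hookrightarrow K^{2m+1}$ obtained by padding with zeros, $(x_1,\dots,x_n)\mapsto(x_1,\dots,x_n,0,\dots,0)$, is a definable injection.

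For the inductive step, suppose $n>2m+1$. I would project away from a generic direction, as in the proof of the Whitney embedding theorem. For $c=(c_1,\dots,c_{n-1})\in K^{n-1}$, let $L_c\colon K^n\to K^{n-1}$ be the linear projection along $(c,1)$, namely $L_c(x)=(x_1-c_1x_n,\dots,x_{n-1}-c_{n-1}x_n)$. A direct computation shows that for distinct $s,s'\in S$ with $s_n=s'_n$ one always has $L_c(s)\neq L_c(s')$, whereas if $s_n\neq s'_n$ then $L_c(s)=L_c(s')$ holds exactly when $c_i=(s_i-s'_i)/(s_n-s'_n)$ for all $i$. Thus $L_c|_S$ is injective precisely when $c$ lies outside the image $\Sigma(D)$ of the definable secant map $\Sigma\colon D\to K^{n-1}$ sending $(s,s')$ to $\big((s_1-s'_1)/(s_n-s'_n),\dots,(s_{n-1}-s'_{n-1})/(s_n-s'_n)\big)$, where $D\coloneqq\{(s,s')\in S\times S:s_n\neq s'_n\}$.

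The dimension count is the crux. By Lemma~\ref{lem:dimbasic} and $D\subseteq S\times S$, we have $\dim_2 D\les\dim_2(S\times S)=2m$, and then $\dim_2\Sigma(D)\les\dim_2 D\les 2m$ by Corollary~\ref{2:cor:dim2fibre}. Since $n-1>2m$, Corollary~\ref{cor:dim2Kn} gives $\dim_2 K^{n-1}=n-1>\dim_2\Sigma(D)$, so $\Sigma(D)\neq K^{n-1}$ by monotonicity of $\dim_2$ (Lemma~\ref{lem:dimbasic}). Choosing any $c$ in the complement, the map $L_c|_S\colon S\to K^{n-1}$ is a definable injection with $\dim_2 L_c(S)=m$ (again by Corollary~\ref{2:cor:dim2fibre}), and the ambient dimension has dropped to $n-1$. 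Applying the induction hypothesis to $L_c(S)$ produces a definable injection $L_c(S)\to K^{2m+1}$, and composing it with $L_c|_S$ completes the step.

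The only delicate point is the verification that $L_c$ captures all coincidences among points of $S$: pairs with equal last coordinate are separated automatically for every $c$, while pairs with distinct last coordinate are separated exactly by avoiding the single value $\Sigma(s,s')$. Granting this, the argument is purely dimension-theoretic and uses nothing about the non-archimedean nature of $K$; everything else is routine bookkeeping with the tools already established. I therefore do not expect a genuine obstacle beyond this verification and the correct setup of the reduction.
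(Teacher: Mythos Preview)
Your argument is correct and in fact slightly cleaner than the paper's. Both proofs run the same induction on the ambient dimension and use a secant-dimension count to drop from $K^n$ to $K^{n-1}$ when $n>2m+1$, but they project differently. You use the \emph{linear} projection $L_c$ along the direction $(c,1)$, so the bad set of directions is $\Sigma(D)\subseteq K^{n-1}$ with $\dim_2\Sigma(D)\les 2m$, and a single choice of $c$ handles all of $S$ at once. The paper instead takes a \emph{central} projection from a point $b\notin\sec(S)\subseteq K^n$ onto the hyperplane $\{x_n=0\}$; this projection is undefined at points with $s_n=b_n$, so the paper must split $S$ into $S_1=\{s\in S:s_n\neq b_n\}$ and $S_2=\{s\in S:s_n=b_n\}$, project each piece separately, and then invoke an auxiliary lemma (Lemma~\ref{lem:combinedefinj}) to splice the two injections into a single map $S\to K^{n-1}$. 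Your parallel-projection variant sidesteps that splitting entirely; the paper's variant is perhaps closer to the classical Whitney picture but costs an extra lemma.
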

\begin{proof}
We may assume that $n>2m+1$.
Let
\[
\sec(S)\ \coloneqq\ \{ a \in K^n : a=t\tilde{s}+(1-t)s\ \text{for some}\ t \in K\ \text{and}\ s,\tilde{s}\in S \}.
\]
Note that $\dim_2 \sec(S)\les 2m+1$ by Corollary~\ref{2:cor:dim2fibre}\ref{2:cor:dim2fibre:1}, since there is a definable surjection $S\times S\times K \to \sec(S)$.
Take $b \in K^n \setminus \sec(S)$ with $b_n\neq 0$ and set $S_1\coloneqq \{ s \in S : s_n \neq b_n \}$ and $S_2\coloneqq \{ s \in S : s_n = b_n \}$.
For $s \in S_1$, there is a unique $a \in K^{n-1}$ such that $(a,0)= ts+(1-t)b$ for some $t\in K$.
Defining $f(s)\coloneqq a$ for such $s,a$ produces a definable map $f\colon S_1 \to K^{n-1}$, which is injective since $b \notin \sec(S)$.
Since $S_2 \subseteq K^{n-1}\times\{b_n\}$, projection onto the first $n-1$ coordinates is a definable injection $S_2 \to K^{n-1}$.
Lemma~\ref{lem:combinedefinj} yields a definable injection $S \to K^{n-1}$, and the result follows by induction.
\end{proof}

The proofs show that Lemma~\ref{lem:whitney} holds for any dimension function on an expansion of an ordered field.
Theorem~\ref{2:thm:dim20disc} yields the following corollary for discrete sets.
\begin{cor}\label{cor:discembed}
If $S\subseteq K^n$ is definable in $(K,\bm k)$ and discrete, then there exists a definable injection $S \to K$.
\end{cor}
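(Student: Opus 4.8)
The plan is to obtain this as an immediate consequence of the two results established just above it, namely Theorem~\ref{2:thm:dim20disc} and Lemma~\ref{lem:whitney}. The former identifies discreteness of a nonempty definable set with the condition $\dim_2 = 0$, while the latter produces a definable injection into $K^{2m+1}$ from any definable set with $\dim_2$ equal to $m$. Specializing the Whitney-type embedding to $m=0$ lands us in $K^{1}=K$, which is precisely the target.

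Concretely, I would first dispose of the case $S=\0$, for which the empty map serves as a definable injection $S\to K$. Assuming then that $S$ is nonempty, discrete, and definable in $(K,\bm k)$, I would apply Theorem~\ref{2:thm:dim20disc} to conclude that $\dim_2 S=0$. Finally, invoking Lemma~\ref{lem:whitney} with $m=0$ yields a definable injection $f\colon S\to K^{2\cdot 0+1}=K$, completing the argument.

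There is no genuine obstacle at this stage: all the substantive work has already been carried out in Theorem~\ref{2:thm:dim20disc} (which rests on local o-minimality via Proposition~\ref{prop:localomin}, the d-minimal package, and the completion argument using Lemma~\ref{lem:10.6.4}) and in Lemma~\ref{lem:whitney} (which uses the fibre-dimension formula of Corollary~\ref{2:cor:dim2fibre} together with the combination Lemma~\ref{lem:combinedefinj} and an induction on the ambient dimension). The corollary is thus simply the $m=0$ specialization of the embedding result, made applicable to discrete sets by the dimension-theoretic characterization of discreteness.
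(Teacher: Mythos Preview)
Your proposal is correct and matches the paper's approach: the paper simply remarks that Theorem~\ref{2:thm:dim20disc} yields the corollary, which is exactly your argument of combining $\dim_2 S=0$ with the $m=0$ case of Lemma~\ref{lem:whitney}.
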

M. Aschenbrenner, L. van den Dries, and J. van der Hoeven have also had a different proof of the analogue of Corollary~\ref{cor:discembed} for closed $H$-fields since 2016, which should appear in the aforementioned sequel to \cite{adh-dimension}.
In fact, they can prove the same for any differential field with nontrivial derivation, where $\dim S=0$ replaces the discreteness of $S$; this does not imply Corollary~\ref{cor:discembed} itself.

\section{Coanalyzability and an imaginary}\label{sec:coanal}

In this final section, we explain how the dimensions considered here are connected to the model-theoretic notion of coanalyzability from \cite{herwighrushovskimacpherson}, especially for existentially closed pre-$H$-fields with gap~$0$ and for transserial tame pairs.
For $S \subseteq K^n$ definable in a closed $H$-field $K$, $\dim S \les 0$ if and only if $S$ is coanalyzable relative to $C$ (to be defined shortly) \cite[Proposition~6.2]{adh-dimension}.
In both cases considered here, this result fails, but for different reasons.
We recover the analogous result for $\dim_2$ when considering instead coanalyzability relative to a lift of the differential residue field in the general setting of $T^{\dhl}_{\lift}$.
That is Theorem~\ref{thm:coanaldim20equiv}, which is then used to show that $T^{\dhl}_{\lift}$ does not eliminate imaginaries in Corollary~\ref{cor:noEI}.

Let $(K, \bm k) \models T^{\dhl}_{\lift}$.
Here are the easiest definitions of the two relevant notions of coanalyzability of a definable set in $(K, \bm k)$.
We follow the presentation of \cite[Section~6]{adh-dimension} except for using an equivalent definition from \cite[Proposition~6.1]{adh-dimension}, and refer the reader to that paper for further facts.
\begin{defn}
Let $\varphi(x,y)$ be an $\ca L_{\lift}$-formula.
Let $a \in K^m$, and $\Th(K, \bm k, a)$ denote the theory of $(K, \bm k)$ expanded by constants for $a$.
Then $\varphi(a,K)$ is
\begin{enumerate}
    \item \deft{coanalyzable relative to $C$} if whenever $(L, \bm k_L) \prece (L^*, \bm k_{L^*}) \models \Th(K, \bm k, a)$ with $C_L = C_{L^*}$, we have $\varphi(a,L)=\varphi(a,L^*)$;
    \item \deft{coanalyzable relative to $\bm k$} if whenever $(L, \bm k_L) \prece (L^*, \bm k_{L^*}) \models \Th(K, \bm k, a)$ with $\bm k_L = \bm k_{L^*}$, we have $\varphi(a,L)=\varphi(a,L^*)$.
\end{enumerate}
\end{defn}
Note that since $C = C_{\bm k} \subseteq \bm k$ by Lemma~\ref{lem:liftconstants}, if $\varphi(a,K)$ as above is coanalyzable relative to $C$, then it is coanalyzable relative to~$\bm k$.
One defines similarly the notion of a definable set in $K$ being coanalyzable relative to $C$ \emph{in $K$}, rather than \emph{in $(K, \bm k)$}, but this is superfluous here.
\begin{lem}\label{lem:coanalC12equiv}
Suppose that $K$ is an existentially closed pre-$H$-field with gap~$0$ or $K$ is a closed $H$-field.
Let $\varphi(x,y)$ be an $\ca L$-formula and $a \in K^m$.
Then $\varphi(a,K)$ is coanalyzable relative to $C$ in $K$ if and only if $\varphi(a,K)$ is coanalyzable relative to $C$ in $(K, \bm k)$.
\end{lem}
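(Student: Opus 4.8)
The plan is to prove the two implications separately, the implication ``coanalyzable relative to $C$ in $K$ $\Rightarrow$ in $(K,\bm k)$'' being routine and the converse carrying all the content.

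First I would dispatch the easy implication by passing to reducts. Suppose $(L,\bm k_L)\prece (L^*,\bm k_{L^*})\models \Th(K,\bm k,a)$ with $C_L=C_{L^*}$. Forgetting the lift predicate, $L\prece L^*$ as $\ca L$-structures and $L\models \Th(K,a)$, since $\Th(K,a)$ is the $\ca L$-reduct of $\Th(K,\bm k,a)$; thus $L\prece L^*\models \Th(K,a)$ with $C_L=C_{L^*}$, and coanalyzability relative to $C$ in $K$ gives $\varphi(a,L)=\varphi(a,L^*)$ because $\varphi$ is an $\ca L$-formula.

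For the converse, assume $\varphi(a,K)$ is coanalyzable relative to $C$ in $(K,\bm k)$ and let $L\prece L^*\models \Th(K,a)$ (as $\ca L$-structures) with $C_L=C_{L^*}$ be arbitrary. It suffices to expand this to an $\ca L_{\lift}$-elementary pair of models of $\Th(K,\bm k,a)$ with the same constants, since then coanalyzability in $(K,\bm k)$ yields $\varphi(a,L)=\varphi(a,L^*)$, and as $L\prece L^*$ was arbitrary this shows $\varphi(a,K)$ is coanalyzable relative to $C$ in $K$. To build the expansion I would first choose a lift $\bm k_L\subseteq \dot{\ca O}_L$ of $\res(L,\dot{\ca O}_L)$ with $(L,\bm k_L)\models \Th(K,\bm k,a)$, then use Fact~\ref{fact:adh7.1.3} to extend $\bm k_L$ to a lift $\bm k_{L^*}\subseteq \dot{\ca O}_{L^*}$ of $\res(L^*,\dot{\ca O}_{L^*})$. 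Since two lift elements of $L^*$ lying in $L$ with equal residue coincide, one gets $\bm k_{L^*}\cap L=\bm k_L$ and $\st_{L^*}$ restricts to $\st_L$ on $\dot{\ca O}_L$; hence $(L,\bm k_L)\subseteq (L^*,\bm k_{L^*})$ is an inclusion of models of $T^{\dhl}_{\lift}$ as $\ca L_{\lift}\cup\{\st\}$-structures, so it is $\ca L_{\lift}$-elementary by the quantifier elimination for $T^{\dhl}_{\lift}$ in $\ca L_{\lift}\cup\{\st\}$ \cite[Corollaries~5.9 and~5.10]{pc-transtamepair} (covering the codf and closed $H$-field cases respectively). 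By elementarity $(L^*,\bm k_{L^*})\models \Th(K,\bm k,a)$ as well, and $C_L=C_{L^*}$ is untouched because a lift contributes no new constants by Lemma~\ref{lem:liftconstants}.

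The main obstacle is the first step of this construction: producing a lift $\bm k_L\subseteq L$ for which $a$ acquires its correct $\ca L_{\lift}$-type, i.e.\ $(L,\bm k_L)\models \Th(K,\bm k,a)$. An arbitrary lift will not do---for instance, if $a\in \bm k$ in $(K,\bm k)$ then $\Th(K,\bm k,a)$ forces $a\in U$, so $\bm k_L$ must contain $a$. By Theorem~\ref{thm:specialQE} the type $\Th(K,\bm k,a)$ is determined by the special formulas that $a$ satisfies, so it is enough to arrange that $a$ satisfies in $(L,\bm k_L)$ exactly the special formulas it satisfies in $(K,\bm k)$. Since $L$ and $K$ both model $\Th(K,a)$ and hence assign $a$ the same $\ca L$-type over $\Q$, I would construct $\bm k_L$ by mimicking the proof of Theorem~\ref{thm:specialQE}: build the lift generated by $a$ through the iterated standard-part construction $\bm k_r=\st(E_r)$ used there---the requisite standard parts and witnesses for the relevant special formulas exist in $L$ precisely because $a$ has the same $\ca L$-type in $L$ as in $K$---and then extend to a full lift of $\res(L,\dot{\ca O}_L)$ via Fact~\ref{fact:adh7.1.3}. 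Matching \emph{all} special formulas, both those that hold and those that fail, is the delicate point, and it is exactly what the back-and-forth in the proof of Theorem~\ref{thm:specialQE} secures; the case of a special formula whose witness is forced to be a constant reduces to the $\ca L$-type, as such a formula is $T^{\dhl}_{\lift}$-equivalent to an $\ca L$-formula because $C\subseteq \bm k$.
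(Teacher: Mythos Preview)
Your approach matches the paper's in its essentials: for the substantive direction, both arguments pick a lift $\bm k_L$ in $L$ via Fact~\ref{fact:adh7.1.3}, extend it to $\bm k_{L^*}$, and deduce $(L,\bm k_L)\prece(L^*,\bm k_{L^*})$. You go through quantifier elimination in $\ca L_{\lift}\cup\{\st\}$; the paper instead first checks $\bm k_L\prece\bm k_{L^*}$ via model completeness of the relevant residue-field theory (in the closed $H$-field case this needs $C_{\bm k_L}=C_{\bm k_{L^*}}$, which comes from Lemma~\ref{lem:liftconstants} and the hypothesis $C_L=C_{L^*}$) and then invokes \cite[Corollary~5.5]{pc-transtamepair}. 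Either route works.

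Your final paragraph, however, is extra work that the paper does not do. The paper takes an \emph{arbitrary} lift $\bm k_L$ and never arranges $(L,\bm k_L)\models\Th(K,\bm k,a)$; it simply applies the coanalyzability hypothesis, implicitly leaning on the ``further facts'' from \cite[Section~6]{adh-dimension} to which it defers (coanalyzability is a property of the definable set and is equivalently tested on elementary extensions of the given model, where one may take $\bm k_L=\bm k$). Your proposed fix via the construction in Theorem~\ref{thm:specialQE} is also not well-posed as written: standard parts are defined only relative to an already-chosen lift, so the recipe $\bm k_r=\st(E_r)$ is circular here, and that proof compares two given lifts rather than building one to realize a prescribed type.
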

\begin{proof}
Suppose that $\varphi(a,K)$ is coanalyzable relative to $C$ in $(K, \bm k)$.
Let $L \prece L^* \models \Th(K,a)$ with $C_L=C_{L^*}$.
We need to show that $\varphi(a,L)=\varphi(a,L^*)$.
Use Fact~\ref{fact:adh7.1.3} first to equip $L$ with a lift $\bm k_L$ of $\res(L, \dot{\ca O}_L)$, then again to extend $\bm k_L$ to a lift $\bm k_{L^*}$ of $\res(L^*, \dot{\ca O}_{L^*})$, thereby arranging $(L, \bm k_L) \subseteq (L^*, \bm k_{L^*})$.
If $K$ is an existentially closed pre-$H$-field with gap~$0$, then $\bm k_L \prece \bm k_{L^*}$ by the model completeness of closed ordered differential fields \cite{singer-codf}.
If $K$ is a closed $H$-field, then Lemma~\ref{lem:liftconstants} yields $C_{\bm k_L}=C_L=C_{L^*}=C_{\bm k_{L^*}}$, so $\bm k_L \prece \bm k_{L^*}$ by the model completeness of closed $H$-fields \cite[Corollary~16.2.5]{adamtt}.
In either case, \cite[Corollary~5.7]{pc-transtamepair} gives $(L, \bm k_L) \prece (L^*, \bm k_{L^*})$, and hence $\varphi(a,L)=\varphi(a,L^*)$ by assumption.
The converse is clear.
\end{proof}

First, we consider the case of existentially closed pre-$H$-fields with gap~$0$, and record an easy consequence of definability of $\dim$ in $T^{\dhl}_{\codf}$.
\begin{lem}\label{dhlcodf-coanalCdim0}
If $K$ is an existentially closed pre-$H$-field with gap~$0$ and $S \subseteq K^n$ is definable in $K$ and coanalyzable relative to $C$, then $\dim S\les 0$.
\end{lem}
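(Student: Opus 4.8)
The plan is to prove the contrapositive: if $\dim S \ges 1$, then $S$ is not coanalyzable relative to $C$. I would first reduce to the one-dimensional case. A quick check shows that coanalyzability relative to $C$ is preserved under coordinate projections: if $\pi \colon K^n \to K$ is a projection, $S=\varphi(a,K)$ is coanalyzable, and $(L,\bm k_L) \prece (L^*,\bm k_{L^*})$ with $C_L = C_{L^*}$, then any $b \in \pi(S)(L^*)$ is witnessed by some $s \in S(L^*) = S(L)$, so $b=\pi(s) \in \pi(S)(L)$; thus $\pi(S)(L) = \pi(S)(L^*)$. On the other hand, if $\dim S \ges 1$ then some coordinate projection $\pi_i(S)\subseteq K$ is not thin, since otherwise $S \subseteq \prod_i \pi_i(S)$ would give $\dim S = 0$ by Lemma~\ref{lem:dimbasic}. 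Hence some $\pi_i(S)$ has $\dim \pi_i(S) = 1$ and is coanalyzable relative to $C$, and it suffices to rule this out.

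So suppose $S \subseteq K$ is definable, coanalyzable relative to $C$, and $\dim S = 1$. By Lemma~\ref{1:lem:dimnint}, $S$ has nonempty interior, hence contains a ball $\{ y \in K : v(y-b) \ges \gamma \}$ for some $b \in K$ and $\gamma \in \Gamma$. Taking $L = K$ in the definition of coanalyzability, it is enough to produce $L^* \succe K$ with $C_{L^*} = C_K$ and $S(L^*) \neq S(K)$. In a monster model $\bm K \succe K$, use saturation to find $\delta > 0$ with $\dot{v}\delta > \dot{\Gamma}_K$, and set $s \coloneqq b + \delta$; then $s \in S(\bm K) \setminus K$ since $\dot{v}(s - b) > \dot{\Gamma}_K \ges \gamma$. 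I would then form the gap~$0$ pre-$H$-subfield $F \coloneqq K\langle \delta \rangle$ of $\bm K$. Applying Lemma~\ref{lem:16.6.9-10}\ref{16.6.10} to $\delta^{-1}$ (note $\delta^{-1} \dotrel{\succ} 1$ and $\delta^{-1} > K$) and using $F = K\langle \delta^{-1} \rangle$ gives $\res(F, \dot{\ca O}_F) = \res(K, \dot{\ca O})$.

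The crux is then to close $F$ off to a model $L^* \models T^{\dhl}_{\codf}$ with $K \subseteq L^* \subseteq \bm K$ while keeping $C_{L^*} = C_K$. Since $\res(F, \dot{\ca O}_F) = \res(K,\dot{\ca O})$ is already a closed ordered differential field, a $\d$-Hensel-Liouville closure of $F$ can be taken without enlarging the residue field (as in the completion argument of Theorem~\ref{2:thm:dim20disc} and the closure constructions of \cite{pc-preH-gap}), so that $\res(L^*, \dot{\ca O}_{L^*}) = \res(K, \dot{\ca O})$. Now constants inject into the residue field: by Lemma~\ref{lem:liftconstants} (applied to a lift furnished by Fact~\ref{fact:adh7.1.3}), $C = C_{\bm k} \subseteq \bm k$ and the residue map restricts to an isomorphism onto the constants of the residue field. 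Since $\res(L^*) = \res(K)$ and $C_K \subseteq C_{L^*}$ have the same image there, this forces $C_{L^*} = C_K$. By the model completeness in Fact~\ref{fact:preH:qe}, $K \subseteq L^*$ yields $K \prece L^*$, and $s \in S(L^*) \setminus K$ witnesses $S(K) \neq S(L^*)$, so $S$ is not coanalyzable relative to $C$.

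I expect the main obstacle to be exactly this last step, namely guaranteeing that passing to a model of $T^{\dhl}_{\codf}$ does not enlarge the constant field. The element $\delta$ is chosen precisely so that adjoining it preserves the residue field by Lemma~\ref{lem:16.6.9-10}, and it is the fact that this residue field is already a closed ordered differential field that lets the ensuing $\d$-Hensel-Liouville closure fix both the residue field and, via Lemma~\ref{lem:liftconstants}, the constants.
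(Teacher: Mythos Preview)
Your argument is correct, but it takes a much more hands-on route than the paper. The paper's proof is a single line: it invokes \cite[Corollary~2.9]{angelvdd}, a general fact about definable dimensions arising from pregeometries, together with Corollary~\ref{cor:dimbdd} (definability of $\dim$ in $T^{\dhl}_{\codf}$); the same pattern recurs verbatim in the proof of Theorem~\ref{thm:coanaldim20equiv} with $\dim_2$ and $\bm k$ in place of $\dim$ and~$C$.

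Your direct construction has the virtue of being self-contained. The reduction to $n=1$ via projections is clean, and the core idea---adjoin $\delta$ with $\dot v\delta>\dot\Gamma_K$, use Lemma~\ref{lem:16.6.9-10} (applied to $\delta^{-1}$) to keep the residue field fixed, close off to a model of $T^{\dhl}_{\codf}$ without enlarging the residue field, and then deduce $C_{L^*}=C_K$ from Lemma~\ref{lem:liftconstants}---is exactly the mechanism that later drives Lemma~\ref{lem:ttpLnotcoanalC}. The step you flag as delicate, namely closing $F$ to a model while preserving $\res$, is justified by \cite[Theorem~6.16]{pc-preH-gap}, which the paper itself uses for this purpose elsewhere. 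What the paper's approach buys is economy and uniformity: one external lemma plus definability handles both this case and the $\dim_2$ analogue at once, whereas your argument would need to be repeated (with $\bm k$ in place of $C$) for Theorem~\ref{thm:coanaldim20equiv}.
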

\begin{proof}
This follows from \cite[Corollary~2.9]{angelvdd} by Corollary~\ref{cor:dimbdd}.
\end{proof}
However, in the setting of Lemma~\ref{dhlcodf-coanalCdim0}, the converse can fail.
\begin{lem}\label{lem:dim0noncoanC}
If $K$ is an existentially closed pre-$H$-field with gap~$0$, then there exists $S \subseteq K$ definable in $K$ such that $\dim S = 0$ but $S$ is not coanalyzable relative to~$C$.
\end{lem}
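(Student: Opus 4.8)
The plan is to produce the witnessing set $S$ at the level of the differential residue field $\bm k$ and then transfer it, exploiting that here $C$ is merely dense in $\bm k$ rather than a lift of it (the feature distinguishing this case from closed $H$-fields, where $C = \bm k$). Fix a lift $\bm k$ of $\res(K,\dot{\ca O})$ via Fact~\ref{fact:adh7.1.3}; recall that $\bm k$ is a closed ordered differential field and that $C = C_{\bm k} \subseteq \bm k$ by Lemma~\ref{lem:liftconstants}. For any $Q \in \bm k\{Z\}^{\neq}$, Corollary~\ref{cor:dtransoverlift} shows that every $z \in K$ with $Q(z) = 0$ lies in $\bm k$, since a $\d$-algebraic-over-$\bm k$ element of $K$ cannot lie in $K \setminus \bm k$. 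Hence the $\ca L$-definable set $S \coloneqq \{ z \in K : Q(z) = 0 \}$ coincides with the solution set of $Q$ in $\bm k$ and satisfies $\dim S = 0$, and the same computation in any model of the corresponding lift theory $(K^*, \bm k^*)$ gives $S(K^*) = \{ z \in \bm k^* : Q(z) = 0 \}$.

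First I would choose $Q$ so that its solution set in closed ordered differential fields is not controlled by the constants. Concretely, I would arrange a closed ordered differential field $\bm k_0$ (which I may take to be the residue field of an elementary submodel carrying the relevant parameters) together with an elementary extension $\bm k_0 \prece \bm k_0^*$ satisfying $C_{\bm k_0^*} = C_{\bm k_0}$ and containing a \emph{new} solution $u \in \bm k_0^* \setminus \bm k_0$ of $Q(Z) = 0$. Such a configuration amounts to a $\d$-algebraic type over $\bm k_0$ that is orthogonal to the constants—so that realizing it adds no new constants—and is omitted in $\bm k_0$; I would extract it from the model theory of closed ordered differential fields \cite{singer-codf}, in analogy with the differentially closed case, using a nonconstant coefficient $g \in \bm k_0$ (for instance one with $g' = 1$) so that $Q$ genuinely involves the differential, and not merely the constant, structure. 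This is precisely the step where $C \subsetneq \bm k$ is essential: over a closed $H$-field, where $C = \bm k$, every such solution set is constant-internal, which is exactly why the analogue \cite[Proposition~6.2]{adh-dimension} goes the other way.

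Having fixed $Q$, $\bm k_0 \prece \bm k_0^*$, and $u$, I would transfer to the pair. Let $(L, \bm k_0)$ be a model of the lift theory with residue field $\bm k_0$ containing the coefficients $a$ of $Q$, and use the relative quantifier reduction (Theorem~\ref{thm:specialQE}, or directly \cite[Corollary~5.5]{pc-transtamepair}) to extend $\bm k_0 \prece \bm k_0^*$ to an elementary extension $(L, \bm k_0) \prece (L^*, \bm k_0^*)$ of pairs. By Lemma~\ref{lem:liftconstants}, $C_{L^*} = C_{\bm k_0^*} = C_{\bm k_0} = C_L$, and since $\bm k_0 = \bm k_0^* \cap L$ the element $u \notin L$, so $u \in S(L^*) \setminus S(L)$. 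Thus, for the $\ca L$-formula $\varphi(a,z) \equiv (Q(z) = 0)$, we have $\varphi(a,L) \neq \varphi(a,L^*)$ while $C_L = C_{L^*}$, so $S$ is not coanalyzable relative to $C$ in the pair, hence not in $K$ by Lemma~\ref{lem:coanalC12equiv}; and $\dim S = 0$ as noted.

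The hard part will be the middle step: producing $Q$ together with a constant-preserving elementary extension that adds a solution. Equivalently, one must exhibit a $\d$-algebraic definable set over $\bm k$ that is not internal to the constants, and the existence of such sets in closed ordered differential fields (the ordered analogue of types orthogonal to the constants in differentially closed fields) is the real content. Everything else—the reduction of $S$ to the residue field via Corollary~\ref{cor:dtransoverlift}, the transfer through the pair machinery, and the constant bookkeeping via Lemma~\ref{lem:liftconstants}—is routine.
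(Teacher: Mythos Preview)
Your overall architecture is exactly the paper's: find a $\d$-algebraic formula whose solution set in a closed ordered differential field is not coanalyzable relative to the constants, then lift the witnessing extension $\bm k \prece \bm k^*$ (with $C_{\bm k}=C_{\bm k^*}$) to an elementary extension of pairs and read off non-coanalyzability of $S$ in $K$ via Lemma~\ref{lem:coanalC12equiv}. The bookkeeping steps you describe (Lemma~\ref{lem:liftconstants}, \cite[Corollary~5.5]{pc-transtamepair}) are the ones used, and your extra observation via Corollary~\ref{cor:dtransoverlift} that $S\subseteq \bm k$ is correct but not actually needed: once $(K,\bm k)\prece(K^*,\bm k^*)$, any $u\in\bm k^*\setminus\bm k$ already lies in $K^*\setminus K$.

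The genuine gap is the ``hard part'' you flag: you never produce $Q$, and your heuristic for finding it points in the wrong direction. You suggest that a nonconstant parameter $g\in\bm k_0$ (say with $g'=1$) is what forces orthogonality to the constants; in fact the paper uses the \emph{parameter-free} equation $z'=z^3-z^2$, and the non-coanalyzability of its solution set in any closed ordered differential field is a known result, cited from \cite[Corollary~4.3]{elsr-dim-codf}. So no parameters are needed, the formula is $\emptyset$-definable (which also sidesteps the issue of matching parameters between the auxiliary model $L$ and the given $K$ that your sketch would otherwise have to address), and the existence of the required constant-preserving extension $\bm k\prece\bm k^*$ with a new solution is imported rather than constructed. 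Once you plug in that citation, your proof and the paper's coincide.
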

\begin{proof}
Consider the formula $\varphi(z)$ given by $z'=z^3-z^2$ in the language of differential rings, which defines in any closed ordered differential field $\bm k$ a set that is not coanalyzable relative to $C_{\bm k}$ in $\bm k$ \cite[Corollary~4.3]{elsr-dim-codf}.
This yields a proper extension of closed ordered differential fields $\bm k \prece \bm k^*$ such that $C_{\bm k} = C_{\bm k^*}$ but $\varphi(\bm k^*)\neq \varphi(\bm k)$.
By \cite[Corollary~3.4]{pc-preH-gap} and \cite[Theorem~6.16]{pc-preH-gap}, we have $K, K^* \models T^{\dhl}$ such that $\res(K, \dot{\ca O}) \cong \bm k$, $\res(K^*, \dot{\ca O^*}) \cong \bm k^*$, and $K \prece K^*$.
As in the proof of Lemma~\ref{lem:coanalC12equiv}, we arrange that 
$\bm k$ and $\bm k^*$ are lifts with $(K, \bm k) \subseteq (K^*, \bm k^*)$.
Note that $C=C_{\bm k}=C_{\bm k^*}=C_{K^*}$, so $(K, \bm k) \prece (K^*, \bm k^*)$ as before.
Hence $\varphi(K)$ is not coanalyzable relative to $C$.
It remains to note that $\dim \varphi(K)=0$.
\end{proof}

As a corollary of the proof of Lemma~\ref{lem:dim0noncoanC}, we obtain:
\begin{cor}
If $K$ is an existentially closed pre-$H$-field with gap~$0$, then $\bm k$ is not coanalyzable relative to $C$ in $(K, \bm k)$.
\end{cor}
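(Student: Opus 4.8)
The plan is to reuse the elementary extension $(K, \bm k) \prece (K^*, \bm k^*)$ constructed in the proof of Lemma~\ref{lem:dim0noncoanC}, since the present corollary is really a repackaging of that construction. The set $\bm k$ is defined in $(K, \bm k)$ by the $\ca L_{\lift}$-formula $y \in U$ with no parameters, so to show that $\bm k$ is not coanalyzable relative to $C$ it suffices, by the definition of coanalyzability relative to $C$, to exhibit models $(L, \bm k_L) \prece (L^*, \bm k_{L^*}) \models \Th(K, \bm k)$ with $C_L = C_{L^*}$ but $U(L) \neq U(L^*)$.

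First I would recall from the proof of Lemma~\ref{lem:dim0noncoanC} that the structures $(K, \bm k) \prece (K^*, \bm k^*)$ produced there are both models of $T^{\dhl}_{\lift}$, hence both satisfy $\Th(K, \bm k)$, and that they arise from a \emph{proper} elementary extension $\bm k \prece \bm k^*$ of closed ordered differential fields with $C_{\bm k} = C_{\bm k^*}$. The key point is that the constant fields of $K$ and $K^*$ coincide: by Lemma~\ref{lem:liftconstants}, $C_K = C_{\bm k}$ and $C_{K^*} = C_{\bm k^*}$, so $C_{\bm k} = C_{\bm k^*}$ forces $C_K = C_{K^*}$. On the other hand, since $\bm k \prece \bm k^*$ is proper, the interpretations of $U$ differ: $U(K) = \bm k \subsetneq \bm k^* = U(K^*)$.

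Taking $(L, \bm k_L) = (K, \bm k)$ and $(L^*, \bm k_{L^*}) = (K^*, \bm k^*)$ therefore gives an elementary pair of models of $\Th(K, \bm k)$ with the same constant field but distinct interpretations of $U$, so the set $\bm k$ defined by $y \in U$ fails coanalyzability relative to $C$. There is essentially no obstacle here beyond bookkeeping: the only thing needing verification is that the two constant fields agree, which is immediate from Lemma~\ref{lem:liftconstants} together with the property $C_{\bm k} = C_{\bm k^*}$ already guaranteed by the construction.
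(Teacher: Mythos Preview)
Your proposal is correct and is exactly what the paper intends: the corollary is stated there as an immediate consequence of the proof of Lemma~\ref{lem:dim0noncoanC}, and you have simply unpacked that, noting that the elementary pair $(K,\bm k)\prece(K^*,\bm k^*)$ built there already has $C_K=C_{K^*}$ (as recorded in that proof) while $\bm k\subsetneq\bm k^*$. The only implicit step you rely on is the completeness of $T^{\dhl}_{\lift}$ (so that the constructed pair models $\Th(K,\bm k)$ for the given $K$), which the paper states just before Theorem~\ref{thm:specialQE}.
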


Using additional results about $H$-fields from \cite{adamtt}, we have the analogous:
\begin{lem}\label{lem:ttpLnotcoanalC}
Let $(K, L)$ be a transserial tame pair.
Then $L$ is not coanalyzable relative to $C$.
\end{lem}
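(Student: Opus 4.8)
The plan is to disprove coanalyzability directly, by producing a proper elementary extension $(K,L)\prece(K^*,L^*)$ of transserial tame pairs with $C_{K}=C_{K^*}$ but $L=U(K)\subsetneq U(K^*)=L^*$; since $U(y)$ has no parameters, any such extension witnesses that $L$ is not coanalyzable relative to $C$ (in the notation of the definition, the ambient fields are $K$ and $K^*$ and the formula is $U(y)$). As the theory of transserial tame pairs is complete \cite[Corollary~5.4]{pc-transtamepair} and $U$ is $\0$-definable, it even suffices to exhibit one such extension of some transserial tame pair. The conceptual point that makes this feasible is \cite[Lemma~3.1]{pc-transtamepair}: in any transserial tame pair the constant field of the big field coincides with that of the small field, so $C_{K}=C_{L}$ and $C_{K^*}=C_{L^*}$. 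Hence the requirement $C_{K}=C_{K^*}$ reduces to $C_{L}=C_{L^*}$, and the whole task becomes to enlarge the closed $H$-field $L$ to $L^*$ without enlarging its constant field and then lift this enlargement to the pair.

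For the first part, a constant-preserving proper elementary extension of $L$ is already at hand: by \cite[Lemma~3.1]{pc-transtamepair}, $K$ is an elementary extension of $L$ as closed $H$-fields with $C_{K}=C_{L}$ and $L\subsetneq K$. Thus there exists a closed $H$-field $L^*\succe L$ with $L\subsetneq L^*$ and $C_{L^*}=C_{L}$ (for instance an abstract copy of $K$). This is the analogue of the proper extension $\bm k\prece\bm k^*$ of closed ordered differential fields with $C_{\bm k}=C_{\bm k^*}$ produced in the proof of Lemma~\ref{lem:dim0noncoanC}, except that here it comes for free from the structure of a transserial tame pair rather than from a specially chosen differential equation.

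The remaining step is to lift the extension $L\prece L^*$ to the pair, mirroring the lifting in the proof of Lemma~\ref{lem:dim0noncoanC}. Using that $T^{\dhl}$ is complete relative to the theory of its differential residue field, I would realize $L\prece L^*$ as the residue-field part of an elementary extension $K\prece K^*$ of $\d$-Hensel-Liouville closed pre-$H$-fields with $\res(K,\dot{\ca O})\cong L$ and $\res(K^*,\dot{\ca O}^*)\cong L^*$, via the realization results \cite[Corollary~3.4 and Theorem~6.16]{pc-preH-gap} (building $K$ and $K^*$ from scratch with these residue fields, as in Lemma~\ref{lem:dim0noncoanC}, so that the choice of base is immaterial by completeness). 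By Fact~\ref{fact:adh7.1.3} I extend $L$ to a lift of $\res(K^*,\dot{\ca O}^*)$, which, being isomorphic to the closed $H$-field $L^*$, makes $(K^*,L^*)$ a transserial tame pair via \cite[Corollary~3.10]{pc-transtamepair}. Since $L\prece L^*$, \cite[Corollary~5.5]{pc-transtamepair} upgrades the inclusion to an elementary extension $(K,L)\prece(K^*,L^*)$. Finally $C_{K^*}=C_{L^*}=C_{L}=C_{K}$ while $U(K)=L\subsetneq L^*=U(K^*)$, so $L$ is not coanalyzable relative to $C$.

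The main obstacle is precisely this lifting step: realizing the prescribed residue-field extension $L\prece L^*$ by a compatible elementary extension of the pair. It is the exact analogue of the passage from $\bm k\prece\bm k^*$ to $K\prece K^*$ in Lemma~\ref{lem:dim0noncoanC}, now with closed $H$-field residue fields in place of closed ordered differential fields, and I expect it to go through by the same combination of \cite[Theorem~6.16]{pc-preH-gap} and \cite[Corollary~5.5]{pc-transtamepair}. The essential simplification relative to that earlier argument is that $C=C_L$ in transserial tame pairs, so no separate control of constants through a closed-$H$-field hull is required: the self-extension $L\prece K$ already supplies the constant-preserving proper enlargement of $L$.
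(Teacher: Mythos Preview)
Your approach is correct and takes a somewhat different route from the paper's. Both arguments produce an elementary extension of transserial tame pairs with the same constant field but a strictly larger lift, and both feed the pair structure through \cite[Theorem~6.16]{pc-preH-gap} and the relative model completeness from \cite{pc-transtamepair}. The paper, however, works inside a compactness-produced extension $(K^*,L^*)\succe(K,L)$: it takes $a\in L^*$ with $a>L$, invokes \cite[Lemma~16.6.10]{adamtt} to see $C_{L\langle a\rangle}=C$, then \cite[Corollary~14.5.10]{adamtt} (constant-preserving newtonization) to reach a closed $H$-field $L_1\subseteq L^*$ with $C_{L_1}=C$, and assembles the pair around $L_1$ inside $K^*$ via \cite[Lemma~4.1]{pc-transtamepair} and Zorn. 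Your observation that the big field $K$ itself already furnishes a constant-preserving proper elementary extension of $L$ (via \cite[Lemma~3.1]{pc-transtamepair}) bypasses this $H$-field-specific machinery entirely; the cost is rebuilding the pair from scratch and invoking completeness to change the base model, but that is exactly the template of Lemma~\ref{lem:dim0noncoanC}, and nothing in that template depends on the residue field being a closed ordered differential field rather than a closed $H$-field. The lifting obstacle you flag is genuine but not serious: once $\res(K)\hookrightarrow\res(K^*)$ is identified with the given $L\hookrightarrow L^*$, the chain $C_{K^*}=C_{L^*}=C_L=C_K$ goes through via Lemma~\ref{lem:liftconstants} just as in Lemma~\ref{lem:dim0noncoanC}, after which $L\prece L^*$ by model completeness of $T^{\nl}_{\sm}$ and \cite[Corollary~5.5]{pc-transtamepair} applies.
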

\begin{proof}
By compactness, take an elementary extension $(K^*,L^*)\succe (K,L)$ with $a \in L^*$ such that $a>L$.
Then $L\langle a\rangle$ is an $\upomega$-free $H$-field extension of $L$ with $C_{L\langle a\rangle}=C$ by \cite[Lemma~16.6.10]{adamtt}.
Use \cite[Corollary~14.5.10]{adamtt} to extend $L\langle a\rangle$ to $L_1\subseteq L^*$, so that $L_1$ is a closed $H$-field with $C_{L_1}=C$.
By \cite[Lemma~4.1]{pc-transtamepair} and Zorn, extend $(K,\dot{\ca O})$ to $(K_1,\dot{\ca O}^*_{K_1}) \subseteq (K^*,\dot{\ca O}^*)$ so that $L_1$ is a lift of $\res(K_1,\dot{\ca O}^*_{K_1})$ and $(K,L)\subseteq(K_1,L_1)$.
Now use \cite[Theorem~6.16]{pc-preH-gap} to extend $(K_1,\dot{\ca O}^*_{K_1})$ to a $\d$-Hensel-Liouville closed pre-$H$-field $(K_2,\dot{\ca O}^*_{K_2}) \subseteq (K^*,\dot{\ca O}^*)$ so that $L_1$ remains a lift of $\res(K_2,\dot{\ca O}^*_{K_2})$.
Thus $(K_2,L_1)$ is a transserial tame pair (see Section~\ref{subsec:closedHttp}). 
Hence $(K,L)\prece(K_2,L_1)$ and $C_{K_2}=C$, as desired.
\end{proof}

To complement Lemma~\ref{lem:ttpLnotcoanalC}, here is a family of formulas defining lean sets that are coanalyzable relative to~$C$.
\begin{lem}
Let $T_{\res}$ be the theory of closed $H$-fields and let $\varphi(y,z)$ be a special formula $\exists x\in U\ (\theta(x)\wedge P(x,y,z)=0 \wedge P(x,y,Z)\neq 0)$ such that $P\in \Z\{X,Y,Z\}$ and $\theta(x)$ is an $\r$-relative $\ca L_{\OR,\der}\cup\{\prece\}$-formula with $\theta(L)$ coanalyzable relative to $C$ in $L$.
Fix $a \in K^n$.
Then $\varphi(a,K)$ is coanalyzable relative to $C$ in $(K,L)$.
\end{lem}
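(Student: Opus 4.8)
The plan is to reduce, via the structure of special formulas, to the already-established coanalyzability of thin sets in closed $H$-fields from \cite[Proposition~6.2]{adh-dimension}. Unwinding the definition, let $(M,L_M)\prece(M^*,L_{M^*})\models\Th(K,L,a)$ with $C_M=C_{M^*}$; each such pair is a transserial tame pair carrying the distinguished tuple $a$, so in particular $M$ and $M^*$ are closed $H$-fields. By elementarity $\varphi(a,M)=\varphi(a,M^*)\cap M$, and upward elementarity gives $\varphi(a,M)\subseteq\varphi(a,M^*)$, so it suffices to prove that every $c\in M^*$ with $(M^*,L_{M^*})\models\varphi(a,c)$ already lies in $M$.

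The first step is to force the $U$-witness down into $L_M$. By Lemma~\ref{lem:liftconstants} we have $C_{L_M}=C_M=C_{M^*}=C_{L_{M^*}}$, and since $L_M\subseteq L_{M^*}$ are both closed $H$-fields, model completeness of the theory of closed $H$-fields \cite[Corollary~16.2.5]{adamtt} upgrades this inclusion to $L_M\prece L_{M^*}$. As $L_M\equiv L$ and $\theta(L)$ is coanalyzable relative to $C$ in $L$ by hypothesis, applying coanalyzability to the pair $L_M\prece L_{M^*}$ (which has equal constants) yields $\theta(L_M)=\theta(L_{M^*})$. Thus, if $c\in M^*$ satisfies $\varphi(a,c)$, witnessed by some $u\in L_{M^*}^m$ with $L_{M^*}\models\theta(u)$, $P(u,a,c)=0$, and $P(u,a,Z)\neq 0$, then in fact $u\in L_M^m$.

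The second step descends $c$ itself. Put $Q(Z)\coloneqq P(u,a,Z)$; since $u\in L_M^m\subseteq M^m$ and $a\in M^n$, we have $Q\in M\{Z\}^{\neq}$, so its zero set is a thin, hence $\dim$-zero, subset of the closed $H$-field $M$---nonempty in $M$ because $c$ witnesses $\exists z\,(Q(z)=0)$ in $M^*$ and $M\prece M^*$ as closed $H$-fields. By \cite[Proposition~6.2]{adh-dimension} this zero set is coanalyzable relative to $C$ in $M$, so applying this to $M\prece M^*$ with $C_M=C_{M^*}$ gives $\{z\in M:Q(z)=0\}=\{z\in M^*:Q(z)=0\}$. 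Hence $c\in M$, and then $c\in\varphi(a,M)$ by downward elementarity, completing the proof.

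The main obstacle is arranging the two descents in the correct order and at the correct level: coanalyzability of $\theta$ must first be invoked inside the lift to pull the $U$-witness $u$ into $L_M$, because only then does the differential polynomial $Q=P(u,a,Z)$ have coefficients in $M$ and define a set to which \cite[Proposition~6.2]{adh-dimension} can be applied inside the closed $H$-field $M$. The supporting technical points---that model completeness promotes $L_M\subseteq L_{M^*}$ and $M\subseteq M^*$ to elementary extensions with the same constant field, and that the thin zero set is nonempty in $M$---are exactly what make both coanalyzability statements available.
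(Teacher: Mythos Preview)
Your proof is correct and follows the same two-step strategy as the paper's: first pull the witness $u$ into the smaller lift via coanalyzability of $\theta$, then descend $c$ using that it is $\d$-algebraic over the smaller closed $H$-field. The only cosmetic differences are that the paper invokes \cite[Theorem~16.0.3]{adamtt} directly for the second descent rather than the derived \cite[Proposition~6.2]{adh-dimension}, and obtains $L\prece L^*$ immediately from $(K,L)\prece(K^*,L^*)$ rather than via model completeness.
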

\begin{proof}
Let $(K,L)\prece (K^*,L^*)\models \Th(K,L,a)$ with $C=C_{K^*}$ and $b \in \varphi(a,K^*)$.
Take $u\in (L^*)^m$ with $(K^*,L^*)\models \theta(u)\wedge P(u,a,b)=0 \wedge P(u,a,Z)\neq 0$.
Since $\theta(L)$ is coanalyzable relative to $C$, we have $u \in L^m$.
Then $b$ is $\d$-algebraic over $K$, so $b\in K$ by \cite[Theorem~16.0.3]{adamtt}.
\end{proof}
Soon we characterize the subsets $S\subseteq K^n$ definable in $(K,\bm k)$ that are coanalyzable relative to $\bm k$.
Perhaps more can be said in transserial tame pairs about those $S$ that are also coanalyzable relative to $C$, or those that are coanalyzable relative to $\bm k$ but not relative to~$C$. 

For use in the proof of Theorem~\ref{thm:coanaldim20equiv}, here is a special case of \cite[Theorem~4.12]{pc-preH-gap}.
\begin{fact}[{\cite[Theorem~4.12]{pc-preH-gap}}]\label{pc-preH-gap:4.12}
If $(E,\dot{\ca O})$ is a $\d$-henselian pre-$H$-field closed under exponential integration whose value group is divisible, then $(E,\dot{\ca O})$ has no proper $\d$-algebraic pre-$H$-field extension with gap~$0$ and the same residue field.
\end{fact}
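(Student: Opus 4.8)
The plan is to argue by contradiction. Suppose $(F,\dot{\ca O}_F)$ is a proper $\d$-algebraic pre-$H$-field extension of $(E,\dot{\ca O})$ with gap~$0$ and $\res(F,\dot{\ca O}_F)=\res(E,\dot{\ca O})$, and fix $a\in F\setminus E$. Replacing $F$ by $E\langle a\rangle$ (still a $\d$-algebraic pre-$H$-subfield of $F$ with gap~$0$ and unchanged residue field), I may assume that $F=E\langle a\rangle$ is generated by a single $\d$-algebraic element, and the aim is to contradict $a\notin E$.

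The first step is to show that $F$ is an \emph{immediate} extension of $E$, that is, that $\dot{\Gamma}_F=\dot{\Gamma}_E$ in addition to the residue fields agreeing by hypothesis. This is where the remaining three hypotheses are decisive. Closure under integration (which is free from $\d$-henselianity by \cite[Lemma~7.1.8]{adamtt}) together with closure under exponential integration ensures that primitives and exponential integrals of elements of $E$ already lie in $E$, while divisibility of $\dot{\Gamma}_E$ rules out new values arising from roots. By the asymptotic-couple analysis of gap~$0$ pre-$H$-fields (\cite[Chapters~9, 10, and 11]{adamtt}, and \cite{pc-preH-gap}), these are precisely the mechanisms by which a $\d$-algebraic extension with unchanged residue field could enlarge the value group; with all of them excluded, one gets $\dot{\Gamma}_F=\dot{\Gamma}_E$, so $F$ is immediate over $E$.

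With immediacy established, the core is the classical pseudo-Cauchy sequence argument in the form supplied by \cite{pc-dh}. Since $a$ generates a proper immediate extension, it is a pseudo-limit of a pseudo-Cauchy sequence $(a_\rho)$ in $E$ having no pseudo-limit in $E$; because $a$ is $\d$-algebraic, this sequence is of $\d$-algebraic type over $E$ and carries a minimal differential polynomial $P\in E\{Y\}^{\neq}$. After the standard reductions---a compositional conjugation and a multiplicative adjustment, using small derivation and closure under exponential integration to bring the equation into the scope of condition (DH2)---differential-henselianity produces a zero $y\in E$ of $P$ that is a pseudo-limit of $(a_\rho)$. This contradicts the choice of $(a_\rho)$, and the equivalence between $\d$-henselianity and $\d$-algebraic maximality in this asymptotic setting is exactly the input drawn from \cite{pc-dh}.

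I expect the genuine difficulty to lie in the immediacy step rather than the maximality step. Excluding residue-field growth is immediate from the hypothesis, but controlling value-group growth for a $\d$-algebraic element in the gap~$0$ asymptotic setting requires the full interaction of divisibility of $\dot{\Gamma}_E$ with the two closure conditions, and it is precisely here that one must rule out every $\d$-algebraic value-group extension with the same residue field; the pseudo-Cauchy/$\d$-henselian step, by contrast, is a direct application of the maximality results of \cite{pc-dh} and \cite[Chapter~7]{adamtt}.
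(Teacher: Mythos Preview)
The paper does not provide its own proof of this statement: it is recorded as a \emph{Fact}, cited as a special case of \cite[Theorem~4.12]{pc-preH-gap}, with only the parenthetical remark that the result ``relies critically on \cite{pc-dh}''. There is therefore no proof in the paper to compare your proposal against.

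That said, your outline is a plausible reconstruction of how the cited proof proceeds, and it matches the paper's hint exactly: the pseudo-Cauchy/$\d$-algebraic-maximality step you describe is precisely the input from \cite{pc-dh}, while the immediacy step (ruling out value-group growth under the hypotheses of divisibility and closure under exponential integration) is what one expects to be carried out in \cite{pc-preH-gap} itself. Your self-assessment is also accurate: the immediacy step is where the real work lies, and your description of it---``these are precisely the mechanisms by which a $\d$-algebraic extension with unchanged residue field could enlarge the value group''---is a gesture rather than an argument; making it precise requires the asymptotic-couple and equalizer machinery specialized to the gap-$0$ setting. The maximality step is, as you say, essentially a direct citation once immediacy is in hand (and the compositional conjugation you mention is not needed here, since $\d$-henselianity already entails small derivation).
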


Replacing ``residue field'' with ``constant field'' in the above fact yields a false statement:
\begin{cor}
There exist $K \prece K^* \models T^{\dhl}_{\codf}$ such that $K^*$ is a proper $\d$-algebraic extension of $K$ with $C=C_{K^*}$.  
\end{cor}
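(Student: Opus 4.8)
The plan is to recycle the residue-field extension built in the proof of Lemma~\ref{lem:dim0noncoanC} and then shrink it to a $\d$-algebraic extension via the existential matroid $\cl^{\der}$. Running that proof verbatim, the formula $z'=z^3-z^2$ and \cite[Corollary~4.3]{elsr-dim-codf}, together with \cite[Corollary~3.4]{pc-preH-gap} and \cite[Theorem~6.16]{pc-preH-gap}, produce $K \prece K^* \models T^{\dhl}_{\codf}$ with $\res(K)=\bm k$, a proper extension $\res(K^*)=\bm k^*\supsetneq\bm k$, an element $b\in\bm k^*\setminus\bm k$ satisfying $b'=b^3-b^2$, and $C=C_{\bm k}=C_{\bm k^*}=C_{K^*}$, so in particular $C_{K^*}=C_K$. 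The only property still to be arranged is that $K^*$ be $\d$-algebraic over $K$.

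First I would lift $b$ to a $\d$-algebraic element. Picking any $\beta_0\in\ca O_{K^*}$ with $\res(\beta_0)=b$ and setting $P(Z)\coloneqq Z'-Z^3+Z^2\in\Q\{Z\}$, the identity $b'=b^3-b^2$ gives $\res(P(\beta_0))=0$, so $P(\beta_0)\dotrel{\prec}1$; since $\partial P/\partial Z'=1$, the degree-one part of $P(\beta_0+Y)$ is $\dotrel{\asymp}1$, and $P(\beta_0+Y)\dotrel{\asymp}1$. Differential-henselianity (DH2) then yields $\varepsilon\dotrel{\prec}1$ with $P(\beta_0+\varepsilon)=0$, so $\beta\coloneqq\beta_0+\varepsilon$ satisfies $\beta'=\beta^3-\beta^2$ and $\res(\beta)=b$. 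As $\beta$ satisfies a first-order algebraic differential equation over $\Q$, it is $\d$-algebraic over $K$; and since $\beta\dotrel{\asymp}1$ with $\res(\beta)=b\notin\res(K)$, we have $\beta\notin K$.

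Finally I would pass to the $\d$-algebraic closure $K^{**}\coloneqq\cl^{\der}_{K^*}(K)=\{c\in K^*:c\text{ is }\d\text{-algebraic over }K\}$. As established in Section~\ref{sec:ecpreHgap}, $\cl^{\der}$ is an existential matroid on $T^{\dhl}_{\codf}$, so $K^{**}\prece K^*$, and together with $K\prece K^*$ this gives $K\prece K^{**}\models T^{\dhl}_{\codf}$ with $K^{**}/K$ $\d$-algebraic by construction. The element $\beta$ lies in $K^{**}\setminus K$, so the extension is proper, while $C_K\subseteq C_{K^{**}}\subseteq C_{K^*}=C_K$ forces $C_{K^{**}}=C$; renaming $K^{**}$ as $K^*$ gives the statement. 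The one genuinely non-routine ingredient is \cite[Corollary~4.3]{elsr-dim-codf}, which supplies a proper residue extension leaving the constants unchanged---precisely what defeats the constant-field analogue of Fact~\ref{pc-preH-gap:4.12}---and everything afterwards is routine $\d$-henselian lifting followed by an application of the matroid $\cl^{\der}$, so I expect no serious obstacle beyond correctly invoking those external results.
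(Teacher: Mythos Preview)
Your proof is correct, and the overall strategy---reuse the extension from Lemma~\ref{lem:dim0noncoanC}, then pass to the $\d$-algebraic part---matches the paper. The route differs in where you shrink. The paper shrinks at the level of the lift: it replaces $\bm k^*$ by the $\d$-algebraic closure of $\bm k$ in $\bm k^*$ (still a closed ordered differential field), and then argues that the proofs of \cite[Corollary~3.4]{pc-preH-gap} and \cite[Theorem~6.16]{pc-preH-gap} build $K^*$ $\d$-algebraically over $K$. You instead keep $K^*$ as is and shrink inside it to $K^{**}=\cl^{\der}_{K^*}(K)$, invoking the existential-matroid property from Section~\ref{sec:ecpreHgap} to get $K^{**}\prece K^*$. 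Your version has the advantage of not opening the external proofs; the paper's version keeps the argument at the residue level where the relevant element already lives.

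One small simplification: your $\d$-henselian lifting step is unnecessary. In the proof of Lemma~\ref{lem:dim0noncoanC}, $\bm k$ and $\bm k^*$ are arranged to be \emph{lifts} inside $K$ and $K^*$ (not just the residue fields), so the element $b\in\bm k^*\setminus\bm k$ with $b'=b^3-b^2$ already sits in $K^*\setminus K$ and is $\d$-algebraic over $\Q$; you may take $\beta=b$ directly.
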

\begin{proof}
Let $\bm k, \bm k^*, K, K^*$ be as in the proof of Lemma~\ref{lem:dim0noncoanC}.
Then $\bm k^* \setminus \bm k$ contains an element that is $\d$-algebraic over $\bm k$ (even over $\Q$).
Hence by taking only the elements of $\bm k^*$ that are $\d$-algebraic over $\bm k$, we arrange that $\bm k^*$ is a proper $\d$-algebraic extension of $\bm k$.
Note that $\bm k^*$ remains a closed ordered differential field.
Then the proofs of \cite[Corollary~3.4]{pc-preH-gap} and \cite[Theorem~6.16]{pc-preH-gap} show that $K^*$ is $\d$-algebraic over $K$, as desired.
\end{proof}

\begin{lem}\label{lem:leancoanal}
Every lean subset of $K$ is coanalyzable relative to $\bm k$.
\end{lem}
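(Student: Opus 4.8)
The plan is to unwind the definition of coanalyzability relative to $\bm k$ and reduce the claim to Fact~\ref{pc-preH-gap:4.12}. Write the lean set as $S=\varphi(a,K)$, where $a$ lists the coefficients of finitely many $P_1,\dots,P_r\in K\{X,Z\}$ and $\varphi$ is the $\ca L_{\lift}$-formula in a single variable $y$ given by $\bigvee_{i=1}^r\exists u\in U\,\big(P_i(u,y)=0\wedge P_i(u,Z)\neq 0\big)$ (with $U$ interpreted as the lift). Fix $(L,\bm k_L)\prece(L^*,\bm k_{L^*})\models\Th(K,\bm k,a)$ with $\bm k_L=\bm k_{L^*}$. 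Elementarity already gives $\varphi(a,L)=\varphi(a,L^*)\cap L$, so the entire task is the reverse inclusion $\varphi(a,L^*)\subseteq L$.

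To get this, I would take $b\in\varphi(a,L^*)$, witnessed by some $i$ and $u\in\bm k_{L^*}^m$ with $P_i(u,b)=0$ and $P_i(u,Z)\neq 0$. The first observation is that, since $\bm k_{L^*}=\bm k_L\subseteq L$ and the coefficients of $P_i$ lie in $L$, the polynomial $P_i(u,Z)$ is a nonzero element of $L\{Z\}$ with $P_i(u,b)=0$; hence $b$ is $\d$-algebraic over $L$. Consequently $L\langle b\rangle$ is a $\d$-algebraic pre-$H$-field extension of $L$ inside $L^*$, and it inherits gap~$0$ from $L^*$ (small derivation and the condition $f^\dagger\dotrel{\succ}1$ both restrict to subfields).

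The key step is to apply Fact~\ref{pc-preH-gap:4.12} with $E=L$. Since $L\models T^{\dhl}_{\lift}$, it is $\d$-henselian, closed under exponential integration, and real closed with convex $\dot{\ca O}_L$, so $\dot{\Gamma}_L$ is divisible; the fact then forbids any proper $\d$-algebraic pre-$H$-field extension of $L$ with gap~$0$ and the same residue field. So I must check that $L\langle b\rangle$ has the same residue field as $L$, and this is exactly where $\bm k_L=\bm k_{L^*}$ enters: because this common subfield is a lift of the residue field of both $L$ and $L^*$, the residue map identifies $\res(L,\dot{\ca O}_L)$ with $\res(L^*,\dot{\ca O}_{L^*})$, and $\res(L\langle b\rangle,\dot{\ca O}_{L\langle b\rangle})$ is squeezed between them and hence equals $\res(L,\dot{\ca O}_L)$. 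Granting this, Fact~\ref{pc-preH-gap:4.12} forces $L\langle b\rangle=L$, whence $b\in L$; this yields $\varphi(a,L^*)\subseteq L$ and finishes the proof. I expect the only real bookkeeping — not a genuine obstacle — to be verifying that $L\langle b\rangle$ is honestly a gap~$0$ pre-$H$-subfield and that $\res(L,\dot{\ca O}_L)=\res(L^*,\dot{\ca O}_{L^*})$ follows cleanly from $\bm k_L=\bm k_{L^*}$; the substantive content is carried entirely by Fact~\ref{pc-preH-gap:4.12}.
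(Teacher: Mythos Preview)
Your proposal is correct and follows essentially the same approach as the paper: both reduce to Fact~\ref{pc-preH-gap:4.12} by observing that any $b\in\varphi(a,L^*)$ is $\d$-algebraic over $L$ (since the witness $u$ lies in $\bm k_{L^*}=\bm k_L\subseteq L$) and that $L\langle b\rangle$ has the same residue field as $L$. The only cosmetic difference is that the paper treats a single $P$ and then notes that coanalyzability is preserved under finite unions and passes to definable subsets---strictly speaking needed, since a lean set is merely \emph{contained} in such a union rather than equal to it---whereas you bundle the disjunction into $\varphi$ from the start.
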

\begin{proof}
Denote by $\varphi(y,z)$ the $\ca L_{\lift}$-formula $\exists x \in U\ (P(x,y,z)=0 \wedge P(x,y,Z)\neq 0)$, where $P \in \Z\{X,Y,Z\}$, and suppose towards a contradiction that $\varphi(a,K)$ is not coanalyzable relative to $\bm k$, where $a \in K^m$.
This yields $(K, \bm k) \prece (K^*, \bm k) \models \Th(K,\bm k,a)$, after possibly changing $(K, \bm k)$, and $b \in \varphi(a,K^*) \setminus \varphi(a,K)$.
Such a $b$ is $\d$-algebraic over $K$, since $P(u,a,b)=0$ for some $u \in \bm k^m$ with $P(u,a,Z)\neq 0$, and thus $(K\langle b\rangle, \dot{\ca O}_{K\langle b\rangle})$ contradicts Fact~\ref{pc-preH-gap:4.12} applied to $K$.
It remains to note that coanalyzability is preserved by finite unions and inherited by definable subsets.
\end{proof}

The right analogue of \cite[Proposition~6.2]{adh-dimension} for transserial tame pairs (and $T^{\dhl}_{\lift}$ more generally) involves $\dim_2$ instead of $\dim$ and coanalyzability relative to $\bm k$ instead of~$C$.
\begin{thm}\label{thm:coanaldim20equiv}
Let $S \subseteq K^n$ be definable in $(K, \bm k)$. Then
\[
S\ \text{is coanalyzable relative to}\ \bm k\ \iff\ \dim_2 S \les 0.
\]
\end{thm}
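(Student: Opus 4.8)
The plan is to prove the two implications separately, in each case reducing to the one-variable results already established and to Fact~\ref{pc-preH-gap:4.12}. The forward implication extends Lemma~\ref{lem:leancoanal} from $n=1$ to tuples, while the converse is handled contrapositively by producing, for a set of positive $\dim_2$, a proper elementary extension with unchanged $\bm k$ in which $S$ gains a point.

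For $\dim_2 S = 0 \Rightarrow S$ coanalyzable relative to $\bm k$, suppose $S$ is $a$-definable and, toward a contradiction, not coanalyzable relative to $\bm k$; after renaming we obtain $(K,\bm k) \prece (K^*,\bm k) \models \Th(K,\bm k,a)$ with the \emph{same} $\bm k$ and a point $b \in S(K^*)\setminus S(K)$. Since $\dim_2 S = 0$, each projection $\pi_i(S)$ satisfies $\dim_2 \pi_i(S) = 0$ by Lemma~\ref{lem:dimbasic}, so each is lean; as leanness is witnessed by formulas $\exists x \in U\,(P(x,z)=0\wedge P(x,Z)\neq 0)$ whose meaning is unchanged because $U$ denotes the same $\bm k$, every $b_i$ satisfies a nonzero differential polynomial over $\bm k \subseteq K$ and so is $\d$-algebraic over $K$. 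Hence $K\langle b\rangle$ is a $\d$-algebraic pre-$H$-field extension of $K$ with gap~$0$, and $\bm k$ remains a common lift, so its residue field does not grow. As $K$ is $\d$-henselian, closed under exponential integration, and has divisible value group (being real closed), Fact~\ref{pc-preH-gap:4.12} forces $K\langle b\rangle = K$, whence $b \in S(K)$, a contradiction.

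For the converse I would show that $\dim_2 S \ges 1$ implies $S$ is not coanalyzable relative to $\bm k$. By the product argument in the proof of Theorem~\ref{2:thm:dim20disc}, if every $\pi_i(S)$ were discrete then $S$ would be discrete; so some $\pi_i(S)$, say $\pi_1(S)$, is not discrete, hence not lean, and thus has nonempty interior in $K$ by Propositions~\ref{2:prop:discleanequiv} and \ref{prop:emptyintlean}, so it contains a valuation ball. I would then pass to a countable elementary substructure $(L,\bm k_L)\prece (K,\bm k)$ containing $a$ and take the valued-field completion $(L^{\upc},\dot{\ca O}_{L^{\upc}})$, which by Lemma~\ref{lem:10.6.4} and the argument proving Theorem~\ref{2:thm:dim20disc} is again $\d$-Hensel-Liouville closed, keeps $\bm k_L$ as a lift of its unchanged residue field, and satisfies $(L^{\upc},\bm k_L)\succ (L,\bm k_L)$ by \cite[Corollary~5.5]{pc-transtamepair}. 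Since $L$ is countable with nontrivial valuation it is not Cauchy-complete, so $L^{\upc}\supsetneq L$; the ball inside $\pi_1(S)$ (preserved by elementarity) therefore contains some $s \in L^{\upc}\setminus L$, and lifting $s$ through the projection yields $b \in S(L^{\upc})$ with $b_1 = s \notin L$. Thus $S(L)\neq S(L^{\upc})$ while $\bm k_L = \bm k_{L^{\upc}}$, as required.

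The main obstacle is exactly the second step's demand for an extension that enlarges $S$ while fixing $\bm k$: saturation would place a point of positive $\dim_2$ inside the ball, but only by growing $\bm k$, which coanalyzability relative to $\bm k$ forbids. The completion is the right device precisely because it is immediate and so leaves the residue field — and the lift $\bm k$ — intact; to exploit it I must first guarantee it is a \emph{proper} extension, which is why reducing to a countable model matters, together with the fact that a countably infinite field carrying a nontrivial divisible valuation always admits a non-convergent Cauchy sequence and hence is incomplete. Checking that the new completion points really lie in the prescribed ball contained in $\pi_1(S)$ is routine but must be verified.
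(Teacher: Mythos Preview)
Your proof is correct. The direction $\dim_2 S=0 \Rightarrow S$ coanalyzable is essentially the paper's argument: the paper projects and applies Lemma~\ref{lem:leancoanal} to each $\pi_i(S)$, then uses that $S\subseteq\prod_i\pi_i(S)$; you inline Lemma~\ref{lem:leancoanal} by invoking Fact~\ref{pc-preH-gap:4.12} directly on $K\langle b\rangle$, which amounts to the same thing.

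The other direction is where the two proofs diverge. The paper dispatches it in one line by citing \cite[Corollary~2.9]{angelvdd} together with the definability of $\dim_2$ (Corollary~\ref{cor:dim2bdd}): once $\dim_2$ is a definable dimension and $\dim_2\bm k=0$, any set coanalyzable relative to $\bm k$ automatically has $\dim_2$ at most $0$. You instead build an explicit witness to non-coanalyzability when $\dim_2 S\ges 1$: find a projection $\pi_i(S)$ containing a ball, drop to a countable elementary substructure, pass to its valued-field completion (which is a proper elementary extension with unchanged $\bm k$, exactly as in the proof of Theorem~\ref{2:thm:dim20disc}), and pick a new point of the ball, then lift through the projection. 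This is correct; the incompleteness of a countable $L$ with nontrivial (divisible) value group follows by the standard diagonalization, and every open ball in $L^{\upc}$ meets $L^{\upc}\setminus L$ by translation. Your route is longer but self-contained and exhibits concretely the extension that fixes $\bm k$ while enlarging $S$; the paper's route is shorter and makes clear that this direction is a general feature of definable pregeometric dimensions rather than anything specific to $T^{\dhl}_{\lift}$.
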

\begin{proof}
The left-to-right direction follows from \cite[Corollary~2.9]{angelvdd} and Corollary~\ref{cor:dim2bdd}.
Conversely, assume that $\dim_2 S=0$.
For $i=1,\dots,n$, we have $\dim_2 \pi_i(S)=0$, where $\pi_i \colon K^n \to K$ is projection onto the $i$-th coordinate for $i=1,\dots,n$, so $\pi_i(S)$ is lean.
Hence each $\pi_i(S)$ is coanalyzable relative to $\bm k$ by Lemma~\ref{lem:leancoanal}, and thus so is $S \subseteq \prod_{i=1}^{n} \pi_i(S)$.
\end{proof}

Every $S\subseteq K^n$ definable in $(K,\bm k)$ that is \emph{internal} to $\bm k$ must be coanalyzable relative to $\bm k$.
We have not investigated whether the converse holds, as for tame pairs of real closed fields; see \cite[Theorem~1.4]{angelvdd}.
In \cite[Section~5]{adh-dimension}, it is shown that there is a $\0$-definable subset of the differential field $\T$ that is coanalyzable relative to $C_{\T}=\R$ but not internal to~$\R$.

To conclude, we use Theorem~\ref{thm:coanaldim20equiv} and the definability of $\dim_2$ to show that $T^{\dhl}_{\lift}$ does not eliminate imaginaries. (See \cite[Corollary~6.5]{adh-dimension} for an analogous statement for $T^{\nl}_{\sm}$.)
\begin{cor}\label{cor:noEI}
There does not exist a definable $f \colon K^{\x} \to K^n$ such that for all $a,b \in K^{\x}$,
\[
a \dotrel{\asymp} b\ \iff\ f(a)=f(b).
\]
\end{cor}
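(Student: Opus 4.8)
The plan is to assume such an $f$ exists and derive a contradiction through Theorem~\ref{thm:coanaldim20equiv}. First I would observe that the nonempty fibres of $f$ are exactly the $\dotrel{\asymp}$-classes in $K^{\x}$, each of the form $g\dot{\ca O}^{\x}$ for some $g\in K^{\x}$; since $\dot{\ca O}^{\x}=\{u: u\dotrel{\asymp}1\}$ is open, every such fibre has nonempty interior and hence $\dim_2=1$ by Proposition~\ref{prop:emptyintlean}, while $\dim_2 K^{\x}=1$ by Lemma~\ref{lem:dimbasic} and Corollary~\ref{cor:dim2Kn}. Applying the fibre formula Corollary~\ref{2:cor:dim2fibre}\ref{2:cor:dim2fibre:2} with $m=1$, the set $B_1=\{b:\dim_2 f^{-1}(b)=1\}$ equals the image $f(K^{\x})$, we have $f^{-1}(B_1)=K^{\x}$, and so $1=\dim_2 K^{\x}=1+\dim_2 B_1$, giving $\dim_2 f(K^{\x})=0$. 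By Theorem~\ref{thm:coanaldim20equiv}, $f(K^{\x})$ is therefore coanalyzable relative to $\bm k$.

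Next I would contradict this coanalyzability by building an elementary extension of $(K,\bm k)$ that fixes the lift $\bm k$ but strictly enlarges the value group. Fix a $|K|^+$-saturated $(K^*,\bm k^*)\succe(K,\bm k)$ and, by saturation, an element $a\in K^*$ with $a>K$ and $a\dotrel{\succ}1$. By Lemma~\ref{lem:16.6.9-10}, $a$ is $\d$-transcendental over $K$, the lift $\bm k$ remains a lift of $\res(K\langle a\rangle,\dot{\ca O})$, and $\dot{v}(a)\notin\dot{\Gamma}_K$. Using \cite[Theorem~6.16]{pc-preH-gap} exactly as in the proof of Lemma~\ref{lem:ttpLnotcoanalC}, I would extend $(K\langle a\rangle,\dot{\ca O})$ to a $\d$-Hensel-Liouville closed pre-$H$-field $(K_1,\dot{\ca O}_{K_1})\subseteq(K^*,\dot{\ca O}^*)$ for which $\bm k$ is still a lift of $\res(K_1,\dot{\ca O}_{K_1})$. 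Then $(K_1,\bm k)\models T^{\dhl}_{\lift}$, and $(K_1,\bm k)\prece(K^*,\bm k^*)$ by \cite[Corollary~5.5]{pc-transtamepair}; since also $(K,\bm k)\prece(K^*,\bm k^*)$ and $K\subseteq K_1$ share the same $\bm k$, the Tarski--Vaught criterion yields $(K,\bm k)\prece(K_1,\bm k)$, an elementary extension fixing $\bm k$.

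Finally I would locate a new point of the image. In $(K_1,\bm k)$ the defining property of $f$ still holds, so for $c\in K^{\x}$ the equality $f(a)=f(c)$ forces $a\dotrel{\asymp}c$, i.e.\ $\dot{v}(a)=\dot{v}(c)\in\dot{\Gamma}_K$, contradicting $\dot{v}(a)\notin\dot{\Gamma}_K$. Hence $f(a)\in f(K_1^{\x})\setminus f(K^{\x})$, so the set defined by $\exists x\,(x\neq 0\wedge f(x)=y)$ acquires a new realization in the elementary extension $(K_1,\bm k)\succe(K,\bm k)$ with unchanged lift $\bm k$. This shows $f(K^{\x})$ is \emph{not} coanalyzable relative to $\bm k$, contradicting the first paragraph; therefore no such $f$ exists.

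The main obstacle is the second paragraph: producing an elementary extension of $(K,\bm k)$ that keeps the lift $\bm k$ unchanged while enlarging $\dot{\Gamma}$. This is precisely where Lemma~\ref{lem:16.6.9-10} (that the differential residue field does not grow when adjoining an element above $K$ that is $\dotrel{\succ}1$) combines with the residue-field-preserving $\d$-Hensel-Liouville closure furnished by \cite[Theorem~6.16]{pc-preH-gap}. Everything else is the routine fibre-dimension computation together with the passage from $\dim_2=0$ to coanalyzability relative to $\bm k$ supplied by Theorem~\ref{thm:coanaldim20equiv}, which itself rests on the definability of $\dim_2$.
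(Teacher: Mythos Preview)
Your proof is correct and uses the same ingredients as the paper's---fibres of $f$ are open, Theorem~\ref{thm:coanaldim20equiv}, and the combination of Lemma~\ref{lem:16.6.9-10} with \cite[Theorem~6.16]{pc-preH-gap}---but the logical flow is reversed. You first apply the fibre formula to deduce $\dim_2 f(K^{\x})=0$, pass to coanalyzability via Theorem~\ref{thm:coanaldim20equiv}, and then contradict coanalyzability by explicitly constructing an elementary extension $(K,\bm k)\prece(K_1,\bm k)$ with unchanged lift but strictly larger value group. The paper instead first arranges $|\dot{\Gamma}|>|\bm k|$ (using the same lemma and theorem), then argues that $\dim_2 f(K^{\x})=0$ would force $|f(K^{\x})|\les|\bm k|<|\dot{\Gamma}|$ via coanalyzability, which is impossible since $f(K^{\x})$ is in bijection with $\dot{\Gamma}$; hence $\dim_2 f(K^{\x})>0$, and the fibre formula then gives the absurdity $\dim_2 K^{\x}\ges 2$. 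Your explicit construction is a bit longer but makes the failure of coanalyzability transparent, while the paper's cardinality argument is more concise; both routes are essentially dual to one another.
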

\begin{proof}
Suppose towards a contradiction that there exists such an $f$.
By Lemma~\ref{lem:16.6.9-10}, \cite[Theorem~6.16]{pc-preH-gap}, and \cite[Corollary~5.7]{pc-transtamepair}, we arrange that $|\dot{\Gamma}|>|\bm k|$.
Then $\dim_2 f(K^{\x})>0$, since otherwise $f(K^{\x})$ would be coanalyzable relative to $\bm k$ and so would satisfy $|f(K^{\x})| \les |\bm k|$, using the definition of coanalyzable given in \cite[Section~6]{adh-dimension}. 
On the other hand, for each $a \in f(K^{\x})$, the set $f^{-1}(a)$ has nonempty interior in $K$, and so $\dim_2 f^{-1}(a)=1$.
It follows that $\dim_2 K^{\x} \ges 2$, which is absurd.
\end{proof}

\section*{Acknowledgements}
This material is based upon work supported by the National Science Foundation under Grant No.\ DMS-2154086.
This research was funded in whole or in part by the Austrian Science Fund (FWF) 10.55776/ESP450. For open access purposes, the author has applied a CC BY public copyright licence to any author accepted manuscript version arising from this submission.

Thanks are due to Allen Gehret and Elliot Kaplan for helpful conversations and suggestions, particularly around dimension functions and existential matroids.
Thanks are due to James Freitag for suggesting Lemma~\ref{lem:whitney} and outlining its proof.
Thanks are due to Matthew Foreman for suggesting Corollary~\ref{cor:definableBP}.
Thanks are due to the anonymous referee for suggesting several corrections, clarifications, and improvements to the exposition.

\printbibliography

\end{document}